\documentclass[reqno,11pt]{amsart}

\usepackage[top=2.0cm,bottom=2.0cm,left=3cm,right=3cm]{geometry}
\usepackage{amsthm,amsmath,amssymb,dsfont}
\usepackage{mathrsfs,amsfonts,functan,extarrows,mathtools}
\usepackage[colorlinks]{hyperref}

\usepackage{marginnote}
\usepackage{xcolor}
\usepackage{stmaryrd}
\usepackage{esint}
\usepackage{graphicx}
\usepackage{bm}


\newtheorem{theorem}{Theorem}[section]
\newtheorem{proposition}{Proposition}[section]

\newtheorem{lemma}{Lemma}[section]

\numberwithin{equation}{section}
\allowdisplaybreaks

\arraycolsep=1.5pt



\def\d{\mathrm{d}}

\def\R{\mathbb{R}}
\def\T{\mathbb{R}}

\def\eps{\varepsilon}
\def\div{\mathrm{div}}


\newcounter{wronumber}\setcounter{wronumber}{1}




\begin{document}
\title[From VMB to INSFM with Ohm's law]
			{The incompressible Navier-Stokes-Fourier-Maxwell system limits of the Vlasov-Maxwell-Boltzmann system for soft potentials: the noncutoff cases and  cutoff cases}

\author[Ning Jiang]{Ning Jiang}
\address[Ning Jiang]{\newline School of Mathematics and Statistics, Wuhan University, Wuhan, 430072, P. R. China}
\email{njiang@whu.edu.cn}

\author[Yuanjie Lei]{Yuanjie Lei${}^*$}
\address[Yuanjie Lei]
{\newline School of Mathematics and Statistics, Huazhong University of Sciences and Technology, Wuhan, 430074, P. R. China}
\email{leiyuanjie@hust.edu.cn}

\thanks{${}^*$ Corresponding author \quad \today}

\maketitle

\begin{abstract}
   We obtain the global-in-time and uniform in Knudsen number $\eps$ energy estimate for  the cutoff and non-cutoff scaled Vlasov-Maxwell-Boltzmann system for the soft potential. For the non-cutoff soft potential cases, our analysis relies heavily on additional dissipative mechanisms with respect to velocity, which are brought about by the strong angular singularity hypothesis, i.e. $\frac12\leq s<1$. In the case of cutoff cases, our proof relies on two new kinds of weight functions and complex construction of energy functions, and here we ask $\gamma\geq-1$. As a consequence, we justify the incompressible Navier-Stokes-Fourier-Maxwell equations with Ohm's law limit.\\

\noindent\textsc{Keywords.} two-species Vlasov-Maxwell-Boltzmann system; cutoff and noncutoff soft potential; two-fluid incompressible Navier-Stokes-Fourier-Maxwell system; Ohm's law; global classical solutions; uniform energy bounds; convergence for classical solutions.\\


\noindent\textsc{AMS subject classifications.} 35B45, 35B65, 35Q35, 76D03, 76D09, 76D10
\end{abstract}



\tableofcontents


\section{Introduction.}\label{Sec:Introduction}

\subsection{The scaled VMB system}
The two-species Vlasov-Maxwell-Boltzmann (VMB) system describes the evolution of a gas of two species of oppositely charged particles (cations of charge $q^+ > 0$ and mass $m^+>0$, and anions of charge $-q^- <0$ and mass $m^->0$), subject to auto-induced electromagnetic forces.
Such a gas of charged particles equipped with a global neutrality condition is usually called a plasma. The unknowns $F^+(t,x,v) \geq 0 $ and $F^-(t,x,v) \geq 0 $ represent respectively the particle number densities of the positive charged ions (i.e., cations), and the negative charged ions (i.e., anions), which are at position $x\in \mathbb{R}^3$ with velocity $v\in \mathbb{R}^3$, at time $t\geq 0$. The VMB system reads as follows:
	\begin{align}\label{VMB-0}
	\left\{\begin{array}{rcl}
	  \partial_t F^+ + v\cdot \nabla_x F^+ + \tfrac{q^+}{m^+}(E + v \times B)\cdot\nabla_v F^+ & = &  Q(F^+, F^+) + Q(F^+, F^-)\,,
	  \\[5pt]
	  	\partial_t F^- + v\cdot \nabla_x F^- - \tfrac{q^-}{m^-}(E + v \times B)\cdot\nabla_v F^- & = & Q(F^-, F^-) + Q(F^-, F^+)\,,
	  \\[5pt] \displaystyle
	  	\mu_0\eps_0\partial_t E - \nabla_x \times B & = & -\mu_0\int_{\mathbb{R}^3}(q^+F^+- q^- F^-)v\,\mathrm{d}v\,,
	  \\[5pt]
	    \partial_t B + \nabla_{\!x} \times E & = & 0\,,
	  \\[5pt] \displaystyle
	  	\div_x E & = & \tfrac{1}{\eps_0}\int_{\mathbb{R}^3}(q^+F^+- q^- F^-)\,\mathrm{d}v\,,
	  \\[5pt]
	  	\div_x B & = & 0\,.
	\end{array}\right.
	\end{align}
Here the evolution of the densities $F^\pm$ are governed by the Vlasov-Boltzmann equations in the first two equations of \eqref{VMB-0}. This means that the variations of densities along the particle trajectories are subject to the influence of an auto-induced Lorentz force and inter-particle collisions in the gas. The electric field $E(t,x)$ and the magnetic field $B(t,x)$, which are generated by the motion of the particles in the plasma itself, are governed by the Maxwell equation. It consists of the Amp\`ere equation, Faraday's equation and Gauss' laws, representing in the third, fourth, and the last two equations, respectively.
The vacuum permeability and permittivity (or say, the magnetic and electric constants) are denoted, respectively, by the physical coefficients $\mu_0, \eps_0 >0$. Both species of particles are assumed here to have the same mass $m^\pm=m>0$ and charge $q^\pm = q >0$.

The Boltzmann collision operator, presented in the right-hand sides of the Vlasov-Boltzmann equations in \eqref{VMB-0}, is a quadratic form, acting on the velocity variables, associated to the bilinear operator $Q(F,G)(v)$. Before stating the results, we introduce the models, in particular the formats of the collision kernels. Since we treat both non-cutoff and cutoff kernels, for which the convenient representations are little different, we describe it separately in the following.

\subsubsection{Non-cutoff cases}

The Boltzmann collision operator $Q(F,G)(v)$ is given by
\begin{equation*}
	Q(F,G)(v)
	=\int_{\mathbb{R}^3\times \mathbb{S}^2}{\bf B}(v-u,\sigma)\{F(u')G(v')-F(u)G(v)\}
	\d\sigma \d u,
\end{equation*}
where in terms of velocities $v$ and $u$ before the collision, velocities  $v'$ and $u'$ after the collision are defined by
\begin{equation*}
	v'=\frac{v+u}{2}+\frac{|v-u|}{2}\sigma,\ \ \ \ u'=\frac{v+u}{2}-\frac{|v-u|}{2}\sigma,\quad \sigma\in \mathbb{S}^2.
\end{equation*}

The Boltzmann collision kernel ${\bf B}(v-u,\sigma)$ depends only on the relative velocity $|v-u|$ and on the
deviation angle $\theta$ given by $\cos\theta =\langle\sigma,\ (v-u)/{|v-u|}\rangle$, where $\langle\cdot, \cdot\rangle$ is the usual dot product in $\mathbb{R}^3$. Without loss of generality, we suppose that ${\bf B}(v-u,\sigma)$ is supported on $\cos\theta\geq 0$.

Throughout the paper, the collision kernel ${\bf B}(v-u,\sigma)$ is further supposed to satisfy the following assumptions:
\begin{itemize}
	\item[(A1).] ${\bf B}(v-u,\sigma)$ takes the product form in its argument as
	$$
	{\bf B}(v-u,\sigma)=\Phi(|v-u|){\bf b}(\cos\theta)
	$$
	with $\Phi$ and ${\bf b}$ being non-negative functions;
	
	\item[(A2).] The angular function $\sigma\rightarrow {\bf b}(\langle\sigma,(v-u)/|v-u|\rangle)$ is not integrable on ${\mathbb{S}}^2$, i.e.
	\[
	\int_{{\mathbb{S}}^2}{\bf b}(\cos\theta)\d\sigma=2\pi\int_0^{\pi/2}\sin\theta {\bf b}(\cos\theta)\d\theta=\infty.
	\]
	Moreover, there are two positive constants $c_b>0, 0<s<1$ such that
	\[
	\frac{c_b}{\theta^{1+2s}}\leq\sin\theta {\bf b}(\cos\theta)\leq \frac{1}{c_b\theta^{1+2s}};
	\]
	
	\item[(A3).] The kinetic function $z\rightarrow \Phi(|z|)$ satisfies
	\[
	\Phi(|z|)=C_\Phi|z|^\gamma
	\]
	for some positive constant $C_\Phi> 0.$ Here we should notice that the exponent $\gamma>-3$ is determined by the intermolecular interactive mechanism.
\end{itemize}

Usually, we call ${\bf B}(v-u,\sigma)$ as hard potentials collision kernels when $\gamma+2s\geq0$, and
soft potentials when $-3<\gamma<-2s$ with $0<s<1$. The current work is restricted to the case of
\begin{equation}\label{case}
	\max\left\{-3,-\tfrac32-2s\right\}<\gamma<-2s, \ \ \tfrac12\leq s<1\,.
\end{equation}
We call the case \eqref{case} with $\frac12\leq s<1$ as strong angular singularity as treated in \cite{DLYZ-KRM2013}, while for $0<s<\tfrac12$ weak angular singularity as in \cite{FLLZ-2018}.
\subsubsection{Cutoff cases}
For the cutoff collision kernels, $Q(F,G)(v)$ is defined as
defined as
\begin{equation*}
	\begin{split}
		Q(F,G)(v)
		=&\int_{\mathbb{R}^3\times \mathbb{S}^2}|u-v|^{\gamma}{\bf b}\left(\frac{\omega\cdot(v-u)}{|u-v|}\right)\left\{F(v')G(u')-F(v)G(u)\right\}
		\d\omega \d u\\
		\equiv&Q_{gain}(F,G)-Q_{loss}(F,G).
	\end{split}
\end{equation*}
Here $\omega\in\mathbb{S}^2$ and ${\bf b}$, the angular part of the collision kernel, satisfies the Grad angular cutoff assumption (cf. \cite{Grad-1963})
\begin{equation}\label{cutoff-assump}
	0\leq{\bf b}(\cos \vartheta)\leq C|\cos\vartheta|
\end{equation}
for some positive constant $C>0$. Here the deviation angle $\theta$ is given by $\cos\vartheta =\omega\cdot(v-u)/{|v-u|}$.
Moreover,
\begin{equation*}
	v'=v-[(v-u)\cdot\omega]\omega,\quad u'=u+[(v-u)\cdot\omega]\omega,
\end{equation*}
which denote velocities after a collision of particles having velocities $v, u$ before the collision and vice versa.

The exponent $\gamma\in(-3,1]$ in the kinetic part of the collision kernel is determined by the potential of intermolecular forces, which is classified into the soft potential case for $-3<\gamma<0$, the Maxwell molecular case for $\gamma=0$, and the hard potential case for $0<\gamma\leq 1$ which includes the hard sphere model with $\gamma=1$ and ${\bf b}(\cos\vartheta)=C|\cos\vartheta|$ for some positive constant $C>0$. Here we focus on the cutoff case $-1\leq \gamma<0$.

There have been extensive research on the well-posedness of the VMB. In late 80's, DiPerna and Lions developed the theory of global-in-time renormalized solutions with large initial data to the Boltzmann equation \cite{DL-Annals1989}, Vlasov-Maxwell equations \cite{DL-CPAM1989} and Vlasov-Poisson-Boltzmann equations \cite{Lions-Kyoto1994, Lions-Kyoto1994-2}. But for VMB there are severe  difficulties, among which the major one is that the {\em a priori} bounds coming from physical laws are not enough to prove the existence of global solutions, even in the renormalized sense. Recently, Ars\`enio and Saint-Raymond \cite{Arsenio-SRM-2016} eventually established global-in-time renormalized solutions with large initial data for VMB, both cut-off and non-cutoff collision kernels. We emphasize that for Boltzmann type equations, renormalized solutions are still the only global-in-time solutions without any smallness requirements on initial data. On the other line, in the context of classical solutions, through the so-called nonlinear energy method, Guo \cite{Guo-2003-Invent} constructed the classical solution of VMB near the global Maxwellian. Guo's work inspired sequences of follow-up research on VMB with more general collision kernels among which we only mention results for the most general collision kernels with or without angular cutoff assumptions, see \cite{DLYZ-KRM2013, DLYZ-CMP2017, FLLZ-2018}.

\subsection{Hydrodynamic limits of Boltzmann type equations}
One of the most important features of the Boltzmann equations (or more generally, kinetic equations) is their connections to the fluid equations. The so-called fluid regimes of the Boltzmann equations are those of asymptotic dynamics of the scaled Boltzmann equations when the Knudsen number $\eps$ is very small. Justifying these limiting processes rigorously has been an active research field from late 70's. Among many results obtained, the main contributions are the incompressible Navier-Stokes and Euler limits. There are two types of results in this field:

{\bf (Type-I)}, First obtaining the solutions of the scaled Boltzmann equation with {\em uniform} bounds in the Knudsen number $\eps$, then extracting a subsequence converging (at least weakly) to the solutions of the fluid equations as $\eps\rightarrow 0$.

{\bf (Type-II)}, First obtaining the solutions for the limiting fluid equations, then constructing a sequence of special solutions (near the Maxwellians) of the scaled Boltzmann equations for small Knudsen number $\eps$.

The key difference between the results of Type-I and  type-II are: in type-I, the solutions of the fluid equations are {\em not} known a priori, and are completely obtained from taking limits from the Boltzmann equations. In short, it is ``from kinetic to fluid"; In type-II, the solutions of the fluid equations are {\em known} first. In short, it is ``from fluid to kinetic". Usually, type-I results are harder to be achieved since it must obtain enough uniform (with respect to $\eps$) bounds for solutions of the scaled kinetic equations then compactness arguments give the convergence. This approach automatically provides the existence of both the original kinetic equations and the limiting macroscopic equations. On the other hand, type-II approach needs to employ the information of the limiting equations, then prove the solutions of the original kinetic equations with a {\em special} form (usually Hilbert expansion).

We remark that this classification of two approaches also appears in other asymptotical problems. For example, in their work on Kac's program \cite{Mischler-Mouhot}, Mischler and Mouhot called the type-I as ``bottom-up" and type-II as ``top-down". We quote their words here: {\em ...our answer is an ``inverse" answer inthe sense that our methodology is ``top-down" from the limit equation to the many-particle system rather than ``bottom-up" as was expected by Kac.}

The most successful achievement in type-I is the so-called BGL (named after Bardos, Golse and Levermore) program.  From late 80's, Bardos, Golse and Levermore initialized the program to justify Leray's solutions of the incompressible Navier-Stokes equations from DiPerna-Lions' renormalized solutions \cite{BGL-1991-JSP, BGL-CPAM1993}. They proved the first convergence result with five additional technical assumptions. After ten years efforts by Bardos, Golse, Levermore, Lions, Masmoudi and Saint-Raymond, see for example \cite{BGL3, LM3, LM4, GL}, the first complete convergence result without any additional compactness assumption was proved by Golse and Saint-Raymond in \cite{G-SRM-Invent2004} for cutoff Maxwell collision kernel, and in \cite{G-SRM2009} for hard cutoff potentials. Later on, it was extended by Levermore and Masmoudi \cite{LM} to include soft potentials. Recently Ars\'enio got the similar results for non-cutoff case \cite{Arsenio, Arsenio-Masmoudi} based on the existence result of \cite{AV-CPAM2002}. Furthermore, by Jiang, Levermore, Masmoudi and Saint-Raymond, these results were extended to bounded domain where the Boltzmann equation was endowed with the Maxwell reflection boundary condition \cite{Masmoudi-SRM-CPAM2003, JLM-CPDE2010, JM-CPAM2017}, based on the solutions obtained by Mischler \cite{mischler2010asens}.

Another direction in type-I is in the context of classical solutions. The first work in this type is Bardos-Ukai \cite{b-u}. They started from the scaled Boltzmann equation for cut-off hard potentials, and proved the global existence of classical solutions $g_\eps$ uniformly in $0< \eps <1$. The key feature of Bardos-Ukai's work is that they only need the smallness of the initial data, and did not assume the smallness of the Knudsen number $\eps$ in uniform estimate. After having the uniform in $\eps$ solutions $g_\eps$, taking limits can provide a classical solution of the incompressible Navier-Stokes equations with small initial data. Bardos-Ukai's approach heavily depends on the sharp estimates from the spectral analysis on the linearized Boltzmann operator $\mathcal{L}$, and the semigroup method (the semigroup generated by the scaled linear operator $\eps^{-2}\mathcal{L}+\eps^{-1}v\cdot\nabla_{\!x}$). It seems that it is hardly extended to cutoff soft potential, and even harder for the non-cutoff cases, since it is well-known that the operator $\mathcal{L}$ has continuous spectrum in those cases. On the torus, semigroup approach was used by Briant \cite{Briant-JDE-2015} and Briant, Merino-Aceituno and Mouhot \cite{BMM-arXiv-2014} to prove incompressible Navier-Stokes limit by employing the functional analysis breakthrough of Gualdani-Mischler-Mouhot \cite{GMM}. Again, their results are for cut-off kernels with hard potentials. Recently, there is type-I convergence result on the incompressible Navier-Stokes limit of the Boltzmann equation by the first author and his collaborators. In \cite{JXZ-Indiana2018}, the uniform in $\eps$ global existence of the Boltzmann equation with or without cutoff assumption was obtained and the global energy estimates were established.

Most of the type-II results are based on the Hilbert expansion  and obtained in the context of classical solutions. It was started from Nishida and Caflisch's work on the compressible Euler limit \cite{Nishida, Caflisch, KMN}. Their approach was revisitied by Guo, Jang and Jiang, combining with nonlinear energy method to apply to the acoustic limit \cite{GJJ-KRM2009, GJJ-CPAM2010, JJ-DCDS2009}. After then this process was used for the incompressible limits, for examples, \cite{DEL-89} and \cite{Guo-2006-CPAM}. In \cite{DEL-89}, De Masi-Esposito-Lebowitz considered  Navier-Stokes limit in dimension 2. More recently, using the nonlinear energy method, in \cite{Guo-2006-CPAM} Guo justified the Navier-Stokes limit (and beyond, i.e. higher order terms in Hilbert expansion). This was extended in \cite{JX-SIMA2015} to more general initial data which allow the fast acoustic waves.  These results basically say that, given the initial data which is needed in the classical solutions of the Navier-Stokes equations, it can be constructed the solutions of the Boltzmann equation of the form $F_\eps = M +  \sqrt{M}(\eps g_1+ \eps^2 g_2 + \cdots + \eps^n g_n+ \eps^k g^R_\eps)$, where $g_1, g_2, \cdots $ can be determined by the Hilbert expansion, and $g^R_\eps$ is the error term. In particular, the first order fluctuation $g_1 = \rho_1 + \mathrm{u}_1\!\cdot\! v + \theta_1(\frac{|v|^2}{2}-\frac{3}{2})$, where $(\rho_1, \mathrm{u}_1, \theta_1)$ is the solutions to the incompressible Navier-Stokes equations.

Regarding to the Vlasov-Poisson-Boltzmann (VPB) system and VMB, the corresponding fluid limits are more fruitful since the effects of electric and magnetic fields are considered. Analytically, the limits from scaled VPB are similar to the Boltzmann equations because VPB couples with an extra Poisson equation, which has very good enough regularity. This usually does not bring much difficulties. For the limits of VPB, see recent results \cite{GJL-2019, JZ-2020}.

However, for the VMB, the situation is quite different. The corresponding hydrodynamic limits are much harder, even at the formal level, since it is coupled with Maxwell equations which are essentially hyperbolic. In a recent remarkable breakthrough \cite{Arsenio-SRM-2016}, Ars\'enio and Saint-Raymond not only proved the existence of renormalized solutions of VMB,  more importantly, also justified various limits (depending on the scalings) towards incompressible viscous electro-magneto-hydrodynamics. Among these limits, the most singular one is to the two-fluid incompressible Navier-Stokes-Fourier-Maxwell (in brief, NSFM) system with Ohm's law.

The proofs in \cite{Arsenio-SRM-2016} for justifying the weak limit from a sequence of solutions of VMB  to a dissipative solution of incompressible NSFM  are extremely hard. Part of the reasons are, besides many difficulties of the existence of renormalized solutions of VMB itself, the current understanding for the incompressible NSFM with Ohm's law is far from complete. From the point view of mathematical analysis, NSFM have the behavior which is more similar to the much less understood incompressible Euler equations than to the Navier-Stokes equations. That is the reason in \cite{Arsenio-SRM-2016}, they consider the so-called dissipative solutions of NSFM rather than the usual weak solutions. The dissipative solutions were introduced by Lions for 3-dimensional incompressible Euler equations (see section 4.4 of \cite{Lions-1996}).

The studies of incompressible NSFM just started in recent years (for the introduction of physical background, see \cite{Biskamp, Davidson}).  For weak solutions, the existence of global-in-time Leray type weak solutions are completely open, even in 2-dimension.  The first breakthrough comes from Masmoudi \cite{Masmoudi-JMPA2010}, who proved in 2-dimensional case the existence and uniqueness of global strong solutions of incompressible NSFM (in fact, the system he considered in \cite{Masmoudi-JMPA2010} is little different with the NSFM in this paper, but the analytic analysis are basically the same) for the initial data $(v^{in}, E^{in}, B^{in})\in L^2(\mathbb{R}^2)\times (H^s(\mathbb{R}^2))^2$ with $s>0$. It is notable that in \cite{Masmoudi-JMPA2010}, the divergence-free condition of the magnetic field $B$ or the decay property of the linear part coming from Maxwell's equations is {\em not} used.  Ibrahim and Keraani \cite{Ibrahim-Keraani-2011-SIMA} considered the data $(v^{in}, E^{in}, B^{in}) \in \dot B^{1/2}_{2,1}(\mathbb{R}^3)\times (\dot H^{1/2}(\mathbb{R}^3))^2$ for 3-dimensional, and $(v_0, E_0, B_0)\in \dot B^0_{2,1}(\mathbb{R}^2)\times (L^2_{log}(\mathbb{R}^2))^2$ for 2-dimensional case. Later on, German, Ibrahim and Masmoudi \cite{GIM2014} refines the previous results by running a fixed-point argument to obtain mild solutions, but taking the initial velocity field in the natural Navier-Stokes space $H^{1/2}$. In their results the regularity of the initial velocity and electromagnetic fields is lowered. Furthermore, they employed an $L^2L^\infty$-estimate on the velocity field, which significantly simplifies the fixed-point arguments used in \cite{Ibrahim-Keraani-2011-SIMA}. For some other asymptotic problems related, say, the derivation of the MHD from the Navier-Stokes-Maxwell system in the context of weak solutions, see Ars\'enio-Ibrahim-Masmoudi \cite{AIM-ARMA-2015}. Recently, in \cite{JL-CMS-2018} the authors of the current paper proved the global classical solutions of the incompressible NSFM with small intial data, by using the decay properties of both the electric field and  the wave equation with linear damping of the divergence free magnetic field. This key idea was already used in \cite{GIM2014}.

Regarding to the hydrodynamic limits of VMB in the context of classical solutions, the only previous result belongs to Jang \cite{Jang-ARMA2008}. In fact, in \cite{Jang-ARMA2008}, it was taken a very special scaling under which the magnetic effect appeared only at a higher order. As a consequence, it vanished in the limit as the Knudsen number $\eps\rightarrow 0$. So in the limiting equations derived in \cite{Jang-ARMA2008}, there was no equations for the magnetic field at all. We emphasize that in \cite{Jang-ARMA2008}, the author took the Hilbert expansion approach, and the classical solutions to the VMB were based on those of the limiting equations. So the convergence results in \cite{Jang-ARMA2008} belong to the type-II results, as we named in the last subsection.

The main purpose of this paper is to obtain the type-I results for the hydrodynamic limits to Cauchy problem of the Vlasov-Maxwell-Boltzmann system \eqref{VMB-F}-\eqref{IC-VMB-F} for the soft potentials, including noncutoff cases and cutoff cases, when the Knudsen number $\eps$ tends to zero. The key issue is that we do {\em not} employ the Hilbert expansion method, which as mentioned above, has two disadvantages: first, it only gives a special type of the solution of the VMB. In other words, the solution with the expansion form of some finite terms. Second, the solution to the limiting equations must be known {\em before} the existence of VMB. The approach employed in this paper is to obtain a family of the global in time solutions $F_\eps^\pm (t,x,v)$ to the scaled VMB with energy estimate uniform in $0< \eps < 1$. Based on this uniform energy estimate, the moments of the fluctuations of $F_\eps^\pm (t,x,v)$ around the global Maxwellian converge to the solutions to the incompressible Navier-Stokes-Fourier-Maxwell (NSFM) equations. This approach automatically provides a classical solution to the NSFM equations. The first named author of this paper and Luo did this for noncutoff VMB with hard sphere collision kernel in \cite{Jiang-Luo-2022-Ann.PDE}. This paper treats the technically much harder general soft potentials cases, both noncutoff and cutoff collision kernels.

\subsubsection{Incompressible NSFM limit of VMB}
To obtain the incompressible NSFM equations, formally, we follow the scales settled in \cite{Arsenio-SRM-2016}. More specifically, we consider the following scaled two-species VMB system:
\begin{equation}\label{VMB-F}
	\left\{
	\begin{array}{l}
		\partial_t F_\eps^\pm + \tfrac{1}{\eps} v \cdot \nabla_x F_\eps^\pm \pm \tfrac{1}{\eps} ( \eps E_\eps + v \times B_\eps ) \cdot \nabla_v F_\eps^\pm = \tfrac{1}{\eps^2} Q (F_\eps^\pm, F_\eps^\pm) + \tfrac{1}{\eps^2} Q (F_\eps^\pm , F_\eps^\mp) \,,\\[2mm]
		\partial_t E_\eps - \nabla_x \times B_\eps = - \tfrac{1}{\eps^2} \int_{\R^3} (F_\eps^+ - F_\eps^-) v \d v \,,\\[2mm]
		\partial_t B_\eps + \nabla_x \times E_\eps = 0 \,,\\[2mm]
		\div_x E_\eps = \tfrac{1}{\eps} \int_{\R^3} ( F_\eps^+ - F_\eps^- ) \d v \,,\\[2mm]
		\div_x B_\eps = 0
	\end{array}
	\right.
\end{equation}
with initial data
\begin{equation}\label{IC-VMB-F}
	\begin{aligned}
		F_\eps^\pm (0, x, v) = F_\eps^{\pm, in} (x,v) \in \R \,, \quad E_\eps (0,x) = E_\eps^{in} (x) \in \R^3 \,, \quad B_\eps (0, x) = B_\eps^{in} (x) \in \R^3 \,.
	\end{aligned}
\end{equation}

It is well-known that the global equilibrium for the two-species VMB is $[{M(v)},{M}(v)]$, where the normalized global {\em Maxwellian} is
\begin{equation}\label{GM}
 {M}(v) = \tfrac{1}{(2 \pi)^\frac{3}{2}} \exp(- \tfrac{|v|^2}{2}) \,.
\end{equation}
Set
$$F_\eps^\pm (t,x,v) = {M}(v)  + \eps \sqrt{{M(v)}} f_\eps^\pm (t,x,v),$$
this leads to the perturbed two-species VMB
\begin{equation}\label{VMB-F-perturbative}
  \left\{
    \begin{array}{l}
      \partial_t f_\eps + \tfrac{1}{\eps} \big[ v \cdot  \nabla_x f_\eps + q_0 ( \eps E_\eps + v \times B_\eps ) \cdot \nabla_v f_\eps \big] + \tfrac{1}{\eps^2} \mathscr{L} f_\eps - \tfrac{1}{\eps}  (E_\eps \cdot v) \sqrt{{M}}  q_1\\[2mm]
       \qquad \qquad = \tfrac{1}{2} q_0 (E_\eps \cdot v) f_\eps + \tfrac{1}{\eps} \mathscr{T} (f_\eps, f_\eps) \,,\\[2mm]
      \partial_t E_\eps - \nabla_x \times B_\eps = - \tfrac{1}{\eps} \int_{\R^3} f_\eps \cdot {q_1} v \sqrt{{M}} \d v \,,\\[2mm]
      \partial_t B_\eps + \nabla_x \times E_\eps = 0 \,,\\[2mm]
      \div_x E_\eps = \int_{\R^3} f_\eps \cdot {q_1} \sqrt{{M}} \d v \,, \ \div_x B_\eps = 0 \,,
    \end{array}
  \right.
\end{equation}
{where $f_\eps = [ f_\eps^+ , f_\eps^- ]$ represents the vector in $\R^2$ with the components $f_\eps^\pm$, the $2 \times 2$ diagonal matrix $q_0 = \textit{diag} (1, -1)$, the vector $q_1 = [1, -1]$, the linearized collision operator $\mathscr{L}f_\varepsilon$ and the nonlinear collision term $\mathscr{T} (f_\varepsilon,f_\varepsilon)$ are respectively defined by
\[\mathscr{L}f_\varepsilon=[\mathscr{L}_+f_\varepsilon,\mathscr{L}_-f_\varepsilon],\quad\quad\quad\mathscr{T}(f_\varepsilon,g)=[\mathscr{T}_+(f_\varepsilon,g),\mathscr{T}_-(f_\varepsilon,g)]\]
with
\begin{equation*}
\begin{aligned}
\mathscr{L}_\pm f_\varepsilon =& -{M}^{-1/2}
\left\{{Q\left( {M},{M}^{1/2}(f_\varepsilon^\pm+f_\varepsilon^\mp)\right)+ 2Q\left( {M}^{1/2}f_\varepsilon^\pm, {M}\right)}\right\},\\[2mm]
\mathscr{T}_{\pm}(f_\varepsilon,g) =&{M}^{-1/2}Q\left({M}^{1/2}f_\varepsilon^{\pm},{M}^{1/2}g^\pm\right)
+{M}^{-1/2}Q\left({M}^{1/2}f_\varepsilon^{\pm},{M}^{1/2}g^\mp\right).
\end{aligned}
\end{equation*}
For the linearized Boltzmann collision operator $\mathscr{L}$, it is well known, cf. \cite{Guo-2003-Invent, DLYZ-CMP2017}, that it is non-negative and the null space $\mathcal{N}$ of $\mathscr{L}$ is given by
\begin{equation*}
{\mathcal{ N}}={\textrm{span}}\left\{[1,0]{M}^{1/2} , [0,1]{M}^{1/2}, [v_i,v_i]{{M}}^{1/2} (1\leq i\leq3),[|v|^2,|v|^2]{M}^{1/2}\right\}.
\end{equation*}
If we define ${\bf P}$ as the orthogonal projection from $L^2({\mathbb{R}}^3_ v)\times L^2({\mathbb{R}}^3_ v)$ to $\mathcal{N}$, then for any given function $f(t, x, v )\in L^2({\mathbb{R}}^3_ v)$, one has
\begin{equation*}
  {\bf P}f_\varepsilon =\left\{{\rho_\varepsilon^+(t, x)[1,0]+\rho_\varepsilon^-(t, x)[0,1]+\sum_{i=1}^{3}u_\varepsilon^{i}(t, x) [1,1]v_i+\theta_\varepsilon(t, x)[1,1](| v|^2-3)}\right\}{M}^{1/2}
\end{equation*}
with
\begin{equation*}
  \rho_\varepsilon^\pm=\int_{{\mathbb{R}}^3}{M}^{1/2}f_\varepsilon^\pm \d v,\quad
  u_{\varepsilon,i}=\frac12\int_{{\mathbb{R}}^3} v  _i {M}^{1/2}(f_\varepsilon^++f_\varepsilon^-)\d v,\quad
  \theta_\varepsilon=\frac{1}{12}\int_{{\mathbb{R}}^3}(| v|^2-3){M}^{1/2}(f_\varepsilon^++f_\varepsilon^-) \d v.
\end{equation*}
Therefore, we have the following macro-micro decomposition with respect to the given global Maxwellian ${M}$ which was introduced in \cite{Guo-IUMJ-2004}
\begin{equation}\label{macro-micro}
 f_\varepsilon(t,x, v)={\bf P}f_\varepsilon(t,x, v)+\{{\bf I}-{\bf P}\}f_\varepsilon(t, x, v)
\end{equation}
where ${\bf I}$ denotes the identity operator and ${\bf P}f_\varepsilon$ and $\{{\bf I}-{\bf P}\}f_\varepsilon$ are called the macroscopic and the microscopic component of $f_\varepsilon(t,x,v)$, respectively.}

{

Using the moment method, Ars\'enio and Saint-Raymond in \cite{Arsenio-SRM-2016} proved the following limit:
  \begin{equation}\label{NSFM-Limit}
    \begin{aligned}
      f_\eps \rightarrow ( \rho + \tfrac{1}{2} n ) \tfrac{{q_1} + {q_2}}{2} \sqrt{{M}} + ( \rho - \tfrac{1}{2} n ) \tfrac{{q_2} - {q_1}}{2} \sqrt{{M}} + u \cdot v {q_2} \sqrt{{M}} + \theta ( \tfrac{|v|^2}{2} - \tfrac{3}{2} )q_2 \sqrt{{M}}\,,
    \end{aligned}
  \end{equation}
where $(\rho, n, u, \theta, E, B)$ satisfies the following two-fluid incompressible NSFM with Ohm's law:
\begin{equation}\label{INSFM-Ohm}
  \left\{
    \begin{array}{l}
      \partial_t u + u \cdot \nabla_x u - {\mu} \Delta_x u + \nabla_x p = \tfrac{1}{2} ( n E + j \times B ) \,, \qquad \div_x \, u = 0 \,, \\ [2mm]
      \partial_t \theta + u \cdot \nabla_x \theta - \kappa \Delta_x \theta = 0 \,, \qquad\qquad\qquad\qquad\qquad\quad\ \, \rho + \theta = 0 \,, \\ [2mm]
      \partial_t E - \nabla_x \times B = - j \,, \qquad\qquad\qquad\qquad\qquad\qquad\ \ \ \, \div_x \, E = n \,, \\ [2mm]
      \partial_t B + \nabla_x \times E = 0 \,, \qquad\qquad\qquad\qquad\qquad\qquad\qquad \div_x \, B = 0 \,, \\ [2mm]
      \qquad \qquad j - nu = \sigma \big( - \tfrac{1}{2} \nabla_x n + E + u \times B \big) \,, \quad\quad\quad\quad\quad w = \tfrac{3}{2} n \theta \,,
    \end{array}
  \right.
\end{equation}
where the vectors $q_1 = [1, -1]$, $q_2 = [1, 1]$, for a detailed definition of the viscosity $\mu$, heat conductivity $\kappa$, and electrical conductivity $\sigma$, please refer to \cite{Arsenio-SRM-2016} for their derivation.
We will not give detailed formal derivation here. In our proof of Theorem \ref{Main-Thm-1}, we indeed provide how the NSFM can be derived from VMB with soft potential, including the noncutoff cases and cutoff cases.

\subsection{Notations}
\begin{itemize}
	\item For convention, we index the usual $L^p$ space by the name of the concerned variable. So we have, for $p \in [1, + \infty]$,
	\begin{equation*}
		L^p_{[0,T]} = L^p ([0,T]) \,, \ L^p_x = L^p (\T^3) \,, \ L^p_v = L^p (\R^3) \,, \ L^p_{x,v} = L^p (\mathbb{R}^3 \times \mathbb{R}^3) \,.
	\end{equation*}
	{For $p = 2$, we use the notations $ ( \cdot \,, \cdot )_{L^2_x} $, $ \langle \cdot \,, \cdot \rangle_{L^2_v} $ and $ ( \cdot \,, \cdot )_{L^2_{x,v}} $ to represent the inner product on the Hilbert spaces $L^2_x$, $L^2_v$ and $L^2_{x,v}$, respectively;}
\item $
  \langle \cdot \rangle = \sqrt{1 + |\cdot|^2} \,;
$
\item
For any multi-indexes $\alpha = (\alpha_1, \alpha_2, \alpha_3)$ and $ \beta = ( \beta_1, \beta_2, \beta_3 )$ in $\mathbb{N}^3$ we denote the $(\alpha,\beta)^{th}$ partial derivative by
\begin{equation*}
  \partial^\alpha_\beta  = \partial^\alpha_x \partial^\beta_v = \partial^{\alpha_1}_{x_1} \partial^{\alpha_2}_{x_2} \partial^{\alpha_3}_{x_3} \partial^{\beta_1}_{v_1} \partial^{\beta_2}_{v_2} \partial^{\beta_3}_{v_3} \,;
\end{equation*}
\item As in \cite{AMUXY-JFA-2012},
\begin{eqnarray*}
	|g|_{L^2_D}^2=|g|_{D}^2&\equiv&\int_{\mathbb{R}^6\times \mathbb{S}^2}{\bf B}(v-u,\sigma)\mu(u)\left(g'-g\right)^2\d u\d v\d\sigma\\
	&&+\int_{\mathbb{R}^6\times \mathbb{S}^2}g(u)^2\left(\mu(u')^{\frac12}-\mu(u)^{\frac12}\right)^2\d u\d v\d\sigma;
\end{eqnarray*}
and $\|g\|_{D}=\|g\|_{L^2_xL^2_D}=\||g|_{L^2_D}\|_{L^2_x}$.

\item
For $l\in \mathbb{R}$ , $\langle v\rangle=\sqrt{1+| v|^2}$, $L_l^2$  denotes the weighted space with norm
$
|g|_{L^2_{l}}^2\equiv\int_{\mathbb{R}^3_v}|g(v)|^2\langle v\rangle^{2l}dv.
$
The weighted frictional Sobolev norm $|g(v)|_{H^s_l}^2=|\langle v\rangle^lf(v)|_{H^s}^2$ is given by
\begin{equation*}
	|g(v)|_{H^s_l}^2=|f|^2_{L^2_l}+\int_{\mathbb{R}^3}\d v\int_{\mathbb{R}^3}\d v'
	\frac{[\langle v\rangle^lg(v)-\langle v'\rangle^lg(v')]^2}{|v-v'|^{3+2s}}\chi_{|v-v'|\leq1},
\end{equation*}
where $\chi_{\Omega}$ is the standard indicator function of the set $\Omega$.
Moreover, in $\mathbb{R}^3_x\times\mathbb{R}^3_v$, $\|\cdot\|_{H^s_\gamma}=\||\cdot|_{H^s_\gamma}\|_{L^2_x}$ is used;
\item
$
|g|_\nu\equiv|g\langle v\rangle ^{\frac\gamma2}|_{L^2_v},\ 	\|g\|_\nu\equiv\|g\langle v\rangle ^{\frac\gamma2}\|_{L^2_xL^2_v}
$ for $-3<\gamma\leq 1$;
\item We use $\Lambda^{-\varrho}g(t,x,v)$ to denote
\[\Lambda^{-\varrho}g(t,x,v)=\int_{\mathbb{R}^3_\xi}|\xi|^{-\varrho}\widehat{g}(t,\xi,v)e^{2\pi ix\cdot\xi}\d\xi,\]
where $\widehat{g}(t,\xi,v)=\int_{\mathbb{R}^3_x}g(t,x,v)e^{-2\pi ix\cdot\xi}\d x$.
\end{itemize}
\subsection{The structure}
In the next section, we will deduce the main results for both non-cutoff soft potentials and cutoff soft potentials. In Section 3 and Section 4, we mainly deal with the uniform bound estimate of solutions for non-cutoff soft potentials and cutoff soft potentials respectively, which is independent of time $t$ and $\varepsilon$. In Section 5, based on
the uniform global in time energy bound, we take the limit to derive the incompressible
NSFM system with Ohm's law. Some basic properties of the linear collision operator and
bilinear symmetric operator will be given  in Appendix.

\section{Main results}
\subsection{Non-cutoff cases}
To state for brevity, we introduce the following energy and dissipation rate functional respectively: for some large $N \in \mathbb{N}$,
\begin{eqnarray}
    \mathcal{E}_N(t) &= & \|f_\eps \|^2_{H^N_{x}} + \| E_\eps \|^2_{H^N_x} + \| B_\eps \|^2_{H^N_x}, \label{E-N}\\
    \mathcal{D}_N(t) &= & \tfrac{1}{\eps^2} \| \{{\bf I-P}\}f_{\varepsilon}\|^2_{H^N_{x}L^2_D} + \| \nabla_x {\bf P} f_\eps \|^2_{H^{N-1}_x L^2_v} + \| E_\eps \|^2_{H^{N-1}_x} + \| \nabla_x B_\eps \|^2_{H^{N-2}_x} \label{D-N}
\end{eqnarray}

Due to the weaker dissipation of the Boltzmann operator in the soft potential case rather than the hard cases, in order to deal with the external force term brought by the Lorenian electric-magnetic force, especially including the growth of velocity $v$, we need to introduce the time-velocity weight
\begin{equation}\label{weight}
  w_\ell(\alpha,\beta)=e^{\frac {q\langle v\rangle}{(1+t)^\vartheta}}\langle v\rangle^{4(\ell-|\alpha|-|\beta|)},\ q<<1,\ \ell\geq N.
\end{equation}

The energy and dissipation rate functional with respect to $w_\ell(\alpha,\beta)$ are introduced respectively by
\begin{eqnarray}
\mathcal{E}_{N,\ell}(t)
&=&\sum_{|\alpha|+|\beta|\leq N-1}\left\|w_\ell(\alpha,\beta)\partial^\alpha_\beta \{{\bf I-P}\}f_{\varepsilon}\right\|^2+{(1+t)^{-\frac{1+\epsilon_0}{2}}}\varepsilon^2\sum_{|\alpha|=N}\left\|w_\ell(\alpha,0)\partial^\alpha f_\eps\right\|^2\nonumber\\
&&+{(1+t)^{-\frac{1+\epsilon_0}{2}}}\sum_{|\alpha|+|\beta|=N,\atop|\beta|\geq1}\left\|w_\ell(\alpha,\beta)\partial^\alpha_\beta \{{\bf I-P}\}f_{\varepsilon}\right\|^2,\label{E-N-w}\\
\mathcal{D}_{N,\ell}(t)
&=&
\tfrac{1}{\eps^2} \sum_{|\alpha|+|\beta|\leq N-1}\|w_\ell(\alpha,\beta)\partial^\alpha \{{\bf I-P}\}f_{\varepsilon}\|^2_{D}\nonumber\\
&&+ \frac{q\vartheta}{(1+t)^{1+\vartheta}}\sum_{|\alpha|+|\beta|\leq N-1}\|w_\ell(\alpha,\beta)\partial^\alpha_\beta \{{\bf I-P}\}f_{\varepsilon}\langle v\rangle^{\frac12}\|^2\nonumber\\
&&+{(1+t)^{-\frac{1+\epsilon_0}{2}}}\left\{\sum_{|\alpha|=N}\|w_\ell(\alpha,0)\partial^\alpha f_\eps\|^2_{D}+ \frac{q\vartheta}{(1+t)^{1+\vartheta}}\varepsilon^2\sum_{|\alpha|=N}\|w_\ell(\alpha,0)\partial^\alpha f_\eps\langle v\rangle^{\frac12}\|^2\right\}\nonumber\\
&&+{(1+t)^{-\frac{1+\epsilon_0}{2}}}\tfrac{1}{\eps^2} \sum_{|\alpha|+|\beta|= N,\atop |\beta|\geq 1}\|w_\ell(\alpha,\beta)\partial^\alpha \{{\bf I-P}\}f_{\varepsilon}\|^2_{D}\nonumber\\
&&+ {(1+t)^{-\frac{1+\epsilon_0}{2}}}\frac{q\vartheta}{(1+t)^{1+\vartheta}}\sum_{|\alpha|+|\beta|= N,\atop |\beta|\geq 1}\|w_\ell(\alpha,\beta)\partial^\alpha_\beta \{{\bf I-P}\}f_{\varepsilon}\langle v\rangle^{\frac12}\|^2.\label{D-N-w}
\end{eqnarray}
To obtain the desired temporal time decays, we introduce the energy and dissipation rate functional with the lowest $k\mbox{-}$order space-derivative
\begin{eqnarray}
\mathcal{E}_{k\rightarrow N_0}(t)&=&\sum_{|\alpha|=k}^{N_0}\left\|\partial^\alpha[f_\varepsilon,E_\varepsilon,B_\varepsilon]\right\|^2,
\label{E-k}\\
\mathcal{D}_{k\rightarrow N_0}(t)&=&
\left\|\nabla^{k}[E_\varepsilon,\rho_\varepsilon^+-\rho_\varepsilon^-]\right\|^2+\sum_{k+1\leq|\alpha|\leq N_0-1}\left\|
\partial^\alpha[{\bf P}f_\varepsilon,E_\varepsilon,B_\varepsilon]\right\|^2\nonumber\\
&&+\sum_{|\alpha|=N_0}\left\|\partial^\alpha {\bf P}f_\varepsilon\right\|^2+\tfrac{1}{\eps^2}\sum_{k\leq |\alpha| \leq N_0}\left\|\partial^\alpha{\bf\{I-P\}}f_\varepsilon\right\|^2_{D},\label{D-k}
\end{eqnarray}
respectively.

 The first part of the theorems is the global existence of the scaled two-species VMB system \eqref{VMB-F-perturbative} with uniform energy estimate with respect to the Knudsen number $0 < \eps \leq 1$, both for non-cutoff and cutoff collision kernels.

\begin{theorem}\label{Main-Thm-1}
Assume that
\begin{itemize}
	\item $
		\max\left\{-3,-\frac32-2s\right\}<\gamma<-2s, \ \ \frac12\leq s<1$;
\item 	$0<q\ll1$, $0<\varepsilon<1$;
\item $\frac12<\varrho<\frac32$,
$0<\vartheta\leq \frac\varrho2-\frac14$ ;
\item $0<\epsilon_0\leq 2(1+\varrho)$;
\item $\bar{l}$ is a properly large positive constant;
\item  $N_0\geq 4$, $N=2N_0$ and $l\geq \bar{l}+N+\frac12$;
\item  the initial data
\begin{eqnarray}\label{Def-Y_0}
	Y_{f_\varepsilon,E_\varepsilon,B_\varepsilon}(0)&\equiv&\sum_{|\alpha|\leq N}\left\|e^{ {q\langle v\rangle}}\langle v\rangle^{4(l-|\alpha|-|\beta|)} \partial^\alpha f_{\varepsilon,0}\right\|\nonumber\\
	&&+\|[E_{\varepsilon,0},B_{\varepsilon,0}]\|_{H^N_x}+\|\Lambda^{-\varrho}[f_{\varepsilon,0},E_{\varepsilon,0},B_{\varepsilon,0}]\|
\end{eqnarray}
is less than a sufficiently small positive constant, which is independent of $\varepsilon$.
\end{itemize}
{Then the Cauchy problem to \eqref{VMB-F-perturbative} admits admits the unique global-in-time solutions  $[f_\varepsilon,E_\varepsilon, B_\varepsilon]\in H^N_xL^2_v\times H^N_x\times H^N_x$, }we can also deduce that there exist energy functionals $\mathcal{E}_{N}(t)$, $\mathcal{E}_{N,l}(t)$ and the corresponding energy dissipation functionals $\mathcal{D}_{N}(t)$, $\mathcal{D}_{N,l}(t)$ which satisfy \eqref{E-N}, \eqref{D-N}, \eqref{E-N-w} and \eqref{D-N-w} respectively such that
\begin{eqnarray}\label{Main-Thm-1-1}
&&\frac{\d}{\d t}\left\{\mathcal{E}_{N,l}(t)+\mathcal{E}_{N}(t)\right\}+\mathcal{D}_{N}(t)+\mathcal{D}_{N-1,l}(t)
\lesssim0
\end{eqnarray}
{holds for all $0\leq t\leq T$.}

Meanwhile, we also get the large time behavior in the following result:
\begin{eqnarray}
  \mathcal{E}_{k \rightarrow N_0}(t)\lesssim Y^2_{f_\varepsilon,E_\varepsilon,B_\varepsilon}(0)(1+t)^{-k-\varrho}, \ k=0,1,\cdots, N_0-2,
\end{eqnarray}
where $\mathcal{E}_{k \rightarrow N_0}(t)$ is defined in \eqref{E-k}.
\end{theorem}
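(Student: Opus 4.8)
The plan is to establish the global existence together with the uniform estimate \eqref{Main-Thm-1-1} by a nonlinear energy method of Guo type: close a single \emph{a~priori} estimate under the smallness of $Y_{f_\varepsilon,E_\varepsilon,B_\varepsilon}(0)$ (which is independent of $\varepsilon$), then combine with a standard local existence theorem and a continuity argument; the large-time decay follows afterwards from a bound on the negative Sobolev norm $\|\Lambda^{-\varrho}[f_\varepsilon,E_\varepsilon,B_\varepsilon]\|$ and interpolation. For the \emph{unweighted spatial-derivative estimates}, for each $|\alpha|\le N$ apply $\partial^\alpha_x$ to \eqref{VMB-F-perturbative}, pair with $\partial^\alpha_x f_\varepsilon$ in $L^2_{x,v}$, and add the Maxwell energy identities obtained from pairing $\partial^\alpha_x$ of the $E_\varepsilon$- and $B_\varepsilon$-equations with $\partial^\alpha_x E_\varepsilon$ and $\partial^\alpha_x B_\varepsilon$. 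The transport term $\tfrac1\varepsilon v\cdot\nabla_x$ is skew-adjoint, the coercivity of $\mathscr{L}$ recorded in the Appendix yields $\tfrac1{\varepsilon^2}\|\{\mathbf{I-P}\}\partial^\alpha_x f_\varepsilon\|_D^2$, the Maxwell cross terms cancel, and the two singular linear terms $-\tfrac1\varepsilon(E_\varepsilon\cdot v)\sqrt M q_1$ and $-\tfrac1\varepsilon\int f_\varepsilon\cdot q_1 v\sqrt M\,\d v$ cancel up to a microscopic remainder, so no uncontrolled power of $\varepsilon^{-1}$ survives; the remaining force terms $\tfrac1\varepsilon q_0(v\times B_\varepsilon)\cdot\nabla_v f_\varepsilon$ and $\tfrac12 q_0(E_\varepsilon\cdot v)f_\varepsilon$ are nonlinear and, after integrating by parts in $v$ and using Sobolev embedding in $x$, are bounded by $\sqrt{\mathcal{E}_N}\,\mathcal{D}_N$ plus terms absorbed by the weighted dissipation below. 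The macroscopic dissipation $\|\nabla_x\mathbf{P}f_\varepsilon\|^2_{H^{N-1}_xL^2_v}$ is recovered via the fluid-type (local conservation) equations for $(\rho_\varepsilon^\pm,u_\varepsilon,\theta_\varepsilon)$ read off from \eqref{macro-micro} plus the standard interactive functional; $\|E_\varepsilon\|^2_{H^{N-1}_x}$, including its zeroth-order piece $\|E_\varepsilon\|^2_{L^2_x}$, comes from an Ohm-type interactive functional coupling $\rho_\varepsilon^+-\rho_\varepsilon^-$ with $E_\varepsilon$, and $\|\nabla_x B_\varepsilon\|^2_{H^{N-2}_x}$ from the wave structure of the Maxwell system fed by the already-damped $E_\varepsilon$ and current.

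The heart of the soft-potential analysis is the \emph{time-velocity weighted estimate}. Since $|\cdot|_D$ only controls a $\langle v\rangle^{\gamma/2}$-weighted $L^2_v$ norm plus an $H^s_v$ seminorm, the velocity growth in $\tfrac1\varepsilon q_0(v\times B_\varepsilon)\cdot\nabla_v f_\varepsilon$ must be handled through $w_\ell(\alpha,\beta)$ in \eqref{weight}: for $|\alpha|+|\beta|\le N$ apply $\partial^\alpha_\beta$ to \eqref{VMB-F-perturbative} and pair with $w_\ell(\alpha,\beta)^2\partial^\alpha_\beta\{\mathbf{I-P}\}f_\varepsilon$, and for $|\alpha|=N,\ |\beta|=0$ with $\varepsilon^2 w_\ell(\alpha,0)^2\partial^\alpha f_\varepsilon$. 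Time-differentiating the factor $e^{q\langle v\rangle/(1+t)^\vartheta}$ produces exactly the extra good term $\tfrac{q\vartheta}{(1+t)^{1+\vartheta}}\|w_\ell(\alpha,\beta)\partial^\alpha_\beta\{\mathbf{I-P}\}f_\varepsilon\langle v\rangle^{1/2}\|^2$ appearing in \eqref{D-N-w}; this $\langle v\rangle$-gain, combined with the velocity regularization $|\cdot|_{H^s_v}$ coming from the \emph{strong} angular singularity $\tfrac12\le s<1$ (so $2s\ge1$ absorbs one full $v$-derivative), is the ``additional dissipative mechanism'' that lets the Lorentz term and the commutators $[\partial_\beta,\mathscr{L}]$, $[\partial_\beta,\mathscr{T}]$ be closed. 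At top order $|\alpha|+|\beta|=N$ one cannot close at full strength, which is why those norms carry the factor $(1+t)^{-(1+\epsilon_0)/2}$ in \eqref{E-N-w}--\eqref{D-N-w} and why \eqref{Main-Thm-1-1} controls only $\mathcal{D}_{N-1,\ell}$; cross terms between $\partial^\alpha f_\varepsilon$ ($|\alpha|=N$) and $\partial^\alpha_\beta\{\mathbf{I-P}\}f_\varepsilon$ are balanced using the $\varepsilon^2$ factor. Summing over $(\alpha,\beta)$ and combining with the previous step yields
\[\tfrac{\d}{\d t}\left\{\mathcal{E}_{N,\ell}+\mathcal{E}_N\right\}+\mathcal{D}_N+\mathcal{D}_{N-1,\ell}\lesssim\left(\sqrt{\mathcal{E}_{N,\ell}+\mathcal{E}_N}+\mathcal{E}_{N,\ell}+\mathcal{E}_N\right)\left(\mathcal{D}_N+\mathcal{D}_{N-1,\ell}\right),\]
so the smallness of $Y_{f_\varepsilon,E_\varepsilon,B_\varepsilon}(0)$, propagated by the continuity argument, absorbs the right-hand side and gives \eqref{Main-Thm-1-1}; integration in time plus local existence yields the global solution.

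For the \emph{decay}, applying $\Lambda^{-\varrho}$ to \eqref{VMB-F-perturbative} and pairing suitably, the nonlinear and singular terms are estimated via the Hardy--Littlewood--Sobolev inequality (legitimate since $\tfrac12<\varrho<\tfrac32$), showing that $\|\Lambda^{-\varrho}[f_\varepsilon,E_\varepsilon,B_\varepsilon]\|$ remains bounded by $Y_{f_\varepsilon,E_\varepsilon,B_\varepsilon}(0)$ for all time. Then, for each $k=0,\dots,N_0-2$, the same manipulations as in the first step but restricted to derivatives of order between $k$ and $N_0$ give $\tfrac{\d}{\d t}\mathcal{E}_{k\rightarrow N_0}+\mathcal{D}_{k\rightarrow N_0}\lesssim0$, together with an interpolation of the form $\mathcal{E}_{k\rightarrow N_0}\lesssim\mathcal{D}_{k\rightarrow N_0}^{\,1-1/(k+\varrho)}\,\|\Lambda^{-\varrho}[f_\varepsilon,E_\varepsilon,B_\varepsilon]\|^{2/(k+\varrho)}$, i.e.\ $\mathcal{D}_{k\rightarrow N_0}\gtrsim\mathcal{E}_{k\rightarrow N_0}^{\,1+1/(k+\varrho)}$ modulo the bounded negative norm; solving this ODE inequality by the standard Guo--Wang iteration over increasing $k$ produces $\mathcal{E}_{k\rightarrow N_0}(t)\lesssim Y^2_{f_\varepsilon,E_\varepsilon,B_\varepsilon}(0)(1+t)^{-k-\varrho}$. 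The choice $N=2N_0$ supplies the regularity margin these interpolations need.

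The genuinely hard part is controlling the electromagnetic field and the $\varepsilon^{-1}$-singular couplings \emph{uniformly} in $\varepsilon$. The Maxwell system is hyperbolic, so $(E_\varepsilon,B_\varepsilon)$ gains no regularization: all of $\|E_\varepsilon\|^2_{H^{N-1}_x}$ (down to the $L^2_x$ level) and $\|\nabla_x B_\varepsilon\|^2_{H^{N-2}_x}$ must be manufactured from the Ohm-current cancellation and the damped-wave structure, while the singular terms $\tfrac1\varepsilon(E_\varepsilon\cdot v)\sqrt M q_1$, $\tfrac1\varepsilon q_0(v\times B_\varepsilon)\cdot\nabla_v f_\varepsilon$ and $\tfrac1\varepsilon\mathscr{T}(f_\varepsilon,f_\varepsilon)$ have to be matched against the $\varepsilon^{-2}$ dissipation and the weighted gain without ever requiring a velocity weight heavier than $|\cdot|_D$ can afford in the soft-potential range. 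Orchestrating the three ingredients — the time-velocity weight \eqref{weight}, the strong angular singularity, and the top-order decay factors $(1+t)^{-(1+\epsilon_0)/2}$ — so that the full functional $\mathcal{E}_{N,\ell}+\mathcal{E}_N$ closes against $\mathcal{D}_N+\mathcal{D}_{N-1,\ell}$ is where the bulk of the technical work lies.
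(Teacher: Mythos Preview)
Your outline captures the right ingredients (unweighted energy, macroscopic dissipation via interactive functionals, time--velocity weighted estimates exploiting the strong angular singularity, negative Sobolev norms, interpolation for decay), and the overall architecture matches the paper's. But there is a genuine gap in the way you close the main Lyapunov inequality.

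You write that summing the weighted and unweighted estimates gives
\[
\tfrac{\d}{\d t}\bigl\{\mathcal{E}_{N,\ell}+\mathcal{E}_N\bigr\}+\mathcal{D}_N+\mathcal{D}_{N-1,\ell}\ \lesssim\ \bigl(\sqrt{\mathcal{E}_{N,\ell}+\mathcal{E}_N}+\mathcal{E}_{N,\ell}+\mathcal{E}_N\bigr)\bigl(\mathcal{D}_N+\mathcal{D}_{N-1,\ell}\bigr),
\]
to be absorbed by smallness. This is not what the unweighted spatial estimate actually produces. At top order the bad terms are of the form $\|E_\varepsilon\|_{L^\infty_x}^2\|\langle v\rangle^{7/4}\nabla_x^N\{\mathbf{I-P}\}f_\varepsilon\|^2$ and $\|\nabla_x B_\varepsilon\|_{L^\infty_x}^2\|\langle v\rangle^{7/4}\nabla_x^{N-1}\nabla_v\{\mathbf{I-P}\}f_\varepsilon\|^2$. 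The only place $\|\langle v\rangle^{7/4}\nabla_x^N\{\mathbf{I-P}\}f_\varepsilon\|^2$ sits inside your dissipation is in the top--order part of $\mathcal{D}_{N,\ell}$, which by \eqref{D-N-w} carries the \emph{time-decaying} prefactor $(1+t)^{-(1+\epsilon_0)/2}$. Bounding the bad term by $\mathcal{D}_{N,\ell}$ therefore costs a factor $(1+t)^{(1+\epsilon_0)/2}$, and smallness of $\mathcal{E}_N$ alone cannot kill that growth. What the paper does instead is to assume, as part of the bootstrap, the \emph{pointwise time decay}
\[
(1+t)^{\varrho+\frac32}\|E_\varepsilon\|_{L^\infty_x}^2+(1+t)^{\varrho+\frac52}\|\nabla_x B_\varepsilon\|_{L^\infty_x}^2\ \le\ M_1
\]
(this is Assumption~1), so that the product becomes $M_1(1+t)^{-\varrho-\frac32+(1+\epsilon_0)/2}\mathcal{D}_{N,\ell}$, which is $\le M_1\mathcal{D}_{N,\ell}$ precisely under the hypothesis $\epsilon_0\le 2(1+\varrho)$. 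In other words, the decay of the electromagnetic field is not a \emph{corollary} of \eqref{Main-Thm-1-1} as you present it; it is an \emph{input} that must be built into the a~priori hypothesis and recovered simultaneously via Proposition~\ref{noncut-decay} (which in turn uses Assumption~2 on $\|\Lambda^{-\varrho}[f_\varepsilon,E_\varepsilon,B_\varepsilon]\|$). The continuity argument closes all three pieces --- the boundedness of $\mathcal{E}_N+\mathcal{E}_{N,\ell}$, the boundedness of the negative norm, and the $L^\infty_x$ decay rates --- at once, not sequentially.

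A second, smaller point: in the weighted transport and magnetic-field estimates you attribute the closure entirely to the $\langle v\rangle^{1/2}$ gain from $\partial_t e^{q\langle v\rangle/(1+t)^\vartheta}$ and the $H^s_v$ regularization. The paper's mechanism relies just as heavily on the \emph{algebraic} part $\langle v\rangle^{4(\ell-|\alpha|-|\beta|)}$ of the weight: because $w_\ell(\alpha,\beta)=\langle v\rangle^{-4}w_\ell(\alpha-e_i,\beta)$, one can trade a spatial or velocity derivative for four powers of $\langle v\rangle^{-1}$, and it is this trade (via Parseval in $v$, using $|\xi|\lesssim|\xi|^{2s}$ when $s\ge\tfrac12$) that lets the singular $\tfrac1\varepsilon v\cdot\nabla_x$ and $\tfrac1\varepsilon v\times B_\varepsilon\cdot\nabla_v$ terms land back inside $\|\cdot\|_D$ without any residual $\varepsilon^{-1}$ or $(1+t)$ growth. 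Your sketch does not make this exchange explicit, and without it the $\tfrac1\varepsilon$ in front of the transport term in the weighted microscopic equation has no home.
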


\subsection{Cutoff cases}
We introduce the time-velocity weight:
\[\overline {w}_{l}(\alpha,\beta)=\overline {w}_{l}(\alpha,\beta)(t,v)=\langle v\rangle^{l-|\alpha|-2|\beta|}e^{\frac{q\langle v\rangle^2}{(1+t)^{\vartheta}}}\]
and
\begin{eqnarray}
	\overline{\mathcal{E}}_{N-1,l_1}(t)&\equiv&\sum_{|\alpha|+|\beta|\leq N-1}	\left\|\overline {w}_{l_1}(\alpha,\beta)\partial^\alpha_\beta\{{\bf I-P}\}f_{\varepsilon}\right\|^2,\\
	\overline{\mathcal{D}}_{N-1,l_1}(t)&\equiv&\sum_{|\alpha|+|\beta|\leq N-1}	\left\|\overline {w}_{l_1}(\alpha,\beta)\partial^\alpha_\beta\{{\bf I-P}\}f_{\varepsilon}\right\|^2_\nu\nonumber\\
	&&+\sum_{|\alpha|+|\beta|\leq N-1}	\frac{q\vartheta}{(1+t)^{1+\vartheta}}\left\|\overline {w}_{l_1}(\alpha,\beta)\partial^\alpha_\beta\{{\bf I-P}\}f_{\varepsilon}\langle v\rangle\right\|^2.
\end{eqnarray}
Define another weight
\[\widetilde{w}_{\ell}(\alpha,\beta)=\widetilde{w}_{\ell}(\alpha,\beta)(t,v)=\langle v\rangle^{{\ell}-|\alpha|-\frac12|\beta|}e^{\frac{q\langle v\rangle^2}{(1+t)^{\vartheta}}}.\]
When $n\leq N$,
\begin{eqnarray}
	\mathcal{E}_\ell^{n,j}(t)&\equiv&
	\chi_{n=N}(1+t)^{-\sigma_{N,0}}\sum_{|\alpha|=N}\|\widetilde{w}_\ell(\alpha,0)\partial^\alpha f_{\varepsilon}\|^2\nonumber\\
	&&+\sum_{|\alpha|+|\beta|= n,\atop |\beta|\leq j}
	(1+t)^{-\sigma_{n,|\beta|}}
	\left\|\widetilde{w}_\ell(\alpha,\beta)\partial^\alpha_\beta \{{\bf I-P}\}f_{\varepsilon}\right\|^2\nonumber\\
	&&+\sum_{|\alpha|+|\beta|= n,\atop|\beta|\leq j}
	(1+t)^{-\sigma_{n,|\beta|}}
	\left\|\widetilde{w}_\ell(\alpha,\beta)\partial^\alpha_\beta \{{\bf I-P}\}f_{\varepsilon}\right\|^2,\\
	\mathcal{D}_\ell^{n,j}(t)&\equiv&
	\sum_{|\alpha|+|\beta|= n,\atop|\beta|\leq j}
	(1+t)^{-\sigma_{n,|\beta|}}
	\left\|\widetilde{w}_\ell(\alpha,\beta)\partial^\alpha_\beta \{{\bf I-P}\}f_{\varepsilon}\right\|^2_\nu\nonumber\\
	&&+\sum_{|\alpha|+|\beta|= n,\atop|\beta|\leq j}
	(1+t)^{-\sigma_{n,|\beta|}}\frac{q\vartheta{\varepsilon^2}}{(1+t)^{1+\vartheta}}\|\widetilde{w}_\ell(\alpha,\beta)\partial^\alpha_\beta \{{\bf I-P}\}f_{\varepsilon}\langle v\rangle\|^2.
\end{eqnarray}
Furthermore, for brevity,
\begin{eqnarray}
	\mathbb{E}^{(n)}_\ell(t)\equiv	\sum_{j\leq n}\mathcal{E}_\ell^{n,j}(t),\ \ \mathbb{D}^{(n)}_\ell(t)\equiv	\sum_{j\leq n}\mathcal{D}_\ell^{n,j}(t).
\end{eqnarray}
To obtain the desired temporal time decays, we introduce the energy and dissipation rate functional with the lowest $k\mbox{-}$order space-derivative
\begin{eqnarray}
	\mathcal{E}_{k\rightarrow N_0}(t)&=&\sum_{|\alpha|=k}^{N_0}\left\|\partial^\alpha[f_\varepsilon,E_\varepsilon,B_\varepsilon]\right\|^2,
	\label{E-k-cut}\\
	\mathcal{D}_{k\rightarrow N_0}(t)&=&
	\left\|\nabla^{k}[E_\varepsilon,\rho_\varepsilon^+-\rho_\varepsilon^-]\right\|^2+\sum_{k+1\leq|\alpha|\leq N_0-1}\left\|
	\partial^\alpha[{\bf P}f_\varepsilon,E_\varepsilon,B_\varepsilon]\right\|^2\nonumber\\
	&&+\sum_{|\alpha|=N_0}\left\|\partial^\alpha {\bf P}f_\varepsilon\right\|^2+\tfrac{1}{\eps^2}\sum_{k\leq |\alpha| \leq N_0}\left\|\partial^\alpha{\bf\{I-P\}}f_\varepsilon\right\|^2_{\nu},\label{D-k-cut}
\end{eqnarray}
respectively. Meanwhile, we also define
\begin{eqnarray}
	\mathcal{E}_{1\rightarrow N_0-1,\ell}(t)
	&=&\sum_{|\alpha|+|\beta|\leq N_0-1,\atop|\alpha|\geq 1}\left\|\overline{w}_\ell(\alpha,\beta)\partial^\alpha_\beta \{{\bf I-P}\}f_{\varepsilon}\right\|^2,\label{E-N-0-1-w}\\
\mathcal{D}_{1\rightarrow N_0-1,\ell}(t)
&=&\sum_{|\alpha|+|\beta|\leq N_0-1,\atop|\alpha|\geq 1}\left\|\overline{w}_\ell(\alpha,\beta)\partial^\alpha_\beta \{{\bf I-P}\}f_{\varepsilon}\right\|^2_\nu.\label{D-N-0-1-w}
\end{eqnarray}

\begin{theorem}\label{Main-Thm-2}
Assume that
\begin{itemize}
	\item $-1\leq\gamma<0$;
	\item $N_0\geq 5$, $N=2N_0$;
	\item 	$0<q\ll1$, $0<\varepsilon<1$;
\item $\frac12\leq \varrho<\frac32$, $0<\vartheta\leq \frac23\rho$;
\item $0<\epsilon_0\leq 2(1+\varrho)$;
	\item $\sigma_{N,0}=\frac{1+\epsilon_0}2$, $\sigma_{n,0}=0$ for $n\leq N-1$, $\sigma_{n,j+1}-\sigma_{n,j}=\frac{1+\epsilon_0}2$;
	\item $\bar{l}$ is a properly large positive constant;
	\item $l_1\geq N+\bar{l}$, $\tilde{\ell}\geq\frac32\sigma_{N-1,N-1}$, $\ell_1\geq l_1+\tilde{\ell}+\frac12$, $\overline{\ell}_0\geq \ell_1+\frac32N$, $l_0\geq \overline{\ell}_0+\frac52$, $\ell_0\geq l_0+\tilde{\ell}+\frac12$, $l^{H}\geq \ell_0+N$,
\end{itemize}
if the initial data
\begin{eqnarray}\label{Def-Y_0}
	Y_{f_\varepsilon,E_\varepsilon,B_\varepsilon}(0)&\equiv&\sum_{|\alpha|+|\beta|\leq N}\left\|e^{ {q\langle v\rangle^2}}\langle v\rangle^{l^H} \partial^\alpha_\beta f_{\varepsilon,0}\right\|\nonumber\\
	&&+\|[E_{\varepsilon,0},B_{\varepsilon,0}]\|_{H^N_x}+\|\Lambda^{-\varrho}[f_{\varepsilon,0},E_{\varepsilon,0},B_{\varepsilon,0}]\|
\end{eqnarray}
is less than a sufficiently small positive constant, which is independent of $\varepsilon$,
{then the Cauchy problem to \eqref{VMB-F-perturbative} admits the unique global-in-time solutions $[f_\varepsilon,E_\varepsilon, B_\varepsilon]\in H^N_xL^2_v\times H^N_x\times H^N_x$, } and
we can deduce that
\begin{eqnarray}\label{Main-Thm-2-1}
	&&\frac{\d}{\d t}\left\{\sum_{n\leq N_0}\mathbb{E}_{\ell_0}^{(n)}(t)+\sum_{N_0+1\leq n\leq N-1}\mathbb{E}_{\ell_1}^{(n)}(t)+\varepsilon^2\mathbb{E}_{\ell_1}^{(N)}(t)\right\}\nonumber\\
	&&+\frac{\d}{\d t}\left\{\overline{\mathcal{E}}_{N-1,l_1}(t)+\overline{\mathcal{E}}_{N_0-1,l_0}(t)+\mathcal{E}_{N}(t)+\|\Lambda^{-\varrho}[f_\varepsilon,E_\varepsilon,B_\varepsilon]\|^2\right\}+\sum_{n\leq N_0}\mathbb{D}_{\ell_0}^{(n)}(t)\nonumber\\
	&&+\sum_{N_0+1\leq n\leq N-1}\mathbb{D}_{\ell_1}^{(n)}(t)+\varepsilon^2\mathbb{D}_{\ell_1}^{(N)}(t)+\overline{\mathcal{D}}_{N-1,l_1}(t)+\overline{\mathcal{D}}_{N_0-1,l_0}(t)+\mathcal{D}_{N}(t)
	\lesssim0
\end{eqnarray}
{ holds for all $0\leq t\leq T$.}

	Meanwhile, we also get the large time behavior in the following result:
	\begin{eqnarray}
		\mathcal{E}_{k\rightarrow N_0}(t)\lesssim Y^2_{f_\varepsilon,E_\varepsilon,B_\varepsilon}(0)(1+t)^{-k-\varrho}, \ k=0,1,\cdots, N_0-2,
	\end{eqnarray}
	where $\mathcal{E}_{k\rightarrow N_0}(t)$ is defined in \eqref{E-k}.
\end{theorem}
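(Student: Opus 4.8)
\textbf{Proof proposal for Theorem \ref{Main-Thm-2}.}

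The plan is to run a ``Type-I'' global continuity argument. Under the \emph{a priori} hypothesis that one total energy functional assembled from the quantities in \eqref{Main-Thm-2-1} is small and stays small, I would establish \eqref{Main-Thm-2-1} uniformly in $\varepsilon\in(0,1)$; combined with a standard local existence theorem for \eqref{VMB-F-perturbative}, this closes a bootstrap and yields the global solution, and then interpolation against the propagated negative norm $\|\Lambda^{-\varrho}[f_\varepsilon,E_\varepsilon,B_\varepsilon]\|$ gives the decay rate. First I would run the unweighted estimate: differentiate \eqref{VMB-F-perturbative} by $\partial^\alpha$ with $|\alpha|\le N$, pair with $\partial^\alpha f_\varepsilon$ in $L^2_{x,v}$ and with $\partial^\alpha[E_\varepsilon,B_\varepsilon]$ in $L^2_x$, and use the coercivity $\skpt{\mathscr{L}g}{g}_{L^2_v}\gtrsim|\{\mathbf{I}-\mathbf{P}\}g|_\nu^2$ to control $\tfrac{\d}{\d t}\mathcal{E}_N(t)$ by $\tfrac{1}{\varepsilon^2}\|\{\mathbf{I}-\mathbf{P}\}f_\varepsilon\|^2_{H^N_x L^2_\nu}$ plus at-least-cubic errors; the $\varepsilon^{-1}$-singular terms cancel through the usual structure (skew-symmetry of the streaming operator, and the pairing of $\tfrac{1}{\varepsilon}(E_\varepsilon\cdot v)\sqrt M q_1$ against $\mathbf{P}f_\varepsilon$ via the Gauss constraint $\div_x E_\varepsilon=\int_{\R^3} f_\varepsilon\cdot q_1\sqrt M\,\d v$). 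To upgrade this to the full dissipation $\mathcal{D}_N(t)$, I would use the macro--micro decomposition \eqref{macro-micro} and the resulting local conservation laws to build the interactive macroscopic functional recovering $\|\nabla_x\mathbf{P}f_\varepsilon\|^2$, and exploit the Amp\`ere--Faraday pair together with the damping of $E_\varepsilon$ from Ohm's law to recover $\|E_\varepsilon\|^2$ and $\|\nabla_x B_\varepsilon\|^2$.

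The heart of the proof is the weighted estimate carried out with the two new weights $\overline{w}_{l}(\alpha,\beta)=\langle v\rangle^{l-|\alpha|-2|\beta|}e^{q\langle v\rangle^2/(1+t)^{\vartheta}}$ and $\widetilde{w}_{\ell}(\alpha,\beta)=\langle v\rangle^{\ell-|\alpha|-\frac12|\beta|}e^{q\langle v\rangle^2/(1+t)^{\vartheta}}$. For each $(\alpha,\beta)$ with $|\alpha|+|\beta|\le N$ one applies $\partial^\alpha_\beta$ to the equation for $\{\mathbf{I}-\mathbf{P}\}f_\varepsilon$ and pairs against the corresponding weighted microscopic component. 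The key gain is that $\partial_t$ falling on the Gaussian factor produces $-\tfrac{q\vartheta\langle v\rangle^2}{(1+t)^{1+\vartheta}}$, hence a \emph{positive} extra dissipation $\tfrac{q\vartheta}{(1+t)^{1+\vartheta}}\|w(\alpha,\beta)\partial^\alpha_\beta\{\mathbf{I}-\mathbf{P}\}f_\varepsilon\langle v\rangle\|^2$ of exactly the type appearing in $\overline{\mathcal{D}}_{N-1,l_1}$ and $\mathcal{D}_\ell^{n,j}$; this is what controls the velocity growth in the Lorentz term $\tfrac{1}{\varepsilon}(v\times B_\varepsilon)\cdot\nabla_v f_\varepsilon$, which the degenerate soft-potential norm $\|\cdot\|_\nu$ (weight $\langle v\rangle^{\gamma}$, $\gamma<0$, vanishing as $|v|\to\infty$) cannot. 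When $\partial^\beta_v$ falls on the streaming term $\tfrac{1}{\varepsilon}v\cdot\nabla_x$ it produces $\tfrac{1}{\varepsilon}\partial^\alpha_x\partial^{\beta'}_v$ with $|\beta'|=|\beta|-1$; the polynomial orders $l-|\alpha|-2|\beta|$ and $\ell-|\alpha|-\frac12|\beta|$, together with the time weights $(1+t)^{-\sigma_{n,|\beta|}}$ with gaps $\sigma_{n,j+1}-\sigma_{n,j}=\tfrac{1+\epsilon_0}{2}$, are calibrated so that these lower-order-in-$v$ contributions are absorbed by the extra dissipation coming from the weight at level $|\beta|-1$, which closes the estimate by an induction on the number of $v$-derivatives and then on $|\alpha|$. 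Cutoff weighted coercivity of $\mathscr{L}$ (built on \eqref{cutoff-assump}) and commutator bounds for $[\partial_\beta,\mathscr{L}]$ and for the weights, collected in the Appendix, feed each step. The top order $|\alpha|=N$ is carried only with an $\varepsilon^2$ prefactor, consistent with the $\varepsilon^{-2}$ multiplying the same quantities inside the dissipation --- which is why \eqref{Main-Thm-2-1} carries $\varepsilon^2\mathbb{E}^{(N)}_{\ell_1}(t)$ and $\varepsilon^2\mathbb{D}^{(N)}_{\ell_1}(t)$.

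For the decay, I would apply $\Lambda^{-\varrho}$ to \eqref{VMB-F-perturbative}, estimate in $L^2$, and propagate $\|\Lambda^{-\varrho}[f_\varepsilon,E_\varepsilon,B_\varepsilon]\|$ by its initial value plus $\int_0^t\sqrt{\mathcal E}\,\mathcal D$, the force and collision nonlinearities being admissible once $\varrho\in[\tfrac12,\tfrac32)$. Then a negative-Sobolev-norm interpolation (bounding intermediate derivatives by a power of $\|\Lambda^{-\varrho}\cdot\|$ and a power of a higher derivative), combined with an iteration on $k$ in the spirit of Guo--Wang, turns the energy--dissipation inequality for $\mathcal{E}_{k\to N_0}$ and $\mathcal{D}_{k\to N_0}$ in \eqref{E-k-cut}--\eqref{D-k-cut} into $\mathcal{E}_{k\to N_0}(t)\lesssim Y^2_{f_\varepsilon,E_\varepsilon,B_\varepsilon}(0)(1+t)^{-k-\varrho}$ for $k=0,\dots,N_0-2$. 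Assembling the unweighted, macroscopic, weighted and negative-norm pieces into a single functional and noting that all remaining terms are at least cubic in the small energy closes the continuity argument $\varepsilon$-uniformly and establishes \eqref{Main-Thm-2-1}.

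\textbf{Main obstacle.} The hard part will be closing the weighted hierarchy \emph{uniformly in $\varepsilon$}. The Lorentz force $\tfrac{1}{\varepsilon}(v\times B_\varepsilon)\cdot\nabla_v f_\varepsilon$ is simultaneously $\varepsilon^{-1}$-singular, unbounded in $v$, and one $v$-derivative strong, while in the soft-potential regime the collision frequency is bounded --- in fact decaying --- in $v$, so the microscopic dissipation $\|\cdot\|_\nu$ supplies nothing at large velocities; the only compensation is the time-decaying exponential weight, whose $t$-derivative manufactures the needed $\langle v\rangle^2$-dissipation. Making this work requires calibrating at once the strength $q$ and the exponent $\vartheta$ of that weight, the decay index $\varrho$, the parameter $\epsilon_0$, and the whole ladder of polynomial weight orders $l_1,\ell_1,\overline{\ell}_0,l_0,\ell_0,l^H$ (with $\tilde\ell$ governing the exchange of $v$-derivatives for extra weight), so that at every node the borrowed lower-order terms are dominated by the next level's extra dissipation. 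Getting this bookkeeping consistent across all $(\alpha,\beta)$ and $\varepsilon$-free at the same time is precisely what forces the elaborate family of functionals appearing in the statement.
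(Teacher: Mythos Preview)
Your outline follows the paper's strategy and correctly identifies most of the architecture: the unweighted $\mathcal{E}_N$ estimate, the macroscopic interactive functional, the two weight families $\overline{w}_l$ and $\widetilde{w}_\ell$, the $\varepsilon^2$ prefactor at top order, the negative-Sobolev propagation, and the Guo--Wang interpolation for decay. What you underspecify, however, is the precise \emph{coupling} between the two weighted hierarchies, and without it the $\overline{\mathcal{E}}_{N-1,l_1}$ estimate does not close.

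The issue is this. You write that the extra dissipation $\tfrac{q\vartheta}{(1+t)^{1+\vartheta}}\|\overline{w}_{l_1}(\alpha,\beta)\partial^\alpha_\beta\{\mathbf{I}-\mathbf{P}\}f_\varepsilon\,\langle v\rangle\|^2$ ``is what controls the velocity growth in the Lorentz term''. For the $\overline{w}$ hierarchy this is not enough: the weight mismatch $\overline{w}_{l_1}(\alpha,\beta)=\langle v\rangle\,\overline{w}_{l_1}(\alpha-\alpha_1,\beta+e_i)$ (with $|\alpha_1|=1$) means the magnetic term produces $\tfrac{1}{\varepsilon}\|\nabla_x B_\varepsilon\|_{L^\infty_x}\|\langle v\rangle^{5/2}\overline{w}_{l_1}(\alpha-\alpha_1,\beta+e_i)\partial^{\alpha-\alpha_1}_{\beta+e_i}\{\mathbf{I}-\mathbf{P}\}f_\varepsilon\|\,\|\langle v\rangle^{-1/2}\overline{w}_{l_1}(\alpha,\beta)\partial^\alpha_\beta\{\mathbf{I}-\mathbf{P}\}f_\varepsilon\|$, and $\langle v\rangle^{5/2}$ exceeds the $\langle v\rangle$ the extra dissipation provides (whose coefficient moreover decays in $t$). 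The paper's remedy is a velocity interpolation with parameter $\theta=\tfrac{3}{\tilde\ell+\tfrac12}$, which converts the excess weight into a high power $\|\nabla_x B_\varepsilon\|_{L^\infty_x}^{2/\theta}$ of the field. Taking $\tilde\ell\ge\tfrac32\sigma_{N-1,N-1}$ makes this power decay faster than any of the $(1+t)^{\sigma_{n,j}}$ factors, and the residual weighted norm $\|\langle v\rangle^{\tilde\ell}\overline{w}_{l_1}(\cdot)\partial\{\mathbf{I}-\mathbf{P}\}f_\varepsilon\|$ is then absorbed by the $\widetilde{w}_{\ell_1}$-dissipation $\mathbb{D}^{(n)}_{\ell_1}$ precisely when $\ell_1\ge l_1+\tilde\ell+\tfrac12$. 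This is why $\overline{\mathcal{E}}_{N-1,l_1}$ (bounded in time) and $\mathbb{E}^{(n)}_{\ell_1}$ (time-growing) must coexist: the former supplies the smallness needed to verify the decay hypothesis on $[E_\varepsilon,B_\varepsilon]$ (your ``Assumption I''), while the latter supplies the dissipation that closes the former. Your sentence ``with $\tilde\ell$ governing the exchange of $v$-derivatives for extra weight'' hints at this, but the mechanism is an exchange of \emph{velocity weight for arbitrarily fast time decay of the field}, not of $v$-derivatives, and it is the specific device that makes the cutoff soft-potential case go through.
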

\subsection{The limits}
The second is on the two-fluid incompressible NSFM limit with Ohm's law as $\eps \rightarrow 0$, taken from the solutions $(f_\eps , E_\eps , B_\eps)$ of system \eqref{VMB-F-perturbative} which are constructed in the first theorem.
\begin{theorem}\label{Main-Thm-3}

Take the assumption as in Theorem \ref{Main-Thm-1}. Assume that the initial data $( f_{\eps,0} , E_{\eps,0}, B_{\eps,0} )$ satisfy
	\begin{enumerate}
		\item $f_{\eps,0} \in H^N_xL^2_{v}$, $E_{\eps,0}$, $B_{\eps,0}\in H^N_x$;
		\item there exist scalar functions $\rho{(0,x)}$, $ \theta{(0,x)}$,  $n{(0,x)}\in H^N_x$ and vector-valued functions $u{(0,x)} $,  $E{(0,x)}$, $B{(0,x)} \in H^N_x$ such that
		\begin{equation}
		  \begin{array}{l}
			f_{\eps,0} \rightarrow f{(0,x,v)}\quad \textrm{strongly in} \ H^N_{x}L^2_v \,, \\
			E_{\eps,0} \rightarrow E{(0,x)} \quad \textrm{strongly in} \ H^N_x \,, \\
			B_{\eps,0} \rightarrow B{(0,x)} \quad \textrm{strongly in} \ H^N_x
		  \end{array}
		\end{equation}
		as $\eps \rightarrow 0$, where $f{(0,x,v)}$ is of the form
		\begin{equation}
		  \begin{aligned}
		    f(0,x,v)= & ( \rho(0,x) + \tfrac{1}{2} n(0,x) ) \tfrac{\mathsf{q}_1 + \mathsf{q}_2}{2} \sqrt{M} + ( \rho(0,x) - \tfrac{1}{2} n(0,x) ) \tfrac{\mathsf{q}_2 - \mathsf{q}_1}{2} \sqrt{M} \\
		    & + u(0,x) \cdot v \mathsf{q}_2 \sqrt{M} + \theta(0,x) ( \tfrac{|v|^2}{2} - \tfrac{3}{2} ) \mathsf{q}_2 \sqrt{M} \,.
		  \end{aligned}
		\end{equation}
	\end{enumerate}
  Let $(f_\eps, E_\eps, B_\eps)$ be the family of solutions to the scaled two-species VML \eqref{VMB-F-perturbative} constructed in Theorem \ref{Main-Thm-1}. Then, as $\eps \rightarrow 0$,
  \begin{equation}
    \begin{aligned}
      f_\eps \rightarrow ( \rho + \tfrac{1}{2} n ) \tfrac{\mathsf{q}_1 + \mathsf{q}_2}{2} \sqrt{M} + ( \rho - \tfrac{1}{2} n ) \tfrac{\mathsf{q}_2 - \mathsf{q}_1}{2} \sqrt{M} + u \cdot v \mathsf{q}_2 \sqrt{M} + \theta ( \tfrac{|v|^2}{2} - \tfrac{3}{2} ) \sqrt{M}
    \end{aligned}
  \end{equation}
  weakly-$\star$ in $t \geq 0$, strongly in $H^{N-1}_{x}L^2_v$ and weakly in $H^N_{x}L^2_v$, and
  \begin{equation}
    \begin{aligned}
      E_\eps \rightarrow E \quad \textrm{and } \quad B_\eps \rightarrow B
    \end{aligned}
  \end{equation}
  strongly in $C( \R^+; H^{N-1}_x )$, weakly-$\star$ in $t \geq 0$ and weakly in $H^N_x$. Here
  $$( u, \theta , n, E, B ) \in C(\R^+; H^{N-1}_x) \cap L^\infty (\R^+ ; H^N_x)$$
  is the solution to the incompressible NSFM \eqref{INSFM-Ohm} with Ohm's law, which has the initial data
  \begin{equation}
    \begin{aligned}
      u|_{t=0} = \mathcal{P} u(0,x) \,, \ \theta |_{t=0} = \tfrac{3}{5} \theta(0,x) - \tfrac{2}{5} \rho(0,x) \,, \ E|_{t=0} = E(0,x)\,, \ B|_{t=0} = B(0,x) \,,
    \end{aligned}
  \end{equation}
  where $\mathcal{P}$ is the Leray projection. Moreover, the convergence of the moments holds:
  \begin{equation}
    \begin{aligned}
      & \mathcal{P} \langle f_\eps , \tfrac{1}{2} \mathsf{q}_2 v \sqrt{M} \rangle_{L^2_v} \rightarrow u \,, \\
      & \langle f_\eps , \tfrac{1}{2} \mathsf{q}_2 ( \tfrac{|v|^2}{5} - 1 ) \sqrt{M} \rangle_{L^2_v} \rightarrow \theta \,,
    \end{aligned}
  \end{equation}
  strongly in $C(\R^+ ; H^{N-1}_x)$, weakly-$\star$ in $t \geq 0$ and weakly in $H^N_x$ as $\eps \rightarrow 0$.
\end{theorem}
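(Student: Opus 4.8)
The plan is to combine the uniform-in-$\eps$ a priori bounds of Theorem \ref{Main-Thm-1} with compactness and moment arguments, in the spirit of the BGL program and of \cite{Jiang-Luo-2022-Ann.PDE}. From \eqref{Main-Thm-1-1} one has, uniformly in $\eps\in(0,1)$, $\sup_{t\ge0}\big(\mathcal E_N(t)+\mathcal E_{N,l}(t)\big)+\int_0^\infty\big(\mathcal D_N+\mathcal D_{N-1,l}\big)\,\d s\lesssim Y^2_{f_\eps,E_\eps,B_\eps}(0)$; in particular $f_\eps$ is bounded in $L^\infty(\R^+;H^N_xL^2_v)$, $(E_\eps,B_\eps)$ in $L^\infty(\R^+;H^N_x)$, and $\tfrac1\eps\{{\bf I-P}\}f_\eps$ is bounded in $L^2(\R^+;H^N_xL^2_D)$. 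Extracting a subsequence, $f_\eps\rightharpoonup f$ weakly-$\star$ in $t$ and weakly in $H^N_xL^2_v$, $(E_\eps,B_\eps)\rightharpoonup(E,B)$, and $\{{\bf I-P}\}f_\eps\to0$ strongly, at rate $O(\eps)$, in $L^2_{\mathrm{loc}}(\R^+;H^N_xL^2_D)$; hence $f(t,x,\cdot)\in\mathcal N$ a.e., so $f$ has the stated form with $\rho=\tfrac12(\rho^++\rho^-)$, $n=\rho^+-\rho^-$, $u=\tfrac12\langle f^++f^-,v\sqrt M\rangle_{L^2_v}$, $\theta=\tfrac1{12}\langle f^++f^-,(|v|^2-3)\sqrt M\rangle_{L^2_v}$.

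First I would pass to the limit in the local conservation laws. Testing the first equation of \eqref{VMB-F-perturbative} against the collision invariants $[1,0]\sqrt M$, $[0,1]\sqrt M$, $[1,1]v_i\sqrt M$, $[1,1](|v|^2-3)\sqrt M$ and integrating in $v$ gives the fluid-moment system for $(\rho_\eps^\pm,u_\eps,\theta_\eps)$, whose singular $\tfrac1\eps$ terms are spatial divergences and gradients of $v$-moments of $\{{\bf I-P}\}f_\eps$ and ${\bf P}f_\eps$. Multiplying by $\eps$ and letting $\eps\to0$ kills the $O(\eps)$ contributions and leaves $\div_x u=0$ and the Boussinesq relation $\rho+\theta=0$, while the Gauss law $\div_x E_\eps=\langle f_\eps,q_1\sqrt M\rangle_{L^2_v}$ passes to $\div_x E=n$ and $\div_x B=0$ persists. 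The non-singular combinations — the Leray-projected momentum balance and the energy combination $\tfrac35\theta_\eps-\tfrac25\rho_\eps$ — have time derivative uniformly bounded in $\eps$; since hypothesis (2) makes the data well prepared (already of the limiting form, hence no persistent acoustic oscillations), Aubin--Lions--Simon upgrades $\mathcal P u_\eps$ and $\tfrac35\theta_\eps-\tfrac25\rho_\eps$ to strong convergence in $C_{\mathrm{loc}}(\R^+;H^{N-1}_x)$. The electromagnetic fields are treated the same way: since the $\R^2$-inner products of $q_1$ with $[1,0],[0,1],[1,1]$ vanish, the macroscopic part of $f_\eps$ contributes nothing to the Ampère source, so $\partial_t E_\eps=\nabla_x\times B_\eps-\tfrac1\eps\langle\{{\bf I-P}\}f_\eps,q_1v\sqrt M\rangle_{L^2_v}$ has $\partial_t E_\eps$ bounded in $L^2_tH^{N-1}_x$, which together with $\partial_t B_\eps=-\nabla_x\times E_\eps$ and the uniform $H^N_x$ bound yields strong convergence of $(E_\eps,B_\eps)$ in $C_{\mathrm{loc}}(\R^+;H^{N-1}_x)$ (crucially, Maxwell supplies no smoothing, so this $L^2_t$-in-time control of the source is what makes compactness available).

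Next I would identify the limiting system by taking weighted moments and inverting $\mathscr L$. The $\{{\bf I-P}\}$-projection of \eqref{VMB-F-perturbative} reads $\tfrac1{\eps^2}\mathscr L\{{\bf I-P}\}f_\eps=-\tfrac1\eps{\bf P}^\perp(v\cdot\nabla_x{\bf P}f_\eps)+\tfrac1\eps\mathscr T({\bf P}f_\eps,{\bf P}f_\eps)+R_\eps$, with $R_\eps$ negligible in the $L^2_D$-scale, whence $\tfrac1\eps\{{\bf I-P}\}f_\eps\to\mathscr L^{-1}\big[-{\bf P}^\perp(v\cdot\nabla_x{\bf P}f)+\mathscr T({\bf P}f,{\bf P}f)\big]$, the invertibility and bounds of $\mathscr L^{-1}$ on $\mathcal N^\perp$ being furnished by the Appendix. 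Testing the first equation of \eqref{VMB-F-perturbative} against $\tfrac12 q_2v\sqrt M$ and against $\tfrac12 q_2\big(\tfrac{|v|^2}{5}-1\big)\sqrt M$ and passing to the limit, the microscopic flux term produces the dissipative terms $-\mu\Delta_xu$ and $-\kappa\Delta_x\theta$ (Burnett functionals of $\mathscr L^{-1}$ applied to the traceless $v\otimes v$ and to $v(\tfrac{|v|^2}{2}-\tfrac52)$) and, through the $\mathscr T({\bf P}f,{\bf P}f)$ part together with $\div_xu=0$, the convection terms $u\cdot\nabla_xu$ and $u\cdot\nabla_x\theta$, which pass to the limit thanks to the strong convergence above. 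The Lorentz force $\tfrac12(nE+j\times B)$ arises from the limits of $\tfrac12 q_0(E_\eps\cdot v)f_\eps$ and of $\tfrac1\eps q_0(\eps E_\eps+v\times B_\eps)\cdot\nabla_vf_\eps$ (after integrating by parts in $v$), using $f\in\mathcal N$ and the strong convergence of $(E_\eps,B_\eps)$. Ohm's law $j-nu=\sigma\big(-\tfrac12\nabla_xn+E+u\times B\big)$ comes from the difference of the $+$ and $-$ species momentum balances: there the singular term $\tfrac1\eps(E_\eps\cdot v)\sqrt Mq_1$ does not cancel (its inner product against a $q_1$-moment is nonzero), and balancing its $O(1/\eps)$ part against the $\mathscr L^{-1}$-expanded microscopic flux and the constraints already derived identifies $\sigma$ and shows $j:=\lim_\eps\tfrac1\eps\langle f_\eps,q_1v\sqrt M\rangle_{L^2_v}$ to be the source of the limiting Ampère equation $\partial_tE-\nabla_x\times B=-j$. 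The initial data $u|_{t=0}=\mathcal P u(0,x)$ and $\theta|_{t=0}=\tfrac35\theta(0,x)-\tfrac25\rho(0,x)$ are precisely the images of ${\bf P}f_{\eps,0}$ under the Leray projection and the normalized energy combination, and the asserted convergences of $\mathcal P\langle f_\eps,\tfrac12 q_2v\sqrt M\rangle_{L^2_v}$ and $\langle f_\eps,\tfrac12 q_2(\tfrac{|v|^2}{5}-1)\sqrt M\rangle_{L^2_v}$ follow by transporting strong convergence through these moment maps. Strong convergence of $f_\eps$ in $H^{N-1}_xL^2_v$ then follows from $f_\eps={\bf P}f_\eps+\{{\bf I-P}\}f_\eps$, the microscopic part vanishing strongly and the macroscopic part being a fixed finite combination of the (strongly convergent) fluid moments; global-in-time strong convergence in $C(\R^+;H^{N-1}_x)$ is obtained by interpolating the uniform $H^N_x$ bound against the decay estimate $\mathcal E_{k\rightarrow N_0}(t)\lesssim Y^2_{f_\eps,E_\eps,B_\eps}(0)(1+t)^{-k-\varrho}$, which renders the tail $t\ge T$ uniformly small.

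\emph{Main obstacle.} The delicate part is the rigorous limit of the singular $O(1/\eps)$ quantities $\tfrac1\eps\langle\{{\bf I-P}\}f_\eps,\psi\sqrt M\rangle_{L^2_v}$ (for $\psi$ the momentum-flux, heat-flux, and current weights): one must show these genuinely converge — not merely remain bounded — and that the limits equal $\mathscr L^{-1}$ acting on explicit macroscopic fields. For the soft potentials here this needs the weighted dissipation of Theorem \ref{Main-Thm-1} — the time-velocity weights $w_\ell(\alpha,\beta)$ are exactly what make the required velocity moments of $\{{\bf I-P}\}f_\eps$ integrable — together with a careful estimate of the remainder $R_\eps$ in the expansion $\{{\bf I-P}\}f_\eps=-\eps\mathscr L^{-1}{\bf P}^\perp(v\cdot\nabla_x{\bf P}f_\eps)+\eps\mathscr L^{-1}\mathscr T({\bf P}f_\eps,{\bf P}f_\eps)+\eps R_\eps$, whose error collects $\eps\,\partial_t\{{\bf I-P}\}f_\eps$, the force terms, and the $\{{\bf I-P}\}$-quadratic contributions. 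A secondary difficulty is that the hyperbolic Maxwell block has no compactifying mechanism, so the strong convergence of $(E_\eps,B_\eps)$, hence the passage to the limit in the couplings $j\times B$ and $u\times B$, must be extracted entirely from the $L^2_t$ bound on the Ampère source — which is why strong, rather than merely weak, convergence in $C(\R^+;H^{N-1}_x)$ is indispensable.
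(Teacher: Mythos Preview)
Your proposal is correct and takes the same approach as the paper --- uniform bounds from Theorem~\ref{Main-Thm-1}, strong vanishing of $\{{\bf I-P}\}f_\eps$ at rate $O(\eps)$, extraction of weak limits, Aubin--Lions--Simon for the filtered moments and for $(E_\eps,B_\eps)$, and the moment/$\mathscr L^{-1}$ identification of the NSFM--Ohm limit following \cite{Jiang-Luo-2022-Ann.PDE}; in fact the paper's Section~\ref{Sec:Limits} is briefer than your sketch and explicitly defers the detailed limit passage to that reference. One minor slip: hypothesis~(2) does \emph{not} make the data well-prepared (it only says $f_{\eps,0}$ is asymptotically macroscopic, not that $\div_x u(0,\cdot)=0$ or $\rho(0,\cdot)+\theta(0,\cdot)=0$), so acoustic oscillations are present and are handled precisely by restricting attention to $\mathcal P u_\eps$ and $\tfrac35\theta_\eps-\tfrac25\rho_\eps$ --- which is exactly why the limiting initial data are $\mathcal P u(0,x)$ and $\tfrac35\theta(0,x)-\tfrac25\rho(0,x)$ rather than $u(0,x)$ and $\theta(0,x)$.
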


\subsection{The idea and the outline of the proof}
In order to better illustrate the difficulties encountered in the soft potential considered in this paper, especially when compared with the hard sphere model.  To this end, we first state the main strategies adopted in \cite{Jiang-Luo-2022-Ann.PDE} to deal with the hard sphere model.
\subsubsection{The strategy in \cite{Jiang-Luo-2022-Ann.PDE} for the hard sphere model}

\begin{itemize}
\item First, under the hard sphere model, the following dissipative mechanism can be derived from the compulsion of linear collision operators:
\[\|\cdot \|^2_\nu\equiv\|\cdot \langle v\rangle^{\frac12}\|^2,\]
from this, we can see that the first power of the velocity in the dissipative mechanism is sufficient to control a single increase in the velocity due to the Lorentz force, namely the following terms, $E_\varepsilon\cdot v f_\varepsilon$, $E_\varepsilon\cdot \nabla_v f_\varepsilon$ and $\frac1\varepsilon v\times B_\varepsilon\cdot\nabla_v f_\varepsilon$.
\item Secondly, in order to control the singularity of $\varepsilon$ in the magnetic field term, especially the singularity of macroquantities in its energy estimation, they note that the linear operators corresponding to the two particles cancel out, making the correlation estimation of the macroquantities 0. For example, the following equation is true, i.e.
\[\left(\frac1\varepsilon v\times \partial^{\alpha_1}_x B_\varepsilon\cdot\partial_x^{\alpha-\alpha_1}{\bf P} f_\varepsilon, \partial^\alpha_x {\bf P}f_\varepsilon\right)=0.\]
\item  Finally, in order to obtain the dissipative mechanism of the electromagnetic field, they used Ohm's law to obtain the equation containing Damping term of the electric field quantity. This discovery played a very important role in their final estimation uniformly in both time $t$  and $\varepsilon$.
\end{itemize}
\subsubsection {The difficulty for soft potentials}
However, for the soft potential case we consider in this paper, we will face more difficulties. We must control not only the growth of velocity, but also the growth of time, and more importantly, the growth of singularity. The specific difficulties are stated as follows:
\begin{itemize}
\item [i)]First, the dissipation mechanism of the linear collision operator is much weaker than that of the hard sphere model, namely, the dissipation mechanism is as follows:
\[{\|\cdot\|_\nu\equiv\|\cdot\langle v\rangle^{\frac\gamma2}\|, \ \ \ \gamma<0,}\]
we find that this dissipative mechanism cannot control the growth of velocity in the nonlinear term containing Lorentz forces.
\item [ii)]Secondly, refer to the practices about the dynamic model with external force rather than the hard sphere model, such as Guo in \cite{Guo-2012-JAMS} proposed algebraic weight to deal with the Coulomb potential of VPL,  and  Duan-Liu-Yang-Zhao in \cite{DLYZ-KRM2013},  Duan-Lei-Yang-Zhao in \cite{DLYZ-CMP2017}  proposed to use the extra dissipation of exponential weight to deal with the soft potential of VMB and so on. Whether algebraic weight or exponential weight function, when we use the weight function, the weighted energy estimate of a linear operator $\mathscr{L}$ of the highest order weighted function will produce the singular term of the highest order macroscopic quantity, as follows:
\begin{eqnarray}\label{hard-1}
 \frac1{\varepsilon^2}\left(\mathscr{L}\partial^\alpha f_\varepsilon,w^2\partial^\alpha f_\varepsilon\right)
  \gtrsim\cdots-\frac1{\varepsilon^2}\left\|\partial^\alpha {\bf P}f_{\varepsilon}\right\|^2,
\end{eqnarray}
which leads to a new singular term $\frac1{\varepsilon^2}\left\|\partial^\alpha {\bf P}f_{\varepsilon}\right\|^2$. This new singularity term can not be controlled any more!

\item [iii)]{Thirdly,  in order to control the increase of velocity in the nonlinear term caused by Lorentz force, the only feasible method so far is to use the strategy of exponential weight functions $w_\ell$ i.e. $\langle v\rangle^\ell e^{\frac{q\langle v\rangle}{(1+t)^{\vartheta}}}$ or $\langle v\rangle^\ell e^{\frac{q\langle v\rangle^2}{(1+t)^{\vartheta}}}$. We use the extra dissipative term brought by the exponential weight function to deal with the increase of velocity in the nonlinear term. The electromagnetic field is required to have a certain time decay rates. For example, for the soft potential case under the Grad’s angular cutoff hypothesis, how to deal with the related energy estimation of nonlinear terms containing magnetic fields is shown as follows:
\begin{equation}\label{hard-6}
\frac1\varepsilon\|\partial^{e_i}_x B_\varepsilon\|_{L^\infty_x}\|\langle v\rangle^{\frac12} w_\ell \partial_{\beta+e_i}^{\alpha-e_i}f_\varepsilon\|\|\langle v\rangle^{\frac12} w_\ell \partial_{\beta}^{\alpha}f_\varepsilon\|.
\end{equation}
We see that even if the magnetic field term has a certain time decay i.e. $\frac{1}{(1+t)^{1+\vartheta}}$, this term cannot be controlled by the additional dissipative term
\begin{equation}\label{extra-dissipation}
\frac{q\vartheta}{(1+t)^{1+\vartheta}}\|\langle v\rangle w_\ell \partial_{\beta}^{\alpha}f_\varepsilon\|^2
\end{equation}
 due to the existence of this singular term $\frac1\varepsilon$ in \eqref{hard-6}.
 Therefore, how to control the weighted energy estimation $\eqref{hard-6}$ is the most challenging problem in our proof.}
\item [iv)]{Forthly, for the transport term $\frac1\varepsilon v\cdot \nabla_x f_\varepsilon$, under the hard sphere model or in the case of hard potential, the energy estimation of this term can be controlled by the dissipative mechanism i.e. $\frac1{\varepsilon}\|\cdot\|_\nu$ caused by the corresponding linear operator $\frac1{\varepsilon^2}\mathcal{L}$, regardless of the existence of the singular term with or without weight estimation. For the soft potential case, however, the situation is quite different. As estimated below:
\begin{equation}\label{hard-7}
\frac1{\varepsilon}(\partial^\alpha_\beta(v\cdot\nabla_x f_\varepsilon), w_\ell\partial^\alpha_\beta f_\varepsilon)\leq \frac1{\varepsilon}\|w_\ell \partial^{\alpha+e_i}_{\beta-e_i} f_\varepsilon\|\|w_\ell \partial^\alpha_\beta f_\varepsilon\|,
\end{equation}
where we assume that the weight function $w_\ell$ has no contribution with respect to velocity increasing or decreasing, we can see that there is a loss of velocity due to the dissipative mechanism of linear operators $\mathcal{L}$. If we expect to use the extra dissipation mechanism \eqref{extra-dissipation} to control \eqref{hard-7}, we find that since the transport term $\frac1\varepsilon v\cdot \nabla_x f_\varepsilon$ is linear, unlike \eqref{hard-6}, it lacks some kind of time decay, and there are also singular term $\frac1\varepsilon$. So, in a sense, controlling \eqref{hard-7} is harder than controlling \eqref{hard-6}.}
\item [v)] {Finally, from the above discussion, a basic premise is that the electromagnetic field must have a certain time decay rates. Due to the existence of the singularity $\frac1\varepsilon$ in nonlinearity terms, using the linear analysis combined with the principle of Duhamel's principle, we should be able to get some time decay rates of electromagnetic field, but this time decay rates is $O(\varepsilon^{-1})$. Now let's look at \eqref{hard-6}, which has a higher degree of singularity $\frac1\varepsilon$, which makes it harder to estimate. Therefore, whether we can get better time decay rates, which is independent of $\varepsilon$?}
\end{itemize}
\subsubsection{The uniform estimates for the non-cutoff cases}
To overcome the above difficulty induced by the singularity terms, the main strategies
and novelties can be summarized in the following parts.
      \begin{itemize}
        \item [1)]For the $L^2\mbox{-}$energy estimate of spatial derivatives, assuming that the electromagnetic field has some time decay, we can obtain a direct estimation in the following :\begin{eqnarray}
        	&&\frac{\d}{\d t}\mathcal{E}_{N}(t)+\mathcal{D}_{N}(t)\nonumber\\
        	&\lesssim &{M_1(1+t)^{-\varrho-\frac32}}\left\{\left\|\langle v\rangle^{\frac 74}\nabla^N_x\{{\bf I-P}\}f_{\varepsilon}\right\|^2+\sum_{|\alpha'|= N-1}\|\langle v\rangle^{\frac74}\partial^{\alpha'}\nabla_v \{{\bf I-P}\}f_{\varepsilon}\|^2\right\}+\cdots.\nonumber
        \end{eqnarray}

         \item [2)]To control the growth of the velocity on the right-hand side of the above inequality, we need to obtain the energy estimate of the highest-order weighted spatial derivative. As \eqref{hard-1}, we multiply $\varepsilon^2$ to the energy estimates with weight on the highest-order spatial derivatives, such that $\frac1{\varepsilon^2}\left\|\nabla^N_x {\bf P}f_{\varepsilon}\right\|^2$ can be controlled by the macroscopic dissipation terms.
\item[3)] To overcome singular factors $\frac1\varepsilon$ and the velocity growth for the transport term $\frac1\varepsilon v\cdot\nabla_x f_\varepsilon$ and the nonlinear term containing magnetic field $\frac1{\varepsilon}v\times B_\varepsilon\cdot\nabla_v f_\varepsilon$, we can design a time-velocity weight function as follows:
\[w_\ell(\alpha,\beta)=e^{\frac{q\langle v\rangle}{(1+t)^\vartheta}}\langle v\rangle^{4(\ell-|\alpha|-|\beta|)}.\]
We make full use of the dissipative mechanism and weight function brought by the strong angular singularity i.e. $\frac12\leq s<1$, and have the following related estimates such as
 \begin{eqnarray}
	&&\frac1\varepsilon\left(\partial^\alpha_\beta\left[ v\cdot \nabla_x\{{\bf I-P}\}f_{\varepsilon}\right],w^2_l(\alpha,\beta)\partial^\alpha_\beta\{{\bf I-P}\}f_{\varepsilon}\right)\nonumber\\
	&=&-\frac1\varepsilon\int_{\mathbb{R}^3_x\times\mathbb{R}^3_\xi}i\xi_j\mathcal{F}\left[\partial^{\alpha+e_i}_{\beta-e_i}\{{\bf I-P}\}f_{\varepsilon} w_l(\alpha+e_i,\beta-e_i)\langle v\rangle^{-\frac32}\right]\nonumber\\
	&&\quad\quad\quad\quad\quad\times\overline{\mathcal{F}\left[\langle v\rangle^{\frac32}w_l(\alpha,\beta)\partial^\alpha_{\beta-e_j}\{{\bf I-P}\}f_{\varepsilon}\right]}d\xi dx\nonumber\\
	&\lesssim&\left\|w_l(\alpha+e_i,\beta-e_i)\partial^{\alpha+e_i}_{\beta-e_i}\{{\bf I-P}\}f_{\varepsilon}\right\|_D^2+\frac\eta{\varepsilon^2}\left\|w_l(\alpha,\beta-e_j)\partial^\alpha_{\beta-e_j}\{{\bf I-P}\}f_{\varepsilon}\right\|_D^2\nonumber	
\end{eqnarray}
and
\begin{eqnarray}
	&&\frac1\varepsilon\left(\partial^\alpha_\beta\left[ v\times B_\varepsilon\cdot\nabla_{ v}\{{\bf I-P}\}f_{\varepsilon}\right],w^2_l(\alpha,\beta)\partial^\alpha_\beta\{{\bf I-P}\}f_{\varepsilon}\right)\nonumber\\
	&=&\frac1\varepsilon\sum_{|\alpha_1|=1,\beta_1=0}\left(\partial^{\alpha_1}_{\beta_1}\left[ v\times B_\varepsilon\right]\cdot\partial^{\alpha-\alpha_1}_{\beta-\beta_1}\left[\nabla_{ v}\{{\bf I-P}\}f_{\varepsilon}\right],w^2_l(\alpha,\beta)\partial^\alpha_\beta\{{\bf I-P}\}f_{\varepsilon}\right)+\cdots\nonumber\\
	&\lesssim&\frac1\varepsilon\|\partial^{e_i}B_\varepsilon\|_{L^\infty_x}\left\|w_l(\alpha-e_i,\beta)\partial^{\alpha-e_i}_{\beta}\{{\bf I-P}\}f_{\varepsilon}\right\|_D^2+\frac\eta\varepsilon\left\|w_l(\alpha,\beta)\partial^\alpha_{\beta}\{{\bf I-P}\}f_{\varepsilon}\right\|_D^2+\cdots\nonumber
\end{eqnarray}
where we used the fact that
\[v_j \langle v\rangle^{\frac32}w_l(\alpha,\beta)\leq v_j \langle v\rangle^{-\frac52}w_l(\alpha-e_i,\beta)\leq\langle v\rangle^{-\frac32}w_l(\alpha-e_i,\beta).\]
In this way, combined with other related estimates, we can obtain the estimates we want,
\begin{eqnarray}
	&&\frac{\d}{\d t}\left\{\sum_{|\alpha|=N}{\varepsilon^2}\left\|w_l(\alpha,0)\partial^\alpha f_{\varepsilon}\right\|^2
	+\sum_{|\alpha|+|\beta|= N,\atop|\beta|\geq 1}\left\|w_l(\alpha,\beta)\partial^\alpha_\beta \{{\bf I-P}\}f_{\varepsilon}\right\|^2\right\}+\cdots\nonumber\\
	&\lesssim&{\varepsilon^2}\left\|\nabla^N_x E_\varepsilon\right\|\left\|\nabla^N_xf_{\varepsilon}\right\|_\nu+\cdots.
\end{eqnarray}
In order to overcome the regularization loss of the electromagnetic field, we multiply the above inequality by a time factor $(1+t)^{-\frac{1+\epsilon_0}{2}}$ so that it can be controlled in the following
\[(1+t)^{-\frac{1+\epsilon_0}{2}}{\varepsilon^2}\left\|\nabla^N_x E_\varepsilon\right\|\left\|\nabla^N_xf_{\varepsilon}\right\|_\nu\lesssim(1+t)^{-{(1+\epsilon_0)}}\mathcal{E}_N(t)+\eta\mathcal{D}_N(t).\]
\item[4)]Except for  the energy estimation of the highest spatial derivative, we must use the microscopic projection equation \eqref{I-P-cut} to estimate them. Similar to the highest order estimate, we can obtain the following estimate:
	\begin{eqnarray}
	&&\frac{\d}{\d t}\sum_{|\alpha|+|\beta|\leq N-1}\left\|w_l(\alpha,\beta)\partial^\alpha_\beta \{{\bf I-P}\}f_{\varepsilon}\right\|^2+\cdots\lesssim\cdots+\mathcal{D}_{N}(t).
\end{eqnarray}

\item [5)]In fact, our weighted energy estimate above heavily relies on the time decay of the low-order derivatives of the electromagnetic field. Therefore, we need to obtain time decay results that are independent of $\varepsilon$. By using interpolation methods for derivatives and velocity, carefully treating the linear and nonlinear terms containing the singularity $\frac1\varepsilon$, assuming the boundedness of the norm of the negative-order Sobolev space i.e. $\|\Lambda^{-\varrho}[f_\varepsilon,E_\varepsilon,B_\varepsilon]\|$, we can obtain the following estimate:
\begin{equation}
	\mathcal{E}_{k\rightarrow N_0}(t)\lesssim\cdots(1+t)^{-(k+\varrho)},\quad 0\leq t\leq T.
\end{equation}
To ensure the validity of the decay estimate above, we use standard estimates to obtain the boundedness of the norm of the negative-order Sobolev space i.e. $\|\Lambda^{-\varrho}[f_\varepsilon,E_\varepsilon,B_\varepsilon]\|$.
Finally, by combining the above strategies, we construct the a priori estimates and close it.
      \end{itemize}
\subsubsection{The uniform estimates for the angular cutoff cases}
\begin{itemize}
	\item [i)]Unlike the non-angular truncated case with strong angular singularity, the extra difficulty in the angular truncated case lies in the fact that the dissipation of the linear collision operator does not have a dissipation mechanism for velocity derivatives, which makes it very difficult to obtain uniformity estimates for the solutions. Specifically, for the case of soft potential with angular truncation, we need to consider the singularity $\frac1\varepsilon$, time growth, and velocity growth in the relevant energy estimates.
	\item [ii)] Similar to the non-angular truncated case, we can obtain the following estimate:
		\begin{eqnarray}
		&&\frac{\d}{\d t}\mathcal{E}_{N}(t)+\mathcal{D}_{N}(t)\nonumber\\
		&\lesssim &\left\|E_\varepsilon\right\|_{L^\infty_x}^2\left\|\langle v\rangle^{\frac 32}\nabla^N_xf_{\varepsilon}\right\|^2+\left\|\nabla_xB_\varepsilon\right\|_{L^\infty_x}^2\left\|\langle v\rangle^{\frac 32}\nabla^{N-1}_x\nabla_v\{{\bf I-P}\}f_{\varepsilon}\right\|^2+\cdots
	\end{eqnarray}
Therefore, we need a weighted energy estimate for the spatial-velocity highest-order derivatives. Due to the lack of dissipation mechanisms related to velocity derivatives, we need to design the following weight function related to time and velocity:
\[\widetilde{w}_{\ell}(\alpha,\beta)(t,v)=\langle v\rangle^{{\ell}-|\alpha|-\frac12|\beta|}e^{\frac{q\langle v\rangle^2}{(1+t)^{\vartheta}}}\]
The greatest advantage of designing an algebraic weight function here is that it can balance singularity and velocity growth such as for $|\alpha_1|=1$
\begin{eqnarray}\label{hard-1-cut}
	&&{\frac1\varepsilon}\left|\left([ v\times \partial^{\alpha_1}B_\varepsilon\cdot\nabla_v \partial^{\alpha-\alpha_1}{\bf \{I-P\}}f_\varepsilon],\widetilde{w}^2_{\ell_1}(\alpha,0)\partial^\alpha {\bf \{I-P\}}f_{\varepsilon}\right)\right|\nonumber\\
	&\lesssim&{\frac1\varepsilon}\|\partial^{\alpha_1}B_\varepsilon\|_{L^\infty_x}\|\langle v\rangle^{\frac14} \widetilde{w}_{\ell_1}(\alpha-\alpha_1,e_i)\partial^{\alpha-\alpha_1}_{e_i}{\bf \{I-P\}}f_{\varepsilon}\|\|\langle v\rangle^{\frac14} \widetilde{w}_{\ell_1}(\alpha,0 )\partial^{\alpha}{\bf \{I-P\}}f_{\varepsilon}\|\nonumber\\
		&\lesssim&\cdots+{\frac\eta\varepsilon}(1+t)^{-\frac{1+\vartheta}2}\|\langle v\rangle^{\frac14} w_\ell(\alpha,0 )\partial^{\alpha}{\bf \{I-P\}}f_{\varepsilon}\|^2,
\end{eqnarray}
and
	\begin{eqnarray}\label{hard-2-cut}
	&&\frac1\varepsilon\left(\partial^\alpha_\beta\left[ v\cdot \nabla_x\{{\bf I-P}\}f_{\varepsilon}\right],\widetilde{w}^2_{\ell_1}(\alpha,\beta)\partial^\alpha_\beta\{{\bf I-P}\}f_{\varepsilon}\right)\nonumber\\
	&=&-\frac1\varepsilon\int_{\mathbb{R}^3_x\times\mathbb{R}^3_v}\langle v\rangle^{\frac12}\partial^{\alpha+e_i}_{\beta-e_i}\{{\bf I-P}\}f_{\varepsilon} \widetilde{w}_{\ell_1}(\alpha+e_i,\beta-e_i)\widetilde{w}_{\ell_1}(\alpha,\beta)\partial^\alpha_\beta\{{\bf I-P}\}f_{\varepsilon}dvdx\nonumber\\
	&\lesssim&\frac1\varepsilon(1+t)^{\frac{1+\vartheta}2}\left\|\langle v\rangle^{\frac14}\widetilde{w}_{\ell_1}(\alpha+e_i,\beta-e_i)\partial^{\alpha+e_i}_{\beta-e_i}\{{\bf I-P}\}f_{\varepsilon}\right\|^2\nonumber\\
	&&+\frac\eta\varepsilon(1+t)^{-\frac{1+\vartheta}2}\left\|\langle v\rangle^{\frac14}\widetilde{w}_{\ell_1}(\alpha,\beta)\partial^{\alpha}_{\beta}\{{\bf I-P}\}f_{\varepsilon}\right\|^2	
\end{eqnarray}
where we used the fact
\begin{eqnarray}
\langle v\rangle \widetilde{w}_{\ell_1}(\alpha,0 )=\widetilde{w}_{\ell_1}(\alpha-\alpha_1,e_i)\langle v\rangle^{\frac12},
\widetilde{w}_{\ell_1}(\alpha,\beta)=\langle v\rangle^{\frac12}\widetilde{w}_{\ell_1}(\alpha+e_i,\beta-e_i).
\end{eqnarray}
The last terms on the right-hand side of both \eqref{hard-1-cut} and \eqref{hard-2-cut} can be bounded by the dissipation terms like
\begin{eqnarray}
	&&(1+t)^{-(1+\vartheta)}\left\|\langle v\rangle\widetilde{w}_{\ell_1}(\alpha,\beta)\partial^{\alpha}_{\beta}\{{\bf I-P}\}f_{\varepsilon}\right\|^2+\frac1{\varepsilon^2}\left\|\widetilde{w}_{\ell_1}(\alpha,\beta)\partial^{\alpha}_{\beta}\{{\bf I-P}\}f_{\varepsilon}\right\|^2_\nu,
\end{eqnarray}
provided that $\gamma\geq -1$.
To control the first term on the right-hand side of \eqref{hard-2-cut}, we need to design different time increments for the various spatial velocity derivatives such that
	\begin{eqnarray}\label{hard-3-cut}
	&&{\varepsilon^2}\frac{d}{dt}\mathbb{E}^{(N)}_{\ell_1}(t)+{\varepsilon^2}\mathbb{D}^{(N)}_{\ell_1}(t)
	\lesssim\eta(1+t)^{-2\sigma_{N,0}}\left\|\nabla^N_x E_\varepsilon\right\|^2+\mathcal{E}_N(t)\mathcal{E}_{1\rightarrow N_0-1,\overline{\ell}_0}(t)+\cdots,
\end{eqnarray}
Here we take
\[	\sigma_{N,0}=\frac{1+\epsilon_0}2,\ 	\sigma_{N,|\beta|}-\sigma_{N,|\beta|-1}=\frac{1+\vartheta}2, |\beta|\geq1.	\]
Following a similar approach, we have the following estimates established:
\begin{eqnarray}\label{hard-4-cut}
	\frac{\d}{\d t}\sum_{N_0+1\leq n\leq N-1}\mathbb{E}_{\ell_1}^{(n)}(t)+\cdots
	&\lesssim&\mathcal{D}_{N}(t)+\mathcal{E}_N(t)\mathcal{E}_{1\rightarrow N_0-1,\overline{\ell}_0}(t)+\cdots,\\
		\frac{\d}{\d t}\sum_{n\leq N_0}\mathbb{E}_{\ell_0}^{(n)}(t)+\cdots
	&\lesssim&\mathcal{D}_{N_0+1}(t)+\cdots.\nonumber
\end{eqnarray}
\item [iii)] The above weighted energy estimate heavily relies on the time decay of the low-order derivatives of the solutions, such as
\[	\mathcal{E}_{k\rightarrow N_0}(t)\lesssim(1+t)^{-(k+\varrho)},\  \mathcal{E}_{1\rightarrow N_0-1,\overline{\ell}_0}(t)\lesssim(1+t)^{-(1+\varrho)}\]
These time decay estimates can be obtained using interpolation techniques, which is similar to the non-cutoff case. For the sake of brevity, we will not elaborate on this further here. To ensure the validity of these decay estimates, we need boundedness of certain weighted energy norm estimates. However, the weighted energy estimates in the aforementioned inequalities i.e. \eqref{hard-3-cut} and\eqref{hard-4-cut} are with respect to time growth with $(1+t)^{\sigma_{n,j}}$.
\item[iv)] To do so, we introduce another weight function represented as follows:
\[\overline {w}_{l}(\alpha,\beta)(t,v)=\langle v\rangle^{l-|\alpha|-2|\beta|}e^{\frac{q\langle v\rangle^2}{(1+t)^{\vartheta}}}.\] The advantage of this weight function is that the treatment of the transport term does not involve any growth in both velocity and time
such as
\begin{eqnarray}
	&&\frac1\varepsilon\left(\partial^\alpha_\beta\left[ v\cdot \nabla_x\{{\bf I-P}\}f_{\varepsilon}\right],\overline{w}^2_{l_1-|\beta|}\partial^\alpha_\beta\{{\bf I-P}\}f_{\varepsilon}\right)\nonumber\\
	&=&-\frac1\varepsilon\int_{\mathbb{R}^3_x\times\mathbb{R}^3_v}\langle v\rangle^{-1}\partial^{\alpha+e_i}_{\beta-e_i}\{{\bf I-P}\}f_{\varepsilon} \overline{w}_{l_1}(\alpha+e_i,\beta-e_i)\overline {w}_{l_1}(\alpha,\beta)\partial^\alpha_\beta\{{\bf I-P}\}f_{\varepsilon}dvdx\nonumber\\
	&\lesssim&\frac1\varepsilon\left\|\langle v\rangle^{\frac\gamma2}\overline{w}_{l_1}(\alpha+e_i,\beta-e_i)\partial^{\alpha+e_i}_{\beta-e_i}\{{\bf I-P}\}f_{\varepsilon}\right\|^2+\frac\eta\varepsilon\left\|\langle v\rangle^{\frac\gamma2}\overline{w}_{l_1}(\alpha,\beta)\partial^{\alpha}_{\beta}\{{\bf I-P}\}f_{\varepsilon}\right\|^2,	
\end{eqnarray}
where we used the fact
\begin{equation}\label{wight-bound-1-trans.}
	\overline {w}_{l_1}(\alpha,\beta)=\langle v\rangle^{-1}\overline{w}_{l_1}(\alpha+e_i,\beta-e_i).\end{equation}
However, the disadvantage of this weight function is that when performing corresponding weighted energy estimates for nonlinear terms containing magnetic fields, the weight function will unavoidably introduce a linear growth in velocity such as
\begin{eqnarray}
	&&\frac1\varepsilon\left(\partial^\alpha_\beta\left[ v\times B_\varepsilon\cdot\nabla_{ v}\{{\bf I-P}\}f_{\varepsilon}\right],\overline{w}^2_{l_1}(\alpha,\beta)\partial^\alpha_\beta\{{\bf I-P}\}f_{\varepsilon}\right)\nonumber\\
	&=&\frac1\varepsilon\sum_{|\alpha_1|=1,\beta_1=0}\left(\partial^{\alpha_1}_{\beta_1}\left[ v\times B_\varepsilon\right]\cdot\partial^{\alpha-\alpha_1}_{\beta-\beta_1}\left[\nabla_{ v}\{{\bf I-P}\}f_{\varepsilon}\right],\overline{w}^2_{l_1}(\alpha,\beta)\partial^\alpha_\beta\{{\bf I-P}\}f_{\varepsilon}\right)+\cdots\nonumber
\end{eqnarray}
In contrast to the previous equation \eqref{wight-bound-1-trans.}, the weight function here results in linearly increasing velocity, i.e.
\[\overline{w}_{l_1}(\alpha,\beta)= \langle v\rangle\overline{w}_{l_1}(\alpha-\alpha_1,\beta+e_i),\ |\alpha_1|=1,\beta_1=0.\]
To account for this, we apply Sobolev inequalities, interpolation method and Young inequalites to get
\begin{eqnarray}\label{hard-5-cut}
	&\lesssim&\sum_{|\alpha_1|=1}{\frac1\varepsilon}\|\partial^{\alpha_1}B_\varepsilon\|_{L^\infty_x}\|\langle v\rangle^{\frac52} \overline{w}_{l_1}(\alpha-\alpha_1,\beta+e_i)\partial^{\alpha-\alpha_1}_{\beta+e_i}{\bf \{I-P\}}f_{\varepsilon}\|\nonumber\\
	&&\times\|\langle v\rangle^{-\frac{1}2} \overline {w}_{l_1}(\alpha,\beta)\partial^{\alpha}_\beta{\bf \{I-P\}}f_{\varepsilon}\|+\cdots\nonumber\\
	&\lesssim&\sum_{|\alpha_1|=1}
	\|\partial^{\alpha_1}B_\varepsilon\|^{\frac2\theta}_{L^\infty_x}\|\langle v\rangle^{\tilde{\ell}} \overline{w}_{l_1}(\alpha-\alpha_1,\beta+e_i)\partial^{\alpha-\alpha_1}_{\beta +e_i}{\bf \{I-P\}}f_{\varepsilon}\|^2+\cdots
\end{eqnarray}
where we take $\theta$ as
$\theta=\frac3{\tilde{\ell}+\frac12}$.
Here we note that assuming the electromagnetic field has some decay, as the weight index $\widetilde{\ell}$ increases, the overall decay will be fast enough so that the following inequality holds
\[\sum_{|\alpha_1|=1}
\|\partial^{\alpha_1}B_\varepsilon\|^{\frac2{\theta}}_{L^\infty_x}\lesssim (1+t)^{-\sigma_{N,N}}.\]
At this point, we choose the weight index $\ell_1$ in both \eqref{hard-3-cut} and \eqref{hard-4-cut} such that the following inequality holds.
\[\langle v\rangle^{\tilde{\ell}} \overline{w}_{l_1}(\alpha-\alpha_1,\beta+e_i)\leq\widetilde{w}_{\ell_1}(\alpha-\alpha_1,\beta+e_i)\langle v\rangle^{-\frac12}.\]
 Then, the first term on the right-hand side of \eqref{hard-5-cut} can be controlled
by the correponding dissipation terms on the left-hand side of both \eqref{hard-3-cut} and \eqref{hard-4-cut}.
Thus, one can deduce that
	\begin{eqnarray}
	&&\frac{d}{dt}\overline{\mathcal{E}}_{N-1,l_1}(t)+\overline{\mathcal{D}}_{N-1,l_1}(t)
	\lesssim\mathcal{D}_N(t)+\eta\sum_{N_0+1\leq n\leq N-1}\mathbb{D}^{(n)}_{\ell_1}(t)+\cdots.
\end{eqnarray}
\item [v)]Building upon the aforementioned technique and combining it with other relevant estimations, we can construct the a priori estimates \eqref{The-a-priori-estimtates-cut} and close it to obtain uniform bound estimates, which is independent of time $t$ and $\varepsilon$.
\end{itemize}

\subsubsection{The limits}
Based on the uniform in $\varepsilon$ estimates, we employ the moment method to rigorously justify the
hydrodynamic limit from the perturbed VMB to the two fluid incompressible NSFM
system with Ohm's law as \cite{Jiang-Luo-2022-Ann.PDE}.

\section{The non-cutoff VMB system}
\subsection{Lyapunov inequality for the energy functional $\mathcal{E}_{N}(t)$}

Without generality, we take $N=2N_0$ with $N_0\geq 3$ for brevity, since we do not attempt to obtain the optimal regularity index.
\begin{lemma}\label{lemma3.7}
For $|\alpha|\leq N$, it holds that
\begin{eqnarray}\label{lemma3.7-1}
&&\frac{\d}{\d t}\left\|\partial^\alpha [f_\varepsilon,E_\varepsilon,B_\varepsilon]\right\|^2+\frac1{\varepsilon^2}\left\|\partial^\alpha\{{\bf I-P}\}f_{\varepsilon}\right\|_D^2\nonumber\\
&\lesssim &\|E_\varepsilon\|^2_{L^\infty_x} \left\|\langle v\rangle^{\frac 74}\nabla^N_x\{{\bf I-P}\}f_{\varepsilon}\right\|^2+\|\nabla_xB_\varepsilon\|^2_{L^\infty_x}\sum_{|\alpha'|= N-1}\|\langle v\rangle^{\frac74}\partial^{\alpha'}\nabla_v \{{\bf I-P}\}f_{\varepsilon}\|^2\nonumber\\
&&+\mathcal{E}_N(t)\left\{\left\|\langle v\rangle^{\frac 74}\{{\bf I-P}\}f_{\varepsilon}\right\|_{H^{N-1}_xL^2_v}^2+\sum_{|\alpha'|\leq N-2}\|\langle v\rangle^{\frac74}\partial^{\alpha'}\nabla_v \{{\bf I-P}\}f_{\varepsilon}\|^2\right\}\nonumber\\
&&+\mathcal{E}_N(t)\mathcal{D}_{N}(t)+\eta\mathcal{D}_{N}(t)\nonumber
\end{eqnarray}
for all $0\leq t\leq T$.
\end{lemma}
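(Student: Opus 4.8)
The plan is to apply $\partial^\alpha$ (for a fixed $\alpha$ with $|\alpha|\le N$) to the perturbed system \eqref{VMB-F-perturbative}, take the $L^2_{x,v}$ inner product of the $\partial^\alpha f_\varepsilon$-equation with $\partial^\alpha f_\varepsilon$ and the $L^2_x$ inner products of the $\partial^\alpha E_\varepsilon$- and $\partial^\alpha B_\varepsilon$-equations with $\partial^\alpha E_\varepsilon$ and $\partial^\alpha B_\varepsilon$ respectively, and add the three resulting identities. Several terms drop out or combine for structural reasons: the streaming term gives $\tfrac1\varepsilon(v\cdot\nabla_x\partial^\alpha f_\varepsilon,\partial^\alpha f_\varepsilon)_{L^2_{x,v}}=0$ by integration by parts in $x$; the top-order Lorentz contributions $\tfrac1\varepsilon(q_0(v\times B_\varepsilon)\cdot\nabla_v\partial^\alpha f_\varepsilon,\partial^\alpha f_\varepsilon)$ and $(q_0E_\varepsilon\cdot\nabla_v\partial^\alpha f_\varepsilon,\partial^\alpha f_\varepsilon)$ vanish because $v\times B_\varepsilon$ is divergence-free in $v$ and $E_\varepsilon$ is $v$-independent; the two curl terms cancel in the sum, $-(\nabla_x\times\partial^\alpha B_\varepsilon,\partial^\alpha E_\varepsilon)_{L^2_x}+(\nabla_x\times\partial^\alpha E_\varepsilon,\partial^\alpha B_\varepsilon)_{L^2_x}=0$; and, since $v$, $\sqrt M$ and $q_1$ do not depend on $x$, one has $\partial^\alpha[(E_\varepsilon\cdot v)\sqrt M\,q_1]=(\partial^\alpha E_\varepsilon\cdot v)\sqrt M\,q_1$, so the term it produces in the kinetic identity cancels exactly the Amp\`ere source $-\tfrac1\varepsilon(\int_{\R^3}\partial^\alpha f_\varepsilon\cdot q_1 v\sqrt M\,\d v,\partial^\alpha E_\varepsilon)_{L^2_x}$. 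What survives on the left is $\tfrac12\tfrac{\d}{\d t}\|\partial^\alpha[f_\varepsilon,E_\varepsilon,B_\varepsilon]\|^2$ together with $\tfrac1{\varepsilon^2}(\mathscr L\partial^\alpha f_\varepsilon,\partial^\alpha f_\varepsilon)_{L^2_{x,v}}$, and the coercivity of $\mathscr L$ recorded in the Appendix gives $\tfrac1{\varepsilon^2}(\mathscr L\partial^\alpha f_\varepsilon,\partial^\alpha f_\varepsilon)_{L^2_{x,v}}\ge \tfrac{\delta_0}{\varepsilon^2}\|\partial^\alpha\{{\bf I-P}\}f_\varepsilon\|_D^2$; a fixed small fraction of this dissipation is retained for absorbing the error terms.

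It remains to bound the genuinely nonlinear contributions: the collision term $\tfrac1\varepsilon(\partial^\alpha\mathscr T(f_\varepsilon,f_\varepsilon),\partial^\alpha f_\varepsilon)_{L^2_{x,v}}$, the electric term $\tfrac12(q_0\partial^\alpha[(E_\varepsilon\cdot v)f_\varepsilon],\partial^\alpha f_\varepsilon)_{L^2_{x,v}}$, and the lower-order Leibniz pieces $\tfrac1\varepsilon\sum_{0\ne\alpha_1\le\alpha}\binom\alpha{\alpha_1}(q_0(v\times\partial^{\alpha_1}B_\varepsilon)\cdot\nabla_v\partial^{\alpha-\alpha_1}f_\varepsilon,\partial^\alpha f_\varepsilon)$ and $\sum_{0\ne\alpha_1\le\alpha}\binom\alpha{\alpha_1}(q_0\partial^{\alpha_1}E_\varepsilon\cdot\nabla_v\partial^{\alpha-\alpha_1}f_\varepsilon,\partial^\alpha f_\varepsilon)$ of the two Lorentz terms. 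Since $\mathscr T$ is valued in $\mathcal N^\perp$, the collision term equals $\tfrac1\varepsilon(\partial^\alpha\mathscr T(f_\varepsilon,f_\varepsilon),\partial^\alpha\{{\bf I-P}\}f_\varepsilon)$; applying the trilinear estimate for $\mathscr T$ from the Appendix, distributing the $N=2N_0$ derivatives via $H^2(\R^3)\hookrightarrow L^\infty(\R^3)$, and using Young against the retained fraction of $\varepsilon^{-2}\|\partial^\alpha\{{\bf I-P}\}f_\varepsilon\|_D^2$ bounds it by $\eta\mathcal D_N(t)+\mathcal E_N(t)\mathcal D_N(t)$. For the Lorentz and electric pieces we insert the macro--micro splitting in the $\partial^{\alpha-\alpha_1}f_\varepsilon$ and $\partial^\alpha f_\varepsilon$ slots. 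The macroscopic-on-macroscopic interactions either vanish (for the $v\times B_\varepsilon$ pieces, because $q_0(v\times\cdot)\cdot\nabla_v{\bf P}f_\varepsilon$ reduces to ${\bf u}_\varepsilon$-terms proportional to $q_1\sqrt M$ and $\int_{\R^3}vM\,\d v=0$) or are controlled by $\mathcal E_N(t)\mathcal D_N(t)$ through the macroscopic dissipation $\|\nabla_x{\bf P}f_\varepsilon\|^2_{H^{N-1}_xL^2_v}+\|E_\varepsilon\|^2_{H^{N-1}_x}$ contained in $\mathcal D_N(t)$; whenever a microscopic factor is present the singular prefactor is tamed since $\varepsilon^{-1}\|\partial^\alpha\{{\bf I-P}\}f_\varepsilon\|_D\le\mathcal D_N(t)^{1/2}$ and a Gagliardo--Nirenberg interpolation turns the macroscopic $L^\infty_x$-factors (such as $\|{\bf u}_\varepsilon\|_{L^\infty_x}$) into $\mathcal D_N(t)^{1/2}$, leaving $\mathcal E_N(t)^{1/2}\mathcal D_N(t)\le\eta\mathcal D_N(t)$. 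The remaining pieces are those carrying a genuine velocity multiplier $v$ (from $v\times B_\varepsilon$ or $(E_\varepsilon\cdot v)$) paired with a microscopic factor that has no $D$-dissipation to spare for the extra weight, in particular $\langle v\rangle\nabla_v\{{\bf I-P}\}f_\varepsilon$-type quantities; estimating these by H\"older with $\|E_\varepsilon\|_{L^\infty_x}$ or $\|\nabla_xB_\varepsilon\|_{L^\infty_x}$ in front and using the generous weight $\langle v\rangle^{7/4}$ to dominate the powers of $\langle v\rangle$ actually produced yields precisely the terms $\|E_\varepsilon\|^2_{L^\infty_x}\|\langle v\rangle^{\frac74}\nabla^N_x\{{\bf I-P}\}f_\varepsilon\|^2$, $\|\nabla_xB_\varepsilon\|^2_{L^\infty_x}\sum_{|\alpha'|=N-1}\|\langle v\rangle^{\frac74}\partial^{\alpha'}\nabla_v\{{\bf I-P}\}f_\varepsilon\|^2$, together with the two terms in which $\mathcal E_N(t)$ multiplies a lower-order weighted microscopic norm (arising when a field factor must be estimated in $L^2_x$ instead of $L^\infty_x$). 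Collecting everything and absorbing the $\eta\mathcal D_N(t)$ contributions gives the claimed inequality.

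The main obstacle is exactly the handling of the $\varepsilon^{-1}$-singular Lorentz term $\tfrac1\varepsilon(v\times\partial^{\alpha_1}B_\varepsilon)\cdot\nabla_v\partial^{\alpha-\alpha_1}f_\varepsilon$ in the presence of the unbounded velocity multiplier $v$ and the weak soft-potential dissipation: contrary to the hard-sphere case, the $D$-norm only furnishes the weight $\langle v\rangle^{\gamma/2}$ with $\gamma<0$, so no power of $v$ can be absorbed locally, and one cannot afford an uncontrollable $\varepsilon^{-1}$-singular macroscopic remainder. One must therefore exploit, in the right order, (i) the exact vanishing of the top-order Lorentz term, (ii) the cancellation $\int_{\R^3}v\sqrt M\,q_1\cdot\partial^\alpha{\bf P}f_\varepsilon\,\d v=0$, which follows from $q_1$ annihilating the fluid components of ${\bf P}f_\varepsilon$ and from $\int_{\R^3}vM\,\d v=0$ and prevents any $\varepsilon^{-1}$-singular term involving the macroscopic part, and (iii) the conversion of every remaining singular factor into $\mathcal D_N(t)^{1/2}$, so that the residue is at worst $\mathcal E_N(t)^{1/2}\mathcal D_N(t)$ or a velocity-growth term deliberately deferred to the right-hand side with an $\langle v\rangle^{7/4}$ weight. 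Carrying out this bookkeeping so that every leftover is no worse than $\langle v\rangle^{7/4}$ times a derivative already appearing in $\mathcal E_N$, $\mathcal D_N$, or the explicit top-order weighted norms — these being absorbed subsequently by the weighted energy estimate — is the technical crux.
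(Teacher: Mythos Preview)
Your proposal is correct and follows essentially the same route as the paper: derive the energy identity by pairing $\partial^\alpha$ of \eqref{VMB-F-perturbative} with $(\partial^\alpha f_\varepsilon,\partial^\alpha E_\varepsilon,\partial^\alpha B_\varepsilon)$, use the coercivity of $\mathscr L$, then control the four nonlinear source terms via the macro--micro splitting, the vanishing of the $(v\times B)$ macro--macro pairing, the $\langle v\rangle^{7/4}\cdot\langle v\rangle^{-3/4}$ weight split, and the trilinear bound for $\mathscr T$. The paper organizes the argument by separating the cases $\alpha=0$ and $1\le|\alpha|\le N$ and states the macro--macro cancellation as an oddness-in-$v$ fact rather than via your $q_1$-computation, but these are presentational differences; one minor slip is your claim that the mixed terms yield $\mathcal E_N^{1/2}\mathcal D_N$---after Young they actually produce $\mathcal E_N\mathcal D_N+\eta\varepsilon^{-2}\|\partial^\alpha\{\mathbf I-\mathbf P\}f_\varepsilon\|_D^2$, which is exactly what the lemma's right-hand side records.
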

\begin{proof}
First of all, it is straightforward to establish the energy identities
\begin{eqnarray}\label{spatial-without}
&&\frac12\frac{\d}{\d t}\left(\left\|\partial^\alpha f_{\varepsilon}\right\|^2+ \left\|\partial^\alpha[E_\varepsilon,B_\varepsilon])\right\|^2\right)+\frac1{\varepsilon^2}\left(\mathscr{L}\partial^\alpha f_\varepsilon,\partial^\alpha f_\varepsilon\right)\nonumber\\
&=&\left(\partial^\alpha\left(\frac{q_0}{2}E_\varepsilon\cdot vf_\varepsilon\right),\partial^\alpha f_\varepsilon\right)-\left(\partial^\alpha\left(q_0 E_\varepsilon\cdot\nabla_vf_\varepsilon\right),\partial^\alpha f_\varepsilon\right)\\
&&-\frac1\varepsilon\left(\partial^\alpha\left(q_0(v\times B_\varepsilon)\cdot\nabla_vf_\varepsilon\right),\partial^\alpha f_\varepsilon\right)+\frac1\varepsilon\left(\partial^\alpha\mathscr{T}(f_\varepsilon,f_\varepsilon),\partial^\alpha f_\varepsilon\right).\nonumber
\end{eqnarray}
The coercivity property of the linear operator $\mathscr{L}$ i.e. \eqref{L-cut-1} tells us that
\[
\frac1{\varepsilon^2}\left(\mathscr{L}\partial^\alpha f_\varepsilon,\partial^\alpha f_\varepsilon\right)\gtrsim\frac1{\varepsilon^2}\|\partial^\alpha{\bf\{I-P\}}f_{\varepsilon}\|_D^2.
\]
For the four terms on the right-hand side of \eqref{spatial-without}, we estimate them one by one in the following.

{\bf Case 1: $\alpha=0$.}
By applying macro-micro decomposition, $L^2\mbox{-}L^3\mbox{-}L^6$ or $L^2\mbox{-}L^\infty\mbox{-}L^2$ Sobolev inequalities and Cauchy inequalities, one has
\begin{eqnarray}
 &&\left(\frac{q_0}{2}E_\varepsilon\cdot vf_\varepsilon, f_\varepsilon\right)\nonumber\\
 &=& \left(\frac{q_0}{2}E_\varepsilon\cdot v{\bf P}f_\varepsilon, {\bf P}f_\varepsilon\right)+\left(\frac{q_0}{2}E_\varepsilon\cdot v{\bf P}f_\varepsilon, {\bf \{I-P\}}f_{\varepsilon}\right)\nonumber\\
 &&+\left(\frac{q_0}{2}E_\varepsilon\cdot v{\bf \{I-P\}}f_\varepsilon, {\bf P}f_\varepsilon\right)+\left(\frac{q_0}{2}E_\varepsilon\cdot v{\bf \{I-P\}}f_\varepsilon, {\bf \{I-P\}}f_{\varepsilon}\right)\\
 &\lesssim&\|f_{\varepsilon}\|_{H^1_xL^2_v}\|E_\varepsilon\|\|\nabla_xf_{\varepsilon}\|_{D}+\|E_\varepsilon\|_{L^\infty_x}^2\|{\bf \{I-P\}}f_{\varepsilon}\langle v\rangle^{\frac34}\|^2+\eta\|{\bf \{I-P\}}f_{\varepsilon}\|^2_{D}\nonumber\\
&\lesssim&\|f_{\varepsilon}\|^2_{H^1_xL^2_v}\|E_\varepsilon\|^2+\eta\|\nabla_xf_{\varepsilon}\|^2_{D}+\|E_\varepsilon\|_{L^\infty_x}^2\|{\bf \{I-P\}}f_{\varepsilon}\langle v\rangle^{\frac74}\|^2+\eta\|{\bf \{I-P\}}f_{\varepsilon}\|^2_{D}\nonumber\\
&\lesssim&\mathcal{E}_2(t)\|E_\varepsilon\|^2+\eta\|\nabla_xf_{\varepsilon}\|^2_{D}+\|E_\varepsilon\|_{L^\infty_x}^2\|{\bf \{I-P\}}f_{\varepsilon}\langle v\rangle^{\frac74}\|^2+\eta\|{\bf \{I-P\}}f_{\varepsilon}\|^2_{D}\nonumber\\
&\lesssim&\mathcal{E}_2(t)\|E_\varepsilon\|^2+\|E_\varepsilon\|_{L^\infty_x}^2\|{\bf \{I-P\}}f_{\varepsilon}\langle v\rangle^{\frac74}\|^2+\eta\mathcal{D}_{2}(t).\nonumber
\end{eqnarray}
Integrating in part with respect to $v$ yields that
\begin{eqnarray}
  -\left(\left(q_0 E_\varepsilon\cdot\nabla_vf_\varepsilon\right), f_\varepsilon\right)-\frac1\varepsilon\left(\left(q_0(v\times B_\varepsilon)\cdot\nabla_vf_\varepsilon\right), f_\varepsilon\right)=0.
\end{eqnarray}
By using Lemma \ref{Gamma-noncut}, we can deduce that the last term can be dominated by
$$
 \mathcal{E}_2(t)\mathcal{D}_{2}(t)+\frac{\eta}{{\varepsilon^2}}\|{\{\bf I- P\}}f_{\varepsilon}\|^2_{D}.
$$
By collecting the above related estimates, we arrive at
\begin{eqnarray}\label{0-order}
&&\frac12\frac{\d}{\d t}\left(\left\|f_{\varepsilon}\right\|^2+ \left\|[E_\varepsilon,B_\varepsilon])\right\|^2\right)+\frac1{\varepsilon^2}\|{\bf \{I-P\}}f_{\varepsilon}\|^2_\nu\nonumber\\
&\lesssim&\mathcal{E}_2(t)\mathcal{D}_{2}(t)+\mathcal{E}_2(t)\|E_\varepsilon\|^2+\|E_\varepsilon\|_{L^\infty_x}^2\|{\bf \{I-P\}}f_{\varepsilon}\langle v\rangle^{\frac74}\|^2+\eta\mathcal{D}_2(t).
\end{eqnarray}

{\bf Case 2: $1\leq |\alpha|\leq N$.} By using Sobolev inequalities,Cauchy inequality and macro-micro decomposition, one can deduce that the first term can be bounded by
\begin{eqnarray}\label{alpha-without-w}
&&\left(\partial^\alpha\left(\frac{q_0}{2}E_\varepsilon\cdot vf_\varepsilon\right),\partial^\alpha f_\varepsilon\right)\nonumber\\
&\lesssim&\|E_\varepsilon\|_{L^\infty_x} \left\|\langle v\rangle^{\frac 74}\partial^{\alpha}f_{\varepsilon}\right\|
\left\|\langle v\rangle^{-\frac {3}4}\partial^\alpha f_{\varepsilon}\right\|\nonumber\\
&&+\chi_{|\alpha|\geq3}\sum_{1\leq|\alpha_1|\leq |\alpha|-2}\left\|\partial^{\alpha_1}E_\varepsilon\right\|_{L^\infty_x}
\left\|\langle v\rangle^{\frac 74}\partial^{\alpha-\alpha_1}f_{\varepsilon}\right\|
\left\|\langle v\rangle^{-\frac {3}4}\partial^\alpha f_{\varepsilon}\right\|\nonumber\\
&&+\chi_{|\alpha|\geq2}\sum_{|\alpha_1|=|\alpha|-1}\int_{\mathbb{R}^3_x\times\mathbb{R}^3_v}|\partial^{\alpha_1}E_\varepsilon|
|\langle v\rangle^{\frac74}\partial^{\alpha-\alpha_1}f_\varepsilon|
|\langle v\rangle^{-\frac {3}4}\partial^\alpha f_\varepsilon|dvdx\nonumber
\\
&&+\chi_{|\alpha|\geq1}\int_{\mathbb{R}^3_x\times\mathbb{R}^3_v}|\partial^{\alpha}E_\varepsilon|
|\langle v\rangle^{\frac74}f_\varepsilon|
|\langle v\rangle^{-\frac {3}4}\partial^\alpha f_\varepsilon|dvdx\nonumber\\
&\lesssim&\|E_\varepsilon\|^2_{L^\infty_x} \left\|\langle v\rangle^{\frac 74}\partial^{\alpha}f_{\varepsilon}\right\|^2+\chi_{|\alpha|\geq3}\sum_{1\leq|\alpha_1|\leq |\alpha|-2}\left\|\partial^{\alpha_1}E_\varepsilon\right\|^2_{L^\infty_x}
\left\|\langle v\rangle^{\frac 74}\partial^{\alpha-\alpha_1}f_{\varepsilon}\right\|^2
+\eta\left\|\langle v\rangle^{-\frac {3}4}\partial^\alpha f_{\varepsilon}\right\|^2\nonumber\\
&&+\chi_{|\alpha|\geq2}\sum_{|\alpha_1|=|\alpha|-1}\int_{\mathbb{R}^3_x\times\mathbb{R}^3_v}|\partial^{\alpha_1}E_\varepsilon|
|\langle v\rangle^{\frac 32}\partial^{\alpha-\alpha_1}{\bf P}f_\varepsilon|
|\langle v\rangle^{-\frac {1}2}\partial^\alpha f_\varepsilon|dvdx\nonumber\\[2mm]
&&+\chi_{|\alpha|\geq1}\int_{\mathbb{R}^3_x\times\mathbb{R}^3_v}|\partial^{\alpha}E_\varepsilon|
|\langle v\rangle^{\frac74}{\bf P}f_\varepsilon|
|\langle v\rangle^{-\frac {3}4}\partial^\alpha f_\varepsilon|dvdx\nonumber\\
&&+\chi_{|\alpha|\geq2}\sum_{|\alpha_1|=|\alpha|-1}\int_{\mathbb{R}^3_x\times\mathbb{R}^3_v}|\partial^{\alpha_1}E_\varepsilon|
|\langle v\rangle^{\frac 74}\partial^{\alpha-\alpha_1}{\bf\{I-P\}}f_\varepsilon|
|\langle v\rangle^{-\frac {3}4}\partial^\alpha f_\varepsilon|dvdx\nonumber\\[2mm]
&&+\chi_{|\alpha|\geq1}\int_{\mathbb{R}^3_x\times\mathbb{R}^3_v}|\partial^{\alpha}E_\varepsilon|
|\langle v\rangle^{\frac74}{\bf \{I-P\}}f_\varepsilon|
|\langle v\rangle^{-\frac {3}4}\partial^\alpha f_\varepsilon|dvdx\nonumber\\
&\lesssim&\|E_\varepsilon\|^2_{L^\infty_x} \left\|\langle v\rangle^{\frac 74}\nabla^N_x\{{\bf I-P}\}f_{\varepsilon}\right\|^2+\mathcal{E}_N(t)\left\|\langle v\rangle^{\frac 74}\{{\bf I-P}\}f_{\varepsilon}\right\|_{H^{N-1}_xL^2_v}^2\nonumber\\
&&+\mathcal{E}_N(t)\mathcal{D}_{N}(t)+\eta\left\|\partial^\alpha f_{\varepsilon}\right\|_{D}^2.
\end{eqnarray}

By a similar way, note that $\left(E_\varepsilon\cdot \partial^\alpha\nabla_vf,\partial^\alpha f_\varepsilon\right)=0,$ we can deduce that
\begin{eqnarray}
  &&\left(\partial^\alpha\left(q_0 E_\varepsilon\cdot\nabla_vf_\varepsilon\right),\partial^\alpha f_\varepsilon\right)\nonumber\\
  &\lesssim&\mathcal{E}_N(t)\mathcal{D}_{N}(t)+\mathcal{E}_N(t)\left\|\nabla_v\{{\bf I-P}\}f_{\varepsilon}\langle v\rangle^{\frac 74}\right\|_{H^{N-1}_xL^2_v}^2+\eta\left\|\partial^\alpha f_{\varepsilon}\right\|_{D}^2.
\end{eqnarray}

As for the third term on the right-hand side of \eqref{spatial-without}, by using macro-micro decomposition, we have
\begin{eqnarray}
&&\frac1\varepsilon\left(\partial^\alpha\left(q_0(v\times B_\varepsilon)\cdot\nabla_vf_\varepsilon\right),\partial^\alpha f_\varepsilon\right)\nonumber\\
&=&\frac1\varepsilon\left(\partial^\alpha\left(q_0(v\times B_\varepsilon)\cdot\nabla_v{\bf P}f_\varepsilon\right),\partial^\alpha {\bf P}f_\varepsilon\right)\nonumber\\
&&+\frac1\varepsilon\left(\partial^\alpha\left(q_0(v\times B_\varepsilon)\cdot\nabla_v{\bf P}f_\varepsilon\right),\partial^\alpha {\{\bf I- P\}}f_\varepsilon\right)\nonumber\\
&&+\frac1\varepsilon\left(\partial^\alpha\left(q_0(v\times B_\varepsilon)\cdot\nabla_v{\{\bf I- P\}}f_\varepsilon\right),\partial^\alpha {\bf P}f_\varepsilon\right)\nonumber\\
&&+\frac1\varepsilon\left(\partial^\alpha\left(q_0(v\times B_\varepsilon)\cdot\nabla_v{\{\bf I- P\}}f_\varepsilon\right),\partial^\alpha {\{\bf I- P\}}f_\varepsilon\right)\nonumber\\
&=&\frac1\varepsilon\left(\partial^\alpha\left(q_0(v\times B_\varepsilon)\cdot\nabla_v{\bf P}f_\varepsilon\right),\partial^\alpha {\{\bf I- P\}}f_\varepsilon\right)\nonumber\\
&&+\frac1\varepsilon\left(\partial^\alpha\left(q_0(v\times B_\varepsilon)\cdot\nabla_v{\{\bf I- P\}}f_\varepsilon\right),\partial^\alpha {\bf P}f_\varepsilon\right)\nonumber\\
&&+\frac1\varepsilon\left(\partial^\alpha\left(q_0(v\times B_\varepsilon)\cdot\nabla_v{\{\bf I- P\}}f_\varepsilon\right),\partial^\alpha {\{\bf I- P\}}f_\varepsilon\right).\label{B-without-w}
\end{eqnarray}
where we use the fact that
$$\frac1\varepsilon\left(\partial^\alpha\left(q_0(v\times B_\varepsilon)\cdot\nabla_v{\bf P}f_\varepsilon\right),\partial^\alpha {\bf P}f_\varepsilon\right)=0,$$
due to the kernel structure of ${\bf P}$ and the integral of oddness function with respect to velocity $v$ over $\mathbb{R}^3_v$.

Using various Sobolev inequalities and Cauchy inequality, we can get that the first two terms on the right-hand side of \eqref{B-without-w} can be controlled by
\begin{eqnarray}
&&\frac1\varepsilon\left(\partial^\alpha\left(q_0(v\times B_\varepsilon)\cdot\nabla_v{\bf P}f_\varepsilon\right),\partial^\alpha {\{\bf I- P\}}f_\varepsilon\right)\nonumber\\
&&+\frac1\varepsilon\left(\partial^\alpha\left(q_0(v\times B_\varepsilon)\cdot\nabla_v{\{\bf I- P\}}f_\varepsilon\right),\partial^\alpha {\bf P}f_\varepsilon\right)\nonumber\\
&\lesssim&\|B_\varepsilon\|^2_{H^N_x}(t)\mathcal{D}_{N}(t)+\frac{\eta}{{\varepsilon^2}}\sum_{|\alpha'|\leq \alpha}\|\partial^{\alpha'}{\{\bf I- P\}}f_{\varepsilon}\|^2_D,
\end{eqnarray}
as for the last term on the right-hand side of \eqref{B-without-w}, by using a similar way as the estimates on $\left(\partial^\alpha\left(q_0 E_\varepsilon\cdot\nabla_vf_\varepsilon\right),\partial^\alpha f_\varepsilon\right)$, we can deduce that
\begin{eqnarray*}
&&\frac1\varepsilon\left(\partial^\alpha\left(q_0(v\times B_\varepsilon)\cdot\nabla_v{\{\bf I- P\}}f_\varepsilon\right),\partial^\alpha {\{\bf I- P\}}f_\varepsilon\right)\\
&=&\frac1\varepsilon\left(q_0(v\times B_\varepsilon)\cdot\nabla_v\partial^\alpha{\{\bf I- P\}}f_\varepsilon,\partial^\alpha {\{\bf I- P\}}f_\varepsilon\right)\\
&&+\sum_{1\leq|\alpha_1|\leq |\alpha|}\frac1\varepsilon\left(q_0(v\times \partial ^{\alpha_1}B_\varepsilon)\cdot\nabla_v\partial^{\alpha-\alpha_1}{\{\bf I- P\}}f_\varepsilon,\partial^\alpha {\{\bf I- P\}}f_\varepsilon\right)\\
&=&\sum_{1\leq|\alpha_1|\leq |\alpha|}\frac1\varepsilon\left(q_0(v\times \partial ^{\alpha_1}B_\varepsilon)\cdot\nabla_v\partial^{\alpha-\alpha_1}{\{\bf I- P\}}f_\varepsilon\langle v\rangle^{\frac34},\partial^\alpha {\{\bf I- P\}}f_\varepsilon\langle v\rangle^{-\frac34}\right)\\
&\lesssim&\|\nabla_xB_\varepsilon\|_{L^\infty_x}\sum_{|\alpha'|= N-1}\|\partial^{\alpha'}\nabla_v \{{\bf I-P}\}f_{\varepsilon}\langle v\rangle^{\frac74}\|^2\nonumber\\
&&+\|B_\varepsilon\|_{H^N_x}^2\sum_{|\alpha'|\leq N-2}\|\partial^{\alpha'}\nabla_v \{{\bf I-P}\}f_{\varepsilon}\langle v\rangle^{\frac74}\|^2
+\frac{\eta}{{\varepsilon^2}}\|\partial^\alpha{\{\bf I- P\}}f_{\varepsilon}\|^2_D.
\end{eqnarray*}

Now we arrive at
\begin{eqnarray}
  &&\frac1\varepsilon\left(\partial^\alpha\left(q_0(v\times B_\varepsilon)\cdot\nabla_vf_\varepsilon\right),\partial^\alpha f_\varepsilon\right)\\
&\lesssim&\mathcal{E}_N(t)\mathcal{D}_{N}(t)+\|\nabla_xB_\varepsilon\|_{L^\infty_x}\sum_{|\alpha'|= N-1}\|\partial^{\alpha'}\nabla_v \{{\bf I-P}\}f_{\varepsilon}\langle v\rangle^{\frac74}\|^2\nonumber\\
&&+\|B_\varepsilon\|_{H^N_x}^2\sum_{|\alpha'|\leq N-2}\|\partial^{\alpha'}\nabla_v \{{\bf I-P}\}f_{\varepsilon}\langle v\rangle^{\frac74}\|^2
+\eta\mathcal{D}_{N}(t).\nonumber
\end{eqnarray}

By using Lemma \ref{Gamma-noncut} and Cauchy inequality, one can get that the last term on the right-hand side of \eqref{B-without-w} can be dominated by
$$
\lesssim \mathcal{E}_{N}(t)\mathcal{D}_N(t)+\frac{\eta}{{\varepsilon^2}}\sum_{\alpha'\leq \alpha}\|\partial^{\alpha'}{\{\bf I- P\}}f_{\varepsilon}\|^2_D.
$$
Now we arrive at by collecting the above estimates
\begin{eqnarray}\label{E1}
&&\frac{\d}{\d t}\sum_{1\leq|\alpha|\leq N}\left(\left\|\partial^\alpha  f_{\varepsilon}\right\|^2+\left\|\partial^\alpha[E_\varepsilon,B_\varepsilon]\right\|^2\right)
+\sum_{1\leq|\alpha|\leq N}\frac1{\varepsilon^2}\left\|\partial^\alpha\{{\bf I-P}\}f_{\varepsilon}\right\|^2_{D}\nonumber\\
&\lesssim&\| E_\varepsilon\|^{2}_{L^{\infty}_x}\left\|\langle v\rangle^{\frac 74}\nabla^N_xf_{\varepsilon}\right\|^2+\mathcal{E}_N(t)\left\|\langle v\rangle^{\frac 74}\{{\bf I-P}\}f_{\varepsilon}\right\|_{H^{N-1}_xL^2_v}^2\nonumber\\
&&+\mathcal{E}_N(t)\mathcal{D}_{N}(t)+\|\nabla_xB_\varepsilon\|_{L^\infty_x}\sum_{|\alpha'|= N-1}\|\partial^{\alpha'}\nabla_v \{{\bf I-P}\}f_{\varepsilon}\langle v\rangle^{\frac74}\|^2\nonumber\\
&&+\mathcal{E}_N(t)\sum_{|\alpha'|\leq N-2}\|\partial^{\alpha'}\nabla_v \{{\bf I-P}\}f_{\varepsilon}\langle v\rangle^{\frac74}\|^2
+\eta\mathcal{D}_{N}(t).\nonumber
\end{eqnarray}

By plugging \eqref{0-order} and \eqref{E1} into \eqref{spatial-without}, one has \eqref{lemma3.7-1}.
Thus the proof of Lemma \ref{lemma3.7} is complete.
\end{proof}

\begin{lemma}\label{mac-dissipation}

	There exists an interactive energy functional $\mathcal{E}^{int}_N(t)$ satisfying
	$$
	\mathcal{E}^{int}_N(t)\lesssim\sum_{|\alpha|\leq N}\left\|\partial^\alpha[f_\varepsilon,E_\varepsilon,B_\varepsilon]\right\|^2$$
	such that
	\begin{eqnarray}\label{mac-dissipation-1}
		&&\frac{\d}{\d t}\mathcal{E}^{int}_N(t)+\left\|\nabla_x[\rho_\varepsilon^\pm,u_\varepsilon,\theta_\varepsilon]\right\|^2_{H^{N-1}_x}+\|\rho_\varepsilon^+-\rho_\varepsilon^-\|^2+\|E_\varepsilon\|^2_{H^{N-1}_x}+\|\nabla_xB_\varepsilon\|^2_{H^{N-2}_x}\nonumber\\
		&\lesssim&\frac1{\varepsilon^2}\sum_{|\alpha|\leq N}\left\|\partial^\alpha\{{\bf I-P}\}f_{\varepsilon}\right\|_{D}^2+\mathcal{E}_N(t)\mathcal{D}_N(t).
	\end{eqnarray}
\end{lemma}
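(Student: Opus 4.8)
The plan is to run the standard macroscopic estimate (local conservation laws plus an interactive energy functional), in the spirit of Guo, Duan--Liu--Yang--Zhao and Jiang--Luo \cite{Jiang-Luo-2022-Ann.PDE}, adapted to the two-species coupling and to the Maxwell system. First I would take the velocity moments of the perturbed system \eqref{VMB-F-perturbative} against the collision invariants $\sqrt{M}[1,0]$, $\sqrt{M}[0,1]$, $v_i\sqrt{M}[1,1]$, $(|v|^2-3)\sqrt{M}[1,1]$; since $\mathscr{L}f_\varepsilon,\mathscr{T}(f_\varepsilon,f_\varepsilon)\in\mathcal{N}^{\perp}$ these moments only feel the transport term, the Lorentz term and the source $\tfrac1\varepsilon(E_\varepsilon\cdot v)\sqrt M q_1$, and produce the local conservation laws, schematically
\begin{align*}
 \varepsilon\partial_t\rho_\varepsilon^{\pm}+\div_x u_\varepsilon+\div_x\big(\text{moment of }\{{\bf I-P}\}f_\varepsilon\big)&=\varepsilon\,(\text{q.t.}),\\
 \varepsilon\partial_t u_\varepsilon+\nabla_x\big(\text{lin.\ comb.\ of }\rho_\varepsilon^{\pm},\theta_\varepsilon\big)+\nabla_x\!\cdot\!\big(\text{moment of }\{{\bf I-P}\}f_\varepsilon\big)&=\varepsilon\,(\text{q.t.}),\\
 \varepsilon\partial_t\theta_\varepsilon+\tfrac13\div_x u_\varepsilon+\div_x\big(\text{moment of }\{{\bf I-P}\}f_\varepsilon\big)&=\varepsilon\,(\text{q.t.}),
\end{align*}
``q.t.'' standing for quadratic terms. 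I would also record: (a) the difference of the two single-species momentum equations, for which the source $\tfrac1\varepsilon(E_\varepsilon\cdot v)\sqrt M q_1$ contributes $\tfrac2\varepsilon E_\varepsilon$, giving $2E_\varepsilon=\varepsilon\partial_t(u_\varepsilon^+-u_\varepsilon^-)+\nabla_x(\rho_\varepsilon^+-\rho_\varepsilon^-)+\nabla_x\!\cdot\!\big(\text{moment of }\{{\bf I-P}\}f_\varepsilon\big)+(\text{Lorentz})+\varepsilon(\text{q.t.})$, in which $u_\varepsilon^{\pm}-u_\varepsilon=\langle\{{\bf I-P}\}f_\varepsilon^{\pm},v\sqrt M\rangle_{L^2_v}$ is purely microscopic; (b) the higher-moment (Burnett) relations obtained from the equation for $\{{\bf I-P}\}f_\varepsilon$ by testing against $v_iv_j\sqrt M$ and $v_i(|v|^2-5)\sqrt M$, which express $\nabla_x[\rho_\varepsilon^{\pm},u_\varepsilon,\theta_\varepsilon]$ through $\partial_t(\{{\bf I-P}\}f_\varepsilon\text{-moments})$, $\tfrac1\varepsilon(\{{\bf I-P}\}f_\varepsilon\text{-moments})$, transport of $\{{\bf I-P}\}f_\varepsilon$, and quadratic terms; and (c) the Gauss laws $\div_x E_\varepsilon=\rho_\varepsilon^+-\rho_\varepsilon^-$, $\div_x B_\varepsilon=0$. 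These are the only structural inputs.

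Next I would assemble $\mathcal{E}^{int}_N(t)$ as a sum over $|\alpha|\le N-1$ of three blocks with small, hierarchically chosen constants. The \emph{hydrodynamic block} takes the usual Duan-type cross terms pairing $\partial^\alpha\nabla_x[\rho_\varepsilon^{\pm},u_\varepsilon,\theta_\varepsilon]$ with the moment functions built from $\partial^\alpha\{{\bf I-P}\}f_\varepsilon$ (the $v_iv_j$ and $v_i(|v|^2-5)$ Burnett moments), arranged so that, upon differentiating in $t$ and inserting relations (b) and the conservation laws, one gets $+\|\partial^\alpha\nabla_x[\rho_\varepsilon^{\pm},u_\varepsilon,\theta_\varepsilon]\|^2$. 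The \emph{electric block} is $\sim-(\partial^\alpha(u_\varepsilon^+-u_\varepsilon^-),\partial^\alpha E_\varepsilon)_{L^2_x}$: differentiating and using relation (a) yields $+\|\partial^\alpha E_\varepsilon\|^2$, and since $\div_x E_\varepsilon=\rho_\varepsilon^+-\rho_\varepsilon^-$ the pressure term contributes $+\|\partial^\alpha(\rho_\varepsilon^+-\rho_\varepsilon^-)\|^2$. The \emph{magnetic block}, taken only for $|\alpha|\le N-2$, is $\sim-(\partial^\alpha E_\varepsilon,\nabla_x\times\partial^\alpha B_\varepsilon)_{L^2_x}$: differentiating and substituting Amp\`ere's law for $\partial_t E_\varepsilon$ and Faraday's law $\partial_t B_\varepsilon=-\nabla_x\times E_\varepsilon$ yields $+\|\nabla_x\times\partial^\alpha B_\varepsilon\|^2=\|\nabla_x\partial^\alpha B_\varepsilon\|^2$ (using $\div_x B_\varepsilon=0$) modulo $\|\nabla_x\times\partial^\alpha E_\varepsilon\|^2\le\|E_\varepsilon\|^2_{H^{N-1}_x}$ and the current $\tfrac1\varepsilon\langle\partial^\alpha\{{\bf I-P}\}f_\varepsilon,q_1v\sqrt M\rangle_{L^2_v}$; the loss of one derivative (hence $H^{N-2}_x$ for $B_\varepsilon$) is forced by the curl. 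Since every summand of $\mathcal{E}^{int}_N$ is a product of two $L^2_x$ norms of $\partial^\alpha$ of $f_\varepsilon$, $E_\varepsilon$ or $B_\varepsilon$ with $|\alpha|\le N$, the claimed bound $\mathcal{E}^{int}_N(t)\lesssim\sum_{|\alpha|\le N}\|\partial^\alpha[f_\varepsilon,E_\varepsilon,B_\varepsilon]\|^2$ is immediate.

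The right-hand side is then obtained by collecting the remaining terms into three types. First, terms linear in $\{{\bf I-P}\}f_\varepsilon$: here one keeps the $x$-derivatives on the microscopic side (rather than integrating them onto $E_\varepsilon$ or $B_\varepsilon$), so that after Cauchy--Schwarz and Young only $\tfrac1{\varepsilon^2}\|\partial^{\alpha'}\{{\bf I-P}\}f_\varepsilon\|_D^2$ with $|\alpha'|\le N$ plus an $\eta$-fraction of the macroscopic dissipation appear; crucially, all velocity moments here are taken against Schwartz weights $P(v)\sqrt M$, so the soft-potential defect $\langle v\rangle^{\gamma/2}$ with $\gamma<0$ is harmless, $|\langle\{{\bf I-P}\}f_\varepsilon,P(v)\sqrt M\rangle_{L^2_v}|\lesssim|\{{\bf I-P}\}f_\varepsilon|_\nu\,|P(v)\sqrt M\langle v\rangle^{-\gamma/2}|_{L^2_v}\lesssim|\{{\bf I-P}\}f_\varepsilon|_{L^2_D}$, and since $\varepsilon<1$ any unweighted $O(1)$ microscopic $D$-norm term can be upgraded to a $\tfrac1{\varepsilon^2}$ one. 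Second, terms of the form $\eta\cdot(\text{macroscopic dissipation})$, absorbed into the left-hand side, the hierarchy of the block constants (magnetic smallest, electric next, hydrodynamic largest) preventing the feedbacks $\|\nabla_x\times E_\varepsilon\|^2\lesssim\|E_\varepsilon\|^2_{H^{N-1}_x}$ and $\eta\|\nabla_xB_\varepsilon\|^2$ from cascading. Third, genuinely quadratic terms, bounded by $\mathcal{E}_N(t)\mathcal{D}_N(t)$ by the same $L^2\mbox{-}L^3\mbox{-}L^6$ / $L^2\mbox{-}L^\infty\mbox{-}L^2$ Sobolev estimates as in Lemma \ref{lemma3.7}; the $\varepsilon\partial_t(\text{micro})$ cross-term produced in the electric block, after replacing $\partial_t E_\varepsilon$ by Amp\`ere's law, is again a microscopic moment and joins $\tfrac1{\varepsilon^2}\|\{{\bf I-P}\}f_\varepsilon\|_D^2$.

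The main obstacle is the simultaneous bookkeeping of the singular factors $\tfrac1\varepsilon,\tfrac1{\varepsilon^2}$ and of the regularity indices: the interactive functional must be organized so that, after substituting the conservation laws, the Burnett relations and the Maxwell equations, every singular factor either cancels by the kernel/oddness structure of ${\bf P}$ (the same mechanism used in Lemma \ref{lemma3.7}, e.g.\ $(\tfrac1\varepsilon v\times\partial^\alpha B_\varepsilon\cdot\nabla_v{\bf P}f_\varepsilon,\partial^\alpha{\bf P}f_\varepsilon)=0$) or is matched against a hidden $\tfrac1\varepsilon$ in a moment function, leaving only the non-singular macroscopic dissipation together with the clean $\tfrac1{\varepsilon^2}\|\{{\bf I-P}\}f_\varepsilon\|_D^2$; and the powers/weights in the moment functions must be chosen so that the hydrodynamic dissipation is produced at level $H^{N-1}_x$, the magnetic dissipation only at $H^{N-2}_x$ because the curl costs one derivative, and no $\tfrac1\varepsilon$ leaks through the Amp\`ere current, while the hierarchical choice of block constants closes the $E_\varepsilon$--$B_\varepsilon$ feedback loop. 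Once this structural accounting is fixed, the estimates themselves are a routine, if lengthy, repetition of the classical macroscopic estimate with the soft-potential $D$-norm in place of the usual $\nu$-norm.
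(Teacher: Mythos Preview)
Your proposal is correct and follows essentially the same approach as the paper's proof in the Appendix: derive the local conservation laws and higher (Burnett) moment relations from \eqref{VMB-F-perturbative}, build the interactive functional from the three blocks you describe (the $\mathcal{A}_{ij},\mathcal{B}_j$ cross terms for $[\rho_\varepsilon^\pm,u_\varepsilon,\theta_\varepsilon]$, the pairing $-\varepsilon(\partial^\alpha G,\partial^\alpha E_\varepsilon)$ for $E_\varepsilon$ with $G=\langle v\sqrt{M},\{{\bf I-P}\}f_\varepsilon\cdot q_1\rangle$, and $-(\partial^\alpha E_\varepsilon,\partial^\alpha\nabla_x\times B_\varepsilon)$ for $B_\varepsilon$), and close by hierarchical absorption. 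The one point to make precise is that every cross term in $\mathcal{E}^{int}_N$ carries an explicit factor of $\varepsilon$ (e.g.\ $(\partial^\alpha\mathcal{B}_j,\varepsilon\partial^\alpha\partial_j\theta_\varepsilon)$, $(\partial^\alpha G,\varepsilon\partial^\alpha E_\varepsilon)$), which is exactly what cancels the $\tfrac1\varepsilon$ in the transport/source terms and produces the non-singular macroscopic dissipation; this is the concrete realization of the ``bookkeeping'' you flag as the main obstacle.
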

\begin{proof}
This lemma can be proved by macroscopically projecting the original equation. For the sake of brevity in exposition, the details of the proof are provided in Appendix \ref{Macro}.
\end{proof}
{\bf Assumption 1:}
\begin{eqnarray}\label{Assump-1}
\sup_{0<t\leq T}\left\{(1+t)^{\varrho+\frac32}\left\|E_\varepsilon\right\|^2_{L^\infty_x}+(1+t)^{\varrho+\frac52}\left\|\nabla_xB_\varepsilon\right\|_{L^\infty_x}^2+\mathcal{E}_{N}(t)\right\}\leq M_1
\end{eqnarray}
where $M_1$ is a sufficiently small positive constant.
\begin{proposition}
Under {\bf Assumption 1},
there exist an energy functional $\mathcal{E}_{N}(t)$ and the corresponding energy dissipation functional $\mathcal{D}_{N}(t)$ which satisfy (\ref{E-N}), (\ref{D-N}) respectively such that
\begin{eqnarray}\label{prof-1}
&&\frac{\d}{\d t}\mathcal{E}_{N}(t)+\mathcal{D}_{N}(t)\nonumber\\
&\lesssim &M_1(1+t)^{-\varrho-\frac32}\left\{\left\|\langle v\rangle^{\frac 74}\nabla^N_x\{{\bf I-P}\}f_{\varepsilon}\right\|^2+\sum_{|\alpha'|= N-1}\|\langle v\rangle^{\frac74}\partial^{\alpha'}\nabla_v \{{\bf I-P}\}f_{\varepsilon}\|^2\right\}\nonumber\\
&&+\mathcal{E}_N(t)\sum_{|\alpha'|+|\beta'|\leq N-1}\|\langle v\rangle^{\frac74}\partial^{\alpha'}_{\beta'} \{{\bf I-P}\}f_{\varepsilon}\|^2
\end{eqnarray}
{holds for all $0\leq t\leq T$.}
\end{proposition}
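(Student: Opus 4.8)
The plan is to assemble $\mathcal{E}_N(t)$ and $\mathcal{D}_N(t)$ from Lemma~\ref{lemma3.7} and Lemma~\ref{mac-dissipation} by a linear combination with suitably small coefficients, and then substitute the decay rates of $E_\varepsilon$ and $\nabla_xB_\varepsilon$ supplied by \eqref{Assump-1}; no new estimate is needed, the content being the bookkeeping of small constants carried out uniformly in $\varepsilon$. First I would sum the inequality of Lemma~\ref{lemma3.7} over $|\alpha|\le N$, obtaining
\begin{equation*}
\frac{\d}{\d t}\sum_{|\alpha|\le N}\big\|\partial^\alpha[f_\varepsilon,E_\varepsilon,B_\varepsilon]\big\|^2+\frac1{\varepsilon^2}\sum_{|\alpha|\le N}\big\|\partial^\alpha\{{\bf I-P}\}f_\varepsilon\big\|_D^2\lesssim R_1+R_2+R_3,
\end{equation*}
where $R_1=\|E_\varepsilon\|^2_{L^\infty_x}\|\langle v\rangle^{7/4}\nabla^N_x\{{\bf I-P}\}f_\varepsilon\|^2+\|\nabla_xB_\varepsilon\|^2_{L^\infty_x}\sum_{|\alpha'|=N-1}\|\langle v\rangle^{7/4}\partial^{\alpha'}\nabla_v\{{\bf I-P}\}f_\varepsilon\|^2$, $R_2=\mathcal{E}_N(t)\{\|\langle v\rangle^{7/4}\{{\bf I-P}\}f_\varepsilon\|^2_{H^{N-1}_xL^2_v}+\sum_{|\alpha'|\le N-2}\|\langle v\rangle^{7/4}\partial^{\alpha'}\nabla_v\{{\bf I-P}\}f_\varepsilon\|^2\}$, and $R_3=\mathcal{E}_N(t)\mathcal{D}_N(t)+\eta\mathcal{D}_N(t)$. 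The left-hand side already carries the microscopic part $\varepsilon^{-2}\|\{{\bf I-P}\}f_\varepsilon\|^2_{H^N_xL^2_D}$ of \eqref{D-N} and, up to equivalence, the energy $\|f_\varepsilon\|^2_{H^N_x}+\|E_\varepsilon\|^2_{H^N_x}+\|B_\varepsilon\|^2_{H^N_x}$.

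\textbf{Step 2 (adding the macroscopic dissipation).} To generate the macroscopic part of \eqref{D-N} I would add $\kappa$ times \eqref{mac-dissipation-1}, with $0<\kappa\ll1$ fixed independently of $\varepsilon$, and set
\begin{equation*}
\mathcal{E}_N(t):=\sum_{|\alpha|\le N}\big\|\partial^\alpha[f_\varepsilon,E_\varepsilon,B_\varepsilon]\big\|^2+\kappa\,\mathcal{E}^{int}_N(t).
\end{equation*}
Since $\mathcal{E}^{int}_N(t)$ is controlled by $\sum_{|\alpha|\le N}\|\partial^\alpha[f_\varepsilon,E_\varepsilon,B_\varepsilon]\|^2$, for $\kappa$ small $\mathcal{E}_N(t)$ is equivalent to the right-hand side of \eqref{E-N}. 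On the left this produces, besides the microscopic dissipation, the term $c\kappa\big(\|\nabla_x[\rho^\pm_\varepsilon,u_\varepsilon,\theta_\varepsilon]\|^2_{H^{N-1}_x}+\|\rho^+_\varepsilon-\rho^-_\varepsilon\|^2+\|E_\varepsilon\|^2_{H^{N-1}_x}+\|\nabla_xB_\varepsilon\|^2_{H^{N-2}_x}\big)$, which, because $\|\nabla_x[\rho^\pm_\varepsilon,u_\varepsilon,\theta_\varepsilon]\|^2_{H^{N-1}_x}\sim\|\nabla_x{\bf P}f_\varepsilon\|^2_{H^{N-1}_xL^2_v}$, dominates a constant multiple of the dissipation in \eqref{D-N} (the extra nonnegative term $\|\rho^+_\varepsilon-\rho^-_\varepsilon\|^2$ being discarded); this lower bound is taken as $\mathcal{D}_N(t)$. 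On the right one picks up $c\kappa\,\varepsilon^{-2}\sum_{|\alpha|\le N}\|\partial^\alpha\{{\bf I-P}\}f_\varepsilon\|_D^2$, which carries the same $\varepsilon^{-2}$ weight as the left-hand microscopic term and is therefore absorbed for $\kappa$ small, together with the harmless $\kappa\,\mathcal{E}_N\mathcal{D}_N$. One then fixes $\eta\ll\kappa$ so that the $\eta\mathcal{D}_N$ of Step~1 — whose macroscopic and microscopic components now sit on the left with coefficients $\gtrsim\kappa$ and $\gtrsim1$ respectively — is absorbed, and finally uses $\mathcal{E}_N(t)\le M_1$ from \eqref{Assump-1} to absorb $\mathcal{E}_N\mathcal{D}_N$ once $M_1$ is small.

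\textbf{Step 3 (inserting the decay rates).} It remains to rewrite the surviving right-hand side. Assumption~1 gives $\|E_\varepsilon\|^2_{L^\infty_x}\le M_1(1+t)^{-\varrho-3/2}$ and $\|\nabla_xB_\varepsilon\|^2_{L^\infty_x}\le M_1(1+t)^{-\varrho-5/2}\le M_1(1+t)^{-\varrho-3/2}$, so $R_1$ is dominated by $M_1(1+t)^{-\varrho-3/2}\{\|\langle v\rangle^{7/4}\nabla^N_x\{{\bf I-P}\}f_\varepsilon\|^2+\sum_{|\alpha'|=N-1}\|\langle v\rangle^{7/4}\partial^{\alpha'}\nabla_v\{{\bf I-P}\}f_\varepsilon\|^2\}$. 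For $R_2$, the first summand is the $\beta'=0$ contribution and the second the $|\beta'|=1$ contribution to $\sum_{|\alpha'|+|\beta'|\le N-1}\|\langle v\rangle^{7/4}\partial^{\alpha'}_{\beta'}\{{\bf I-P}\}f_\varepsilon\|^2$, hence $R_2\lesssim\mathcal{E}_N(t)\sum_{|\alpha'|+|\beta'|\le N-1}\|\langle v\rangle^{7/4}\partial^{\alpha'}_{\beta'}\{{\bf I-P}\}f_\varepsilon\|^2$. Collecting these bounds gives \eqref{prof-1}.

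\textbf{Main obstacle.} There is no genuine analytic difficulty here: the weighted, velocity-derivative quantities on the right of \eqref{prof-1} are intentionally left uncontrolled at this stage and will be absorbed later through the $w_\ell(\alpha,\beta)$-weighted estimates. The only point needing care is the order and $\varepsilon$-uniformity of the absorptions in Step~2 — the constant $\kappa$ must depend only on the implicit constants of Lemmas~\ref{lemma3.7} and \ref{mac-dissipation}, not on $\varepsilon$ — together with the verification that the macroscopic dissipation furnished by \eqref{mac-dissipation-1}, written in terms of $\nabla_x[\rho^\pm_\varepsilon,u_\varepsilon,\theta_\varepsilon]$, is indeed equivalent to the $\|\nabla_x{\bf P}f_\varepsilon\|^2_{H^{N-1}_xL^2_v}$ appearing in \eqref{D-N}.
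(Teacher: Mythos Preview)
Your proposal is correct and follows exactly the approach the paper takes: the paper's own proof consists of the single sentence ``A proper linear combination of \eqref{lemma3.7-1} and \eqref{mac-dissipation-1} gives \eqref{prof-1},'' and your Steps~1--3 simply spell out what that linear combination is and why the absorptions go through uniformly in $\varepsilon$. The bookkeeping you provide (choosing $\kappa$ small independent of $\varepsilon$, then $\eta\ll\kappa$, then using $\mathcal{E}_N\le M_1$ and the pointwise decay of $\|E_\varepsilon\|_{L^\infty_x}^2$, $\|\nabla_xB_\varepsilon\|_{L^\infty_x}^2$ from \eqref{Assump-1}) is precisely the intended content of ``proper linear combination.''
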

\begin{proof}
  A proper linear combination of \eqref{lemma3.7-1} and \eqref{mac-dissipation-1} gives \eqref{prof-1}.
\end{proof}
\subsection{The top-order energy estimates with weight}
To control the first term on the right-hand side of \eqref{prof-1}, one need the energy estimate with the weight. However, To overcome this difficulty term, we have the following the result:

\begin{lemma}\label{noncut-N-wight} It holds that
\begin{eqnarray}\label{N-wight-1}
&&\frac{\d}{\d t}\sum_{|\alpha|=N}{\varepsilon^2}\left\|w_l(\alpha,0)\partial^\alpha f_{\varepsilon}\right\|^2
+\frac{\vartheta q{\varepsilon^2}}{(1+t)^{1+\vartheta}}\sum_{|\alpha|=N}\left\|\langle v\rangle^{\frac12} w_l(\alpha,0)\partial^\alpha f_{\varepsilon}\right\|^2+\sum_{|\alpha|=N}\left\|w_l(\alpha,0)\partial^\alpha f_{\varepsilon}\right\|^2_{D}\nonumber\\
&\lesssim&\mathcal{D}_N(t)+
{\varepsilon^2}\left\|\partial^\alpha E_\varepsilon\right\|\left\|{M}^\delta\partial^\alpha f_{\varepsilon}\right\|+\mathcal{E}_N(t)\mathcal{D}_{N,l}(t)
\end{eqnarray}
{for all $0\leq t\leq T$.}
\end{lemma}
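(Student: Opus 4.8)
The plan is to apply $\partial^\alpha$ with $|\alpha|=N$ to the perturbed equation \eqref{VMB-F-perturbative}, multiply by $\varepsilon^2 w_l^2(\alpha,0)\partial^\alpha f_\varepsilon$, and integrate over $\mathbb{R}^3_x\times\mathbb{R}^3_v$. First I would record the basic weighted energy identity: the time derivative hitting the weight $w_l(\alpha,0)=e^{q\langle v\rangle/(1+t)^\vartheta}\langle v\rangle^{4(l-|\alpha|)}$ produces the good dissipative term
$\frac{\vartheta q \varepsilon^2}{(1+t)^{1+\vartheta}}\|\langle v\rangle^{1/2} w_l(\alpha,0)\partial^\alpha f_\varepsilon\|^2$
(here $\langle v\rangle^{1/2}$ because $\partial_t\langle v\rangle/(1+t)^\vartheta \sim \langle v\rangle/(1+t)^{1+\vartheta}$ up to the $\vartheta$ factor), and the linear collision term $\frac{1}{\varepsilon^2}(\mathscr{L}\partial^\alpha f_\varepsilon, w_l^2(\alpha,0)\partial^\alpha f_\varepsilon)$ must be handled using the weighted coercivity estimate for $\mathscr{L}$ (the appendix lemma on the linear operator), which gives a lower bound of the form $\frac{1}{\varepsilon^2}\|w_l(\alpha,0)\partial^\alpha f_\varepsilon\|_D^2 - \frac{C}{\varepsilon^2}\|\partial^\alpha{\bf P}f_\varepsilon\|^2 - (\text{lower weight})$; the multiplication by $\varepsilon^2$ is precisely what converts the dangerous $\frac{1}{\varepsilon^2}\|\partial^\alpha{\bf P}f_\varepsilon\|^2$ into $\|\partial^\alpha{\bf P}f_\varepsilon\|^2$, which is absorbable into $\mathcal{D}_N(t)$.

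Next I would estimate the forcing terms on the right-hand side. The term $\frac{1}{\varepsilon}(\partial^\alpha(E_\varepsilon\cdot v\sqrt M q_1), \varepsilon^2 w_l^2\partial^\alpha f_\varepsilon)$: since $\sqrt M$ kills the exponential weight, $w_l^2 M$ is bounded, and this contributes $\lesssim \varepsilon^2\|\partial^\alpha E_\varepsilon\|\|M^\delta\partial^\alpha f_\varepsilon\|$ — the explicit term on the right of \eqref{N-wight-1}. The quadratic term $\frac{q_0}{2}(E_\varepsilon\cdot v)f_\varepsilon$ and the nonlinear collision term $\frac{1}{\varepsilon}\mathscr{T}(f_\varepsilon,f_\varepsilon)$ are handled by the weighted trilinear estimates for $\mathscr{T}$ from the appendix (the analog of Lemma~\ref{Gamma-noncut} with weight), giving $\lesssim \mathcal{E}_N(t)\mathcal{D}_{N,l}(t)$ plus $\eta\,(\text{dissipation})$; one uses the strong angular singularity $\tfrac12\le s<1$ here so that the weighted $\|\cdot\|_D$-norm carries enough velocity moments. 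The transport term $\frac{1}{\varepsilon}v\cdot\nabla_x f_\varepsilon$ and the magnetic term $\frac{1}{\varepsilon}q_0(v\times B_\varepsilon)\cdot\nabla_v f_\varepsilon$ are the genuinely hard ones: for the first, I would pass to the Fourier side in $x$ and split $\langle v\rangle^{3/2}\cdot\langle v\rangle^{-3/2}$ so that the extra $v$-factor from $v\cdot\nabla_x$ is split as $\langle v\rangle^{3/2}w_l(\alpha,0)\le \langle v\rangle^{-3/2}w_l(\alpha-e_i,0)$ (using that lowering $|\alpha|$ by one raises the polynomial weight by $\langle v\rangle^4$); this converts the velocity growth into a term controlled by $\|w_l(\alpha,0)\partial^\alpha f_\varepsilon\|_D^2$ plus $\frac{\eta}{\varepsilon^2}\|w_l(\cdot)\partial^\alpha\{{\bf I-P}\}f_\varepsilon\|_D^2$, both sitting inside $\mathcal{D}_N(t)+\mathcal{E}_N(t)\mathcal{D}_{N,l}(t)$. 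For the magnetic term, after integration by parts in $v$ the leading piece (no derivative on $B_\varepsilon$) has a favorable structure; the remaining pieces distribute at least one $x$-derivative onto $B_\varepsilon$ so that $\|\partial^{\alpha_1}B_\varepsilon\|_{L^\infty_x}\lesssim\|B_\varepsilon\|_{H^N_x}$ times a small constant, and the same weight-splitting trick $\langle v\rangle w_l(\alpha,0)\le w_l(\alpha-e_i,0)$ absorbs the linear $v$-growth into lower-weight dissipation.

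The main obstacle, as the authors flag in their strategy discussion, is the simultaneous presence of the singular factor $\tfrac1\varepsilon$ and the velocity growth in the transport and magnetic terms: there is no time decay available for the linear transport term, so one cannot use the extra dissipation \eqref{extra-dissipation} directly, and one must instead rely entirely on the weight-index bookkeeping (the gain of $\langle v\rangle^4$ per unit drop in $|\alpha|$) together with the weighted $D$-norm coercivity — verifying carefully that all the auxiliary terms generated land inside $\mathcal{D}_N(t)$, $\mathcal{E}_N(t)\mathcal{D}_{N,l}(t)$, or the explicit $\varepsilon^2\|\partial^\alpha E_\varepsilon\|\|M^\delta\partial^\alpha f_\varepsilon\|$ term, and that no uncontrolled $\tfrac1{\varepsilon^2}\|\partial^\alpha{\bf P}f_\varepsilon\|^2$ survives. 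Finally, I would sum over all $|\alpha|=N$ and collect to obtain \eqref{N-wight-1}.
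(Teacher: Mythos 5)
Your overall strategy is the paper's: apply $\partial^\alpha$, test against $w_l^2(\alpha,0)\partial^\alpha f_\varepsilon$, use the weighted coercivity of $\mathscr{L}$ (which costs $\frac{1}{\varepsilon^2}\|\partial^\alpha {\bf P}f_\varepsilon\|^2$), multiply the whole identity by $\varepsilon^2$ so that this loss becomes $\|\partial^\alpha {\bf P}f_\varepsilon\|^2\lesssim\mathcal{D}_N(t)$, and then use the gain of $\langle v\rangle^4$ per unit drop in $|\alpha|$ to absorb the velocity growth from the Lorentz-force terms into $\mathcal{E}_N(t)\mathcal{D}_{N,l}(t)$ plus an $\eta$-fraction of $\|w_l(\alpha,0)\partial^\alpha f_\varepsilon\|_D^2$. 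Your treatment of the source term $\frac{1}{\varepsilon}E_\varepsilon\cdot v\sqrt{M}q_1$ and of the magnetic term (leading commutator-free piece vanishing by antisymmetry of $(v\times B_\varepsilon)\cdot\nabla_v$ against a radial weight, then Fourier-in-$v$ splitting with the weight bookkeeping for the pieces carrying $\partial^{\alpha_1}B_\varepsilon$) matches the paper.

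The one genuine slip is the transport term. For $|\alpha|=N$ and $\beta=0$ one has $\partial^\alpha(v\cdot\nabla_x f_\varepsilon)=v\cdot\nabla_x\partial^\alpha f_\varepsilon$, and since $w_l(\alpha,0)$ is independent of $x$, the pairing $\bigl(v\cdot\nabla_x\partial^\alpha f_\varepsilon,\,w_l^2(\alpha,0)\partial^\alpha f_\varepsilon\bigr)$ vanishes identically after integration by parts in $x$; this is why it does not appear on the right-hand side of \eqref{N-f-w} at all. The Fourier splitting $\langle v\rangle^{3/2}\cdot\langle v\rangle^{-3/2}$ that you invoke is the device of the companion Lemma \ref{N-micro-weight}, where $|\beta|\geq 1$ and the commutator $\nabla_x\partial^{\alpha}_{\beta-e_i}$ survives. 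More importantly, the bound you propose for it would not close here: the term $\frac{\eta}{\varepsilon^2}\|w_l(\alpha,0)\partial^\alpha\{{\bf I-P}\}f_\varepsilon\|_D^2$ with $|\alpha|=N$ is \emph{not} contained in $\mathcal{D}_N(t)+\mathcal{E}_N(t)\mathcal{D}_{N,l}(t)$ (the top-order weighted dissipation in $\mathcal{D}_{N,l}$ carries no $\varepsilon^{-2}$ and comes with a decaying time prefactor), and $\|w_l(\alpha,0)\partial^\alpha f_\varepsilon\|_D^2$ without a small constant is precisely the dissipation you are trying to produce on the left, so it cannot be absorbed into the right-hand side either. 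Since the term is actually zero the conclusion is unaffected, but the argument as you wrote it would fail if that term were present; you should replace it with the one-line observation that it vanishes.
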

\begin{proof}
For this purpose, the standard energy estimate on $\partial^\alpha f$ with $|\alpha|= N$ weighted by the time-velocity dependent function $w_{l}(\alpha,0)(t,v)$ gives

\begin{equation}\label{N-f-w}
\begin{aligned}
&\frac{\d}{\d t}\left\|w_{l}(\alpha,0)\partial^\alpha f_{\varepsilon}\right\|^2
+\frac{\vartheta q}{(1+t)^{1+\vartheta}}\left\|\langle v\rangle^{\frac12} w_{l}(\alpha,0)\partial^\alpha f_{\varepsilon}\right\|^2+\frac1{\varepsilon^2}\|w_{l}(\alpha,0)\partial^\alpha f_{\varepsilon}\|_{D}^2\\[2mm]
\lesssim&\frac1{\varepsilon^2}\left\|\partial^\alpha {\bf P}f_{\varepsilon}\right\|^2+\frac1{\varepsilon^2}\left\|\partial^\alpha \{{\bf I-P}\}f_{\varepsilon}\right\|_{D}^2+
\left\|\partial^\alpha E_\varepsilon\right\|\left\|{M}^\delta\partial^\alpha f_{\varepsilon}\right\|\\
&+\left|\left(\partial^\alpha (E_\varepsilon\cdot vf_\varepsilon),w^2_{l}(\alpha,0)\partial^\alpha f_\varepsilon\right)\right|+\left|\left(\partial^\alpha (E_\varepsilon\cdot \nabla_vf_\varepsilon),w^2_{l}(\alpha,0)\partial^\alpha f_\varepsilon\right)\right|\\[2mm]
&+\frac1{\varepsilon}\left|\left(\partial^\alpha[ v\times B_\varepsilon\cdot\nabla_v f_\varepsilon],w^2_{l}(\alpha,0)\partial^\alpha f_\varepsilon\right)\right|+\frac1{\varepsilon}\left|\left(\partial^\alpha \mathscr{T}(f_\varepsilon,f_\varepsilon),w^2_{l}(\alpha,0)\partial^\alpha f_\varepsilon\right)\right|
\end{aligned}
\end{equation}
where the coercivity estimates on $\mathscr{L}$ i.e. \eqref{L-noncut-2} tell us that
\begin{eqnarray*}
	&&\frac1{\varepsilon^2}\left(\mathscr{L}\partial^\alpha f_\varepsilon,w^2_{l}(\alpha,0)\partial^\alpha f_\varepsilon\right)\\
	&\gtrsim&\frac1{\varepsilon^2}\|w_{l}(\alpha,0)\partial^\alpha f_{\varepsilon}\|_{D}^2-\frac1{\varepsilon^2}\|\partial^\alpha f_{\varepsilon}\|_{D}^2\\
	&\gtrsim&\frac1{\varepsilon^2}\|w_{l}(\alpha,0)\partial^\alpha f_{\varepsilon}\|_{D}^2-\frac1{\varepsilon^2}\left\|\partial^\alpha {\bf P}f_{\varepsilon}\right\|^2-\frac1{\varepsilon^2}\left\|\partial^\alpha \{{\bf I-P}\}f_{\varepsilon}\right\|_{D}^2.
\end{eqnarray*}

To control the singularity $\frac1{\epsilon^2}$ in $\frac1{\varepsilon^2}\left\|\partial^\alpha {\bf P}f_{\varepsilon}\right\|^2$ on the right-hand side of \eqref{N-f-w}, we have to multiply ${\varepsilon^2}$ to \eqref{N-f-w} such that
\begin{equation}\label{alpha-f-1}
\begin{aligned}
&\frac{\d}{\d t}{\varepsilon^2}\left\|w_l(\alpha,0)\partial^\alpha f_{\varepsilon}\right\|^2
+\frac{\vartheta q{\varepsilon^2}}{(1+t)^{1+\vartheta}}\left\|\langle v\rangle^\frac12 w_l(\alpha,0)\partial^\alpha f_{\varepsilon}\right\|^2+\left\|w_l(\alpha,0)\partial^\alpha f_{\varepsilon}\right\|^2_{D}\\[2mm]
\lesssim&\left\|\partial^\alpha {\bf P}f_{\varepsilon}\right\|^2+\left\|\partial^\alpha \{{\bf I-P}\}f_{\varepsilon}\right\|_{D}^2+
{\varepsilon^2}\left\|\partial^\alpha E_\varepsilon\right\|\left\|{M}^\delta\partial^\alpha f_{\varepsilon}\right\|\\
&+{\varepsilon^2}\left|\left(\partial^\alpha (E_\varepsilon\cdot vf_\varepsilon),w^2_l(\alpha,0)\partial^\alpha f_\varepsilon\right)\right|+{\varepsilon^2}\left|\left(\partial^\alpha (E_\varepsilon\cdot \nabla_vf_\varepsilon),w^2_l(\alpha,0)\partial^\alpha f_\varepsilon\right)\right|\\
&+{\varepsilon}\left|\left(\partial^\alpha[ v\times B_\varepsilon\cdot\nabla_v f_\varepsilon],w^2_l(\alpha,0)\partial^\alpha f_\varepsilon\right)\right|+{\varepsilon}\left|\left(\partial^\alpha \mathscr{T}(f_\varepsilon,f_\varepsilon),w^2_l(\alpha,0)\partial^\alpha f_\varepsilon\right)\right|.
\end{aligned}
\end{equation}
By using Sobolev inequalities, one has
\begin{eqnarray}
&&{\varepsilon^2}\left|\left(\partial^\alpha (E_\varepsilon\cdot vf_\varepsilon),w^2_l(\alpha,0)\partial^\alpha f_\varepsilon\right)\right|\\
&\lesssim&{\varepsilon^2}\|E_\varepsilon\|_{L^\infty_x}\left\|\langle v\rangle^{1/2}w_l(\alpha,0)\partial^{\alpha}f_{\varepsilon}\right\|
\left\|\langle v\rangle^{1/2}w_l(\alpha,0)\partial^\alpha f_{\varepsilon}\right\|\nonumber\\
&&+\sum_{\substack{1\leq|\alpha_1|\leq N-2}}{\varepsilon^2}\left\|\partial^{\alpha_1}E_\varepsilon\right\|_{L^\infty_x}
\left\|\langle v\rangle^{\frac74}w_l(\alpha,0)\partial^{\alpha-\alpha_1}f_{\varepsilon}\right\|
\left\|\langle v\rangle^{-\frac34}w_l(\alpha,0)\partial^\alpha f_{\varepsilon}\right\|\nonumber\\
&&+\sum_{\substack{|\alpha_1|= N-1}}{\varepsilon^2}\left\|\partial^{\alpha_1}E_\varepsilon\right\|_{L^3_x}
\left\|\langle v\rangle^{\frac74}w_l(\alpha,0)\partial^{\alpha-\alpha_1}f_{\varepsilon}\right\|_{L^6_x}
\left\|\langle v\rangle^{-\frac34}w_l(\alpha,0)\partial^\alpha f_{\varepsilon}\right\|\nonumber\\
&&+{\varepsilon^2}\left\|\partial^{\alpha}E_\varepsilon\right\|\left\|\langle v\rangle^{\frac74}w_l(\alpha,0)f_{\varepsilon}\right\|_{L^\infty_x}
\left\|\langle v\rangle^{-\frac34}w_l(\alpha,0)\partial^\alpha f_{\varepsilon}\right\|\nonumber\\
&\lesssim&{\varepsilon^2}\|E_\varepsilon\|_{L^\infty_x}\left\|\langle v\rangle^{1/2}w_l(\alpha,0)\partial^{\alpha}f_{\varepsilon}\right\|^2\nonumber\\
&&+\sum_{\substack{1\leq|\alpha_1|\leq N-2}}{\varepsilon^2}\left\|\partial^{\alpha_1}E_\varepsilon\right\|_{L^\infty_x}^2
\left\|\langle v\rangle^{\frac74}w_l(\alpha,0)\partial^{\alpha-\alpha_1}f_{\varepsilon}\right\|^2\nonumber\\
&&+\sum_{\substack{|\alpha_1|= N-1}}{\varepsilon^2}\left\|\partial^{\alpha_1}E_\varepsilon\right\|_{L^3_x}^2
\left\|\langle v\rangle^{\frac74}w_l(\alpha,0)\partial^{\alpha-\alpha_1}f_{\varepsilon}\right\|^2_{L^6_x}\nonumber\\
&&+{\varepsilon^2}\left\|\partial^{\alpha}E_\varepsilon\right\|^2\left\|\langle v\rangle^{\frac74}w_l(\alpha,0)f_{\varepsilon}\right\|^2_{L^\infty_x}
+\eta\left\|\langle v\rangle^{-\frac34}w_l(\alpha,0)\partial^\alpha f_{\varepsilon}\right\|^2\nonumber\\
&\lesssim&{\varepsilon^2}\|E_\varepsilon\|_{L^\infty_x}\left\|\langle v\rangle^{1/2}w_l(\alpha,0)\partial^{\alpha}f_{\varepsilon}\right\|^2+\mathcal{E}_N(t)\mathcal{D}_{N,l}(t)
+\eta\left\|\langle v\rangle^{-\frac34}w_l(\alpha,0)\partial^\alpha f_{\varepsilon}\right\|^2\nonumber
\end{eqnarray}
where we used the fact that
\[\langle v\rangle^{\frac74}w_l(\alpha,0)\lesssim w_l(\alpha-\alpha_1)\langle v\rangle^{\frac74-4|\alpha_1|}\lesssim w_l(\alpha-\alpha_1)\langle v\rangle^{-\frac34}.\]

As for ${\varepsilon}\left|\left(\partial^\alpha[ v\times B_\varepsilon\cdot\nabla_v f_\varepsilon],w^2_l(\alpha,0)\partial^\alpha f_\varepsilon\right)\right|$, one need to estimate it very carefully because of lack of $\varepsilon-$order,
\begin{eqnarray*}
&&{\varepsilon}\left|\left(\partial^\alpha[ v\times B_\varepsilon\cdot\nabla_v f_\varepsilon],w^2_l(\alpha,0)\partial^\alpha f_\varepsilon\right)\right|\\
&\leq&{\varepsilon}\left|\left(\partial^\alpha[ v\times B_\varepsilon\cdot\nabla_v {\bf P}f_\varepsilon],w^2_l(\alpha,0)\partial^\alpha {\bf P}f_\varepsilon\right)\right|\\
&&+{\varepsilon}\left|\left(\partial^\alpha[ v\times B_\varepsilon\cdot\nabla_v {\bf P}f_\varepsilon],w^2_l(\alpha,0)\partial^\alpha {\bf \{I-P\}}f_{\varepsilon}\right)\right|\\
&&+{\varepsilon}\left|\left(\partial^\alpha[ v\times B_\varepsilon\cdot\nabla_v {\bf \{I-P\}}f_\varepsilon],w^2_l(\alpha,0)\partial^\alpha {\bf P}f_\varepsilon\right)\right|\\
&&+{\varepsilon}\left|\left(\partial^\alpha[ v\times B_\varepsilon\cdot\nabla_v {\bf \{I-P\}}f_\varepsilon],w^2_l(\alpha,0)\partial^\alpha {\bf \{I-P\}}f_{\varepsilon}\right)\right|\\
&\leq&{\varepsilon}\left|\left(\partial^\alpha[ v\times B_\varepsilon\cdot\nabla_v {\bf P}f_\varepsilon],w^2_l(\alpha,0)\partial^\alpha {\bf \{I-P\}}f_{\varepsilon}\right)\right|\\
&&+{\varepsilon}\left|\left(\partial^\alpha[ v\times B_\varepsilon\cdot\nabla_v {\bf \{I-P\}}f_\varepsilon],w^2_l(\alpha,0)\partial^\alpha {\bf P}f_\varepsilon\right)\right|\\
&&+{\varepsilon}\left|\left(\partial^\alpha[ v\times B_\varepsilon\cdot\nabla_v {\bf \{I-P\}}f_\varepsilon],w^2_l(\alpha,0)\partial^\alpha {\bf \{I-P\}}f_{\varepsilon}\right)\right|\\
&\lesssim&\mathcal{E}_N(t)\mathcal{D}_N(t)+\eta\|\partial^\alpha f_{\varepsilon}\|_{D}^2+{\varepsilon}\left|\left(\partial^\alpha[ v\times B_\varepsilon\cdot\nabla_v {\bf \{I-P\}}f_\varepsilon],w^2_l(\alpha,0)\partial^\alpha {\bf \{I-P\}}f_{\varepsilon}\right)\right|
\end{eqnarray*}
where the last term can be bounded by
\begin{eqnarray*}
 &&{\varepsilon}\left|\left(\partial^\alpha[ v\times B_\varepsilon\cdot\nabla_v {\bf \{I-P\}}f_\varepsilon],w^2_l(\alpha,0)\partial^\alpha {\bf \{I-P\}}f_{\varepsilon}\right)\right|\\
 &=&{\varepsilon}\left|\left([ v\times B_\varepsilon\cdot\nabla_v \partial^\alpha{\bf \{I-P\}}f_\varepsilon],w^2_l(\alpha,0)\partial^\alpha {\bf \{I-P\}}f_{\varepsilon}\right)\right|\\
 &&+\sum_{1\leq|\alpha_1|\leq N}{\varepsilon}\left|\left([ v\times \partial^{\alpha_1}B_\varepsilon\cdot\nabla_v \partial^{\alpha-\alpha_1}{\bf \{I-P\}}f_\varepsilon],w^2_l(\alpha,0)\partial^\alpha {\bf \{I-P\}}f_{\varepsilon}\right)\right|\\
 &=&\sum_{1\leq|\alpha_1|\leq N}{\varepsilon}\left|\left([ v\times \partial^{\alpha_1}B_\varepsilon\cdot\nabla_v \partial^{\alpha-\alpha_1}{\bf \{I-P\}}f_\varepsilon],w^2_l(\alpha,0)\partial^\alpha {\bf \{I-P\}}f_{\varepsilon}\right)\right|\\
 &\lesssim&\sum_{1\leq|\alpha_1|\leq N}{\varepsilon}\int_{\mathbb{R}^3_\xi\times\mathbb{R}^3_v}|\xi||\partial^{\alpha_1}B_\varepsilon|
 \left|\mathcal{F}_v\left[w_l(\alpha-\alpha_1) \partial^{\alpha-\alpha_1}{\bf \{I-P\}}f_{\varepsilon}\langle v\rangle^{\frac52-4|\alpha_1|}\right]\right|\\
 &&\times \left|\mathcal{F}_v\left[w_l(\alpha,0)\partial^\alpha {\bf \{I-P\}}f_{\varepsilon}\langle v\rangle^{-\frac32}\right]\right|d\xi dv\\
  &\lesssim&\sum_{1\leq|\alpha_1|\leq N}{\varepsilon}\int_{\mathbb{R}^3_\xi\times\mathbb{R}^3_v}|\xi||\partial^{\alpha_1}B_\varepsilon|
 \left|\mathcal{F}_v\left[w_l(\alpha-\alpha_1) \partial^{\alpha-\alpha_1}{\bf \{I-P\}}f_{\varepsilon}\langle v\rangle^{-\frac32}\right]\right|^2d\xi dv\\
 &&+\eta{\varepsilon}\int_{\mathbb{R}^3_\xi\times\mathbb{R}^3_v} |\xi|\left|\mathcal{F}_v\left[w_l(\alpha,0)\partial^\alpha {\bf \{I-P\}}f_{\varepsilon}\langle v\rangle^{-\frac32}\right]\right|^2d\xi dv\\
 &\lesssim&\mathcal{E}_N(t)\mathcal{D}_{N,l}(t)+\eta\left\|w_l(\alpha,0)\partial^\alpha {\bf \{I-P\}}f_{\varepsilon}\right\|_D^2.
\end{eqnarray*}

Similarly, one also has
\begin{eqnarray}
	&&{\varepsilon^2}\left|\left(\partial^\alpha (E_\varepsilon\cdot \nabla_vf_\varepsilon),w^2_l(\alpha,0)\partial^\alpha f_\varepsilon\right)\right|\\
	&\lesssim&{\varepsilon^2}\|E_\varepsilon\|_{L^\infty_x}\left\|\langle v\rangle^{1/2}w_l(\alpha,0)\partial^{\alpha}f_{\varepsilon}\right\|^2+\mathcal{E}_N(t)\mathcal{D}_{N,l}(t)+\eta\left\|w_l(\alpha,0)\partial^\alpha {\bf \{I-P\}}f_{\varepsilon}\right\|_D^2\nonumber\\		&\lesssim&{\varepsilon^2}M_1^{\frac12}(1+t)^{-\frac34-\frac\varrho2}\left\|\langle v\rangle^{1/2}w_l(\alpha,0)\partial^{\alpha}f_{\varepsilon}\right\|^2+\mathcal{E}_N(t)\mathcal{D}_{N,l}(t)+\eta\left\|w_l(\alpha,0)\partial^\alpha {\bf \{I-P\}}f_{\varepsilon}\right\|_D^2\nonumber\\
	&\lesssim&{\varepsilon^2}M_1^{\frac12}(1+t)^{-1-\vartheta}\left\|\langle v\rangle^{1/2}w_l(\alpha,0)\partial^{\alpha}f_{\varepsilon}\right\|^2+\mathcal{E}_N(t)\mathcal{D}_{N,l}(t)+\eta\left\|w_l(\alpha,0)\partial^\alpha {\bf \{I-P\}}f_{\varepsilon}\right\|_D^2\nonumber
\end{eqnarray}
where we use {\bf Assumption 1} and take
\[0<\vartheta\leq\frac\varrho2-\frac14.\]
The last term on the right-hand side of \eqref{alpha-f-1} can be bounded by
\begin{equation}
\begin{aligned}
&{\varepsilon}\left|\left(\partial^\alpha \mathscr{T}(f_\varepsilon,f_\varepsilon),w^2_l(\alpha,0)\partial^\alpha f_\varepsilon\right)\right|\\
\lesssim& \mathcal{E}_N(t)\mathcal{D}_{N,l}(t)+\eta\left\|w_l(\alpha,0)\partial^\alpha f_{\varepsilon}\right\|_{D}^2.
\end{aligned}
\end{equation}
By collecting the related inequalities into \eqref{alpha-f-1}, one has \eqref{N-wight-1},
which complete the proof of Lemma \ref{noncut-N-wight}.
\end{proof}
In addition to the above highest-order energy estimates with the wight $w(\alpha,0)$ for $|\alpha|=N$, to avoid the macroscopic part with the singularity factor $\frac 1{\epsilon^2}$, for the other cases with weight, we applying the micro projection equality.

\begin{lemma}\label{N-micro-weight}
It holds that
  \begin{eqnarray}\label{N-micro-weight-1}
&&\frac{\d}{\d t}\sum_{|\alpha|+|\beta|= N,\atop|\alpha|\leq N-1}\left\|w_l(\alpha,\beta)\partial^\alpha_\beta \{{\bf I-P}\}f_{\varepsilon}\right\|^2+\frac1{\varepsilon^2}\sum_{|\alpha|+|\beta|= N,\atop|\alpha|\leq N-1}\left\|w_l(\alpha,\beta)\partial^\alpha_\beta \{{\bf I-P}\}f_{\varepsilon}\right\|^2_D\nonumber\\
&&+\frac{q\vartheta}{(1+t)^{1+\vartheta}}\sum_{|\alpha|+|\beta|= N,\atop|\alpha|\leq N-1}\|\langle v\rangle^{\frac12}w_l(\alpha,\beta)\partial^\alpha_\beta \{{\bf I-P}\}f_{\varepsilon}\|^2\nonumber\\
&\lesssim&\sum_{|\alpha|=N-1,|\beta|=1}\left\|w_l(\alpha+e_i,\beta-e_i)\partial^{\alpha+e_i}_{\beta-e_i}\{{\bf I-P}\}f_{\varepsilon}\right\|_D^2+\mathcal{E}_N(t)\mathcal{D}_{N,l}(t)+\mathcal{D}_{N}(t).
\end{eqnarray}
\end{lemma}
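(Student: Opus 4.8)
The plan is to run a weighted $\partial^\alpha_\beta$-energy estimate directly on the microscopic equation \eqref{I-P-cut} for $\{{\bf I-P}\}f_\varepsilon$, rather than on the full perturbed system \eqref{VMB-F-perturbative}. The point of using \eqref{I-P-cut} when $|\beta|\geq1$ is that $\nabla_v\{{\bf I-P}\}f_\varepsilon$ is already purely microscopic, so the weighted coercivity of $\mathscr{L}$ does not generate the singular macroscopic term $\tfrac1{\varepsilon^2}\|\partial^\alpha{\bf P}f_\varepsilon\|^2$ that, in Lemma~\ref{noncut-N-wight}, had to be absorbed by multiplying through by $\varepsilon^2$; here one keeps the full $\tfrac1{\varepsilon^2}\|\cdot\|_D^2$ dissipation. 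Concretely, for fixed $\alpha,\beta$ with $|\alpha|+|\beta|=N$ and $|\alpha|\leq N-1$, I would apply $\partial^\alpha_\beta$ to \eqref{I-P-cut} and pair in $L^2_{x,v}$ against $w_l^2(\alpha,\beta)\partial^\alpha_\beta\{{\bf I-P}\}f_\varepsilon$. The $\partial_t$-term produces $\tfrac12\tfrac{\d}{\d t}\|w_l(\alpha,\beta)\partial^\alpha_\beta\{{\bf I-P}\}f_\varepsilon\|^2$ together with, since $\partial_t w_l^2(\alpha,\beta)=-\tfrac{2q\vartheta\langle v\rangle}{(1+t)^{1+\vartheta}}w_l^2(\alpha,\beta)$, the good extra dissipation $\tfrac{q\vartheta}{(1+t)^{1+\vartheta}}\|\langle v\rangle^{1/2}w_l(\alpha,\beta)\partial^\alpha_\beta\{{\bf I-P}\}f_\varepsilon\|^2$; the weighted coercivity of $\mathscr{L}$ (i.e. \eqref{L-noncut-2}, together with the appendix bounds for the $\partial_\beta$--$\mathscr{L}$ commutators) yields $\gtrsim\tfrac1{\varepsilon^2}\|w_l(\alpha,\beta)\partial^\alpha_\beta\{{\bf I-P}\}f_\varepsilon\|_D^2$ minus lower-order $D$-norms, and the macroscopic coupling terms of \eqref{I-P-cut} (built from ${\bf P}f_\varepsilon$ and its $x$-derivatives) are controlled by the macroscopic dissipation inside $\mathcal{D}_N(t)$.

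The heart of the matter is the transport term $\tfrac1\varepsilon\big(\partial^\alpha_\beta[v\cdot\nabla_x\{{\bf I-P}\}f_\varepsilon],\,w_l^2(\alpha,\beta)\partial^\alpha_\beta\{{\bf I-P}\}f_\varepsilon\big)$, which carries both the singular prefactor $\tfrac1\varepsilon$ and a derivative loss. Writing $\partial^\alpha_\beta(v\cdot\nabla_x g)=v\cdot\nabla_x\partial^\alpha_\beta g+\sum_j\beta_j\partial^{\alpha+e_j}_{\beta-e_j}g$ with $g=\{{\bf I-P}\}f_\varepsilon$, the first piece contributes nothing because $v\cdot\nabla_x$ is skew and $w_l$ is $x$-independent, and the surviving commutator piece is exactly the source of the term $\partial^{\alpha+e_i}_{\beta-e_i}\{{\bf I-P}\}f_\varepsilon$ appearing on the right of \eqref{N-micro-weight-1} --- one space derivative more, one velocity derivative less. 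Following the scheme in the introduction, I would estimate it by passing to Fourier variables in $x$ (and integrating by parts once in $v$), splitting a factor $\langle v\rangle^{3/2}\cdot\langle v\rangle^{-3/2}$ between the two factors, and crucially using the identity $w_l(\alpha+e_i,\beta-e_i)=w_l(\alpha,\beta)$ --- the exponent $4(\ell-|\alpha|-|\beta|)$ in \eqref{weight} being invariant under the shift $(\alpha,\beta)\mapsto(\alpha+e_i,\beta-e_i)$ --- together with the extra velocity regularity and weight encoded in $\|\cdot\|_D$ for the strong angular singularity $\tfrac12\leq s<1$, which dominates the $\langle v\rangle^{-3/2}$-weighted $L^2$-norm. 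This gives $\lesssim\|w_l(\alpha+e_i,\beta-e_i)\partial^{\alpha+e_i}_{\beta-e_i}\{{\bf I-P}\}f_\varepsilon\|_D^2+\tfrac\eta{\varepsilon^2}\|w_l(\alpha,\beta-e_j)\partial^\alpha_{\beta-e_j}\{{\bf I-P}\}f_\varepsilon\|_D^2$, the second term at total order $N-1$ and hence absorbed by the weighted lower-order dissipation (part of $\mathcal{D}_{N,l}(t)$, treated in the companion lemma for $|\alpha|+|\beta|\leq N-1$).

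The remaining terms of \eqref{I-P-cut} are handled exactly as in Lemmas~\ref{lemma3.7} and \ref{noncut-N-wight}: for the electric and Lorentz force terms $\tfrac1\varepsilon\partial^\alpha_\beta[q_0(\varepsilon E_\varepsilon+v\times B_\varepsilon)\cdot\nabla_v\{{\bf I-P}\}f_\varepsilon]$ and $\tfrac12\partial^\alpha_\beta[q_0(E_\varepsilon\cdot v)f_\varepsilon]$, one distributes the $x$-derivatives by Leibniz, puts the lower-order factor in $L^\infty_x$ by Sobolev embedding, uses the weight-gain inequality $\langle v\rangle^{7/4}w_l(\alpha,\beta)\lesssim w_l(\alpha-\alpha_1,\beta)\langle v\rangle^{-3/4}$ for $|\alpha_1|\geq1$ (and the analogous bound for the magnetic contribution recorded in the introduction) to transfer a positive power of $\langle v\rangle$ onto the $D$-controllable factor, and invokes \textbf{Assumption~1} (\eqref{Assump-1}) for the decays $\|E_\varepsilon\|_{L^\infty_x}\lesssim M_1^{1/2}(1+t)^{-\varrho/2-3/4}$ and $\|\nabla_xB_\varepsilon\|_{L^\infty_x}\lesssim M_1^{1/2}(1+t)^{-\varrho/2-5/4}$; the constraint $0<\vartheta\leq\tfrac\varrho2-\tfrac14$ makes these dominate $(1+t)^{-1-\vartheta}$, so the total is $\mathcal{E}_N(t)\mathcal{D}_{N,l}(t)+\mathcal{D}_N(t)+\eta\|w_l(\alpha,\beta)\partial^\alpha_\beta\{{\bf I-P}\}f_\varepsilon\|_D^2$. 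The source $\tfrac1\varepsilon\partial^\alpha_\beta[(E_\varepsilon\cdot v)\sqrt M q_1]$, a product of $\partial^\alpha_x E_\varepsilon$ (with $|\alpha|\leq N-1$, bounded by a piece of $\mathcal{D}_N(t)$) and a Gaussian-localized velocity factor, is absorbed after Cauchy--Schwarz against $\tfrac\eta{\varepsilon^2}\|\partial^\alpha_\beta\{{\bf I-P}\}f_\varepsilon\|_D^2\leq\eta\,\tfrac1{\varepsilon^2}\|w_l(\alpha,\beta)\partial^\alpha_\beta\{{\bf I-P}\}f_\varepsilon\|_D^2$; and the nonlinear term $\tfrac1\varepsilon\big(\partial^\alpha_\beta\mathscr{T}(f_\varepsilon,f_\varepsilon),\,w_l^2(\alpha,\beta)\partial^\alpha_\beta\{{\bf I-P}\}f_\varepsilon\big)$ is bounded by the weighted trilinear estimate (Lemma~\ref{Gamma-noncut}) by $\mathcal{E}_N(t)\mathcal{D}_{N,l}(t)+\eta\|w_l(\alpha,\beta)\partial^\alpha_\beta\{{\bf I-P}\}f_\varepsilon\|_D^2$.

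Finally I would sum over all $(\alpha,\beta)$ with $|\alpha|+|\beta|=N$ and $|\alpha|\leq N-1$, absorb the $\eta$-small weighted $D$-norms into the left-hand side, and close the chain of transport losses $\|w_l(\alpha+e_i,\beta-e_i)\partial^{\alpha+e_i}_{\beta-e_i}\{{\bf I-P}\}f_\varepsilon\|_D^2$ by a downward induction on $|\beta|$ with suitably small relative constants: those with $|\beta-e_i|\geq1$ are themselves among the left-hand dissipation terms and are absorbed, whereas the boundary case $|\beta|=1$ leaves precisely $\sum_{|\alpha|=N-1,|\beta|=1}\|w_l(\alpha+e_i,\beta-e_i)\partial^{\alpha+e_i}_{\beta-e_i}\{{\bf I-P}\}f_\varepsilon\|_D^2$ (a weighted top-order $D$-norm at $|\alpha|=N$), which cannot belong to this lemma's left-hand side and is therefore kept on the right of \eqref{N-micro-weight-1}, to be supplied later by the left-hand side of Lemma~\ref{noncut-N-wight}. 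Collecting everything gives \eqref{N-micro-weight-1}; the main obstacle throughout is the transport term, where the singular $\tfrac1\varepsilon$ forces one to transfer a power of $\langle v\rangle$ across a term that is only $D$-bounded, which can be done only because the $\langle v\rangle$-weight of $w_l$ is invariant under $(\alpha,\beta)\mapsto(\alpha+e_i,\beta-e_i)$ and because $\|\cdot\|_D$ carries extra velocity weight and regularity when $s\geq\tfrac12$.
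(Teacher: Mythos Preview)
Your overall scheme matches the paper exactly: apply $\partial^\alpha_\beta$ to \eqref{I-P-cut}, pair with $w_l^2(\alpha,\beta)\partial^\alpha_\beta\{{\bf I-P}\}f_\varepsilon$, harvest the extra dissipation from $\partial_t w_l^2$, use the weighted coercivity \eqref{L-noncut-3} for $\mathscr{L}$, Lemma~\ref{Gamma-noncut} for $\mathscr{T}$, bound the linear macroscopic sources by $\mathcal{D}_N(t)$, and close the transport hierarchy by a linear combination descending in $|\beta|$, with the boundary case $|\beta|=1$ surviving as the first term on the right of \eqref{N-micro-weight-1}.

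One step needs correction. For the transport commutator the paper uses Fourier (Plancherel) in $v$, not in $x$: one writes $\partial^\alpha_\beta=\partial_{v_j}\partial^\alpha_{\beta-e_j}$ so that on the $v$-Fourier side $\partial_{v_j}\mapsto i\xi_j$; the resulting $|\xi|$ is then split and absorbed by $\|\cdot\|_D$ (which for $s\geq\tfrac12$ dominates $\int|\xi|\,|\mathcal{F}_v(\cdot)|^2$-type quantities at the right velocity weight), together with the weight drop $\langle v\rangle^{3/2}w_l(\alpha,\beta)\leq\langle v\rangle^{-3/2}w_l(\alpha,\beta-e_j)$, not only the invariance $w_l(\alpha+e_i,\beta-e_i)=w_l(\alpha,\beta)$. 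Taken literally, ``Fourier in $x$ plus one integration by parts in $v$'' either leaves the $v$-derivative mismatch intact (Plancherel in $x$ is just a rewriting of the $L^2_x$ pairing) or moves a $\partial_{v_j}$ onto the first factor and produces an $(N+1)$-th order term $\partial^{\alpha+e_i}_{\beta}$ that you cannot control. With this fix your stated bound for the transport piece is exactly the paper's. A secondary remark: the paper reuses the same Fourier-in-$v$ device for the worst case $|\alpha_1|=1$ of the magnetic term (via $v_j\langle v\rangle^{3/2}w_l(\alpha,\beta)\leq\langle v\rangle^{-3/2}w_l(\alpha-e_i,\beta)$), landing directly on $\mathcal{E}_N(t)\mathcal{D}_{N,l}(t)$ without invoking Assumption~1; your $\langle v\rangle^{7/4}$-splitting route with Assumption~1 is the mechanism from Lemma~\ref{noncut-N-wight} and would also close, but it is not what is done here.
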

\begin{proof}
To this end, by applying the microscopic projection $\{{\bf I-P}\}$ to the first equation of (\ref{VMB-F-perturbative}), we can get that
\begin{eqnarray}\label{I-P-cut}
	&&\partial_t\{{\bf I-P}\}f_{\varepsilon}+\frac1\varepsilon v\cdot \nabla_x\{{\bf I-P}\}f_{\varepsilon}-\frac1 \varepsilon E_\varepsilon\cdot v{M}^{1/2}q_1+\frac1 {\varepsilon^2} \mathscr{L}f_\varepsilon\nonumber\\
	&=&\frac1\varepsilon\{{\bf I-P}\}\left[ q_0v\times B_\varepsilon\cdot\nabla_{ v}f_\varepsilon\right]+\frac1 \varepsilon{\bf P}(v\cdot\nabla_x f_\varepsilon)-\frac1 \varepsilon v\cdot\nabla_x{\bf P}f_\varepsilon\nonumber\\
	&&+\{{\bf I-P}\}\left[ -\frac12q_0 v\cdot E_\varepsilon f_\varepsilon+q_0E_\varepsilon\cdot\nabla_{ v}f_\varepsilon\right]+\frac1\varepsilon\mathscr{T}(f_\varepsilon,f_\varepsilon).
\end{eqnarray}

Applying $\partial^\alpha_\beta$ into \eqref{I-P-cut}, multiplying $w^2_{l}(\alpha,\beta)\partial^\alpha\{{\bf I-P}\}f_{\varepsilon}$ and integrating the result identity over $\mathbb{R}^3_x\times\mathbb{R}^3_v$, then we have
\begin{eqnarray}\label{I-P-w}
&&\frac12\frac{\d}{\d t}\|w_l(\alpha,\beta)\partial^\alpha_\beta\{{\bf I-P}\}f_{\varepsilon}\|^2+\frac{q\vartheta}{(1+t)^{1+\vartheta}}\left\|\langle v\rangle^{\frac12} w_{l}(\alpha,\beta)\partial^\alpha_\beta\{{\bf I-P}\}f_{\varepsilon}\right\|^2\nonumber\\
&&+\frac1 {\varepsilon^2} \left(\partial^\alpha_\beta\mathscr{L}f_\varepsilon,w^2_l(\alpha,\beta)\partial^\alpha_\beta\{{\bf I-P}\}f_{\varepsilon}\right)\nonumber\\
&=&-\frac1\varepsilon\left(\partial^\alpha_\beta[ v\cdot \nabla_x\{{\bf I-P}\}f_{\varepsilon}],w^2_l(\alpha,\beta)\partial^\alpha_\beta\{{\bf I-P}\}f_{\varepsilon}\right)\nonumber\\
&&+\frac1 \varepsilon \left(\partial^\alpha_\beta \left[E_\varepsilon\cdot v{M}^{1/2}q_1\right],w^2_l(\alpha,\beta)\partial^\alpha_\beta\{{\bf I-P}\}f_{\varepsilon}\right)\nonumber\\
&&+\frac1 \varepsilon\left(\partial^\alpha_\beta\left[{\bf P}(v\cdot\nabla_x f_\varepsilon)-\frac1 \varepsilon v\cdot\nabla_x{\bf P}f_\varepsilon\right],w^2_l(\alpha,\beta)\partial^\alpha_\beta\{{\bf I-P}\}f_{\varepsilon}\right)\nonumber\\
&&+\frac1\varepsilon\left(\partial^\alpha_\beta\left[\{{\bf I-P}\}\left[q_0 v\times B_\varepsilon\cdot\nabla_{ v}f_\varepsilon\right]\right],w^2_l(\alpha,\beta)\partial^\alpha_\beta\{{\bf I-P}\}f_{\varepsilon}\right)\nonumber\\
&&+\left(\partial^\alpha_\beta\{{\bf I-P}\}\left[ -\frac12q_0 v\cdot E_\varepsilon f_\varepsilon+q_0E_\varepsilon\cdot\nabla_{ v}f_\varepsilon\right],w^2_l(\alpha,\beta)\partial^\alpha_\beta\{{\bf I-P}\}f_{\varepsilon}\right)\nonumber\\
&&+\frac1\varepsilon\left(\partial^\alpha_\beta \mathscr{T}(f_\varepsilon,f_\varepsilon), w^2_l(\alpha,\beta)\partial^\alpha_\beta\{{\bf I-P}\}f_{\varepsilon}\right).
\end{eqnarray}
The coercivity estimates on the linear operator $\mathscr{L}$ i.e. \eqref{L-cut-3} yields that
 \begin{eqnarray}
   &&\frac1 {\varepsilon^2} \left(\partial^\alpha_\beta\mathscr{L}f_\varepsilon,w^2_l(\alpha,\beta)\partial^\alpha_\beta\{{\bf I-P}\}f_{\varepsilon}\right)\nonumber\\
   &\gtrsim&\frac1 {\varepsilon^2} \|w_l(\alpha,\beta)\partial^\alpha_\beta\{{\bf I-P}\}f_{\varepsilon}\|_{D}^2-\frac1 {\varepsilon^2}\|\partial^\alpha\{{\bf I-P}\}f_{\varepsilon}\|_{D}^2.
 \end{eqnarray}
As for the transport term on the right-hand side of \eqref{I-P-w}, one has
 \begin{eqnarray}
 	&&\frac1\varepsilon\left(\partial^\alpha_\beta\left[ v\cdot \nabla_x\{{\bf I-P}\}f_{\varepsilon}\right],w^2_l(\alpha,\beta)\partial^\alpha_\beta\{{\bf I-P}\}f_{\varepsilon}\right)\nonumber\\
 	&=&-\frac1\varepsilon\int_{\mathbb{R}^3_x\times\mathbb{R}^3_v}\partial^{\alpha+e_i}_{\beta-e_i}\{{\bf I-P}\}f_{\varepsilon} w_l(\alpha+e_i,\beta-e_i)w_l(\alpha,\beta)\partial^\alpha_\beta\{{\bf I-P}\}f_{\varepsilon}dvdx\nonumber\\
	&=&-\frac1\varepsilon\int_{\mathbb{R}^3_x\times\mathbb{R}^3_v}\partial^{\alpha+e_i}_{\beta-e_i}\{{\bf I-P}\}f_{\varepsilon} w_l(\alpha+e_i,\beta-e_i)\langle v\rangle^{-\frac32}\langle v\rangle^{\frac32}w_l(\alpha,\beta)\partial_{e_j}\partial^\alpha_{\beta-e_j}\{{\bf I-P}\}f_{\varepsilon}dvdx\nonumber\\
		&=&-\frac1\varepsilon\int_{\mathbb{R}^3_x\times\mathbb{R}^3_\xi}i\xi_j\mathcal{F}_v\left[\partial^{\alpha+e_i}_{\beta-e_i}\{{\bf I-P}\}f_{\varepsilon} w_l(\alpha+e_i,\beta-e_i)\langle v\rangle^{-\frac32}\right]\nonumber\\
		&&\quad\quad\quad\quad\quad\times\overline{\mathcal{F}_v\left[\langle v\rangle^{\frac32}w_l(\alpha,\beta)\partial^\alpha_{\beta-e_j}\{{\bf I-P}\}f_{\varepsilon}\right]}d\xi dx\nonumber\\
				&\lesssim&\int_{\mathbb{R}^3_x\times\mathbb{R}^3_\xi}|\xi|\left|\mathcal{F}_v\left[\partial^{\alpha+e_i}_{\beta-e_i}\{{\bf I-P}\}f_{\varepsilon} w_l(\alpha+e_i,\beta-e_i)\langle v\rangle^{-\frac32}\right]\right|^2d\xi dx\nonumber\\
		&&+\frac\eta{\varepsilon^2}\int_{\mathbb{R}^3_x\times\mathbb{R}^3_\xi}|\xi|\left|\overline{\mathcal{F}_v\left[\langle v\rangle^{\frac32}w_l(\alpha,\beta)\partial^\alpha_{\beta-e_j}\{{\bf I-P}\}f_{\varepsilon}\right]}\right|^2d\xi dx\nonumber\\
		&\lesssim&\int_{\mathbb{R}^3_x\times\mathbb{R}^3_\xi}|\xi|\left|\mathcal{F}_v\left[\langle v\rangle^{-\frac32} w_l(\alpha+e_i,\beta-e_i)\partial^{\alpha+e_i}_{\beta-e_i}\{{\bf I-P}\}f_{\varepsilon}\right]\right|^2d\xi dx\nonumber\\
		&&+\frac\eta{\varepsilon^2}\int_{\mathbb{R}^3_x\times\mathbb{R}^3_\xi}|\xi|\left|\overline{\mathcal{F}_v\left[\langle v\rangle^{-\frac32}w_l(\alpha,\beta-e_j)\partial^\alpha_{\beta-e_j}\{{\bf I-P}\}f_{\varepsilon}\right]}\right|^2d\xi dx\nonumber\\
		&\lesssim&\left\|w_l(\alpha+e_i,\beta-e_i)\partial^{\alpha+e_i}_{\beta-e_i}\{{\bf I-P}\}f_{\varepsilon}\right\|_D^2+\frac\eta{\varepsilon^2}\left\|w_l(\alpha,\beta-e_j)\partial^\alpha_{\beta-e_j}\{{\bf I-P}\}f_{\varepsilon}\right\|_D^2\nonumber\\
		&\lesssim&\left\|w_l(\alpha+e_i,\beta-e_i)\partial^{\alpha+e_i}_{\beta-e_i}\{{\bf I-P}\}f_{\varepsilon}\right\|_D^2+\eta\mathcal{D}_{N,l}(t).	
 \end{eqnarray}
Here we used the fact
\[\langle v\rangle^{\frac32}w_l(\alpha,\beta)=\langle v\rangle^{\frac32}w_l(\alpha,\beta-e_j)\langle v\rangle^{-4}\leq \langle v\rangle^{-\frac32}w_l(\alpha,\beta-e_j).\]
For the second and third term on the right-hand side of \eqref{I-P-w}, it is straightford to compute that
\begin{eqnarray}
  &&\frac1 \varepsilon \left(\partial^\alpha_\beta \left[E_\varepsilon\cdot v{M}^{1/2}q_1\right],w^2_l(\alpha,\beta)\partial^\alpha_\beta\{{\bf I-P}\}f_{\varepsilon}\right)\nonumber\\
  &\lesssim&\|\partial^\alpha E_\varepsilon\|^2+\frac\eta {\varepsilon^2}\|\partial^\alpha\{{\bf I-P}\}f_{\varepsilon}\|_{D}^2\lesssim\mathcal{D}_N(t).
\end{eqnarray}
and
\begin{eqnarray}
  &&\frac1 \varepsilon\left(\partial^\alpha_\beta\left[{\bf P}(v\cdot\nabla_x f_\varepsilon)-\frac1 \varepsilon v\cdot\nabla_x{\bf P}f_\varepsilon\right],w^2_l(\alpha,\beta)\partial^\alpha_\beta\{{\bf I-P}\}f_{\varepsilon}\right)\nonumber\\
  &\lesssim&\|\nabla^{|\alpha|+1}f_{\varepsilon}\|_{D}^2+\frac\eta {\varepsilon^2}\|\partial^\alpha\{{\bf I-P}\}f_{\varepsilon}\|_{D}^2\lesssim\mathcal{D}_N(t).
\end{eqnarray}
As for the fourth term on the right-hand side of \eqref{I-P-w}, one has
\begin{eqnarray}\label{mix-B-w}
	&&\frac1\varepsilon\left(\partial^\alpha_\beta\left[\{{\bf I-P}\}\left[ v\times B_\varepsilon\cdot\nabla_{ v}f_\varepsilon\right]\right],w^2_l(\alpha,\beta)\partial^\alpha_\beta\{{\bf I-P}\}f_{\varepsilon}\right)\nonumber\\
	&=&	\frac1\varepsilon\left(\partial^\alpha_\beta\left[ v\times B_\varepsilon\cdot\nabla_{ v}\{{\bf I-P}\}f_{\varepsilon}\right],w^2_l(\alpha,\beta)\partial^\alpha_\beta\{{\bf I-P}\}f_{\varepsilon}\right)\nonumber\\
	&&+\frac1\varepsilon\left(\partial^\alpha_\beta\left[ v\times B_\varepsilon\cdot\nabla_{ v}{\bf P}f_\varepsilon\right],w^2_l(\alpha,\beta)\partial^\alpha_\beta\{{\bf I-P}\}f_{\varepsilon}\right)\nonumber\\
	&&+\frac1\varepsilon\left(\partial^\alpha_\beta\left[{\bf P}\left[ v\times B_\varepsilon\cdot\nabla_{ v}f_\varepsilon\right]\right],w^2_l(\alpha,\beta)\partial^\alpha_\beta\{{\bf I-P}\}f_{\varepsilon}\right).
\end{eqnarray}
Applying macroscopic part and the Cauchy inequality, one can deduce that
\begin{eqnarray}
&&\frac1\varepsilon\left(\partial^\alpha_\beta\left[ v\times B_\varepsilon\cdot\nabla_{ v}{\bf P}f_\varepsilon\right],w^2_l(\alpha,\beta)\partial^\alpha_\beta\{{\bf I-P}\}f_{\varepsilon}\right)\nonumber\\
&&+\frac1\varepsilon\left(\partial^\alpha_\beta\left[{\bf P}\left[ v\times B_\varepsilon\cdot\nabla_{ v}f_\varepsilon\right]\right],w^2_l(\alpha,\beta)\partial^\alpha_\beta\{{\bf I-P}\}f_{\varepsilon}\right)\nonumber\\	 &\lesssim&\mathcal{E}_N(t)\mathcal{D}_N(t)+\frac{\eta}{\varepsilon^2}\|\partial^\alpha\{{\bf I-P}\}f_{\varepsilon}\|^2_D.
	\end{eqnarray}
For the first term on the right-hand side of \eqref{mix-B-w},
\begin{eqnarray}
	&&\frac1\varepsilon\left(\partial^\alpha_\beta\left[ v\times B_\varepsilon\cdot\nabla_{ v}\{{\bf I-P}\}f_{\varepsilon}\right],w^2_l(\alpha,\beta)\partial^\alpha_\beta\{{\bf I-P}\}f_{\varepsilon}\right)\nonumber\\
&=&\frac1\varepsilon\sum_{\alpha_1\leq\alpha,\beta_1\leq\beta}\left(\partial^{\alpha_1}_{\beta_1}\left[ v\times B_\varepsilon\right]\cdot\partial^{\alpha-\alpha_1}_{\beta-\beta_1}\left[\nabla_{ v}\{{\bf I-P}\}f_{\varepsilon}\right],w^2_l(\alpha,\beta)\partial^\alpha_\beta\{{\bf I-P}\}f_{\varepsilon}\right),\nonumber
\end{eqnarray}
when $|\alpha_1|=1,\beta_1=0$,
we apply the Parseval identity to get
\begin{eqnarray}
&\lesssim&\frac1\varepsilon\int_{\mathbb{R}^3_\xi\times\mathbb{R}^3_\xi}|\xi||\partial^{e_i}B_\varepsilon|\left|\mathcal{F}_v\left[v_j \langle v\rangle^{\frac32}w_l(\alpha,\beta)\partial^{\alpha-e_i}_{\beta}\{{\bf I-P}\}f_{\varepsilon}\right]\right|\nonumber\\
&&\times\left|\langle v\rangle^{-\frac32}w_l(\alpha,\beta)\mathcal{F}_v\left[\partial^{\alpha}_{\beta}\{{\bf I-P}\}f_{\varepsilon}\right]\right|d\xi dv\nonumber\\
&\lesssim&\frac1\varepsilon\int_{\mathbb{R}^3_\xi\times\mathbb{R}^3_\xi}|\xi||\partial^{e_i}B_\varepsilon|\left|\mathcal{F}_v\left[v_j \langle v\rangle^{\frac32}w_l(\alpha,\beta)\partial^{\alpha-e_i}_{\beta}\{{\bf I-P}\}f_{\varepsilon}\right]\right|^2d\xi dv\nonumber\\
&&+\frac1\varepsilon\int_{\mathbb{R}^3_\xi\times\mathbb{R}^3_\xi}|\xi|\left|\langle v\rangle^{-\frac32}w_l(\alpha,\beta)\mathcal{F}_v\left[\partial^{\alpha}_{\beta}\{{\bf I-P}\}f_{\varepsilon}\right]\right|^2d\xi dv\nonumber\\
&\lesssim&\frac1\varepsilon\|\partial^{e_i}B_\varepsilon\|_{L^\infty_x}\int_{\mathbb{R}^3_\xi\times\mathbb{R}^3_\xi}|\xi|\left|\mathcal{F}_v\left[ \langle v\rangle^{-\frac32}w_l(\alpha-e_i,\beta)\partial^{\alpha-e_i}_{\beta}\{{\bf I-P}\}f_{\varepsilon}\right]\right|^2d\xi dv\nonumber\\
&&+\frac1\varepsilon\int_{\mathbb{R}^3_\xi\times\mathbb{R}^3_\xi}|\xi|\left|\langle v\rangle^{-\frac32}w_l(\alpha,\beta)\mathcal{F}_v\left[\partial^{\alpha}_{\beta}\{{\bf I-P}\}f_{\varepsilon}\right]\right|^2d\xi dv\nonumber\\
&\lesssim&\mathcal{E}_N(t)\mathcal{D}_{N,l}(t)+\frac{\eta}{\varepsilon^2}\|\partial^\alpha\{{\bf I-P}\}f_{\varepsilon}\|^2_D
\end{eqnarray}
where we used the fact that
\[v_j \langle v\rangle^{\frac32}w_l(\alpha,\beta)\leq v_j \langle v\rangle^{-\frac52}w_l(\alpha-e_i,\beta)\leq\langle v\rangle^{-\frac32}w_l(\alpha-e_i,\beta).\]
The other cases have similar upper bound by utilizing a similar way. Thus, one has
\begin{eqnarray}
	&&\frac1\varepsilon\left(\partial^\alpha_\beta\left[ v\times B_\varepsilon\cdot\nabla_{ v}\{{\bf I-P}\}f_{\varepsilon}\right],w^2_l(\alpha,\beta)\partial^\alpha_\beta\{{\bf I-P}\}f_{\varepsilon}\right)\nonumber\\
	&\lesssim&\mathcal{E}_N(t)\mathcal{D}_{N,l}(t)+\frac{\eta}{\varepsilon^2}\|\partial^\alpha\{{\bf I-P}\}f_{\varepsilon}\|^2_D.
	\end{eqnarray}
By using the similar argument as the estimate on \eqref{mix-B-w},
one has
\begin{eqnarray}
 &&\left(\partial^\alpha_\beta\{{\bf I-P}\}\left[ -\frac12q_0 v\cdot E_\varepsilon f_\varepsilon+q_0E_\varepsilon\cdot\nabla_{ v}f_\varepsilon\right],w^2_l(\alpha,\beta)\partial^\alpha_\beta\{{\bf I-P}\}f_{\varepsilon}\right)\nonumber\\
&\lesssim&\mathcal{E}_N(t)\mathcal{D}_{N,l}(t)+\frac{\eta}{\varepsilon^2}\|\partial^\alpha\{{\bf I-P}\}f_{\varepsilon}\|^2_D.
\end{eqnarray}
By using \eqref{Gamma-noncut-1}-\eqref{Gamma-noncut-2},
one can deduce that
\begin{eqnarray}
	&&\frac1\varepsilon\left(\partial^\alpha_\beta \mathscr{T}(f_\varepsilon,f_\varepsilon), w^2_l(\alpha,\beta)\partial^\alpha_\beta\{{\bf I-P}\}f_{\varepsilon}\right)\nonumber\\
	&\lesssim&\mathcal{E}_N(t)\mathcal{D}_{N,l}(t)++\frac{\eta}{\varepsilon^2}\|\partial^\alpha\{{\bf I-P}\}f_{\varepsilon}\|^2_D.
\end{eqnarray}
 Now by collecting the above related estimates into \eqref{I-P-w}, we arrive at
 \begin{equation}\label{3.68}
\begin{aligned}
&\frac{\d}{\d t}\left\|w_{l}(\alpha,\beta)\partial^\alpha_\beta\{{\bf I-P}\}f_{\varepsilon}\right\|^2+\frac1{\varepsilon^2}\left\|w_{l}(\alpha,\beta)\partial^\alpha_\beta\{{\bf I-P}\}f_{\varepsilon}\right\|^2_{D}\\[2mm]
&+\frac{q\vartheta}{(1+t)^{1+\vartheta}}\left\|\langle v\rangle^\frac12 w_{l}(\alpha,\beta)\partial^\alpha_\beta\{{\bf I-P}\}f_{\varepsilon}\right\|^2\\[2mm]
\lesssim&\left\|w_l(\alpha+e_i,\beta-e_i)\partial^{\alpha+e_i}_{\beta-e_i}\{{\bf I-P}\}f_{\varepsilon}\right\|_D^2+\mathcal{E}_N(t)\mathcal{D}_{N,l}(t)+\mathcal{D}_{N}(t)+\eta\mathcal{D}_{N,l}(t).
\end{aligned}
\end{equation}
The other cases can be dominated by the same upper bound as \eqref{3.68}.
A proper linear combination (\ref{3.68}) with $|\alpha|+|\beta|\leq N, |\alpha|\leq N-1$ implies \eqref{N-micro-weight-1}.
\end{proof}
A proper linear combination of  Lemma \ref{noncut-N-wight} and Lemma \ref{N-micro-weight} gives that
\begin{proposition}\label{mix-w-N}
It holds that
\begin{eqnarray}\label{mix-w-N-estimates}
&&\frac{\d}{\d t}\left\{\sum_{|\alpha|=N}{\varepsilon^2}\left\|w_l(\alpha,0)\partial^\alpha f_{\varepsilon}\right\|^2
+\sum_{|\alpha|+|\beta|= N,\atop|\beta|\geq 1}\left\|w_l(\alpha,\beta)\partial^\alpha_\beta \{{\bf I-P}\}f_{\varepsilon}\right\|^2\right\}\nonumber\\
&&+\frac{\vartheta q{\varepsilon^2}}{(1+t)^{1+\vartheta}}\sum_{|\alpha|=N}\left\|\langle v\rangle^{\frac12} w_l(\alpha,0)\partial^\alpha f_{\varepsilon}\right\|^2+\sum_{|\alpha|=N}\left\|w_l(\alpha,0)\partial^\alpha f_{\varepsilon}\right\|^2_{D}\nonumber\\
&&+\frac1{\varepsilon^2}\sum_{|\alpha|+|\beta|= N,\atop|\beta|\geq 1}\left\|w_l(\alpha,\beta)\partial^\alpha_\beta \{{\bf I-P}\}f_{\varepsilon}\right\|^2_D\nonumber\\
&&+\frac{q\vartheta}{(1+t)^{1+\vartheta}}\sum_{|\alpha|+|\beta|= N,\atop|\beta|\geq 1}\|\langle v\rangle^{\frac12}w_l(\alpha,\beta)\partial^\alpha_\beta \{{\bf I-P}\}f_{\varepsilon}\|^2\nonumber\\
&\lesssim&{\varepsilon^2}\left\|\partial^\alpha E_\varepsilon\right\|\left\|{M}^\delta\partial^\alpha f_{\varepsilon}\right\|+\mathcal{E}_N(t)\mathcal{D}_{N,l}(t)+\mathcal{D}_{N}(t).
\end{eqnarray}
\end{proposition}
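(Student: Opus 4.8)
The plan is to obtain \eqref{mix-w-N-estimates} as a direct linear combination of the two preceding lemmas: add \eqref{N-wight-1} to $\kappa$ times \eqref{N-micro-weight-1}, where $\kappa>0$ is a small constant to be fixed below. On the left-hand side this produces, term by term, exactly the quantities listed on the left of \eqref{mix-w-N-estimates}, once one notices that the index range $|\alpha|+|\beta|=N$, $|\alpha|\leq N-1$ in Lemma \ref{N-micro-weight} is the same as $|\alpha|+|\beta|=N$, $|\beta|\geq1$. So the whole question reduces to disposing of the one genuinely new term that \eqref{N-micro-weight-1} contributes on the right, namely $\kappa\sum_{|\alpha|=N-1,\,|\beta|=1}\|w_l(\alpha+e_i,\beta-e_i)\partial^{\alpha+e_i}_{\beta-e_i}\{{\bf I-P}\}f_{\varepsilon}\|_D^2$, and then to bundle the remaining right-hand contributions.

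The key point I would use is that in this residual sum one has $|\alpha+e_i|=N$ and $\beta-e_i=0$, so each summand is a top-order, purely spatial weighted $D$-norm of $\{{\bf I-P}\}f_{\varepsilon}$ carrying exactly the weight $w_l(\cdot,0)$ --- the very weight that appears in the dissipation $\sum_{|\alpha|=N}\|w_l(\alpha,0)\partial^\alpha f_{\varepsilon}\|_D^2$ already present with coefficient one on the left of \eqref{N-wight-1}. To replace $\{{\bf I-P}\}f_{\varepsilon}$ by $f_{\varepsilon}$, I would split $f_{\varepsilon}={\bf P}f_{\varepsilon}+\{{\bf I-P}\}f_{\varepsilon}$; the macroscopic part ${\bf P}f_{\varepsilon}$ is a $\sqrt{{M}}$-weighted polynomial in $v$ with $x$-dependent coefficients $(\rho_{\varepsilon}^{\pm},u_{\varepsilon},\theta_{\varepsilon})$, so the Gaussian factor absorbs both the algebraic weight $w_l(\cdot,0)$ and the velocity integration in the $D$-norm, giving $\|w_l(\alpha,0)\partial^{\alpha}{\bf P}f_{\varepsilon}\|_D^2\lesssim\|\partial^{\alpha}{\bf P}f_{\varepsilon}\|_{L^2_v}^2\lesssim\mathcal{D}_N(t)$ for $1\leq|\alpha|\leq N$, since $\mathcal{D}_N(t)$ controls $\|\nabla_x{\bf P}f_{\varepsilon}\|^2_{H^{N-1}_xL^2_v}$. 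Hence the residual sum is bounded by $C\kappa\sum_{|\alpha|=N}\|w_l(\alpha,0)\partial^\alpha f_{\varepsilon}\|_D^2+C\kappa\,\mathcal{D}_N(t)$; choosing $\kappa$ small enough that $C\kappa\leq\tfrac12$ absorbs the first piece into the left side of \eqref{N-wight-1}, while $C\kappa\,\mathcal{D}_N(t)$ joins the $\mathcal{D}_N(t)$ already on the right.

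Everything else is bookkeeping: the other right-hand members of \eqref{N-wight-1} and of $\kappa$ times \eqref{N-micro-weight-1} are, up to harmless constants, $\varepsilon^2\|\partial^\alpha E_{\varepsilon}\|\,\|{M}^\delta\partial^\alpha f_{\varepsilon}\|$, $\mathcal{E}_N(t)\mathcal{D}_{N,l}(t)$ and $\mathcal{D}_N(t)$, which are precisely the three terms on the right of \eqref{mix-w-N-estimates}; and since $0<\varepsilon<1$ forces $\varepsilon^{-2}\geq1$, nothing is lost by retaining the $\varepsilon^{-2}$-weighted $D$-dissipation on the left. I do not expect a serious obstacle here; the only delicate point is the matching just described --- an escaped top-order spatial $\{{\bf I-P}\}$-dissipation with the algebraic-times-exponential weight $w_l(\cdot,0)$ against the top-order weighted dissipation furnished by Lemma \ref{noncut-N-wight} --- together with picking $\kappa$ small enough to avoid circularity, which is exactly what ``proper'' in ``a proper linear combination'' refers to.
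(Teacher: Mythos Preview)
Your proposal is correct and matches the paper's approach: the paper's own proof is the single sentence ``A proper linear combination of Lemma \ref{noncut-N-wight} and Lemma \ref{N-micro-weight} gives \eqref{mix-w-N-estimates},'' and what you have written is precisely the content of that linear combination. In particular, your identification of the only nontrivial step---that the residual term $\sum_{|\alpha|=N-1,|\beta|=1}\|w_l(\alpha+e_i,\beta-e_i)\partial^{\alpha+e_i}_{\beta-e_i}\{{\bf I-P}\}f_{\varepsilon}\|_D^2$ on the right of \eqref{N-micro-weight-1} is a top-order purely spatial weighted $D$-norm that can be absorbed (after splitting off the macroscopic part into $\mathcal{D}_N(t)$) by the dissipation $\sum_{|\alpha|=N}\|w_l(\alpha,0)\partial^\alpha f_{\varepsilon}\|_D^2$ on the left of \eqref{N-wight-1} once the combination coefficient $\kappa$ is taken small---is exactly what ``proper'' encodes.
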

\subsection{The low-order energy estimates with weight}
By utilizing a slightly different technique as Proposition \ref{mix-w-N}, we also have
\begin{proposition}\label{mix-w-N-1}
	It holds that
	\begin{eqnarray}\label{mix-w-N-1-1}
		&&\frac{\d}{\d t}\sum_{|\alpha|+|\beta|\leq N-1}\left\|w_l(\alpha,\beta)\partial^\alpha_\beta \{{\bf I-P}\}f_{\varepsilon}\right\|^2+\frac1{\varepsilon^2}\sum_{|\alpha|+|\beta|\leq N-1}\left\|w_l(\alpha,\beta)\partial^\alpha_\beta \{{\bf I-P}\}f_{\varepsilon}\right\|^2_D\nonumber\\
		&&+\frac{q\vartheta}{(1+t)^{1+\vartheta}}\sum_{|\alpha|+|\beta|\leq N-1}\|\langle v\rangle^{\frac12}w_l(\alpha,\beta)\partial^\alpha_\beta \{{\bf I-P}\}f_{\varepsilon}\|^2\nonumber\\
		&\lesssim&\mathcal{E}_N(t)\mathcal{D}_{N,l}(t)+\mathcal{D}_{N}(t).
	\end{eqnarray}

\end{proposition}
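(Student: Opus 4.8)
The plan is to run the microscopic weighted energy estimate of Lemma~\ref{N-micro-weight} simultaneously over \emph{all} pairs $(\alpha,\beta)$ with $|\alpha|+|\beta|\le N-1$ (now including $\beta=0$) and to close the resulting hierarchy internally, since the right-hand side of \eqref{mix-w-N-1-1} no longer carries the separate top-order weighted term that was available in Lemma~\ref{N-micro-weight}. For each such $(\alpha,\beta)$ I apply $\partial^\alpha_\beta$ to the microscopic equation \eqref{I-P-cut}, test against $w_l^2(\alpha,\beta)\partial^\alpha_\beta\{{\bf I-P}\}f_\varepsilon$ and integrate over $\mathbb{R}^3_x\times\mathbb{R}^3_v$, producing an identity of exactly the form \eqref{I-P-w}. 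Differentiating the exponential factor $e^{q\langle v\rangle/(1+t)^\vartheta}$ in $w_l$ gives the good dissipative term $\frac{q\vartheta}{(1+t)^{1+\vartheta}}\|\langle v\rangle^{1/2}w_l(\alpha,\beta)\partial^\alpha_\beta\{{\bf I-P}\}f_\varepsilon\|^2$ on the left, and the weighted coercivity of $\mathscr{L}$ (the same one used in the proof of Lemma~\ref{N-micro-weight}) contributes $\frac1{\varepsilon^2}\|w_l(\alpha,\beta)\partial^\alpha_\beta\{{\bf I-P}\}f_\varepsilon\|_D^2$ minus $\frac1{\varepsilon^2}\|\partial^\alpha\{{\bf I-P}\}f_\varepsilon\|_D^2$ and lower-$|\beta|$ weighted errors; since $|\alpha|\le N-1$, the unweighted loss is bounded by $\mathcal{D}_N(t)$ using the definition \eqref{D-N}, and no singular macroscopic term $\varepsilon^{-2}\|\partial^\alpha{\bf P}f_\varepsilon\|^2$ appears precisely because we work with \eqref{I-P-cut} rather than the full equation.

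Next I bound the six right-hand terms of \eqref{I-P-w}, copying the estimates from the proof of Lemma~\ref{N-micro-weight}: the forcing term $\varepsilon^{-1}E_\varepsilon\cdot v M^{1/2}q_1$ and the corrector $\varepsilon^{-1}({\bf P}(v\cdot\nabla_x f_\varepsilon)-v\cdot\nabla_x{\bf P}f_\varepsilon)$ are $\lesssim\mathcal{D}_N(t)+\varepsilon^{-2}\eta\|\partial^\alpha\{{\bf I-P}\}f_\varepsilon\|_D^2$; the magnetic nonlinearity $\varepsilon^{-1}\{{\bf I-P}\}[q_0 v\times B_\varepsilon\cdot\nabla_v f_\varepsilon]$, after splitting off its macroscopic pieces and using the weight identity $v_j\langle v\rangle^{3/2}w_l(\alpha,\beta)\le\langle v\rangle^{-3/2}w_l(\alpha-e_i,\beta)$ with Parseval and Cauchy--Schwarz in the $x$-frequency, is $\lesssim\mathcal{E}_N(t)\mathcal{D}_{N,l}(t)+\varepsilon^{-2}\eta\|\partial^\alpha\{{\bf I-P}\}f_\varepsilon\|_D^2$; and the electric nonlinearity $\{{\bf I-P}\}[-\tfrac12 q_0 v\cdot E_\varepsilon f_\varepsilon+q_0 E_\varepsilon\cdot\nabla_v f_\varepsilon]$ together with $\varepsilon^{-1}\mathscr{T}(f_\varepsilon,f_\varepsilon)$ (via Lemma~\ref{Gamma-noncut}) admit the same bound. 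The only delicate term is the transport term $\varepsilon^{-1}(\partial^\alpha_\beta[v\cdot\nabla_x\{{\bf I-P}\}f_\varepsilon],w_l^2(\alpha,\beta)\partial^\alpha_\beta\{{\bf I-P}\}f_\varepsilon)$. If $\beta=0$, its principal part $v\cdot\nabla_x\partial^\alpha\{{\bf I-P}\}f_\varepsilon$ is annihilated by integration by parts in $x$ (the weight is $x$-independent), so this term simply vanishes. If $|\beta|\ge1$, the same integration by parts removes the principal part, leaving $\varepsilon^{-1}\sum_j\beta_j(\partial^{\alpha+e_j}_{\beta-e_j}\{{\bf I-P}\}f_\varepsilon,w_l^2(\alpha,\beta)\partial^\alpha_\beta\{{\bf I-P}\}f_\varepsilon)$; writing $w_l^2(\alpha,\beta)=w_l(\alpha+e_j,\beta-e_j)w_l(\alpha,\beta)$, transferring the extra $x$-derivative to the Fourier side, invoking the velocity-weight gain $\langle v\rangle^{3/2}w_l(\alpha,\beta)\le\langle v\rangle^{-3/2}w_l(\alpha,\beta-e_j)$ and Cauchy--Schwarz in $|\xi|$ exactly as in Lemma~\ref{N-micro-weight}, this is $\lesssim\|w_l(\alpha+e_j,\beta-e_j)\partial^{\alpha+e_j}_{\beta-e_j}\{{\bf I-P}\}f_\varepsilon\|_D^2+\eta\mathcal{D}_{N,l}(t)$, and now $|\alpha+e_j|+|\beta-e_j|\le N-1$, so \emph{both} surviving norms already live inside the hierarchy $\{|\alpha'|+|\beta'|\le N-1\}$, but with one fewer $v$-derivative than at level $(\alpha,\beta)$.

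Finally I sum the resulting differential inequalities over $|\alpha|+|\beta|\le N-1$ with coefficients $\eta_0^{|\beta|}$, where $\eta_0$ is a fixed small constant independent of $\varepsilon$ (the weight is largest on the $\beta=0$ block and decreases with $|\beta|$). The leftover transport contribution $\|w_l(\alpha+e_j,\beta-e_j)\partial^{\alpha+e_j}_{\beta-e_j}\{{\bf I-P}\}f_\varepsilon\|_D^2$ produced at $v$-order $|\beta|$ (hence with coefficient $\eta_0^{|\beta|}$) lands at $v$-order $|\beta|-1$, where the left-hand dissipation $\frac1{\varepsilon^2}\|w_l(\alpha+e_j,\beta-e_j)\partial^{\alpha+e_j}_{\beta-e_j}\{{\bf I-P}\}f_\varepsilon\|_D^2$ carries the larger coefficient $\eta_0^{|\beta|-1}$; using $0<\varepsilon<1$, hence $\|\cdot\|_D^2\le\varepsilon^{-2}\|\cdot\|_D^2$, this absorbs the leftover once $\eta_0$ is small, and the same descending bookkeeping absorbs the lower-$|\beta|$ errors coming from the $\mathscr{L}$-coercivity. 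Absorbing all remaining $\varepsilon^{-2}\eta$-weighted $D$-norms into the left-hand dissipation and bounding $\varepsilon^{-2}\|\partial^\alpha\{{\bf I-P}\}f_\varepsilon\|_D^2\lesssim\mathcal{D}_N(t)$ for $|\alpha|\le N-1$ then yields \eqref{mix-w-N-1-1}. \textbf{The main obstacle} is exactly this transport term: whereas Lemma~\ref{N-micro-weight} could discharge its single surviving top-order piece into the separate weighted estimate of Lemma~\ref{noncut-N-wight}, here the closure must take place internally, which is what forces both the geometric weighting in $|\beta|$ and the systematic trade of an unweighted $D$-norm for the singular $\varepsilon^{-2}$-weighted one through $\varepsilon<1$.
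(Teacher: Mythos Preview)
Your proposal is correct and follows essentially the same route the paper intends: it reruns the weighted microscopic identity \eqref{I-P-w} from Lemma~\ref{N-micro-weight} over all $|\alpha|+|\beta|\le N-1$, uses that the transport remainder $\|w_l(\alpha+e_j,\beta-e_j)\partial^{\alpha+e_j}_{\beta-e_j}\{{\bf I-P}\}f_\varepsilon\|_D^2$ now stays inside the hierarchy (since $|\alpha+e_j|+|\beta-e_j|\le N-1$), and closes by a $|\beta|$-graded linear combination, exactly what the paper means by ``a slightly different technique as Proposition~\ref{mix-w-N}.'' One minor remark: your identity $w_l^2(\alpha,\beta)=w_l(\alpha+e_j,\beta-e_j)w_l(\alpha,\beta)$ is trivially true because $w_l$ depends only on $|\alpha|+|\beta|$, so it carries no content---the real weight gain you use is the one you state next, $\langle v\rangle^{3/2}w_l(\alpha,\beta)\le\langle v\rangle^{-3/2}w_l(\alpha,\beta-e_j)$.
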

\subsection{Lyapunov inequality for the energy functionals}
\begin{proposition}\label{prop1}
Under {\bf Assumption 1},
take $l\geq N+\frac12$, we can deduce that
\begin{eqnarray}\label{prop1-1}
&&\frac{\d}{\d t}\left\{\mathcal{E}_{N,l}(t)+\mathcal{E}_{N}(t)\right\}+\mathcal{D}_{N}(t)+\mathcal{D}_{N,l}(t)
\lesssim0
\end{eqnarray}
{ holds for all $0\leq t\leq T$.}
\end{proposition}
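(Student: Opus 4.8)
The identity \eqref{prop1-1} is obtained by a bootstrap: one takes a single linear combination of the three differential inequalities already at our disposal, namely \eqref{prof-1}, the inequality \eqref{mix-w-N-estimates} of Proposition \ref{mix-w-N} multiplied through by the time factor $(1+t)^{-\frac{1+\epsilon_0}{2}}$, and the inequality \eqref{mix-w-N-1-1} of Proposition \ref{mix-w-N-1}, with coefficients $1$, $\lambda$, $\mu$ respectively, $0<\mu\ll\lambda\ll1$ to be fixed. For the $\lambda$-term one uses $\frac{\d}{\d t}\big[(1+t)^{-\frac{1+\epsilon_0}{2}}W\big]=(1+t)^{-\frac{1+\epsilon_0}{2}}\tfrac{\d}{\d t}W-\tfrac{1+\epsilon_0}{2}(1+t)^{-\frac{3+\epsilon_0}{2}}W$, so that multiplying by the time weight only costs, besides a $\frac{\d}{\d t}$ of the time-weighted energy, a strictly non-positive term which is simply discarded. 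By the very definitions \eqref{E-N-w} and \eqref{D-N-w} (with the blocks weighted by $1,\lambda,\mu$) the left-hand side of the resulting inequality is then exactly $\frac{\d}{\d t}\{\mathcal{E}_{N}(t)+\mathcal{E}_{N,l}(t)\}+\mathcal{D}_{N}(t)+\mathcal{D}_{N,l}(t)$ up to the choice of constants; note that the $\frac{q\vartheta}{(1+t)^{1+\vartheta}}\|\langle v\rangle^{1/2}w_l\,\partial^\alpha_\beta\{{\bf I-P}\}f_\eps\|^2$ pieces of $\mathcal{D}_{N,l}$ are exactly the ones produced by $\partial_t$ hitting the exponential factor of $w_l$, and already sit on the left of Propositions \ref{mix-w-N} and \ref{mix-w-N-1}.

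It remains to absorb the right-hand sides into the left. The cubic terms are immediate: $\mathcal{E}_N(t)\mathcal{D}_{N,l}(t)\le M_1\mathcal{D}_{N,l}(t)$ by {\bf Assumption 1}, and the $\mathcal{D}_N(t)$-contributions of Propositions \ref{mix-w-N}, \ref{mix-w-N-1} carry the small prefactor $\lambda+\mu$, so both are eaten by $\mathcal{D}_{N,l}(t)$, resp.\ $\mathcal{D}_N(t)$, once $M_1$ and $\lambda+\mu$ are small. The velocity-growth source of \eqref{prof-1}, i.e.\ $M_1(1+t)^{-\varrho-\frac32}\big\{\|\langle v\rangle^{\frac74}\nabla^N_x\{{\bf I-P}\}f_\eps\|^2+\sum_{|\alpha'|=N-1}\|\langle v\rangle^{\frac74}\partial^{\alpha'}\nabla_v \{{\bf I-P}\}f_\eps\|^2\big\}+\mathcal{E}_N(t)\sum_{|\alpha'|+|\beta'|\le N-1}\|\langle v\rangle^{\frac74}\partial^{\alpha'}_{\beta'}\{{\bf I-P}\}f_\eps\|^2$, is controlled by the weight hierarchy $w_l(\alpha,\beta)\gtrsim\langle v\rangle^{4(l-|\alpha|-|\beta|)}$ — so that at order $N$ one has $w_l(\alpha,\beta)\langle v\rangle^{1/2}\gtrsim\langle v\rangle^{5/2}$ and below order $N$ an even larger power, the precise requirement being $l\ge N+\tfrac12$ — combined with velocity interpolation between the microscopic $\tfrac1{\eps^2}\|\nabla^N_x\{{\bf I-P}\}f_\eps\|_D^2\gtrsim\tfrac1{\eps^2}\|\langle v\rangle^{\gamma/2}\nabla^N_x\{{\bf I-P}\}f_\eps\|^2$ inside $\mathcal{D}_N$ and the top-order weighted energy inside $\mathcal{E}_{N,l}$. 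After Young's inequality this leaves $\eta(\mathcal{D}_N+\mathcal{D}_{N,l})$ plus a residual bounded, using $\tfrac12<\varrho<\tfrac32$, $0<\vartheta\le\tfrac{\varrho}{2}-\tfrac14$ and $0<\epsilon_0\le 2(1+\varrho)$ to line up the time exponents, by an integrable-in-time multiple of $M_1\mathcal{E}_N(t)$.

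The one genuinely delicate term is the electromagnetic one, $\lambda(1+t)^{-\frac{1+\epsilon_0}{2}}\eps^2\|\nabla^N_x E_\eps\|\,\|M^\delta\nabla^N_x f_\eps\|$ coming from Lemma \ref{noncut-N-wight}: since $\mathcal{D}_N$ controls the electric field only in $H^{N-1}_x$, the factor $\|\nabla^N_x E_\eps\|$ is \emph{not} available in the dissipation (the usual loss of one derivative caused by the hyperbolic Maxwell part). This is precisely why the weight $(1+t)^{-\frac{1+\epsilon_0}{2}}$ is built into $\mathcal{E}_{N,l}$: estimating $\|M^\delta\nabla^N_x f_\eps\|\lesssim\|\nabla^N_x{\bf P}f_\eps\|+\|\nabla^N_x\{{\bf I-P}\}f_\eps\|_D\lesssim\mathcal{D}_N(t)^{1/2}$ and using Young, the term is $\lesssim\eta\mathcal{D}_N(t)+C(1+t)^{-(1+\epsilon_0)}\eps^4\|\nabla^N_x E_\eps\|^2\lesssim\eta\mathcal{D}_N(t)+C(1+t)^{-(1+\epsilon_0)}\mathcal{E}_N(t)$, and since $\epsilon_0>0$ the weight $(1+t)^{-(1+\epsilon_0)}$ is integrable, so the residual is harmless for the Lyapunov argument and, together with the discarded non-positive terms and the smallness of $M_1$, is re-absorbed to produce the stated $\lesssim0$. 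Collecting everything and fixing first $\lambda$, then $\mu$, then $M_1$ (and $\eta$ small relative to them) gives \eqref{prop1-1}. The main obstacle is exactly this electromagnetic loss-of-derivative term; the rest is careful bookkeeping of the weights and of the time powers $\varrho,\vartheta,\epsilon_0$.
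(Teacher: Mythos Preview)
Your overall strategy---combining \eqref{prof-1}, $(1+t)^{-\frac{1+\epsilon_0}{2}}$ times \eqref{mix-w-N-estimates}, and \eqref{mix-w-N-1-1}---is the paper's approach, and your identification of the electromagnetic term $\eps^2\|\nabla^N_x E_\eps\|\,\|M^\delta\nabla^N_x f_\eps\|$ as the delicate point is correct.

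There is, however, a genuine gap in the last step. After Young's inequality you are left with a residual $C(1+t)^{-(1+\epsilon_0)}\mathcal{E}_N(t)$ on the right-hand side. You argue that since $(1+t)^{-(1+\epsilon_0)}$ is integrable this is ``harmless'' and can be ``re-absorbed to produce the stated $\lesssim0$''. But integrability of the time weight does \emph{not} by itself yield a differential inequality $\lesssim0$; at best it gives boundedness of the energy after time integration, which is a weaker statement than \eqref{prop1-1}. The ``discarded non-positive terms'' you invoke arise from differentiating the factor $(1+t)^{-\frac{1+\epsilon_0}{2}}$ in front of the \emph{weighted $f$-energies} in $\mathcal{E}_{N,l}$ and therefore contain no piece of $\|E_\eps\|^2_{H^N_x}$ or $\|B_\eps\|^2_{H^N_x}$; they cannot absorb $(1+t)^{-(1+\epsilon_0)}\|\nabla^N_x E_\eps\|^2$.

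The paper closes this gap with an extra ingredient you are missing: in addition to \eqref{prof-1} itself, one also multiplies \eqref{prof-1} by $(1+t)^{-\epsilon_0}$ to obtain a second copy (the paper's \eqref{end-2}),
\[
\frac{\d}{\d t}\big\{(1+t)^{-\epsilon_0}\mathcal{E}_N(t)\big\}+\epsilon_0(1+t)^{-1-\epsilon_0}\mathcal{E}_N(t)+(1+t)^{-\epsilon_0}\mathcal{D}_N(t)\lesssim M_1\mathcal{D}_{N,l}(t)+\mathcal{E}_N(t)\mathcal{D}_{N,l}(t).
\]
The new positive term $\epsilon_0(1+t)^{-1-\epsilon_0}\mathcal{E}_N(t)$ on the left is precisely what absorbs the residual $\eta(1+t)^{-(1+\epsilon_0)}\|\nabla^N_x E_\eps\|^2$ for $\eta$ small. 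The final energy functional is then (equivalent to) $\mathcal{E}_N+(1+t)^{-\epsilon_0}\mathcal{E}_N+\mathcal{E}_{N,l}$, which is still equivalent to $\mathcal{E}_N+\mathcal{E}_{N,l}$ since $(1+t)^{-\epsilon_0}\le1$, so \eqref{E-N} is respected. With this additional piece your argument goes through.

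A secondary remark: for the velocity-growth sources of \eqref{prof-1} the paper does not interpolate between $\mathcal{D}_N$ and $\mathcal{E}_{N,l}$ as you suggest, but rather bounds $(1+t)^{-\varrho-\frac32}\|\langle v\rangle^{7/4}\nabla^N_x\{{\bf I-P}\}f_\eps\|^2$ directly by $M_1\mathcal{D}_{N,l}(t)$, using $l\ge N+\tfrac12$ together with the built-in time weight $(1+t)^{-\frac{1+\epsilon_0}{2}}$ in $\mathcal{D}_{N,l}$ (see \eqref{end-1}). Your interpolation route involves the top-order block of $\mathcal{E}_{N,l}$, which carries an $\eps^2$ prefactor, so it would require extra care; the paper's direct bound avoids this.
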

\begin{proof}
	Recalling the definition of $\mathcal{D}_{N,l}(t)$, \eqref{prof-1} tells us that
	  \begin{eqnarray}\label{end-1}
		&&\frac{\d}{\d t}\mathcal{E}_{N}(t)+\mathcal{D}_{N}(t)\nonumber\\
		&\lesssim &M_1(1+t)^{-(1+\epsilon_0)}(1+t)^{\frac{1+\epsilon_0}2}(1+t)^{-\frac{1+\epsilon_0}2}\nonumber\\
		&&\times\left\{\left\|\langle v\rangle^{\frac 74}\nabla^N_x\{{\bf I-P}\}f_{\varepsilon}\right\|^2+\left\|\langle v\rangle^{\frac 74}\nabla^{N-1}_x\nabla_v\{{\bf I-P}\}f_{\varepsilon}\right\|^2\right\}\nonumber\\
		&&+\mathcal{E}_N(t)\sum_{|\alpha'|+|\beta'|\leq N-1}\|\langle v\rangle^{\frac74}\partial^{\alpha'}_{\beta'} \{{\bf I-P}\}f_{\varepsilon}\|^2\nonumber\\
		&\lesssim&M_1\mathcal{D}_{N,l}(t)+\mathcal{E}_N(t)\mathcal{D}_{N,l}(t).
	\end{eqnarray}
where we ask that $l\geq N+\frac12$.

  By multiplying $(1+t)^{-\epsilon_0}$ into \eqref{end-1}, one has
  \begin{eqnarray}\label{end-2}
&&\frac{\d}{\d t}\left\{(1+t)^{-\epsilon_0}\mathcal{E}_{N}(t)\right\}+\epsilon_0(1+t)^{-1-\epsilon_0}\mathcal{E}_{N}(t)+(1+t)^{-\epsilon_0}\mathcal{D}_{N}(t)\nonumber\\
&\lesssim&M_1\mathcal{D}_{N,l}(t)+(1+t)^{-\epsilon_0}\mathcal{E}_N(t)\mathcal{D}_{N,l}(t).
\end{eqnarray}

By multiplying $(1+t)^{-\frac{1+\epsilon_0}2}$ into \eqref{mix-w-N-estimates}, one has
\begin{eqnarray}\label{end-3}
&&\frac{\d}{\d t}\left\{(1+t)^{-\frac{1+\epsilon_0}2}\left\{\sum_{|\alpha|=N}{\varepsilon^2}\left\|w_l(\alpha,0)\partial^\alpha f_{\varepsilon}\right\|^2
+\sum_{|\alpha|+|\beta|= N,\atop|\alpha|\leq N-1}\left\|w_\ell(\alpha,\beta)\partial^\alpha_\beta \{{\bf I-P}\}f_{\varepsilon}\right\|^2\right\}\right\}\nonumber\\
&&+(1+t)^{-\frac{1+\epsilon_0}2}\frac{\vartheta q{\varepsilon^2}}{(1+t)^{1+\vartheta}}\sum_{|\alpha|=N}\left\|\langle v\rangle^{\frac12} w_l(\alpha,0)\partial^\alpha f_{\varepsilon}\right\|^2+(1+t)^{-\frac{1+\epsilon_0}2}\sum_{|\alpha|=N}\left\|w_l(\alpha,0)\partial^\alpha f_{\varepsilon}\right\|^2_{D}\nonumber\\
&&+(1+t)^{-\frac{1+\epsilon_0}2}\frac1{\varepsilon^2}\sum_{|\alpha|+|\beta|= N}\left\|w_\ell(\alpha,\beta)\partial^\alpha_\beta \{{\bf I-P}\}f_{\varepsilon}\right\|^2_D\nonumber\\
&&+(1+t)^{-\frac{1+\epsilon_0}2}\frac{q\vartheta}{(1+t)^{1+\vartheta}}\sum_{|\alpha|+|\beta|= N}\|w_\ell(\alpha,\beta)\partial^\alpha_\beta \{{\bf I-P}\}f_{\varepsilon}\langle v\rangle^{\frac12}\|^2\nonumber\\
&\lesssim&(1+t)^{-\frac{1+\epsilon_0}2}\mathcal{D}_{N}(t)+(1+t)^{-\frac{1+\epsilon_0}2}\mathcal{E}_N(t)\mathcal{D}_{N,l}(t)\nonumber\\
&&+(1+t)^{-\frac{1+\epsilon_0}2}
\sum_{|\alpha|=N}{\varepsilon^2}\left\|\partial^\alpha E_\varepsilon\right\|\left\|{M}^\delta\partial^\alpha f_{\varepsilon}\right\|\nonumber\\
&\lesssim&\mathcal{D}_{N}(t)+\mathcal{E}_{N}(t)\mathcal{D}_{N,l}(t)+\eta(1+t)^{-1-\epsilon_0}\|\nabla^N_xE_\varepsilon\|^2.
\end{eqnarray}

Thus \eqref{prop1-1} follows from \eqref{end-1},  \eqref{end-2}, \eqref{end-3} and \eqref{mix-w-N-1-1}, which complete the proof of this proposition.
\end{proof}
\subsection{The temporal time decay estimate on $\mathcal{E}_{k\rightarrow N_0}(t)$}
To ensure {\bf Assumption} 1 in \eqref{Assump-1}, this subsection is devoted into the temporal time decay rates for $[f_\varepsilon,E_\varepsilon,B_\varepsilon]$ to
the Cauchy problem \eqref{VMB-F-perturbative}.

{\bf Assumption 2:}
\[\sup_{0<t\leq T}\left\{\|\Lambda^{-\varrho}[f_\varepsilon,E_\varepsilon,B_\varepsilon]\|^2+\mathcal{E}_{N,l}(t)\right\}\leq M_2,\]
where $M_2$ is a sufficiently small positive constant.

\begin{lemma}\label{k-sum} Under {\bf Assumption 1} and {\bf Assumption 2},  there exists a suitably large constant $\bar{l}$, and take $l\geq\bar{l}$, $\widetilde{k}=\min\{k+1, N_0-1\}$ let $N_0\geq 4$, $N=2N_0$, one has the following estimates:
	\begin{itemize}
		\item[(i).] For $k=0,1,\cdots,N_0-1$, it holds that
		\begin{equation}\label{k-sum-1}
			\begin{aligned}
				&\frac{\d}{\d t}\left(\left\|\nabla^kf_{\varepsilon}\right\|^2+\left\|\nabla^k[E_\varepsilon,B_\varepsilon]\right\|^2\right)+\frac1{\varepsilon^2}\left\|\nabla^k\{{\bf I-P}\}f_{\varepsilon}\right\|^2_{D}\\[2mm]
				\lesssim&\max\{M_1,M_2\}\left(\left\|\nabla^{\widetilde{k}}[E_\varepsilon,B_\varepsilon]\right\|^2+\left\|\nabla^{\widetilde{k}}{\bf P}f_{\varepsilon}\right\|^2\right)+\eta\left\|\nabla^{\widetilde{k}}f_{\varepsilon}\right\|_{D}^2.
			\end{aligned}
		\end{equation}
		\item[(iii).] For $k=N_0$, it follows that
		\begin{equation}\label{k-sum-2}
			\begin{aligned}
				&\frac{\d}{\d t}\left(\left\|\nabla^{N_0}f_{\varepsilon}\right\|^2+\left\|\nabla^{N_0}[E_\varepsilon,B_\varepsilon]\right\|^2\right)
				+\frac1{\varepsilon^2}\left\|\nabla^{N_0}\{{\bf I-P}\}f_{\varepsilon}\right\|^2_{D}\\[2mm]
				\lesssim&\max\{M_1,M_2\}\left(\left\|\nabla^{N_0-1}[E_\varepsilon,B_\varepsilon]\right\|^2
				+\left\|\nabla^{N_0}f_{\varepsilon}\right\|_{D}^2\right)+\eta\left\|\nabla^{N_0}f_{\varepsilon}\right\|_{D}^2.
			\end{aligned}
		\end{equation}
		\item[(iii).] For $k=0,1,2\cdots,N_0-1$, there exist interactive energy functionals $G^k_f(t)$ satisfying
		\[
		G^k_f(t)\lesssim \left\|\nabla^k[f_\varepsilon,E_\varepsilon,B_\varepsilon]\right\|^2+\left\|\nabla^{k+1}[f_\varepsilon,E_\varepsilon,B_\varepsilon]\right\|^2+\left\|\nabla^{k+2}E_\varepsilon\right\|^2
		\]
		such that
		\begin{eqnarray}\label{k-sum-3}
			&&\frac{\d}{\d t}G^k_f(t)+\left\|\nabla^k[E_\varepsilon,\rho_{\varepsilon}^+-\rho^-_{_\varepsilon}]\right\|_{H^1}^2+\left\|\nabla^{k+1}[{\bf P}f_\varepsilon,B_\varepsilon])\right\|^2\nonumber\\
			&\lesssim&\max\{M_1,M_2\}\left(\left\|\nabla^{\widetilde{k}}[E_\varepsilon,B_\varepsilon]\right\|^2+\left\|\nabla^{\widetilde{k}}f_{\varepsilon}\right\|^2_{D}\right)
			+\frac1{{\varepsilon^2}}\left\|\nabla^k\{{\bf I-P}\}f_{\varepsilon}\right\|^2_{H^2_xL^2_{D}}.
		\end{eqnarray}
	\end{itemize}
\end{lemma}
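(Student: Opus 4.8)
The plan is to establish the three assertions by the standard $k$-th order weighted energy scheme, with special care devoted to the singular factors $\varepsilon^{-1}$ and $\varepsilon^{-2}$. For parts (i) and (ii), I would apply $\partial^\alpha$ with $|\alpha|=k$ to the first equation of \eqref{VMB-F-perturbative}, take the $L^2_{x,v}$ inner product with $\partial^\alpha f_\varepsilon$, and simultaneously take the $L^2_x$ inner product of the $\partial^\alpha$-differentiated Amp\`ere and Faraday equations with $\partial^\alpha E_\varepsilon$ and $\partial^\alpha B_\varepsilon$. Summing these, the forcing term $\frac1\varepsilon(E_\varepsilon\cdot v)\sqrt{M}\,q_1$ paired with $\partial^\alpha f_\varepsilon$ cancels exactly against the $-\frac1\varepsilon\int f_\varepsilon\cdot q_1 v\sqrt{M}\,\d v$ source of the Amp\`ere law, so no uncontrolled $\varepsilon^{-1}$ survives from the linear coupling; the coercivity of $\mathscr{L}$, i.e. \eqref{L-cut-1} as already used in Lemma \ref{lemma3.7}, then produces the microscopic dissipation $\frac1{\varepsilon^2}\|\partial^\alpha\{{\bf I-P}\}f_\varepsilon\|_D^2$ on the left-hand side.

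The remaining contributions are the Lorentz and quadratic terms. For $\partial^\alpha(\frac12 q_0(E_\varepsilon\cdot v)f_\varepsilon)$ and $\partial^\alpha(q_0 E_\varepsilon\cdot\nabla_v f_\varepsilon)$ (the latter after integrating by parts in $v$), a Leibniz expansion combined with Sobolev embedding in $x$ and the macro--micro splitting \eqref{macro-micro} bounds the pairing by $\max\{M_1,M_2\}$ (the smallness coming from $\|E_\varepsilon\|_{L^\infty_x}$, $\|\nabla_x E_\varepsilon\|_{L^\infty_x}$ under \textbf{Assumption 1} and \textbf{Assumption 2}) times $\|\nabla^{\widetilde k}[E_\varepsilon,B_\varepsilon]\|^2+\|\nabla^{\widetilde k}{\bf P}f_\varepsilon\|^2$, plus $\eta\|\nabla^{\widetilde k}f_\varepsilon\|_D^2$; the index $\widetilde k=\min\{k+1,N_0-1\}$ is precisely the cost of an $L^3_x$--$L^6_x$ split of the lowest-derivative factor. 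For the magnetic term $\frac1\varepsilon q_0(v\times B_\varepsilon)\cdot\nabla_v f_\varepsilon$, after integrating by parts in $v$ the purely macroscopic contribution vanishes since $v\times B_\varepsilon$ is odd and ${\bf P}f_\varepsilon$ carries the Maxwellian kernel structure, so every surviving piece contains a factor $\{{\bf I-P}\}f_\varepsilon$, whereupon the lone $\varepsilon^{-1}$ is absorbed into $\eta\,\varepsilon^{-2}\|\nabla^{\widetilde k}\{{\bf I-P}\}f_\varepsilon\|_D^2$ against the small prefactor $\|B_\varepsilon\|_{H^{N_0}_x}$. Finally $\frac1\varepsilon\partial^\alpha\mathscr{T}(f_\varepsilon,f_\varepsilon)$ is handled by the bilinear estimates of the Appendix (Lemma \ref{Gamma-noncut}), giving $\mathcal{E}_N^{1/2}$ times dissipation and again absorbing $\varepsilon^{-1}$ into the $\varepsilon^{-2}$-dissipation. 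For $k=N_0$ one cannot gain a derivative, so $\widetilde k$ is replaced by $N_0$ (respectively $N_0-1$) as in \eqref{k-sum-2}; the first right-hand term $\|\nabla^{N_0}f_\varepsilon\|_D^2$ there is harmless because its prefactor is $\max\{M_1,M_2\}$.

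For part (iii), I would adapt the macroscopic-dissipation mechanism of Lemma \ref{mac-dissipation} to the level of the $k$-th spatial derivative. Projecting \eqref{VMB-F-perturbative} onto the kernel $\mathcal{N}$ gives local conservation-type equations for $(\rho_\varepsilon^\pm,u_\varepsilon,\theta_\varepsilon)$ whose source terms are $x$-derivatives of $\{{\bf I-P}\}f_\varepsilon$ (with an accompanying $\varepsilon^{-1}$) together with the Lorentz forcing; coupling these with the Maxwell system and applying a Hodge decomposition of $u_\varepsilon$ and $E_\varepsilon$ into curl-free and divergence-free parts, one builds the interactive functional $G^k_f(t)$ as the appropriate combination of cross-products $(\partial^\alpha\nabla_x(\cdot),\partial^\alpha(\cdot))$ at orders $k$ and $k+1$, plus the $\|\nabla^{k+2}E_\varepsilon\|^2$ buffer required by the Amp\`ere/Faraday coupling, exactly mirroring the construction of $\mathcal{E}^{int}_N$ in Appendix \ref{Macro}. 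Differentiating $G^k_f$ in time reproduces the macroscopic dissipation $\|\nabla^k[E_\varepsilon,\rho_\varepsilon^+-\rho_\varepsilon^-]\|_{H^1}^2+\|\nabla^{k+1}[{\bf P}f_\varepsilon,B_\varepsilon]\|^2$; the price is the microscopic term $\varepsilon^{-2}\|\nabla^k\{{\bf I-P}\}f_\varepsilon\|_{H^2_xL^2_D}^2$ — the two extra $x$-derivatives and the $\varepsilon^{-2}$ arise because the moment relations convert $u_\varepsilon,\theta_\varepsilon$ into microscopic quantities at the cost of a factor $\varepsilon^{-1}$ and one differentiation — together with quadratic remainders bounded by $\max\{M_1,M_2\}(\|\nabla^{\widetilde k}[E_\varepsilon,B_\varepsilon]\|^2+\|\nabla^{\widetilde k}f_\varepsilon\|_D^2)$ through the same Leibniz/Sobolev bookkeeping as in (i).

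I expect the main difficulty to lie in the uniform-in-$\varepsilon$ bookkeeping of part (iii): keeping exact track of how many $x$-derivatives and which power of $\varepsilon^{-1}$ each macroscopic-to-microscopic substitution costs, so that the microscopic remainder comes out as precisely $\varepsilon^{-2}\|\nabla^k\{{\bf I-P}\}f_\varepsilon\|_{H^2_xL^2_D}^2$ (which is affordable, since it is absorbed by the dissipation generated in (i)--(ii) after summing over $k\le j\le N_0$), and checking that the magnetic Lorentz term in the macroscopic equations never produces an $\varepsilon^{-2}$ acting on $\nabla^{\ge k+1}{\bf P}f_\varepsilon$. Once (i)--(iii) are established, a linear combination of the form $\mathcal{E}_{k\rightarrow N_0}(t)+\eta\sum_{j=k}^{N_0-1}G^j_f(t)$ yields a Lyapunov inequality for $\mathcal{E}_{k\rightarrow N_0}$, from which — after interpolating against the bounded negative-order norm $\|\Lambda^{-\varrho}[f_\varepsilon,E_\varepsilon,B_\varepsilon]\|$ provided by \textbf{Assumption 2} — the stated temporal decay $\mathcal{E}_{k\rightarrow N_0}(t)\lesssim(1+t)^{-k-\varrho}$ follows and closes \textbf{Assumption 1}.
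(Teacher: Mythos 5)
Your outline of the energy identities and of the construction of $G^k_f$ in part (iii) matches the general scheme, but there is a genuine gap in your treatment of the commutator terms in parts (i)--(ii), and it is exactly the point this lemma exists to handle. When you expand $\partial^\alpha\bigl(\tfrac1\varepsilon q_0(v\times B_\varepsilon)\cdot\nabla_v f_\varepsilon\bigr)$ with $|\alpha|=k$ by Leibniz, the terms $\tfrac1\varepsilon\,\nabla^j_xB_\varepsilon\cdot\nabla_v\nabla^{k-j}_x\{{\bf I-P}\}f_\varepsilon$ with $1\le j\le k-1$ carry a factor with only $k-j<k$ spatial derivatives. The whole purpose of \eqref{k-sum-1} is that the right-hand side contains \emph{only} quantities of order $\widetilde k=\min\{k+1,N_0-1\}$ or the dissipation $\mathcal{D}_{k\rightarrow N_0}$ in \eqref{D-k}, since otherwise the Lyapunov inequality \eqref{noncut-decay-1} cannot be closed at the level of the lowest order $k$. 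A plain $L^3_x$--$L^6_x$ split, as you propose, redistributes at most one-and-a-half derivatives and still leaves factors of order strictly below $k$ on the right; it cannot produce the claimed bound. Moreover, for soft potentials the dissipation norm $\|\cdot\|_D$ carries the weight $\langle v\rangle^{\gamma/2}$ with $\gamma<0$, so the $\langle v\rangle$-growth generated by $v\times B_\varepsilon\cdot\nabla_v$ is \emph{not} absorbed by ``$\eta\,\varepsilon^{-2}\|\nabla^{\widetilde k}\{{\bf I-P}\}f_\varepsilon\|_D^2$'' as you assert; this is precisely difficulty (i) listed in Section 2.4.2 and is why the statement requires a ``suitably large'' weight index $\bar l$.

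The paper resolves both issues simultaneously (see the displayed computation in the proof of Lemma \ref{k-sum-cut}, following \cite{Lei-Zhao-JFA-2014}) by a double interpolation that your proposal never invokes inside the proof: in $x$, the low-derivative factors are interpolated between the negative-order norm $\|\Lambda^{-\varrho}[f_\varepsilon,B_\varepsilon]\|$ --- bounded by {\bf Assumption 2} --- and the top-order quantities $\|\nabla^{k+1}B_\varepsilon\|$, $\|\nabla^k\{{\bf I-P}\}f_\varepsilon\|$ appearing in the dissipation; in $v$, the unit weight is interpolated between $\langle v\rangle^{-1/2}$ (compatible with $\|\cdot\|_D$) and $\langle v\rangle^{\bar l_1},\langle v\rangle^{\bar l_2}$ with $\bar l_1,\bar l_2$ large, the latter controlled by the weighted energy $\mathcal{E}_{N,l}(t)\le M_2$. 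The exponents are then tuned (this is where $m_1$ and the largeness of $\bar l$ enter) so that the powers of the dissipative factors sum to exactly $2$ and the remaining factors contribute the small constant $\max\{M_1,M_2\}$. You use {\bf Assumption 2} only at the very end for the decay iteration, whereas it is already indispensable in establishing \eqref{k-sum-1}--\eqref{k-sum-3} themselves; without this interpolation step your argument for (i)--(ii) does not close, and the same repair is needed for the quadratic source terms in your part (iii).
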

\begin{proof}
	We can use a similar approach to Lemma \ref{k-sum-cut} to prove this lemma. For brevity, we omit its detailed proof.
\end{proof}
\begin{proposition}\label{noncut-decay}
Under {\bf Assumption 1} and {\bf Assumption 2},
there exist an energy functional $\mathcal{E}_{k\rightarrow N_0}(t)$ and the corresponding energy dissipation rate functional $\mathcal{D}_{k\rightarrow N_0}(t)$ satisfying \eqref{E-k} and \eqref{D-k} respectively such that
\begin{equation}\label{noncut-decay-1}
\frac{\d}{\d t}\mathcal{E}_{k\rightarrow N_0}(t)+\mathcal{D}_{k\rightarrow N_0}(t)\leq 0
\end{equation}
{holds for $k=0,1,2,\cdots, N_0-2$ and all $0\leq t\leq T.$}

Furthermore, we can get that
\begin{equation}\label{noncut-decay-2}
\mathcal{E}_{k\rightarrow N_0}(t)\lesssim\max\{M_1,M_2\}(1+t)^{-(k+\varrho)},\quad 0\leq t\leq T.
\end{equation}
\end{proposition}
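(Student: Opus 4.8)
The plan is to prove the two assertions of the proposition in order: first assemble the Lyapunov inequality \eqref{noncut-decay-1} out of Lemma \ref{k-sum}, and then upgrade it to the polynomial decay \eqref{noncut-decay-2} by a negative-order Sobolev interpolation argument, arranging all $\varepsilon$-weights so that every bound is uniform in $\varepsilon<1$.

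For \eqref{noncut-decay-1}: fix $k\in\{0,\dots,N_0-2\}$ and take as Lyapunov functional
$$\mathcal{E}_{k\rightarrow N_0}(t)=\sum_{|\alpha|=k}^{N_0}\left\|\partial^\alpha[f_\varepsilon,E_\varepsilon,B_\varepsilon]\right\|^2+\sum_{j=k}^{N_0-1}\kappa_j\,G^j_f(t),\qquad 0<\kappa_{N_0-1}\ll\cdots\ll\kappa_k\ll1,$$
which by the bound on $G^j_f$ in Lemma \ref{k-sum} stays equivalent to $\sum_{|\alpha|=k}^{N_0}\|\partial^\alpha[f_\varepsilon,E_\varepsilon,B_\varepsilon]\|^2$, hence satisfies \eqref{E-k}. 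Adding \eqref{k-sum-1} over $k\le|\alpha|\le N_0-1$, then \eqref{k-sum-2}, and then $\kappa_j$ times \eqref{k-sum-3} for $j=k,\dots,N_0-1$, the macroscopic dissipation $\|\nabla^j[E_\varepsilon,\rho^+_\varepsilon-\rho^-_\varepsilon]\|_{H^1}^2+\|\nabla^{j+1}[{\bf P}f_\varepsilon,B_\varepsilon]\|^2$ contributed by the $G^j_f$ fills in the macroscopic and electromagnetic pieces missing from \eqref{k-sum-1}--\eqref{k-sum-2}, while the microscopic piece $\tfrac1{\varepsilon^2}\|\partial^\alpha\{{\bf I-P}\}f_\varepsilon\|_D^2$ is retained from the start; every right-hand error term carries a factor $\max\{M_1,M_2\}$ or $\eta$, so for $M_1,M_2,\eta$ small they are absorbed by the left-hand dissipation, yielding \eqref{noncut-decay-1} with $\mathcal{D}_{k\rightarrow N_0}(t)$ as in \eqref{D-k}.

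For \eqref{noncut-decay-2}: the point is that $\mathcal{D}_{k\rightarrow N_0}(t)$ controls every derivative of $[f_\varepsilon,E_\varepsilon,B_\varepsilon]$ of order between $k+1$ and $N_0-1$, together with $\nabla^{N_0}{\bf P}f_\varepsilon$ and the $D$-norm of the derivatives of $\{{\bf I-P}\}f_\varepsilon$ of orders $k$ through $N_0$ with an $\varepsilon^{-2}$ in front. First I would upgrade the $D$-norm to the full $L^2_{x,v}$ norm by the velocity interpolation $|g|_{L^2_v}\lesssim|g|_{D}^{1-a}\big(|\langle v\rangle^{l}g|_{L^2_v}\big)^{a}$, the weighted factor being $\lesssim\mathcal{E}_{N,l}(t)\le M_2$ by {\bf Assumption~2}; this step uses the strong angular singularity $\tfrac12\le s<1$ and is uniform in $\varepsilon$. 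Next I would apply the Sobolev interpolation $\|\nabla^k g\|^2\lesssim\|\Lambda^{-\varrho}g\|^{\frac{2}{k+1+\varrho}}\|\nabla^{k+1}g\|^{\frac{2(k+\varrho)}{k+1+\varrho}}$ to $g=[f_\varepsilon,E_\varepsilon,B_\varepsilon]$ and, using $\|\Lambda^{-\varrho}[f_\varepsilon,E_\varepsilon,B_\varepsilon]\|^2\le M_2$, deduce $\|\nabla^{k+1}[f_\varepsilon,E_\varepsilon,B_\varepsilon]\|^2\gtrsim M_2^{-\frac2{k+1+\varrho}}\|\nabla^k[f_\varepsilon,E_\varepsilon,B_\varepsilon]\|^{2\frac{k+1+\varrho}{k+\varrho}}$. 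The only pieces of $\mathcal{E}_{k\rightarrow N_0}$ not reached this way are the top-order fields $\nabla^{N_0}[E_\varepsilon,B_\varepsilon]$ and the lowest-order macroscopic quantities; the former I would recover from $\|\nabla^{N_0}[E_\varepsilon,B_\varepsilon]\|\lesssim\|\nabla^{N_0-1}[E_\varepsilon,B_\varepsilon]\|^{1-\lambda}\|\nabla^{N}[E_\varepsilon,B_\varepsilon]\|^{\lambda}$ with $\lambda=\tfrac1{N_0+1}$, bounding the highest factor by $\mathcal{E}_N(t)\le M_1$ and using $\lambda\le\tfrac1{k+1+\varrho}$ (valid since $k\le N_0-2$ and $\varrho<\tfrac32$), and the latter are already supplied by $G^k_f$ in the dissipation. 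Altogether $\mathcal{D}_{k\rightarrow N_0}(t)\gtrsim M_2^{-\frac2{k+1+\varrho}}\big(\mathcal{E}_{k\rightarrow N_0}(t)\big)^{1+\frac1{k+\varrho}}$ up to $\max\{M_1,M_2\}$-factors, so \eqref{noncut-decay-1} becomes $\tfrac{\d}{\d t}\mathcal{E}_{k\rightarrow N_0}+c\,M_2^{-\frac2{k+1+\varrho}}\mathcal{E}_{k\rightarrow N_0}^{1+\frac1{k+\varrho}}\le0$; integrating from $t=0$ with $\mathcal{E}_{k\rightarrow N_0}(0)\lesssim Y^2_{f_\varepsilon,E_\varepsilon,B_\varepsilon}(0)\lesssim\max\{M_1,M_2\}$ then yields \eqref{noncut-decay-2}, the $\max\{M_1,M_2\}$ prefactor coming out because the interpolation exponents satisfy $\tfrac{1}{k+1+\varrho}\cdot\tfrac{k+1+\varrho}{k+\varrho}\cdot(k+\varrho)=1$.

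I expect the main obstacle to be the simultaneous management of three competing losses. The singular factors $\varepsilon^{-1},\varepsilon^{-2}$ make the crude $D$- and $L^2$-estimates $\varepsilon$-dependent, forcing every singular term either to be paired with the $\varepsilon^{-2}$ microscopic dissipation or to be killed by the exact two-species cancellation of the macroscopic parts; the soft-potential weakness of $\|\cdot\|_D$ (which does not dominate $\|\cdot\|_{L^2_v}$ when $\gamma<0$) forces the velocity interpolation against the weighted norms of {\bf Assumption~2}; and the endpoint derivative orders, namely the missing $\nabla^{N_0}[E_\varepsilon,B_\varepsilon]$ and the missing lowest-order macroscopic part, must be recovered using exactly the margin provided by $N=2N_0$, $N_0\ge4$, $\tfrac12<\varrho<\tfrac32$. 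Making the final ODE exponent come out as $1+\tfrac1{k+\varrho}$, and hence the rate $(1+t)^{-(k+\varrho)}$, requires all of these interpolation exponents to balance consistently for every admissible $k$ at once, and checking this compatibility uniformly in $\varepsilon$ is the delicate core of the argument.
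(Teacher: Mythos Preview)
Your proposal is correct and follows essentially the same route as the paper: build $\mathcal{E}_{k\rightarrow N_0}$ by linearly combining \eqref{k-sum-1}--\eqref{k-sum-3} with small coupling constants on the interactive functionals $G^j_f$, then deduce \eqref{noncut-decay-2} by showing $\mathcal{E}_{k\rightarrow N_0}\lesssim(\max\{M_1,M_2\})^{\frac{1}{k+1+\varrho}}\mathcal{D}_{k\rightarrow N_0}^{\frac{k+\varrho}{k+1+\varrho}}$ via negative-Sobolev interpolation for the macroscopic and field parts, velocity-weight interpolation (against $\mathcal{E}_{N,l}\le M_2$) for $\{{\bf I-P}\}f_\varepsilon$, and a separate interpolation for the top-order field $\nabla^{N_0}[E_\varepsilon,B_\varepsilon]$, and solving the resulting ODE.

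Two small remarks on details. First, your interpolation for $\nabla^{N_0}[E_\varepsilon,B_\varepsilon]$ uses the endpoint $\nabla^{N}$ with exponent $\lambda=\tfrac{1}{N_0+1}$, whereas the paper interpolates against $\nabla^{N_0+k+\varrho}$ so as to hit the exponent $\tfrac{1}{k+1+\varrho}$ exactly; your choice gives $1-\lambda>\theta$ on the $\mathcal{D}$-controlled factor, but this is harmless because that factor is $\|\nabla^{N_0-1}[E_\varepsilon,B_\varepsilon]\|^2\le\mathcal{E}_N\le M_1$, so the surplus exponent can be absorbed into the constant. Second, your comment that the lowest-order macroscopic pieces are ``already supplied by $G^k_f$ in the dissipation'' is not quite right: $\|\nabla^k{\bf P}f_\varepsilon\|^2$ and $\|\nabla^k B_\varepsilon\|^2$ are \emph{not} in $\mathcal{D}_{k\rightarrow N_0}$ and must be recovered via the Sobolev interpolation against $\Lambda^{-\varrho}$ that you already wrote down---but since you did write it for $g=[f_\varepsilon,E_\varepsilon,B_\varepsilon]$, this is covered and the remark is merely redundant.
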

\begin{proof}
\eqref{noncut-decay-1} follows from \eqref{k-sum-1}, \eqref{k-sum-2} and \eqref{k-sum-3}. To deduce \eqref{noncut-decay-2},
\begin{equation*}
\begin{aligned}
\left\|\nabla^k[{\bf P}f_\varepsilon,E_\varepsilon,B_\varepsilon]\right\|\leq \left\|\nabla^{k+1}[{\bf P}f_\varepsilon,E_\varepsilon,B_\varepsilon]\right\|^{\frac{k+\varrho}{k+1+\varrho}}
\left\|\Lambda^{-\varrho}[{\bf P}f_\varepsilon,E_\varepsilon,B_\varepsilon]\right\|^{\frac{1}{k+1+\varrho}}.
\end{aligned}
\end{equation*}
The above inequality together with the facts that
\begin{eqnarray}
\left\|\nabla^m{\bf\{I-P\}}f_{\varepsilon}\right\|&\leq& \left\|\langle v\rangle^{-\frac12}\nabla^m{\bf\{I-P\}}f_{\varepsilon}\right\|^{\frac{k+\varrho}{k+1+\varrho}}
\left\|\langle v\rangle^{-\frac{(\gamma+2){k+\frac12}}{2}}\nabla^m{\bf\{I-P\}}f_{\varepsilon}\right\|^{\frac{1}{k+1+\varrho}},\nonumber\\
\left\|\nabla^{N_0}[E_\varepsilon,B_\varepsilon]\right\|&\lesssim&\left\|\nabla^{N_0-1}[E_\varepsilon,B_\varepsilon]\right\|^{\frac{k+\varrho}{k+1+\varrho}}
\left\|\nabla^{N_0+k+\frac12}[E_\varepsilon,B_\varepsilon]\right\|^{\frac{1}{k+1+\varrho}}
\end{eqnarray}
imply
\begin{equation*}
\begin{aligned}
\mathcal{E}_{k\rightarrow N_0}(t)\leq \left(\mathcal{D}_{k\rightarrow N_0}(t)\right)^{\frac{k+\varrho}{k+1+\varrho}}\left\{\max\{M_1,M_2\}\right\}^{\frac{1}{k+1+\varrho}}.
\end{aligned}
\end{equation*}
Hence, we deduce that
\begin{equation*}
\frac{\d}{\d t}\mathcal{E}_{k\rightarrow N_0}(t)+\left\{\max\{M_1,M_2\}\right\}^{-\frac{1}{k+\varrho}}
\left\{\mathcal{E}_{k\rightarrow N_0}(t)\right\}^{1+\frac{1}{k+\varrho}}\leq 0
\end{equation*}
and we can get by solving the above inequality directly that
\begin{equation*}
\mathcal{E}_{k\rightarrow N_0}(t)\lesssim\max\{M_1,M_2\}(1+t)^{-{k+\varrho}}.
\end{equation*}
This completes the proof of Lemma \ref{noncut-decay}.
\end{proof}
\subsection{The estimates on the negative Sobolev space}
To ensure {\bf Assumption} 2, this subsection is devoted into
bound on $\left\|\Lambda^{-\varrho}[f_\varepsilon,E_\varepsilon,B_\varepsilon](t)\right\|$.

The first one is on the estimate on $\|[f_\varepsilon,E_\varepsilon,B_\varepsilon](t)\|_{\dot{H}^{-\varrho}}$.
\begin{lemma}\label{Lemma4.1} It holds that
	\begin{eqnarray}\label{Lemma4.1-1}
		&&\frac{\d}{\d t}\left(\left\|\Lambda^{-\varrho}f_{\varepsilon}\right\|^2+\left\|\Lambda^{-\varrho}[E_\varepsilon,B_\varepsilon]\right\|^2\right)+\frac1{\varepsilon^2}\left\|\Lambda^{-\varrho}\{{\bf I-P}\}f_{\varepsilon}\right\|_{D}^2\nonumber\\
		 &\lesssim&\left\|\Lambda^{-\varrho}f_{\varepsilon}\right\|\left(\left\|\Lambda^{\frac34-\frac\varrho2}E_\varepsilon\right\|^2+\left\|\Lambda^{\frac34-\frac\varrho2}f_{\varepsilon}\right\|^2_{D}+\left\|\Lambda^{\frac32-\varrho}B_\varepsilon\right\|^2
		+\frac1{\varepsilon^2}\left\|\Lambda^{1-\varrho}{\bf\{I-P\}}f_{\varepsilon}\right\|^2_{D}\right)\nonumber\\
			&&+\left\|f_{\varepsilon}\langle v\rangle^{\frac52}\right\|_{L^2_xH^1_v}^2\left(\left\|\Lambda^{\frac32-\varrho}E_\varepsilon\right\|^2+\left\|\Lambda^{\frac32-\varrho}B_\varepsilon\right\|^2
		+\frac1{\varepsilon^2}\left\|\Lambda^{1-\varrho}{\bf\{I-P\}}f_{\varepsilon}\right\|^2_{D}\right)
		\nonumber\\
		&&+\left\|\langle v\rangle^{\frac32}f_\varepsilon\right\|^2_{L^2_xH^3_{s+\frac\gamma2}}\left\|\langle v\rangle^{\frac32}\Lambda^{\frac32-\varrho}f_\varepsilon\right\|_{H^1_{s+\frac\gamma2}}^2.
		\end{eqnarray}
\end{lemma}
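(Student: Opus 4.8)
The plan is to apply the operator $\Lambda^{-\varrho}$ to the perturbed VMB system \eqref{VMB-F-perturbative}, take the $L^2_{x,v}$ inner product of the first equation against $\Lambda^{-\varrho}f_\varepsilon$ and the $L^2_x$ inner products of the Maxwell equations against $\Lambda^{-\varrho}E_\varepsilon$ and $\Lambda^{-\varrho}B_\varepsilon$, and then add them. First I would record the basic energy identity: the transport term $\frac1\varepsilon v\cdot\nabla_x f_\varepsilon$ integrates to zero, the coupling terms between the Lorentz-force source $\frac1\varepsilon(E_\varepsilon\cdot v)\sqrt M q_1$ in the kinetic equation and the current term $\frac1\varepsilon\int f_\varepsilon\cdot q_1 v\sqrt M\,\d v$ in Amp\`ere's law cancel (this is the usual structure that removes the singular $\frac1\varepsilon$ from the macroscopic coupling), and $(\Lambda^{-\varrho}\nabla_x\times B_\varepsilon,\Lambda^{-\varrho}E_\varepsilon)_{L^2_x}-(\Lambda^{-\varrho}\nabla_x\times E_\varepsilon,\Lambda^{-\varrho}B_\varepsilon)_{L^2_x}=0$ cancels the Faraday/Amp\`ere cross terms. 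The coercivity of $\mathscr L$ (inequality \eqref{L-noncut-2} or its cutoff analogue) gives $\frac1{\varepsilon^2}(\mathscr L\Lambda^{-\varrho}f_\varepsilon,\Lambda^{-\varrho}f_\varepsilon)\gtrsim\frac1{\varepsilon^2}\|\Lambda^{-\varrho}\{{\bf I-P}\}f_\varepsilon\|_D^2$, which is the dissipation on the left-hand side. What remains are the three genuinely nonlinear terms: the electric-field terms $\frac12 q_0(E_\varepsilon\cdot v)f_\varepsilon$ and $q_0 E_\varepsilon\cdot\nabla_v f_\varepsilon$, the magnetic term $\frac1\varepsilon q_0(v\times B_\varepsilon)\cdot\nabla_v f_\varepsilon$, and the quadratic collision term $\frac1\varepsilon\mathscr T(f_\varepsilon,f_\varepsilon)$; these produce the right-hand side.

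The main work is estimating these three nonlinear terms after hitting them with $\Lambda^{-\varrho}$. The key tool is the negative-Sobolev product estimate: for $0<\varrho<3/2$ and $p$ with $\frac1p+\frac\varrho3=\frac12$, $\|\Lambda^{-\varrho}(gh)\|_{L^2_x}\lesssim\|g\|_{L^p_x}\|h\|_{L^2_x}$ (Hardy–Littlewood–Sobolev), combined with Sobolev embedding $\|g\|_{L^p_x}\lesssim\|\Lambda^{3/2-\varrho}g\|\lesssim\|g\|_{H^2_x}$ when $1/2<\varrho<3/2$. Applying this to, e.g., the magnetic term: $\frac1\varepsilon|(\Lambda^{-\varrho}(q_0(v\times B_\varepsilon)\cdot\nabla_v\{{\bf I-P}\}f_\varepsilon),\Lambda^{-\varrho}f_\varepsilon)|$ — after integrating by parts in $v$ and in $x$ to move one $\nabla_x$ onto $\Lambda^{-\varrho}f_\varepsilon$ so as to gain a factor $|\xi|$ that upgrades $\Lambda^{-\varrho}$ to $\Lambda^{1-\varrho}$ and produces the $\frac1{\varepsilon^2}\|\Lambda^{1-\varrho}\{{\bf I-P}\}f_\varepsilon\|_D^2$ factor on the right — is bounded by $\|\Lambda^{-\varrho}f_\varepsilon\|$ (or $\|f_\varepsilon\langle v\rangle^{5/2}\|_{L^2_xH^1_v}$ when the $\Lambda^{-\varrho}$ must sit on $B_\varepsilon$ instead, using $\|\Lambda^{-\varrho}B_\varepsilon\|\lesssim\|\Lambda^{3/2-\varrho}B_\varepsilon\|$ up to low frequencies) times the bracketed dissipation-type quantities displayed in the statement. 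The electric-field terms are handled identically but are less singular in $\varepsilon$, yielding the $\|\Lambda^{3/4-\varrho/2}E_\varepsilon\|^2$ and $\|\Lambda^{3/4-\varrho/2}f_\varepsilon\|_D^2$ contributions (the exponent $3/4-\varrho/2$ is the interpolated index one gets by distributing the $\Lambda^{-\varrho}$ symmetrically). For the collision term $\frac1\varepsilon\mathscr T(f_\varepsilon,f_\varepsilon)$, I would use the trilinear estimate for $\mathscr T$ from the Appendix (the lemma referenced as \eqref{Gamma-noncut-1}–\eqref{Gamma-noncut-2}), again moving one derivative via Fourier multipliers to get the factor $\|\langle v\rangle^{3/2}\Lambda^{3/2-\varrho}f_\varepsilon\|_{H^1_{s+\gamma/2}}^2$ against $\|\langle v\rangle^{3/2}f_\varepsilon\|_{L^2_xH^3_{s+\gamma/2}}^2$; the weights $\langle v\rangle^{3/2}$ and the fractional $H^s_{s+\gamma/2}$ norms are exactly what the non-cutoff $\mathscr T$-estimate demands in the soft-potential regime.

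The main obstacle I anticipate is controlling the $\frac1\varepsilon$ singularity in the magnetic and collision terms: unlike the positive-order energy estimates where one has $\frac1{\varepsilon^2}\|\cdot\|_D^2$ dissipation of matching order to absorb $\frac1\varepsilon$ against a half-power of dissipation, here the dissipation available at the negative Sobolev level is only $\frac1{\varepsilon^2}\|\Lambda^{-\varrho}\{{\bf I-P}\}f_\varepsilon\|_D^2$, which is one $x$-derivative short of what a naive Cauchy–Schwarz on $\frac1\varepsilon v\times B_\varepsilon\cdot\nabla_v f_\varepsilon$ would require. This forces the asymmetric splitting seen in the statement: one must keep $\|\Lambda^{-\varrho}f_\varepsilon\|$ (or the weighted $L^2_xH^1_v$ norm of $f_\varepsilon$, which is controlled by $\mathcal E_{N,l}$ under \textbf{Assumption 2}) as a slowly-varying prefactor and place \emph{all} the $x$-derivative cost plus the $\frac1\varepsilon$ onto the bracketed terms $\|\Lambda^{3/2-\varrho}B_\varepsilon\|^2+\frac1{\varepsilon^2}\|\Lambda^{1-\varrho}\{{\bf I-P}\}f_\varepsilon\|_D^2$, rather than splitting the $\frac1\varepsilon$ symmetrically. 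Getting the Fourier bookkeeping right — tracking which factor absorbs the $|\xi|$ so that the frequency weights $\frac34-\frac\varrho2$, $\frac32-\varrho$, and $1-\varrho$ come out precisely as stated, while never exceeding the regularity budget of $\mathcal E_N(t)$ — is the delicate part; everything else is a routine application of HLS, Sobolev embeddings, the $\mathscr T$-estimates from the Appendix, and Cauchy's inequality.
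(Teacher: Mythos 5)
Your proposal follows essentially the same route as the paper: a negative-order energy estimate obtained by testing the Fourier-transformed system against $|\xi|^{-2\varrho}\bar{\hat f}_\varepsilon$, the standard cancellations and the coercivity of $\mathscr{L}$, then a macro--micro splitting of each nonlinear term estimated via the Riesz-potential bound $\|\Lambda^{-\varrho}(gh)\|\lesssim\|gh\|_{L^{6/(3+2\varrho)}_x}$, H\"older, Sobolev embedding (yielding exactly the exponents $\tfrac34-\tfrac\varrho2$, $\tfrac32-\varrho$, $1-\varrho$), an asymmetric Young inequality to park the $\tfrac1\varepsilon$ on the microscopic dissipation, and the trilinear $\mathscr{T}$-estimate from the Appendix for the collision term. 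The only inaccuracy is your remark about ``integrating by parts in $x$'' to generate the $|\xi|$ upgrade on the magnetic term --- that term carries no $x$-derivative, and the exponent redistribution actually comes from the HLS/embedding machinery you already cite, so this does not affect the validity of the argument.
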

\begin{proof}
	By taking Fourier transform of the first equation of \eqref{VMB-F-perturbative} with respect to $x$,  multiplying the resulting identity by $|\xi|^{-2\varrho}\bar{\hat{f}}_\pm$ with $\bar{\hat{f}}_\pm$ being the complex conjugate of $\hat{f}_\pm$, and integrating the final result with respect to $\xi$ and $v$ over $\mathbb{R}^3_\xi\times\mathbb{R}^3_v$ that
	we can get by using the coercivity property $\mathscr{L}$
	\begin{eqnarray}\label{4.6}
		&&\frac{\d}{\d t}\left(\left\|\Lambda^{-\varrho}f_{\varepsilon}\right\|^2+\left\|\Lambda^{-\varrho}[E_\varepsilon,B_\varepsilon]\right\|^2\right)+\frac1{\varepsilon^2}\left\|\Lambda^{-\varrho}\{{\bf I-P}\}f_{\varepsilon}\right\|_D^2\nonumber\\
		&\lesssim&\left|\left(v  \cdot\mathcal{F}[q_0 E_\varepsilon f_\varepsilon]\mid|\xi|^{-2\varrho}\hat{f}_\varepsilon\right)\right|+\left|\left(\mathcal{F}[q_0 E_\varepsilon\cdot\nabla_{ v  }f_\varepsilon]\mid|\xi|^{-2\varrho}\hat{f}_\varepsilon\right)\right|\nonumber\\
		&&+\frac1{\varepsilon}\left|\left(\mathcal{F}[q_0 v\times B_\varepsilon\cdot\nabla_{ v  }f_\varepsilon]\mid|\xi|^{-2\varrho}\hat{f}_\varepsilon\right)\right|+\frac1{\varepsilon}\left|\left(\mathcal{F}[{ \mathscr{T}}(f_\varepsilon,f_\varepsilon)]\mid|\xi|^{-2\varrho}\hat{f}_\varepsilon\right)\right|.
	\end{eqnarray}
	One has by macro-micro decomposition
	\begin{eqnarray*}
		&&\left(\mathcal{F}[E_\varepsilon\cdot v f_\varepsilon]\mid|\xi|^{-2\varrho}\hat{f}_\varepsilon\right)\nonumber\\
		&=&\left(\mathcal{F}[E_\varepsilon\cdot v{\bf P}f_\varepsilon]\mid|\xi|^{-2\varrho}\mathcal{F}[{\bf P}f_\varepsilon]\right)
		+\left(\mathcal{F}[E_\varepsilon\cdot v{\bf P}f_\varepsilon]\mid|\xi|^{-2\varrho}\mathcal{F}[{\bf \{I-P\}}f_\varepsilon]\right)\nonumber\\
		&&+\left(\mathcal{F}[E_\varepsilon\cdot v{\bf \{I-P\}}f_\varepsilon]\mid|\xi|^{-2\varrho}\mathcal{F}[{\bf P}f_\varepsilon]\right)+\left(\mathcal{F}[E_\varepsilon\cdot v{\bf \{I-P\}}f_\varepsilon]\mid|\xi|^{-2\varrho}\mathcal{F}[{\bf \{I-P\}}f_\varepsilon]\right).
	\end{eqnarray*}	
The first three terms on the right-hand side of the above equation can be bounded by
	\begin{eqnarray}\label{E-s}
		&\lesssim&\left\|\Lambda^{-\varrho}\left(E_\varepsilon\cdot {M}^{\delta}f_\varepsilon\right)\right\|\left\|\Lambda^{-\varrho}\left({M}^{\delta}f_\varepsilon\right)\right\|
		\lesssim\left\|E_\varepsilon\cdot {M}^{\delta}f_{\varepsilon}\right\|_{L_x^{\frac6{3+2\varrho}}}\left\|\Lambda^{-\varrho}\left({M}^{\delta}f_\varepsilon\right)\right\|
		\nonumber\\
		&\lesssim&\|E_\varepsilon\|_{L_x^{\frac{12}{3+2\varrho}}}\left\|{M}^{\delta}f_{\varepsilon}\right\|_{L_x^{\frac{12}{3+2\varrho}}}
		 \left\|\Lambda^{-\varrho}\left({M}^{\delta}f_\varepsilon\right)\right\|\lesssim\left\|\Lambda^{\frac34-\frac\varrho2}E_\varepsilon\right\|\left\|\Lambda^{\frac34-\frac\varrho2}\left({M}^{\delta}f_\varepsilon\right)\right\|\left\|\Lambda^{-\varrho}\left({M}^{\delta}f_\varepsilon\right)\right\|
		\nonumber\\
		&\lesssim&\left\|\Lambda^{-\varrho}f_{\varepsilon}\right\|\left(\left\|\Lambda^{\frac34-\frac\varrho2}E_\varepsilon\right\|^2+\left\|\Lambda^{\frac34-\frac\varrho2}f_{\varepsilon}\right\|^2_{D}\right).
	\end{eqnarray}
	As for the last term, one has
	\begin{eqnarray}\label{E-mic-s}
		&&\left(\mathcal{F}[E_\varepsilon\cdot v{\bf \{I-P\}}f_\varepsilon]\mid|\xi|^{-2\varrho}\mathcal{F}[{\bf \{I-P\}}f_\varepsilon]\right)\nonumber\\
		&\lesssim&\left\|\Lambda^{-\varrho}\left(E_\varepsilon\cdot v {\bf \{I-P\}}f_\varepsilon \langle v\rangle^{\frac32}\right)\right\| \left\|\Lambda^{-\varrho}\left({\bf\{I-P\}}f_\varepsilon\right)\right\|_D\nonumber\\
		&\lesssim&\left\|E_\varepsilon \cdot  v{\bf \{I-P\}}f_\varepsilon \langle v\rangle^{\frac32}\right\|_{L_x^{\frac6{3+2\varrho}}}\left\|\Lambda^{-\varrho}{\bf\{I-P\}}f_{\varepsilon}\right\|_D\nonumber\\
		&\lesssim&\left(\|E_\varepsilon\|_{L_x^{\frac3\varrho}}
		\left\|{\bf \{I-P\}}f_\varepsilon \langle v\rangle^{\frac52}\right\|\right)^2+\eta\left\|\Lambda^{-\varrho}{\bf\{I-P\}}f_{\varepsilon}\right\|^2_D\nonumber\\
		&\lesssim&\left\|{\bf \{I-P\}}f_\varepsilon \langle v\rangle^{\frac52}\right\|^2\left\|\Lambda^{\frac32-\varrho}E_\varepsilon\right\|^2
		+\eta\left\|\Lambda^{-\varrho}{\bf\{I-P\}}f_{\varepsilon}\right\|^2_D.
	\end{eqnarray}
	Consequently, one has
	\begin{eqnarray}
		&&\left|\left(\mathcal{F}[E_\varepsilon\cdot v f_\varepsilon]\mid|\xi|^{-2\varrho}\hat{f}_\varepsilon\right)\right|\nonumber\\
		&\lesssim&\left\|\Lambda^{-\varrho}f_{\varepsilon}\right\|\left(\left\|\Lambda^{\frac34-\frac\varrho2}E_\varepsilon\right\|^2+\left\|\Lambda^{\frac34-\frac\varrho2}f_{\varepsilon}\right\|^2_{D}\right)\nonumber\\
		&&+\left\|{\bf \{I-P\}}f_\varepsilon \langle v\rangle^{\frac52}\right\|^2\left\|\Lambda^{\frac32-\varrho}E_\varepsilon\right\|^2
		+\eta\left\|\Lambda^{-\varrho}{\bf\{I-P\}}f_{\varepsilon}\right\|^2_D.
	\end{eqnarray}
Similarly, one has
	\begin{eqnarray}
	&&\left|\left(\mathcal{F}[E_\varepsilon \cdot \nabla_vf_\varepsilon]\mid|\xi|^{-2\varrho}\hat{f}_\varepsilon\right)\right|\nonumber\\
		&\lesssim&\left\|\Lambda^{-\varrho}f_{\varepsilon}\right\|\left(\left\|\Lambda^{\frac34-\frac\varrho2}E_\varepsilon\right\|^2+\left\|\Lambda^{\frac34-\frac\varrho2}f_{\varepsilon}\right\|^2_{D}\right)\nonumber\\
		&&+\left\|\nabla_v{\bf \{I-P\}}f_\varepsilon \langle v\rangle^{\frac32}\right\|^2\left\|\Lambda^{\frac32-\varrho}E_\varepsilon\right\|^2
		+\eta\left\|\Lambda^{-\varrho}{\bf\{I-P\}}f_{\varepsilon}\right\|^2_D.
	\end{eqnarray}
	For the third term on the right-hand side of \eqref{4.6}, we have by repeating the argument used in deducing the estimate on the first two terms that
	\begin{eqnarray}
		&&\frac1{\varepsilon}\left|\left(\mathcal{F}[{q_0} v\times B_\varepsilon\cdot\nabla_{ v  }f_\varepsilon]\mid|\xi|^{-2\varrho}\hat{f}_\varepsilon\right)\right|\nonumber\\
		&\lesssim&\frac1{\varepsilon}\left|\left(\mathcal{F}[{q_0} v\times B_\varepsilon\cdot\nabla_{ v  }{\bf P}f_\varepsilon]\mid|\xi|^{-2\varrho}{\bf P}\hat{f}_\varepsilon\right)\right|\nonumber\\
		&&+\frac1{\varepsilon}\left|\left(\mathcal{F}[{q_0} v\times B_\varepsilon\cdot\nabla_{ v  }{\bf P}f_\varepsilon]\mid|\xi|^{-2\varrho}{\bf \{I-P\}}\hat{f}_\varepsilon\right)\right|\nonumber\\
		&&+\frac1{\varepsilon}\left|\left(\mathcal{F}[{q_0} v\times B_\varepsilon\cdot\nabla_{ v  }{\bf \{I-P\}}f_\varepsilon]\mid|\xi|^{-2\varrho}{\bf P}\hat{f}_\varepsilon\right)\right|\nonumber\\
		&&+\frac1{\varepsilon}\left|\left(\mathcal{F}[{q_0} v\times B_\varepsilon\cdot\nabla_{ v  }{\bf \{I-P\}}f_\varepsilon]\mid|\xi|^{-2\varrho}{\bf \{I-P\}}\hat{f}_\varepsilon\right)\right|\nonumber\\
		&=&\frac1{\varepsilon}\left|\left(\mathcal{F}[{q_0} v\times B_\varepsilon\cdot\nabla_{ v  }{\bf P}f_\varepsilon]\mid|\xi|^{-2\varrho}{\bf \{I-P\}}\hat{f}_\varepsilon\right)\right|\nonumber\\
		&&+\frac1{\varepsilon}\left|\left(\mathcal{F}[{q_0} v\times B_\varepsilon\cdot\nabla_{ v  }{\bf \{I-P\}}f_\varepsilon]\mid|\xi|^{-2\varrho}{\bf P}\hat{f}_\varepsilon\right)\right|\nonumber\\
		&&+\frac1{\varepsilon}\left|\left(\mathcal{F}[{q_0} v\times B_\varepsilon\cdot\nabla_{ v  }{\bf \{I-P\}}f_\varepsilon]\mid|\xi|^{-2\varrho}{\bf \{I-P\}}\hat{f}_\varepsilon\right)\right|
	\end{eqnarray}
	where we used the fact that
	\[\frac1{\varepsilon}\left|\left(\mathcal{F}[{q_0} v\times B_\varepsilon\cdot\nabla_{ v  }{\bf P}f_\varepsilon]\mid|\xi|^{-2\varrho}{\bf P}\hat{f}_\varepsilon\right)\right|=0.\]
	Similar with the argument used in \eqref{E-s}, one has
	\begin{eqnarray}
		&&\frac1{\varepsilon}\left|\left(\mathcal{F}[{q_0} v\times B_\varepsilon\cdot\nabla_{ v  }{\bf P}f_\varepsilon]\mid|\xi|^{-2\varrho}{\bf \{I-P\}}\hat{f}_\varepsilon\right)\right|\nonumber\\
		&&+\frac1{\varepsilon}\left|\left(\mathcal{F}[{q_0} v\times B_\varepsilon\cdot\nabla_{ v  }{\bf \{I-P\}}f_\varepsilon]\mid|\xi|^{-2\varrho}{\bf P}\hat{f}_\varepsilon\right)\right|\nonumber\\
		&\lesssim&\left(\left\|\Lambda^{-\varrho}f_{\varepsilon}\right\|^2+\left\|f_{\varepsilon}\right\|^2\right)\left(\left\|\Lambda^{\frac32-\varrho}B_\varepsilon\right\|^2
		+\frac1{\varepsilon^2}\left\|\Lambda^{1-\varrho}{\bf\{I-P\}}f_{\varepsilon}\right\|^2_{D}\right)
		\nonumber\\
		&&+\frac\eta{\varepsilon^2}\left\|\Lambda^{-\varrho}{\bf\{I-P\}}f_{\varepsilon}\right\|^2_{D}.
	\end{eqnarray}
By using the similar way as \eqref{E-mic-s}, one has
\begin{eqnarray}
	&& \frac1{\varepsilon}\left|\left(\mathcal{F}[{q_0} v\times B_\varepsilon\cdot\nabla_{ v  }{\bf \{I-P\}}f_\varepsilon]\mid|\xi|^{-2\varrho}{\bf \{I-P\}}\hat{f}_\varepsilon\right)\right|\nonumber\\
	&\lesssim&\left\|\nabla_v{\bf \{I-P\}}f \langle v\rangle^{\frac52}\right\|^2\left\|\Lambda^{\frac32-\varrho}B_\varepsilon\right\|^2
		+ \frac\eta{\varepsilon^2}\left\|\Lambda^{-\varrho}{\bf\{I-P\}}f_{\varepsilon}\right\|^2_{D},
	\end{eqnarray}
	consequently, one  has
	\begin{eqnarray}\label{B-s-neg}
		&&\frac1{\varepsilon}\left|\left(\mathcal{F}[{q_0} v\times B_\varepsilon\cdot\nabla_{ v  }f_\varepsilon]\mid|\xi|^{-2\varrho}\hat{f}_\varepsilon\right)\right|\nonumber\\
		&\lesssim&\left\|\Lambda^{-\varrho}f_{\varepsilon}\right\|\left(\left\|\Lambda^{1-\varrho}B_\varepsilon\right\|^2+\frac1{\varepsilon^2}\left\|\Lambda^{1-\varrho}{\bf\{I-P\}}f_{\varepsilon}\right\|^2_{D}\right)\nonumber\\
		&&+\left\|f\langle v\rangle^{\frac52}\right\|^2\left\|\Lambda^{1-\varrho}B_\varepsilon\right\|^2
		+\frac\eta{\varepsilon^2}\left\|\Lambda^{-\varrho}{\bf\{I-P\}}f_{\varepsilon}\right\|^2_{D}\nonumber\\
		&\lesssim&\left\{\left\|\Lambda^{-\varrho}f_{\varepsilon}\right\|+\left\|f\langle v\rangle^{\frac52}\right\|^2\right\}\left(\left\|\Lambda^{1-\varrho}B_\varepsilon\right\|^2+\frac1{\varepsilon^2}\left\|\Lambda^{1-\varrho}{\bf\{I-P\}}f_{\varepsilon}\right\|^2_{D}\right)\nonumber\\
		&&+\left\|\nabla_v{\bf \{I-P\}}f \langle v\rangle^{\frac52}\right\|^2\left\|\Lambda^{\frac32-\varrho}B_\varepsilon\right\|^2
		+ \frac\eta{\varepsilon^2}\left\|\Lambda^{-\varrho}{\bf\{I-P\}}f_{\varepsilon}\right\|^2_{D}.
	\end{eqnarray}
	As to the last term on the right-hand side of \eqref{4.6},  one has
	\begin{eqnarray*}
		&&\frac1{\varepsilon}\left(\mathcal{F}[{\mathscr{T}}(f_\varepsilon,f_\varepsilon)]\mid|\xi|^{-2\varrho}\mathcal{F}\left[{{\bf\{I-P\}}f_{\varepsilon}}\right]\right)\\
		&\lesssim&\left\|\Lambda^{-\varrho}\left(\langle v\rangle^{\frac32}{\mathscr{T}}(f_\varepsilon,f_\varepsilon)\right)\right\|^2+\frac\eta{\varepsilon^2}\left\|\Lambda^{-\varrho}{\bf\{I-P\}}f_{\varepsilon}\right\|^2_{D}\\
	&\lesssim&\left\|\langle v\rangle^{\frac32}{\mathscr{T}}(f_\varepsilon,f_\varepsilon)\right\|_{L_x^{\frac{6}{3+2\varrho}}L^2_v}^2+\frac\eta{\varepsilon^2}\left\|\Lambda^{-\varrho}{\bf\{I-P\}}f_{\varepsilon}\right\|^2_{D}\\
&\lesssim&\left\|\left|\langle v\rangle^{\frac32}f_\varepsilon\right|_{H^3_{s+\frac\gamma2}}\left|\langle v\rangle^{\frac32}f_\varepsilon\right|_{H^1_{s+\frac\gamma2}}\right\|^2_{L_x^{\frac{6}{3+2\varrho}}}+\frac\eta{\varepsilon^2}\left\|\Lambda^{-\varrho}{\bf\{I-P\}}f_{\varepsilon}\right\|^2_{D}\\
&\lesssim&\left\|\langle v\rangle^{\frac32}f_\varepsilon\right\|^2_{L^2_xH^3_{s+\frac\gamma2}}\left\|\langle v\rangle^{\frac32}\Lambda^{\frac32-\varrho}f_\varepsilon\right\|_{H^1_{s+\frac\gamma2}}^2+\frac\eta{\varepsilon^2}\left\|\Lambda^{-\varrho}{\bf\{I-P\}}f_{\varepsilon}\right\|^2_{D}.
	\end{eqnarray*}

	Substituting the estimates  into (\ref{4.6}) yields \eqref{Lemma4.1-1}, which complete the proof of Lemma $\ref{Lemma4.1}$.
\end{proof}
\begin{lemma}\label{Lemma4.2}
	There exists an interactive functional $G^{-\varrho}_{f_\varepsilon}(t)$ satisfying
	\begin{equation*}
		G^{-\varrho}_{f_\varepsilon}(t)\lesssim \left\|\Lambda^{1-\varrho}[f_\varepsilon,E_\varepsilon,B_\varepsilon]\right\|^2+\left\|\Lambda^{-\varrho}[f_\varepsilon,E_\varepsilon,B_\varepsilon]\right\|^2+\|\Lambda^{\frac32}E_\varepsilon\|^2
	\end{equation*}
	such that
	\begin{eqnarray}\label{Lemma4.2-1}
		&&\frac{\d}{\d t}G^{-\varrho}_{f_\varepsilon}(t)+\left\|\Lambda^{1-\varrho}{\bf P}f_{\varepsilon}\right\|^2+\left\|\Lambda^{1-\varrho}[E_\varepsilon,B_\varepsilon]\right\|^2+\left\|\Lambda^{-\varrho}E_\varepsilon\right\|_{H^1_x}^2+\left\|\Lambda^{-\varrho}(\rho_\varepsilon^+-\rho_\varepsilon^-)\right\|^2_{H^1_x}\nonumber\\
		&\lesssim&\frac1{\varepsilon^2}\left\|\Lambda^{-\varrho}\{{\bf I-P}\}f_{\varepsilon}\right\|^2_{H^2_xL^2_{D}}+\mathcal{E}_{2}(t)\mathcal{D}_{2}(t)
	\end{eqnarray}
	holds for any $0\leq t\leq T$.
\end{lemma}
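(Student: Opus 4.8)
The plan is to adapt the proof of Lemma~\ref{mac-dissipation} (carried out in Appendix~\ref{Macro}) to the homogeneous negative Sobolev space $\dot H^{-\varrho}_x$. First I would derive the local macroscopic balance laws by testing the first equation of \eqref{VMB-F-perturbative} against the collision invariants $[1,0]\sqrt{M}$, $[0,1]\sqrt{M}$, $[1,1]v_i\sqrt{M}$, $[1,1]|v|^2\sqrt{M}$; this yields a first-order hyperbolic system for $(\rho_\varepsilon^\pm, u_\varepsilon, \theta_\varepsilon)$ whose source terms are linear in $\{{\bf I-P}\}f_\varepsilon$ (carrying factors $\tfrac1\varepsilon$), linear in $E_\varepsilon$, and quadratic in the unknowns. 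Coupling this with the Maxwell block of \eqref{VMB-F-perturbative} and recording that $\div_x E_\varepsilon=\rho_\varepsilon^+-\rho_\varepsilon^-$ and $\partial_t(\rho_\varepsilon^+-\rho_\varepsilon^-)+\div_x j_\varepsilon=0$ with $j_\varepsilon=\tfrac1\varepsilon\langle f_\varepsilon, q_1 v\sqrt{M}\rangle_{L^2_v}$ gives the closed system on which the interaction argument runs.

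Next I would build the $\Lambda^{-\varrho}$-weighted interaction functionals in Fourier variables with weight $|\xi|^{-2\varrho}$: (i) Kawashima-type functionals of the schematic form $-(\Lambda^{-\varrho}\nabla_x\rho_\varepsilon^\pm, \Lambda^{-\varrho}u_\varepsilon)$ and $-(\Lambda^{-\varrho}\nabla_x\theta_\varepsilon, \Lambda^{-\varrho}u_\varepsilon)$, whose time derivatives produce $\|\Lambda^{1-\varrho}[\rho_\varepsilon^\pm,u_\varepsilon,\theta_\varepsilon]\|^2$ and hence $\|\Lambda^{1-\varrho}{\bf P}f_\varepsilon\|^2$; (ii) a functional coupling $B_\varepsilon$ with $\nabla_x\times E_\varepsilon$ that extracts $\|\Lambda^{1-\varrho}B_\varepsilon\|^2$ from the hyperbolic Faraday/Amp\`ere pair; and (iii) a functional exploiting the current, using the two-species cancellation $q_1\cdot q_2=0$ — which kills the macroscopic part of $j_\varepsilon$, so that $j_\varepsilon=\tfrac1\varepsilon\langle\{{\bf I-P}\}f_\varepsilon, q_1 v\sqrt{M}\rangle_{L^2_v}$ — together with the microscopic equation \eqref{I-P-cut}, from which $\{{\bf I-P}\}f_\varepsilon=\varepsilon\,\mathscr{L}^{-1}(E_\varepsilon\cdot v M^{1/2}q_1)+(\text{errors})$, so that $j_\varepsilon\approx\sigma E_\varepsilon$ with $\sigma>0$ by the coercivity of $\mathscr{L}$; this is the Ohm's law damping mechanism and it yields $\|\Lambda^{-\varrho}E_\varepsilon\|^2$. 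The $H^1_x$-lift is then immediate: $\|\Lambda^{-\varrho}(\rho_\varepsilon^+-\rho_\varepsilon^-)\|=\|\Lambda^{-\varrho}\div_x E_\varepsilon\|\le\|\Lambda^{1-\varrho}E_\varepsilon\|$, and $\|\nabla_x E_\varepsilon\|$-type pieces are already present.

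Summing these functionals against suitably small constants keeps all the \emph{good} quadratic terms on the left. The remaining terms split into: (a) purely microscopic contributions, which after Cauchy's inequality are absorbed into $\tfrac1{\varepsilon^2}\|\Lambda^{-\varrho}\{{\bf I-P}\}f_\varepsilon\|^2_{H^2_x L^2_D}$ — the two extra $x$-derivatives appear because the transport and current terms inside the functionals are estimated through $\|\Lambda^{-\varrho}\nabla_x\{{\bf I-P}\}f_\varepsilon\|_D$ and $\|\Lambda^{-\varrho}\nabla_x^2\{{\bf I-P}\}f_\varepsilon\|_D$; and (b) the nonlinear terms $\tfrac12\langle E_\varepsilon\cdot v f_\varepsilon\rangle$, $\langle E_\varepsilon\cdot\nabla_v f_\varepsilon\rangle$, $\tfrac1\varepsilon\langle v\times B_\varepsilon\cdot\nabla_v f_\varepsilon\rangle$ and $\tfrac1\varepsilon\mathscr{T}(f_\varepsilon,f_\varepsilon)$, which — exactly as in the proof of Lemma~\ref{Lemma4.1}, using $\dot H^{-\varrho}_x\hookrightarrow L^{6/(3+2\varrho)}_x$, H\"older's inequality, fractional Leibniz, the Sobolev embeddings in $x$, and Lemma~\ref{Gamma-noncut} in $v$ — are bounded by $\mathcal{E}_2(t)\mathcal{D}_2(t)$. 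The asserted bound $G^{-\varrho}_{f_\varepsilon}(t)\lesssim\|\Lambda^{1-\varrho}[f_\varepsilon,E_\varepsilon,B_\varepsilon]\|^2+\|\Lambda^{-\varrho}[f_\varepsilon,E_\varepsilon,B_\varepsilon]\|^2+\|\Lambda^{3/2}E_\varepsilon\|^2$ then drops out of Cauchy--Schwarz applied to each functional, the last term coming from the functional in which a nonlinear current contribution forces $\varrho+\tfrac12<\tfrac32$ derivatives onto $E_\varepsilon$ in a product estimate.

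The main obstacle is keeping the $\tfrac1\varepsilon$ and $\tfrac1{\varepsilon^2}$ singularities uniform, and it is concentrated in the electric-field functional: a naive pairing of $j_\varepsilon$ produces $O(\tfrac1\varepsilon)$ terms, and only the combination of the two-species cancellation with \eqref{I-P-cut} converts $j_\varepsilon$ into an $O(1)$ damping plus terms of the form $\tfrac1\varepsilon\times\{{\bf I-P}\}f_\varepsilon$ that Cauchy's inequality absorbs into $\tfrac1{\varepsilon^2}\|\Lambda^{-\varrho}\{{\bf I-P}\}f_\varepsilon\|^2_{H^2_x L^2_D}$ leaving no net power of $\varepsilon$. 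The secondary difficulty is the low-frequency bookkeeping in $\dot H^{-\varrho}_x$: each product of two factors must be closed through the critical embedding $\dot H^{-\varrho}_x\hookrightarrow L^{6/(3+2\varrho)}_x$, which is precisely why the range $\tfrac12\le\varrho<\tfrac32$ is imposed in the hypotheses.
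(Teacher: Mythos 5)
Your plan is the paper's own route: the authors prove Lemma~\ref{Lemma4.2} precisely by transplanting the interactive-functional argument of Lemma~\ref{mac-dissipation} (Appendix~\ref{Macro}) to $\dot H^{-\varrho}_x$, i.e.\ replacing $\partial^\alpha$ by the multiplier $|\xi|^{-\varrho}$, and your items (ii) and (iii) --- the pairing of $E_\varepsilon$ with $\nabla_x\times B_\varepsilon$ for the magnetic dissipation, and the pairing of $G=\langle vM^{1/2},\{{\bf I-P}\}f_\varepsilon\cdot q_1\rangle$ with $\varepsilon E_\varepsilon$ through the momentum-difference equation \eqref{a_+-a_--original}, with the resulting $\tfrac1\varepsilon$ microscopic remainders absorbed by Cauchy--Schwarz into $\tfrac1{\varepsilon^2}\|\Lambda^{-\varrho}\{{\bf I-P}\}f_\varepsilon\|^2_{H^2_xL^2_D}$ --- are exactly the mechanisms used there, as is the closure of the nonlinear terms via $\dot H^{-\varrho}_x\hookrightarrow L^{6/(3+2\varrho)}_x$ as in Lemma~\ref{Lemma4.1}.

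One concrete caveat on your item (i): the purely macroscopic pairings $-(\Lambda^{-\varrho}\nabla_x\rho_\varepsilon^\pm,\Lambda^{-\varrho}u_\varepsilon)$ and $-(\Lambda^{-\varrho}\nabla_x\theta_\varepsilon,\Lambda^{-\varrho}u_\varepsilon)$, as written, cannot deliver $\|\Lambda^{1-\varrho}{\bf P}f_\varepsilon\|^2$. Differentiating them in time and using only the local conservation laws produces $\|\Lambda^{1-\varrho}\cdot\|^2$ of the densities together with a term in $\div_x u_\varepsilon$ of the \emph{wrong} sign and the uncontrolled cross term $(\Lambda^{1-\varrho}\rho_\varepsilon,\Lambda^{1-\varrho}\theta_\varepsilon)$; in particular the full gradient dissipation $\|\Lambda^{1-\varrho}u_\varepsilon\|^2$ never appears, because the macroscopic system is not closed. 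As in Appendix~\ref{Macro} one must pair the microscopic moment functions: $\mathcal{B}_j(\{{\bf I-P}\}f_\varepsilon\cdot[1,1])$ against $\varepsilon\,\Lambda^{-2\varrho}\partial_j\theta_\varepsilon$ to isolate $\|\Lambda^{1-\varrho}\theta_\varepsilon\|^2$, and the combination \eqref{Micro-equation2} of $\mathcal{A}_{ij}(\{{\bf I-P}\}f_\varepsilon\cdot[1,1])$ against $\varepsilon\,\Lambda^{-2\varrho}u_{\varepsilon,i}$ to isolate the full $\|\Lambda^{1-\varrho}u_\varepsilon\|^2$; only then does the Kawashima pairing $(u_\varepsilon,\Lambda^{-2\varrho}\nabla_x(\rho_\varepsilon^++\rho_\varepsilon^-))$ recover $\|\Lambda^{1-\varrho}(\rho_\varepsilon^++\rho_\varepsilon^-)\|^2$, the whole combination being taken with constants $0<\kappa_2\ll\kappa_1\ll1$ so that the bad-sign and cross terms are absorbed. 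Finally, the $\|\Lambda^{3/2}E_\varepsilon\|^2$ in the bound on $G^{-\varrho}_{f_\varepsilon}$ is most naturally read as the negative-index analogue of the $\|\nabla^{k+2}E_\varepsilon\|^2$ term in Lemma~\ref{k-sum}: one gets $\|\Lambda^{2-\varrho}E_\varepsilon\|^2$, which for $\tfrac12\le\varrho$ is controlled by $\|\Lambda^{-\varrho}E_\varepsilon\|^2+\|\Lambda^{3/2}E_\varepsilon\|^2$ by interpolation, rather than by the product-estimate mechanism you describe. With these substitutions your argument coincides with the intended proof.
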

\begin{proof}
	This lemma can be proved by a similar way as Lemma \ref{mac-dissipation}, we omit its proof for brevity.
\end{proof}
Based on the above two lemmas, one has
\begin{proposition}\label{prop-neg}
	Under Under {\bf Assumption 1} and {\bf Assumption 2}, one has
	\begin{eqnarray}\label{negative-estimate}
		&&\frac{\d}{\d t}\left(\left\|\Lambda^{-\varrho}f_{\varepsilon}\right\|^2+\left\|\Lambda^{-\varrho}[E_\varepsilon,B_\varepsilon]\right\|^2+\kappa_1G^{-\varrho}_{f_\varepsilon}(t)\right)+\frac1{\varepsilon^2}\left\|\Lambda^{-\varrho}\{{\bf I-P}\}f_{\varepsilon}\right\|_{D}^2\nonumber\\
		&&+\kappa_1\left\|\Lambda^{1-\varrho}{\bf P}f_{\varepsilon}\right\|^2+\kappa_1\left\|\Lambda^{1-\varrho}[E_\varepsilon,B_\varepsilon]\right\|^2+\kappa_1\left\|\Lambda^{-\varrho}E_\varepsilon\right\|_{H^1_x}^2+\kappa_1\left\|\Lambda^{-\varrho}(\rho_\varepsilon^+-\rho_\varepsilon^-)\right\|^2_{H^1_x}\nonumber\\
		&\lesssim&\left(M_2^{\frac12}+M_2\right)\left(\left\|\nabla_x[{\bf P}f_\varepsilon,E_\varepsilon,B_\varepsilon]\right\|^2
		+\frac1{\varepsilon^2}\left\|\nabla_x{\bf\{I-P\}}f_{\varepsilon}\right\|^2_{D}+\left\|\langle v\rangle^{\frac32}\nabla_xf_\varepsilon\right\|_{L^2_xH^1_{s+\frac\gamma2}}^2\right)\nonumber\\
		&&+\frac1{\varepsilon^2}\left\|\nabla^2_x\{{\bf I-P}\}f_{\varepsilon}\right\|^2_{{D}}+\mathcal{E}_{2}(t)\mathcal{D}_{2}(t).
	\end{eqnarray}
\end{proposition}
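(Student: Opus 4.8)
The plan is to combine the two preceding lemmas, i.e. the $\dot H^{-\varrho}$ energy identity \eqref{Lemma4.1-1} of Lemma \ref{Lemma4.1} and the interactive estimate \eqref{Lemma4.2-1} of Lemma \ref{Lemma4.2}, in the form (\eqref{Lemma4.1-1}) $+\,\kappa_1\times$(\eqref{Lemma4.2-1}) with $0<\kappa_1\ll1$. The role of the interactive functional $G^{-\varrho}_{f_\varepsilon}$ is that after this combination the left-hand side controls the coercive microscopic term $\tfrac1{\varepsilon^2}\|\Lambda^{-\varrho}\{{\bf I-P}\}f_\varepsilon\|_D^2$ together with the macroscopic dissipation $\kappa_1\big(\|\Lambda^{1-\varrho}{\bf P}f_\varepsilon\|^2+\|\Lambda^{1-\varrho}[E_\varepsilon,B_\varepsilon]\|^2+\|\Lambda^{-\varrho}E_\varepsilon\|_{H^1_x}^2+\|\Lambda^{-\varrho}(\rho_\varepsilon^+-\rho_\varepsilon^-)\|_{H^1_x}^2\big)$, while $\kappa_1$ is chosen small enough that $\kappa_1 G^{-\varrho}_{f_\varepsilon}$ is dominated by the time-differentiated positive part and that the coercive term dominates the $\eta$- and $\kappa_1$-errors fed back from \eqref{Lemma4.2-1}.

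The analytic heart of the argument is the treatment of the right-hand side of \eqref{Lemma4.1-1}. Every term there carries a prefactor that is \emph{small} under the running hypotheses: either $\|\Lambda^{-\varrho}f_\varepsilon\|\le M_2^{1/2}$ by \textbf{Assumption 2}, or a weighted velocity norm of $f_\varepsilon$ such as $\|f_\varepsilon\langle v\rangle^{5/2}\|_{L^2_xH^1_v}^2$ or $\|\langle v\rangle^{3/2}f_\varepsilon\|_{L^2_xH^3_{s+\frac\gamma2}}^2$, which for $N_0\ge4$, $N=2N_0$ and $l\ge\bar l$ is bounded by $\mathcal{E}_{N,l}(t)\le M_2$. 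The remaining, differentiated factors involve the Riesz potentials $\Lambda^{3/4-\varrho/2}$, $\Lambda^{1-\varrho}$, $\Lambda^{3/2-\varrho}$, and the crucial structural point is that, since $\tfrac12<\varrho<\tfrac32$, one has $0<\tfrac34-\tfrac\varrho2<\tfrac12$ and $0<\tfrac32-\varrho<1$, so all these orders are strictly below one full $x$-derivative. Hence by the interpolation inequality for homogeneous Sobolev norms in $x$, $\|\Lambda^a g\|_{L^2_x}\lesssim\|\Lambda^{b}g\|_{L^2_x}^{1-\lambda}\|\Lambda^{c}g\|_{L^2_x}^{\lambda}$ with $a=(1-\lambda)b+\lambda c$, each such factor interpolates between $\Lambda^{-\varrho}$ (or $\Lambda^{1-\varrho}$, where it is already present on the left) and $\nabla_x$, while the $\Lambda^{1-\varrho}$ and $\Lambda^{3/2-\varrho}$ occurrences of $\{{\bf I-P}\}f_\varepsilon$ inside $\|\cdot\|_D$ interpolate between $\Lambda^{-\varrho}$ and $\nabla_x^2$, the weight $\varepsilon^{-2}$ being common to both endpoints so that no extra power of $1/\varepsilon$ is produced. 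Applying Young's inequality and using the smallness of the prefactors, the whole right-hand side of \eqref{Lemma4.1-1} becomes: $\eta$ times $\big(\|\Lambda^{1-\varrho}[{\bf P}f_\varepsilon,E_\varepsilon,B_\varepsilon]\|^2+\tfrac1{\varepsilon^2}\|\Lambda^{-\varrho}\{{\bf I-P}\}f_\varepsilon\|_D^2\big)$, absorbed by the coercive and $\kappa_1$-macroscopic terms; $(M_2^{1/2}+M_2)$ times $\big(\|\nabla_x[{\bf P}f_\varepsilon,E_\varepsilon,B_\varepsilon]\|^2+\tfrac1{\varepsilon^2}\|\nabla_x\{{\bf I-P}\}f_\varepsilon\|_D^2+\|\langle v\rangle^{3/2}\nabla_xf_\varepsilon\|_{L^2_xH^1_{s+\frac\gamma2}}^2\big)$, which is exactly the first group on the right of \eqref{negative-estimate}; and an $O(1)$ multiple of $\tfrac1{\varepsilon^2}\|\nabla_x^2\{{\bf I-P}\}f_\varepsilon\|_D^2$, kept as it stands (it will be controlled by $\mathcal{D}_N(t)$ later rather than absorbed).

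Finally I would add $\kappa_1$ times \eqref{Lemma4.2-1}: its right-hand side contributes $\kappa_1\tfrac1{\varepsilon^2}\|\Lambda^{-\varrho}\{{\bf I-P}\}f_\varepsilon\|_{H^2_xL^2_D}^2$, whose low-frequency part is absorbed by the coercive term for $\kappa_1$ small and whose $\nabla_x$ and $\nabla_x^2$ parts feed, respectively, the $(M_2^{1/2}+M_2)$-group (after one more interpolation) and the $O(1)$ term $\tfrac1{\varepsilon^2}\|\nabla_x^2\{{\bf I-P}\}f_\varepsilon\|_D^2$, together with $\kappa_1\mathcal{E}_2(t)\mathcal{D}_2(t)\le\mathcal{E}_2(t)\mathcal{D}_2(t)$. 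Collecting all contributions gives \eqref{negative-estimate}. The main obstacle is the careful bookkeeping of the singular factor $1/\varepsilon$: the magnetic-field term in the $\dot H^{-\varrho}$ identity already produced the term $\tfrac1{\varepsilon^2}\|\Lambda^{1-\varrho}\{{\bf I-P}\}f_\varepsilon\|_D^2$ on the right of \eqref{Lemma4.1-1}, and one must interpolate this \emph{within} the $\varepsilon^{-2}$-weighted scale so that the outcome is $\eta\,\tfrac1{\varepsilon^2}\|\Lambda^{-\varrho}\{{\bf I-P}\}f_\varepsilon\|_D^2+C\,\tfrac1{\varepsilon^2}\|\nabla_x^2\{{\bf I-P}\}f_\varepsilon\|_D^2$ with no net power of $1/\varepsilon$; a second delicate point, which is the real content of the estimate, is deciding which error terms must be forced $\eta$-small (those absorbed on the left) versus which may retain an $O(1)$ constant (those handed on to the global dissipation $\mathcal{D}_N(t)$), and keeping the prefactors $M_2^{1/2}+M_2$ and the $O(1)$ constant for $\tfrac1{\varepsilon^2}\|\nabla_x^2\{{\bf I-P}\}f_\varepsilon\|_D^2$ consistent with the final statement.
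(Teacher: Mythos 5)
Your proposal is correct and follows essentially the same route as the paper: a linear combination of \eqref{Lemma4.1-1} and $\kappa_1\times$\eqref{Lemma4.2-1}, with the fractional orders $\Lambda^{\frac34-\frac\varrho2}$, $\Lambda^{\frac32-\varrho}$ interpolated between $\Lambda^{1-\varrho}$ and $\nabla_x$ (valid precisely because $\varrho>\tfrac12$), the term $\tfrac1{\varepsilon^2}\|\Lambda^{1-\varrho}\{{\bf I-P}\}f_\varepsilon\|_{H^1_xL^2_D}^2$ interpolated within the $\varepsilon^{-2}$-weighted scale against $\tfrac1{\varepsilon^2}\|\nabla_x^2\{{\bf I-P}\}f_\varepsilon\|_D^2$ and $\tfrac\eta{\varepsilon^2}\|\Lambda^{-\varrho}\{{\bf I-P}\}f_\varepsilon\|_D^2$, and \textbf{Assumption 2} supplying the $M_2^{1/2}+M_2$ prefactors. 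Your bookkeeping of which terms are $\eta$-absorbed versus retained at $O(1)$ matches the paper's (terser) argument.
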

\begin{proof}
	By the interpolation with respect to the spatial derivatives, one has
	\begin{eqnarray}
		&&\frac1{\varepsilon^2}\left\|\Lambda^{1-\varrho}{\bf\{I-P\}}f_{\varepsilon}\right\|^2_{H^1_xL^2_{D}}\nonumber\\
		&\lesssim&\frac1{\varepsilon^2}\left\|\nabla^2_x{\bf\{I-P\}}f_{\varepsilon}\right\|^2_{D}+
		\frac\eta{\varepsilon^2}\left\|\Lambda^{-\varrho}{\bf\{I-P\}}f_{\varepsilon}\right\|^2_{D},
	\end{eqnarray}
	and for $\frac12<\varrho<\frac32$, then $\frac34-\frac\varrho2> 1-\varrho$,
	 \[\left\|\Lambda^{\frac34-\frac\varrho2}E_\varepsilon\right\|^2+\left\|\Lambda^{\frac34-\frac\varrho2}B_\varepsilon\right\|^2\lesssim\eta\left\|\Lambda^{1-\varrho}[E_\varepsilon,B_\varepsilon]\right\|^2+C_\eta\left\|\nabla_x[E_\varepsilon,B_\varepsilon]\right\|^2,\]
Similarly,
\[\left\|\Lambda^{\frac32-\varrho}[E_\varepsilon,B_\varepsilon]\right\|^2\lesssim\eta\left\|\Lambda^{1-\varrho}[E_\varepsilon,B_\varepsilon]\right\|^2+C_\eta\left\|\nabla_x[E_\varepsilon,B_\varepsilon]\right\|^2.\]	
By using the above estimates, one can deduce  \eqref{negative-estimate} from a proper linear combination of \eqref{Lemma4.1-1} and \eqref{Lemma4.2-1}.
\end{proof}

\subsection{The a priori estimates}
Based on Proposition \ref{prop1} and \ref{prop-neg}, we are ready to construct the a priori estimates
\begin{equation}\label{Def-a-priori}
  \sup_{0<t\leq T}\left\{\left\|\Lambda^{-\varrho}[f_\varepsilon,E_\varepsilon,B_\varepsilon]\right\|^2+\mathcal{E}_{N,l}(t)+\mathcal{E}_{N}(t)\right\}\leq \overline{M}
\end{equation}
where $ \overline{M}\leq \min\{M_1,M_2\}$ is a sufficiently small positive constant.

One has
\begin{eqnarray}
\frac{\d}{\d t}\left\{\left\|\Lambda^{-\varrho}[f_\varepsilon,E_\varepsilon,B_\varepsilon]\right\|^2+\mathcal{E}_{N,l}(t)+\mathcal{E}_{N}(t)\right\}
+\mathcal{D}_{N}(t)+\mathcal{D}_{N,l}(t)\lesssim 0,
\end{eqnarray}
which gives that
\begin{eqnarray}
  &&\left\|\Lambda^{-\varrho}[f_\varepsilon,E_\varepsilon,B_\varepsilon](t)\right\|^2+\mathcal{E}_{N,l}(t)+\mathcal{E}_{N}(t)\nonumber\\
  &\lesssim&\left\|\Lambda^{-\varrho}[f_\varepsilon,E_\varepsilon,B_\varepsilon](0)\right\|^2+\mathcal{E}_{N,l}(0)+\mathcal{E}_{N}(0)
  \lesssim Y^2_{f_\varepsilon,E_\varepsilon,B_\varepsilon}(0)
\end{eqnarray}
where $Y_{f_\varepsilon,E_\varepsilon,B_\varepsilon}(0)$ is defined in \eqref{Def-Y_0},
thus we close the a priori estimates if the initial data $Y_{f_\varepsilon,E_\varepsilon,B_\varepsilon}(0)$ is taken as small sufficiently.

Furthermore, we can get from Proposition \ref{noncut-decay}
\begin{equation}
\mathcal{E}_{k\rightarrow N_0}(t)\lesssim Y^2_{f_\varepsilon,E_\varepsilon,B_\varepsilon}(0)(1+t)^{-(k+\varrho)},\quad 0\leq t\leq T.
\end{equation}

\section{The cutoff VMB system}

\subsection{Lyapunov inequality for the energy functional $\mathcal{E}_{N}(t)$}

\begin{proposition}
There exist an energy functional $\mathcal{E}_{N}(t)$ and the corresponding energy dissipation functional $\mathcal{D}_{N}(t)$ which satisfy (\ref{E-N}), (\ref{D-N}) respectively such that
	\begin{eqnarray}\label{prof-1-cut}
		&&\frac{\d}{\d t}\mathcal{E}_{N}(t)+\mathcal{D}_{N}(t)\nonumber\\
		&\lesssim &\left\|E_\varepsilon\right\|_{L^\infty_x}^2\left\|\langle v\rangle^{\frac 32}\nabla^N_xf_{\varepsilon}\right\|^2+\left\|\nabla_xB_\varepsilon\right\|_{L^\infty_x}^2\left\|\langle v\rangle^{\frac 32}\nabla^{N-1}_x\nabla_v\{{\bf I-P}\}f_{\varepsilon}\right\|^2\nonumber\\
		&&+\mathcal{E}_N(t)\sum_{|\alpha'|+|\beta'|\leq N-1}\left\|\langle v\rangle^{\frac32}\partial^{\alpha'}_{\beta'} \{{\bf I-P}\}f_{\varepsilon}\right\|^2+\mathcal{E}_N(t)\mathcal{D}_N(t).
	\end{eqnarray}
\end{proposition}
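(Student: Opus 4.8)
The plan is to carry out the standard weighted energy method for the spatial derivatives $\partial^\alpha$ with $|\alpha|\le N$, but now adapted to the cutoff soft potential, where the dissipation of $\mathscr{L}$ is only $\|\cdot\|_\nu = \|\langle v\rangle^{\gamma/2}\cdot\|_{L^2}$ rather than the hard-sphere $\|\langle v\rangle^{1/2}\cdot\|_{L^2}$. First I would write down the basic energy identity obtained by applying $\partial^\alpha$ to the first equation of \eqref{VMB-F-perturbative}, pairing with $\partial^\alpha f_\varepsilon$ in $L^2_{x,v}$, and adding the Maxwell energy identities (pairing the $E_\varepsilon$ equation with $\partial^\alpha E_\varepsilon$ and the $B_\varepsilon$ equation with $\partial^\alpha B_\varepsilon$), exactly as in \eqref{spatial-without}. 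The coercivity of $\mathscr{L}$ (the analogue of \eqref{L-cut-1} in the cutoff case) gives $\frac1{\varepsilon^2}(\mathscr{L}\partial^\alpha f_\varepsilon,\partial^\alpha f_\varepsilon)\gtrsim \frac1{\varepsilon^2}\|\partial^\alpha\{{\bf I-P}\}f_\varepsilon\|_\nu^2$. The key structural cancellations — the $v\cdot\nabla_x$ transport term being antisymmetric, $(E_\varepsilon\cdot\nabla_v f_\varepsilon,f_\varepsilon)+\frac1\varepsilon((v\times B_\varepsilon)\cdot\nabla_v f_\varepsilon,f_\varepsilon)=0$ after integration by parts in $v$, and the vanishing of $\frac1\varepsilon((v\times\partial^{\alpha_1}B_\varepsilon)\cdot\nabla_v{\bf P}f_\varepsilon,\partial^\alpha{\bf P}f_\varepsilon)=0$ by oddness — are the same as in the hard-sphere case and in Lemma \ref{lemma3.7}, so they carry over verbatim.

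The bulk of the work is estimating the four families of nonlinear terms on the right-hand side. For the electric field terms $\partial^\alpha(\tfrac{q_0}{2}E_\varepsilon\cdot v f_\varepsilon)$ and $\partial^\alpha(q_0 E_\varepsilon\cdot\nabla_v f_\varepsilon)$, after the Leibniz expansion I would split by the size of the derivative hitting $E_\varepsilon$ and use $L^2$–$L^\infty$–$L^2$ or $L^3$–$L^6$–$L^2$ Sobolev/Hölder inequalities, macro–micro decomposition on the remaining $f_\varepsilon$ factor, and Young's inequality. The crucial bookkeeping point in the cutoff soft case is that the weak dissipation $\|\cdot\|_\nu$ can only absorb a factor $\langle v\rangle^{\gamma/2}$ with $\gamma<0$; the velocity weight $\langle v\rangle^{3/2}$ that appears in the statement is exactly what is needed so that, together with $\langle v\rangle^{\gamma/2}$ on the $\partial^\alpha f_\varepsilon$ factor (absorbed into $\eta\mathcal{D}_N$), the bound closes — this is why the theorem carries $\langle v\rangle^{3/2}$ rather than $\langle v\rangle^{7/4}$ as in the non-cutoff case. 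The highest-derivative electric term produces the stated $\|E_\varepsilon\|_{L^\infty_x}^2\|\langle v\rangle^{3/2}\nabla^N_x f_\varepsilon\|^2$; all lower-order pieces are absorbed into $\mathcal{E}_N(t)\sum_{|\alpha'|+|\beta'|\le N-1}\|\langle v\rangle^{3/2}\partial^{\alpha'}_{\beta'}\{{\bf I-P}\}f_\varepsilon\|^2$ plus $\mathcal{E}_N\mathcal{D}_N$ and $\eta\mathcal{D}_N$. For the magnetic field term $\frac1\varepsilon\partial^\alpha(q_0(v\times B_\varepsilon)\cdot\nabla_v f_\varepsilon)$ I would decompose $f_\varepsilon={\bf P}f_\varepsilon+\{{\bf I-P}\}f_\varepsilon$ in both slots as in \eqref{B-without-w}: the ${\bf P}$–${\bf P}$ term vanishes, the mixed terms are controlled by $\|B_\varepsilon\|_{H^N_x}^2\mathcal{D}_N+\frac{\eta}{\varepsilon^2}\sum\|\partial^{\alpha'}\{{\bf I-P}\}f_\varepsilon\|_\nu^2$ (using that one $\nabla_x$ falls on a macroscopic quantity, which is part of $\mathcal D_N$, or the $\frac1\varepsilon$ is paired with a microscopic quantity via Parseval/Fourier as in Lemma \ref{lemma3.7}), and the $\{{\bf I-P}\}$–$\{{\bf I-P}\}$ term is handled by moving the top derivative onto one factor, peeling off $v\times\partial^{\alpha_1}B_\varepsilon$ with $\|\nabla_x B_\varepsilon\|_{L^\infty_x}$ for $|\alpha_1|=1$ and $\|B_\varepsilon\|_{H^N_x}$ otherwise, producing the $\|\nabla_x B_\varepsilon\|_{L^\infty_x}^2\|\langle v\rangle^{3/2}\nabla^{N-1}_x\nabla_v\{{\bf I-P}\}f_\varepsilon\|^2$ term and absorbing the rest. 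The collision term $\frac1\varepsilon\partial^\alpha\mathscr{T}(f_\varepsilon,f_\varepsilon)$ is bounded by $\mathcal{E}_N\mathcal{D}_N+\frac{\eta}{\varepsilon^2}\sum\|\partial^{\alpha'}\{{\bf I-P}\}f_\varepsilon\|_\nu^2$ using the bilinear estimates for the cutoff operator (the analogue of Lemma \ref{Gamma-noncut}).

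Then I would invoke the cutoff analogue of Lemma \ref{mac-dissipation} — the macroscopic dissipation estimate producing an interactive functional $\mathcal{E}^{int}_N(t)$ with $\frac{d}{dt}\mathcal{E}^{int}_N+\|\nabla_x[\rho^\pm_\varepsilon,u_\varepsilon,\theta_\varepsilon]\|^2_{H^{N-1}_x}+\|\rho^+_\varepsilon-\rho^-_\varepsilon\|^2+\|E_\varepsilon\|^2_{H^{N-1}_x}+\|\nabla_x B_\varepsilon\|^2_{H^{N-2}_x}\lesssim\frac1{\varepsilon^2}\sum_{|\alpha|\le N}\|\partial^\alpha\{{\bf I-P}\}f_\varepsilon\|_\nu^2+\mathcal{E}_N\mathcal{D}_N$ — which recovers exactly the remaining pieces of $\mathcal{D}_N(t)$ not already controlled by the microscopic dissipation. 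Taking a suitable linear combination $\mathcal{E}_N(t):=\sum_{|\alpha|\le N}\|\partial^\alpha[f_\varepsilon,E_\varepsilon,B_\varepsilon]\|^2 + \kappa\,\mathcal{E}^{int}_N(t)$ with $\kappa$ small, the interactive functional is dominated by $\mathcal{E}_N$ so the left side is genuinely $\frac{d}{dt}\mathcal{E}_N(t)+\mathcal{D}_N(t)$, the $\eta\mathcal{D}_N$ terms are absorbed for $\eta$ small, and we arrive at \eqref{prof-1-cut}. The main obstacle — and the reason the cutoff case is genuinely harder than hard-sphere — is that the weak dissipation $\|\cdot\|_\nu$ with $\gamma<0$ is \emph{not} by itself strong enough to absorb the velocity growth coming from the Lorentz force terms even at top order; this forces the right-hand side of \eqref{prof-1-cut} to retain the weighted microscopic norms $\|\langle v\rangle^{3/2}\nabla^N_x f_\varepsilon\|^2$ and $\|\langle v\rangle^{3/2}\nabla^{N-1}_x\nabla_v\{{\bf I-P}\}f_\varepsilon\|^2$ (they cannot be closed here and must be fed into the weighted estimates of the next subsections via the weight functions $\overline w_l$ and $\widetilde w_\ell$), and it requires carefully tracking which terms carry the singular factor $\frac1\varepsilon$ versus $\frac1{\varepsilon^2}$ so that every occurrence of $\frac1\varepsilon$ is paired with a quantity already present in $\mathcal{D}_N$ or with an $\varepsilon$-gaining microscopic quantity.
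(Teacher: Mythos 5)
Your proposal is correct and follows essentially the same route the paper intends: the paper's own proof of this proposition is a one-line reference back to the non-cutoff case (Lemma \ref{lemma3.7} plus the macroscopic dissipation Lemma \ref{mac-dissipation}, whose Appendix proof is explicitly written for both the $\|\cdot\|_D$ and $\|\cdot\|_\nu$ settings), and your reconstruction — energy identity, coercivity of $\mathscr{L}$, the cancellation structure, macro–micro splitting of the Lorentz terms, the bilinear collision estimates of Lemma \ref{Gamma-cut}, and the small linear combination with $\mathcal{E}^{int}_N$ — is exactly that argument adapted to the cutoff dissipation $\|\cdot\|_\nu$. Your bookkeeping of why the weight drops from $\langle v\rangle^{7/4}$ to $\langle v\rangle^{3/2}$ and of which terms must be left on the right-hand side to be closed later by the weighted estimates is consistent with the paper.
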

\begin{proof}
This proposition can be proved by a similar way as to deduce \eqref{prof-1}, we omit its proof for brevity.	
	\end{proof}

\subsection{The bound for energy functionals $\overline{\mathcal{E}}_{N-1,l_1}(t)$ and $\overline{\mathcal{E}}_{N_0-1,l_0}(t)$}
{\bf Assumption I:}
\begin{eqnarray}\label{Assump-1-cut}
	\sup_{0<t\leq T}\left\{\sum_{k\leq N_0-2}(1+t)^{k+\varrho}\mathcal{E}_{k\rightarrow N_0}(t)+(1+t)^{1+\epsilon_1}\overline{\mathcal{E}}_{N_0-1,l_0}(t)+\mathcal{E}_{N}(t)\right\}\leq \mathcal{M}_1
\end{eqnarray}
where $\mathcal{M}_1$ is a sufficiently small positive constant.

\begin{proposition}\label{bound-E}
Under {\bf Assumption 1},
assume $\tilde{\ell}\geq\frac32\sigma_{N-1,N-1}$, $\theta=\frac3{\tilde{\ell}+\frac12}$ and $\ell_1\geq l_1+\tilde{\ell}+\frac12$, then one has
	\begin{eqnarray}\label{bound-E-1}
		&&\frac{\d}{\d t}\overline{\mathcal{E}}_{N-1,l_1}(t)+\overline{\mathcal{D}}_{N-1,l_1}(t)\nonumber\\
			&\lesssim&\mathcal{D}_N(t)+\overline{\mathcal{E}}_{N-1,l_1}(t)\overline{\mathcal{D}}_{N-1,l_1}(t)+{\mathcal{M}_1}^{\frac2{\theta}}\sum_{N_0+1\leq n\leq N-1}\mathbb{D}^{(n)}_{\ell_1}(t).
	\end{eqnarray}
\end{proposition}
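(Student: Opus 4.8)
The plan is to derive \eqref{bound-E-1} by performing the weighted energy estimate on the microscopic equation \eqref{I-P-cut}, applying $\partial^\alpha_\beta$ and testing against $\overline{w}_{l_1}^2(\alpha,\beta)\partial^\alpha_\beta\{{\bf I-P}\}f_\varepsilon$ for every $|\alpha|+|\beta|\le N-1$, then summing. The coercivity of $\mathscr{L}$ (the cutoff analogue, e.g. \eqref{L-cut-3}) produces the good term $\frac1{\varepsilon^2}\|\overline{w}_{l_1}(\alpha,\beta)\partial^\alpha_\beta\{{\bf I-P}\}f_\varepsilon\|_\nu^2$ up to a correction controlled by $\frac1{\varepsilon^2}\|\partial^\alpha\{{\bf I-P}\}f_\varepsilon\|_\nu^2\lesssim\mathcal{D}_N(t)$, and the time-derivative falling on the factor $e^{q\langle v\rangle^2/(1+t)^\vartheta}$ inside $\overline{w}_{l_1}$ produces the second dissipation term $\frac{q\vartheta}{(1+t)^{1+\vartheta}}\|\overline{w}_{l_1}(\alpha,\beta)\partial^\alpha_\beta\{{\bf I-P}\}f_\varepsilon\langle v\rangle\|^2$ in $\overline{\mathcal{D}}_{N-1,l_1}(t)$. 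Hence the left-hand side of \eqref{bound-E-1} is generated, and it remains to bound all the terms coming from the right-hand side of \eqref{I-P-cut}.

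First I would dispatch the linear source terms: the transport term $\frac1\varepsilon v\cdot\nabla_x\{{\bf I-P}\}f_\varepsilon$ is handled exactly as sketched in the introduction using the identity $\overline{w}_{l_1}(\alpha,\beta)=\langle v\rangle^{-1}\overline{w}_{l_1}(\alpha+e_i,\beta-e_i)$ (i.e.\ \eqref{wight-bound-1-trans.}), which shows this term contributes no velocity or time growth and is absorbed into $\frac1{\varepsilon^2}\|\langle v\rangle^{\gamma/2}\overline{w}_{l_1}\partial_{\beta-e_i}^{\alpha+e_i}\{{\bf I-P}\}f_\varepsilon\|^2$, bounded by $\mathcal{D}_N(t)$ when $|\alpha+e_i|\le N$, or by the dissipation terms in $\overline{\mathcal{D}}_{N-1,l_1}(t)$ and $\sum_{N_0+1\le n\le N-1}\mathbb{D}^{(n)}_{\ell_1}(t)$ otherwise (here one uses $\gamma\ge-1$ so that $\langle v\rangle^{\gamma/2}$ is dominated by $\langle v\rangle$). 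The terms $\frac1\varepsilon E_\varepsilon\cdot v M^{1/2}q_1$, $\frac1\varepsilon[{\bf P}(v\cdot\nabla_x f_\varepsilon)-v\cdot\nabla_x{\bf P}f_\varepsilon]$ are lower order and controlled by $\|\partial^\alpha E_\varepsilon\|^2+\|\nabla^{|\alpha|+1}f_\varepsilon\|_\nu^2+\frac\eta{\varepsilon^2}\|\partial^\alpha\{{\bf I-P}\}f_\varepsilon\|_\nu^2\lesssim\mathcal{D}_N(t)$, and the nonlinear terms $\{{\bf I-P}\}[-\frac12 q_0 v\cdot E_\varepsilon f_\varepsilon+q_0 E_\varepsilon\cdot\nabla_v f_\varepsilon]$ and $\frac1\varepsilon\mathscr{T}(f_\varepsilon,f_\varepsilon)$ are estimated by Sobolev embedding together with the bilinear estimates for the cutoff $\mathscr{T}$, giving $\overline{\mathcal{E}}_{N-1,l_1}(t)\overline{\mathcal{D}}_{N-1,l_1}(t)+\eta\,\overline{\mathcal{D}}_{N-1,l_1}(t)+\mathcal{D}_N(t)$.

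The genuine obstacle is the magnetic term $\frac1\varepsilon\{{\bf I-P}\}[q_0\,v\times B_\varepsilon\cdot\nabla_v f_\varepsilon]$, because of the combination of the singular factor $\frac1\varepsilon$, the velocity growth $\langle v\rangle$ forced by differentiating $\nabla_v$ against an algebraic weight (the identity $\overline{w}_{l_1}(\alpha,\beta)=\langle v\rangle\,\overline{w}_{l_1}(\alpha-\alpha_1,\beta+e_i)$ for $|\alpha_1|=1$), and the lack of any intrinsic time decay. The plan is to split this term, by the kernel structure of ${\bf P}$ and parity in $v$, into a pure-microscopic piece plus macroscopic cross-pieces; the cross-pieces are handled by Cauchy–Schwarz plus $\mathcal{E}_N(t)$. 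For the main microscopic piece I will follow \eqref{hard-5-cut}: put $L^\infty_x$ on $\partial^{\alpha_1}B_\varepsilon$, distribute $\langle v\rangle^{5/2}$ onto the $\partial^{\alpha-\alpha_1}_{\beta+e_i}$ factor and $\langle v\rangle^{-1/2}$ onto the $\partial^\alpha_\beta$ factor, then apply a velocity–derivative interpolation of Young type with exponent $\theta=\frac3{\tilde\ell+\frac12}$ to convert $\|\partial^{\alpha_1}B_\varepsilon\|_{L^\infty_x}\|\langle v\rangle^{5/2}\overline{w}_{l_1}\partial^{\alpha-\alpha_1}_{\beta+e_i}\{{\bf I-P}\}f_\varepsilon\|$ into $\|\partial^{\alpha_1}B_\varepsilon\|_{L^\infty_x}^{2/\theta}\|\langle v\rangle^{\tilde\ell}\overline{w}_{l_1}\partial^{\alpha-\alpha_1}_{\beta+e_i}\{{\bf I-P}\}f_\varepsilon\|^2$ plus a term absorbed into $\overline{\mathcal{D}}_{N-1,l_1}$. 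The point of choosing $\tilde\ell\ge\frac32\sigma_{N-1,N-1}$ is that under \textbf{Assumption I} the decay of $B_\varepsilon$ yields $\|\partial^{\alpha_1}B_\varepsilon\|_{L^\infty_x}^{2/\theta}\lesssim(1+t)^{-\sigma_{N,N}}\le(1+t)^{-\sigma_{n,|\beta|}}$, while the choice $\ell_1\ge l_1+\tilde\ell+\frac12$ guarantees $\langle v\rangle^{\tilde\ell}\overline{w}_{l_1}(\alpha-\alpha_1,\beta+e_i)\le\widetilde{w}_{\ell_1}(\alpha-\alpha_1,\beta+e_i)\langle v\rangle^{-1/2}$, so this term is exactly of the form appearing in $\mathbb{D}^{(n)}_{\ell_1}(t)$ for $N_0+1\le n\le N-1$ (the factor ${\mathcal{M}_1}^{2/\theta}$ being the residual smallness from $\|B_\varepsilon\|$). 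Collecting the pieces and summing over $|\alpha|+|\beta|\le N-1$ then yields \eqref{bound-E-1}; the delicate bookkeeping is keeping track of which weighted norm each term lands in and verifying all the weight-index inequalities in the hypotheses are used consistently.
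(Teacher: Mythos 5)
Your proposal follows essentially the same route as the paper's proof: the weighted estimate on the microscopic equation \eqref{I-P-cut} tested against $\overline{w}_{l_1}^2(\alpha,\beta)\partial^\alpha_\beta\{{\bf I-P}\}f_\varepsilon$, the coercivity of $\mathscr{L}$ and the good term from differentiating the time-dependent exponential weight, the identity $\overline{w}_{l_1}(\alpha,\beta)=\langle v\rangle^{-1}\overline{w}_{l_1}(\alpha+e_i,\beta-e_i)$ for the transport term, and the Young/interpolation step with exponent $\theta=\tfrac{3}{\tilde\ell+\frac12}$ converting $\|\partial^{\alpha_1}B_\varepsilon\|_{L^\infty_x}$ into $\|\partial^{\alpha_1}B_\varepsilon\|_{L^\infty_x}^{2/\theta}$ against the $\langle v\rangle^{\tilde\ell}$-weighted norm, absorbed into $\sum_{N_0+1\le n\le N-1}\mathbb{D}^{(n)}_{\ell_1}(t)$ via the weight-index inequality $\ell_1\ge l_1+\tilde\ell+\tfrac12$. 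This matches the paper's argument (including the splitting of the $B_\varepsilon$-term by the size of $|\alpha_1|$, with the interpolation only needed for small $|\alpha_1|$), so no further comment is needed.
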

\begin{proof}	
	Applying $\partial^\alpha_\beta$ into \eqref{I-P-cut}, integrating the result identity over $\mathbb{R}^3_x\times\mathbb{R}^3_v$ by multiplying $\overline{w}^2_{l_1}(\alpha,\beta)\partial^\alpha_\beta\{{\bf I-P}\}f_{\varepsilon}$, then we have
	\begin{eqnarray}\label{bound-I-P}
		&&\frac12\frac{\d}{\d t}\|\overline {w}_{l_1}(\alpha,\beta)\partial^\alpha_\beta\{{\bf I-P}\}f_{\varepsilon}\|^2+\frac{q\vartheta}{(1+t)^{1+\vartheta}}\left\|\langle v\rangle\overline {w}_{l_1}(\alpha,\beta)\partial^\alpha_\beta\{{\bf I-P}\}f_{\varepsilon}\right\|^2\nonumber\\
		&&+\frac1 {\varepsilon^2} \left(\partial^\alpha_\beta\mathscr{L}f_\varepsilon,\overline {w}^2_{l_1}(\alpha,\beta)\partial^\alpha_\beta\{{\bf I-P}\}f_{\varepsilon}\right)\nonumber\\
		&=&-\frac1\varepsilon\left(\partial^\alpha_\beta[ v\cdot \nabla_x\{{\bf I-P}\}f_{\varepsilon}],\overline {w}^2_{l_1}(\alpha,\beta)\partial^\alpha_\beta\{{\bf I-P}\}f_{\varepsilon}\right)\nonumber\\
		&&+\frac1 \varepsilon \left(\partial^\alpha_\beta \left[E_\varepsilon\cdot v{M}^{1/2}q_1\right],\overline {w}^2_{l_1}(\alpha,\beta)\partial^\alpha_\beta\{{\bf I-P}\}f_{\varepsilon}\right)\nonumber\\
		&&+\frac1 \varepsilon\left(\partial^\alpha_\beta\left[{\bf P}(v\cdot\nabla_x f_\varepsilon)-\frac1 \varepsilon v\cdot\nabla_x{\bf P}f_\varepsilon\right],\overline {w}^2_{l_1}(\alpha,\beta)\partial^\alpha_\beta\{{\bf I-P}\}f_{\varepsilon}\right)\nonumber\\
		&&+\frac1\varepsilon\left(\partial^\alpha_\beta\left[\{{\bf I-P}\}\left[q_0 v\times B_\varepsilon\cdot\nabla_{ v}f_\varepsilon\right]\right],\overline {w}^2_{l_1}(\alpha,\beta)\partial^\alpha_\beta\{{\bf I-P}\}f_{\varepsilon}\right)\nonumber\\
		&&+\left(\partial^\alpha_\beta\{{\bf I-P}\}\left[ -\frac12q_0 v\cdot E_\varepsilon f_\varepsilon+q_0E_\varepsilon\cdot\nabla_{ v}f_\varepsilon\right],\overline {w}^2_{l_1}(\alpha,\beta)\partial^\alpha_\beta\{{\bf I-P}\}f_{\varepsilon}\right)\nonumber\\
			&&+\frac1\varepsilon\left(\partial^\alpha_\beta\mathscr{T}(f_\varepsilon,f_\varepsilon),\overline {w}^2_{l_1}(\alpha,\beta)\partial^\alpha_\beta\{{\bf I-P}\}f_{\varepsilon}\right).
	\end{eqnarray}
	The coercivity estimates on the linear operator $\mathscr{L}$ yields that
	\begin{eqnarray}
		&&\frac1 {\varepsilon^2} \left(\partial^\alpha_\beta\mathscr{L}f_\varepsilon,\overline {w}^2_{l_1}(\alpha,\beta)\partial^\alpha_\beta\{{\bf I-P}\}f_{\varepsilon}\right)\nonumber\\
		&\gtrsim&\frac1 {\varepsilon^2} \|\overline {w}_{l_1}(\alpha,\beta)\partial^\alpha_\beta\{{\bf I-P}\}f_{\varepsilon}\|_\nu^2-\frac1 {\varepsilon^2}\|\partial^\alpha\{{\bf I-P}\}f_{\varepsilon}\|_\nu^2.
	\end{eqnarray}
	As for the transport term on the right-hand side of \eqref{bound-I-P}, one has
	\begin{eqnarray}
		&&\frac1\varepsilon\left(\partial^\alpha_\beta\left[ v\cdot \nabla_x\{{\bf I-P}\}f_{\varepsilon}\right],\overline{w}^2_{l_1-|\beta|}\partial^\alpha_\beta\{{\bf I-P}\}f_{\varepsilon}\right)\nonumber\\
		&=&-\frac1\varepsilon\int_{\mathbb{R}^3_x\times\mathbb{R}^3_v}\langle v\rangle^{-1}\partial^{\alpha+e_i}_{\beta-e_i}\{{\bf I-P}\}f_{\varepsilon} \overline{w}_{l_1}(\alpha+e_i,\beta-e_i)\overline {w}_{l_1}(\alpha,\beta)\partial^\alpha_\beta\{{\bf I-P}\}f_{\varepsilon}dvdx\nonumber\\
		&\lesssim&\frac1\varepsilon\left\|\langle v\rangle^{\frac\gamma2}\overline{w}_{l_1}(\alpha+e_i,\beta-e_i)\partial^{\alpha+e_i}_{\beta-e_i}\{{\bf I-P}\}f_{\varepsilon}\right\|^2+\frac\eta\varepsilon\left\|\langle v\rangle^{\frac\gamma2}\overline{w}_{l_1}(\alpha,\beta)\partial^{\alpha}_{\beta}\{{\bf I-P}\}f_{\varepsilon}\right\|^2,	
	\end{eqnarray}
	where we used the fact
	\[\overline {w}_{l_1}(\alpha,\beta)=\langle v\rangle^{-1}\overline{w}_{l_1}(\alpha+e_i,\beta-e_i).\]	
Now we deal with the nonlinear term induced by magnetic field $B(t,x)$,
	\begin{eqnarray}
		&&\frac1\varepsilon\left(\partial^\alpha_\beta\left[ v\times B_\varepsilon\cdot\nabla_{ v}\{{\bf I-P}\}f_{\varepsilon}\right],\overline{w}^2_{l_1}(\alpha,\beta)\partial^\alpha_\beta\{{\bf I-P}\}f_{\varepsilon}\right)\nonumber\\
		&=&\frac1\varepsilon\sum_{\alpha_1\leq\alpha,\beta_1\leq\beta}\left(\partial^{\alpha_1}_{\beta_1}\left[ v\times B_\varepsilon\right]\cdot\partial^{\alpha-\alpha_1}_{\beta-\beta_1}\left[\nabla_{ v}\{{\bf I-P}\}f_{\varepsilon}\right],\overline{w}^2_{l_1}(\alpha,\beta)\partial^\alpha_\beta\{{\bf I-P}\}f_{\varepsilon}\right),\nonumber
	\end{eqnarray}
	when $|\alpha_1|=1,\beta_1=0$,
	we apply Sobolev inequalities, interpolation method and Young inequalites to get
	\begin{eqnarray}
		&\lesssim&\sum_{|\alpha_1|=1}{\frac1\varepsilon}\|\partial^{\alpha_1}B_\varepsilon\|_{L^\infty_x}\|\langle v\rangle^{\frac52} \overline{w}_{l_1}(\alpha-\alpha_1,\beta+e_i)\partial^{\alpha-\alpha_1}_{\beta+e_i}{\bf \{I-P\}}f_{\varepsilon}\|\nonumber\\
		&&\times\|\langle v\rangle^{-\frac{1}2} \overline {w}_{l_1}(\alpha,\beta)\partial^{\alpha}_\beta{\bf \{I-P\}}f_{\varepsilon}\|\nonumber\\
		&\lesssim&\sum_{|\alpha_1|=1}
		\|\partial^{\alpha_1}B_\varepsilon\|^2_{L^\infty_x}\|\langle v\rangle^{\frac{5}2} \overline{w}_{l_1}(\alpha-\alpha_1,\beta+e_i)\partial^{\alpha-\alpha_1}_{\beta +e_i}{\bf \{I-P\}}f_{\varepsilon}\|^2\nonumber\\
		&&+{\frac\eta{\varepsilon^2}}\| \overline {w}_{l_1}(\alpha,\beta)\partial^{\alpha}_\beta{\bf \{I-P\}}f_{\varepsilon}\|_\nu^2\nonumber\\
		&\lesssim&\sum_{|\alpha_1|=1}
		\left\{	\left\{\|\partial^{\alpha_1}B_\varepsilon\|^2_{L^\infty_x}\right\}^{\frac1\theta}\|\langle v\rangle^{\tilde{\ell}} \overline{w}_{l_1}(\alpha-\alpha_1,\beta+e_i)\partial^{\alpha-\alpha_1}_{\beta +e_i}{\bf \{I-P\}}f_{\varepsilon}\|^2\right\}^{\theta}\nonumber\\
		&&\times\left\{\|\langle v\rangle^{-\frac{1}2} \overline{w}_{l_1}(\alpha-\alpha_1,\beta+e_i)\partial^{\alpha-\alpha_1}_{\beta +e_i}{\bf \{I-P\}}f_{\varepsilon}\|^2\right\}^{1-\theta}\nonumber\\
		&&+{\frac\eta{\varepsilon^2}}\| \overline {w}_{l_1}(\alpha,\beta)\partial^{\alpha}_\beta{\bf \{I-P\}}f_{\varepsilon}\|_\nu^2\nonumber\\
		&\lesssim&\sum_{|\alpha_1|=1}
	\|\partial^{\alpha_1}B_\varepsilon\|^{\frac2{\theta}}_{L^\infty_x}\|\langle v\rangle^{\tilde{\ell}} \overline{w}_{l_1}(\alpha-\alpha_1,\beta+e_i)\partial^{\alpha-\alpha_1}_{\beta +e_i}{\bf \{I-P\}}f_{\varepsilon}\|^2\nonumber\\
	&&+\eta\sum_{|\alpha_1|=1}\| \overline{w}_{l_1}(\alpha-\alpha_1,\beta+e_i)\partial^{\alpha-\alpha_1}_{\beta +e_i}{\bf \{I-P\}}f_{\varepsilon}\|_\nu^2+{\frac\eta{\varepsilon^2}}\| \overline {w}_{l_1}(\alpha,\beta)\partial^{\alpha}_\beta{\bf \{I-P\}}f_{\varepsilon}\|_\nu^2,	
	\end{eqnarray}
	where we take $\theta$ as
	\[\theta=\frac3{\tilde{\ell}+\frac12}\]
	and use the fact that
	\[\langle v\rangle^{\frac32}\overline{w}_{l_1}(\alpha,\beta)\leq \langle v\rangle^{\frac52}\overline{w}_{l_1}(\alpha-e_i,\beta+e_i),\ |\alpha_1|=1,\beta_1=0.\]
	When $|\alpha_1|=2,\beta_1=0$ or $|\alpha_1|=3,\beta_1=0$, one has the similar bound
\begin{eqnarray}
		&\lesssim&\sum_{|\alpha_1|=2, 3}
	\|\partial^{\alpha_1}B_\varepsilon\|^{\frac2\theta}_{L^\infty_x}\|\langle v\rangle^{\tilde{\ell}} \overline{w}_{l_1}(\alpha-\alpha_1,\beta+e_i)\partial^{\alpha-\alpha_1}_{\beta +e_i}{\bf \{I-P\}}f_{\varepsilon}\|^2\nonumber\\
	&&+\eta\sum_{|\alpha_1|=2,3}\| \overline{w}_{l_1}(\alpha-\alpha_1,\beta+e_i)\partial^{\alpha-\alpha_1}_{\beta +e_i}{\bf \{I-P\}}f_{\varepsilon}\|_\nu^2+{\frac\eta{\varepsilon^2}}\| \overline {w}_{l_1}(\alpha,\beta)\partial^{\alpha}_\beta{\bf \{I-P\}}f_{\varepsilon}\|_\nu^2.
\end{eqnarray}
While for $4\leq |\alpha_1|\leq N-2$, since
\[\langle v\rangle^{\frac32}\overline{w}_{l_1}(\alpha,\beta)\leq \langle v\rangle^{-\frac12}\overline{w}_{l_1}(\alpha-\alpha_1,\beta+e_i),\]
one deduces that
\begin{eqnarray}
		&\lesssim&\sum_{4\leq |\alpha_1|\leq N-2}{\frac1\varepsilon}\|\partial^{\alpha_1}B_\varepsilon\|_{L^\infty_x}\|\langle v\rangle^{-\frac12} \overline{w}_{l_1}(\alpha-\alpha_1,\beta+e_i)\partial^{\alpha-\alpha_1}_{\beta+e_i}{\bf \{I-P\}}f_{\varepsilon}\|\nonumber\\
	&&\times\|\langle v\rangle^{-\frac{1}2} \overline {w}_{l_1}(\alpha,\beta)\partial^{\alpha}_\beta{\bf \{I-P\}}f_{\varepsilon}\|\nonumber\\
	&\lesssim&\sum_{4\leq |\alpha_1|\leq N-2}
	\|\partial^{\alpha_1}B_\varepsilon\|^2_{L^\infty_x}\| \overline{w}_{l_1}(\alpha-\alpha_1,\beta+e_i)\partial^{\alpha-\alpha_1}_{\beta +e_i}{\bf \{I-P\}}f_{\varepsilon}\|_\nu^2\nonumber\\
	&&+{\frac\eta{\varepsilon^2}}\| \overline {w}_{l_1}(\alpha,\beta)\partial^{\alpha}_\beta{\bf \{I-P\}}f_{\varepsilon}\|_\nu^2\nonumber\\
	&\lesssim&\mathcal{E}_N(t)\overline{\mathcal{D}}_{N-1,l_1}(t)+{\frac\eta{\varepsilon^2}}\| \overline {w}_{l_1}(\alpha,\beta)\partial^{\alpha}_\beta{\bf \{I-P\}}f_{\varepsilon}\|_\nu^2.\nonumber
	\end{eqnarray}
If $|\alpha_1|=N-1$, one also has that the term can be bounded by
\[\mathcal{E}_N(t)\overline{\mathcal{D}}_{N-1,l_1}(t)+{\frac\eta{\varepsilon^2}}\| \overline {w}_{l_1}(\alpha,\beta)\partial^{\alpha}_\beta{\bf \{I-P\}}f_{\varepsilon}\|_\nu^2.\]
Thus, one has
	\begin{eqnarray}
		&&\frac1\varepsilon\left(\partial^\alpha_\beta\left[ v\times B_\varepsilon\cdot\nabla_{ v}\{{\bf I-P}\}f_{\varepsilon}\right],\overline{w}^2_{l_1}(\alpha,\beta)\partial^\alpha_\beta\{{\bf I-P}\}f_{\varepsilon}\right)\nonumber\\
		&\lesssim&\sum_{|\alpha_1|=1,2, 3}
	\|\partial^{\alpha_1}B_\varepsilon\|^{\frac2\theta_{\varepsilon}}_{L^\infty_x}\|\langle v\rangle^{\tilde{\ell}} \overline{w}_{l_1}(\alpha-\alpha_1,\beta+e_i)\partial^{\alpha-\alpha_1}_{\beta +e_i}{\bf \{I-P\}}f_{\varepsilon}\|^2\nonumber\\
	&&+\mathcal{E}_N(t)\overline{\mathcal{D}}_{N-1,{l_1}}(t)+\eta\overline{\mathcal{D}}_{N-1,{l_1}}(t).
	\end{eqnarray}
	By using the similar argument, one also has
		\begin{eqnarray}
		&&\left(\partial^\alpha_\beta\left[ E_\varepsilon\cdot\nabla_{ v}\{{\bf I-P}\}f_{\varepsilon}\right],\overline{w}^2_{l_1}(\alpha,\beta)\partial^\alpha_\beta\{{\bf I-P}\}f_{\varepsilon}\right)\nonumber\\
		&\lesssim&\sum_{|\alpha_1|\leq 3}
		\|\partial^{\alpha_1}E_\varepsilon\|^{\frac2\theta}_{L^\infty_x}\|\langle v\rangle^{\tilde{\ell}} \overline{w}_{l_1}(\alpha-\alpha_1,\beta+e_i)\partial^{\alpha-\alpha_1}_{\beta +e_i}{\bf \{I-P\}}f_{\varepsilon}\|^2\nonumber\\
		&&+\mathcal{E}_N(t)\overline{\mathcal{D}}_{N-1,{l_1}}(t)+\eta\overline{\mathcal{D}}_{N-1,{l_1}}(t)
	\end{eqnarray}
and
		\begin{eqnarray}
	&&\left(\partial^\alpha_\beta\left[ E_\varepsilon\cdot v\{{\bf I-P}\}f_{\varepsilon}\right],\overline{w}^2_{l_1}(\alpha,\beta)\partial^\alpha_\beta\{{\bf I-P}\}f_{\varepsilon}\right)\nonumber\\
	&\lesssim&\sum_{|\alpha_1|\leq 3}
	\|\partial^{\alpha_1}E_\varepsilon\|^{\frac2\theta}_{L^\infty_x}\|\langle v\rangle^{\tilde{\ell}} \overline{w}_{l_1}(\alpha-\alpha_1,\beta)\partial^{\alpha-\alpha_1}_{\beta}{\bf \{I-P\}}f_{\varepsilon}\|^2\nonumber\\
	&&+\mathcal{E}_N(t)\overline{\mathcal{D}}_{N-1,{l_1}}(t)+\eta\overline{\mathcal{D}}_{N-1,{l_1}}(t).
\end{eqnarray}
For the last term, one has
	\begin{eqnarray}
		\frac1\varepsilon\left(\partial^\alpha_\beta\mathscr{T}(f_\varepsilon,f_\varepsilon),\overline {w}^2_{l_1}(\alpha,\beta)\partial^\alpha_\beta\{{\bf I-P}\}f_{\varepsilon}\right)
		\lesssim\overline{\mathcal{E}}_{N-1,{l_1}}(t)\overline{\mathcal{D}}_{N-1,{l_1}}(t)+\eta\overline{\mathcal{D}}_{N-1,{l_1}}(t).
	\end{eqnarray}
	Now by collecting the above related estimates into \eqref{bound-I-P}, we arrive at
	\begin{eqnarray}\label{bound-sum}
			&&\frac{\d}{\d t}\left\|\overline{w}_{l_1}(\alpha,\beta)\partial^\alpha_\beta\{{\bf I-P}\}f_{\varepsilon}\right\|^2+\frac1{\varepsilon^2}\left\|\overline{w}_{l_1}(\alpha,\beta)\partial^\alpha_\beta\{{\bf I-P}\}f_{\varepsilon}\right\|^2_{\nu}\nonumber\\
			&&+\frac{q\vartheta}{(1+t)^{1+\vartheta}}\left\|\langle v\rangle \overline{w}_{l_1}(\alpha,\beta)\partial^\alpha_\beta\{{\bf I-P}\}f_{\varepsilon}\right\|^2\nonumber\\
			&\lesssim&\left\|\overline{w}_{l_1}(\alpha+e_i,\beta-e_i)\partial^{\alpha+e_i}_{\beta-e_i}\{{\bf I-P}\}f_{\varepsilon}\right\|_\nu^2+\overline{\mathcal{E}}_{N-1,{l_1}}(t)\overline{\mathcal{D}}_{N-1,{l_1}}(t)+\eta\overline{\mathcal{D}}_{N-1,{l_1}}(t)\nonumber\\
			&&+\sum_{|\alpha_1|\leq 3}
			\|\partial^{\alpha_1}E_\varepsilon\|^{\frac2\theta}_{L^\infty_x}\|\langle v\rangle^{\tilde{\ell}} \overline{w}_{l_1}(\alpha-\alpha_1,\beta+e_i)\partial^{\alpha-\alpha_1}_{\beta +e_i}{\bf \{I-P\}}f_{\varepsilon}\|^2\nonumber\\
			&&+\sum_{|\alpha_1|\leq 3}
			\|\partial^{\alpha_1}E_\varepsilon\|^{\frac2\theta}_{L^\infty_x}\|\langle v\rangle^{\tilde{\ell}} \overline{w}_{l_1}(\alpha-\alpha_1,\beta)\partial^{\alpha-\alpha_1}_{\beta}{\bf \{I-P\}}f_{\varepsilon}\|^2\nonumber\\
			&&+\sum_{|\alpha_1|\leq 3}
			\|\partial^{\alpha_1}B_\varepsilon\|^{\frac2\theta}_{L^\infty_x}\|\langle v\rangle^{\tilde{\ell}} \overline{w}_{l_1}(\alpha-\alpha_1,\beta+e_i)\partial^{\alpha-\alpha_1}_{\beta +e_i}{\bf \{I-P\}}f_{\varepsilon}\|^2\nonumber\\[2mm]
			&&+\mathcal{D}_N(t)+\mathcal{E}_N(t)\mathcal{D}_N(t)\nonumber\\
			&\lesssim&\left\|\overline{w}_{l_1}(\alpha+e_i,\beta-e_i)\partial^{\alpha+e_i}_{\beta-e_i}\{{\bf I-P}\}f_{\varepsilon}\right\|_\nu^2+\eta\overline{\mathcal{D}}_{N-1,{l_1}}(t)\nonumber\\
			&&+\overline{\mathcal{E}}_{N-1,l_1}(t)\overline{\mathcal{D}}_{N-1,l_1}(t)+{\mathcal{M}_1}^{\frac2\theta}\sum_{N_0+1\leq n\leq N-1}\mathbb{D}^{(n)}_{\ell_1}(t)+\mathcal{D}_N(t)+\mathcal{E}_N(t)\mathcal{D}_N(t).
	\end{eqnarray}
Where we used the fact that, if $\ell_1\geq l_1+\tilde{\ell}+\frac12$, then
\[\langle v\rangle^{\tilde{\ell}} \overline{w}_{l_1}(\alpha-\alpha_1,\beta+e_i)=\langle v\rangle^{\tilde{\ell}+{l_1}-|\alpha|-2|\beta|}\leq\langle v\rangle^{\ell_1-|\alpha|-\frac12|\beta|}\langle v\rangle^{-\frac12}=\widetilde{w}_{\ell_1}(\alpha,\beta)\langle v\rangle^{-\frac12}.\]
	A proper linear combination (\ref{bound-sum}) with $|\alpha|+|\beta|\leq N, |\alpha|\leq N-1$ implies \eqref{bound-E-1}.
\end{proof}
For later use, we also need the following result:
\begin{proposition}\label{bound-E}
	Under {\bf Assumption I},
	assume $\tilde{\ell}\geq\frac32\sigma_{N_0-1,N_0-1}$, $\theta=\frac3{\tilde{\ell}+\frac12}$ and $l_0\geq l+\tilde{\ell}+\frac12$, then one has
	\begin{eqnarray}\label{bound-E-1}
		&&\frac{\d}{\d t}\overline{\mathcal{E}}_{N_0-1,l_0}(t)+\overline{\mathcal{D}}_{N-1,l_0}(t)\nonumber\\
		&\lesssim&\mathcal{D}_{N_0}(t)+\overline{\mathcal{E}}_{N_0-1,l_0}(t)\overline{\mathcal{D}}_{N_0-1,l_0}(t)+{\mathcal{M}_1}^{\frac2{\theta}}\sum_{n\leq N_0-1}\mathbb{D}^{(n)}_{\ell_0}(t).
	\end{eqnarray}
\end{proposition}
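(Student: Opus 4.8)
The proposition is the $N_0$–level analogue of Proposition~\ref{bound-E} for $\overline{\mathcal{E}}_{N-1,l_1}(t)$, just carried out with the smaller weight index $l_0$, at order $N_0-1$ rather than $N-1$, and controlling electromagnetic $L^\infty$–factors by the faster‐decaying lower‐order norms contained in {\bf Assumption I}. Accordingly I would not redo the argument from scratch but mirror \eqref{bound-I-P}--\eqref{bound-sum} verbatim with the substitutions $l_1\rightsquigarrow l_0$, $\ell_1\rightsquigarrow\ell_0$, $N-1\rightsquigarrow N_0-1$, and $\sigma_{N-1,N-1}\rightsquigarrow\sigma_{N_0-1,N_0-1}$ in the choice of $\tilde\ell$. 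Concretely: apply $\partial^\alpha_\beta$ to the microscopic equation \eqref{I-P-cut} for $|\alpha|+|\beta|\leq N_0-1$, $|\alpha|\leq N_0-1$, test against $\overline{w}^2_{l_0}(\alpha,\beta)\partial^\alpha_\beta\{\mathbf{I-P}\}f_\varepsilon$, and integrate over $\mathbb{R}^3_x\times\mathbb{R}^3_v$. The coercivity of $\mathscr{L}$ gives the good term $\tfrac1{\varepsilon^2}\|\overline{w}_{l_0}(\alpha,\beta)\partial^\alpha_\beta\{\mathbf{I-P}\}f_\varepsilon\|_\nu^2$ up to the harmless loss $\tfrac1{\varepsilon^2}\|\partial^\alpha\{\mathbf{I-P}\}f_\varepsilon\|_\nu^2\lesssim\mathcal{D}_{N_0}(t)$, while the time derivative of the exponential factor in $\overline{w}_{l_0}$ contributes $\tfrac{q\vartheta}{(1+t)^{1+\vartheta}}\|\langle v\rangle\overline{w}_{l_0}(\alpha,\beta)\partial^\alpha_\beta\{\mathbf{I-P}\}f_\varepsilon\|^2$ with the correct sign.

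**The key steps.** For the transport term I would use the identity $\overline{w}_{l_0}(\alpha,\beta)=\langle v\rangle^{-1}\overline{w}_{l_0}(\alpha+e_i,\beta-e_i)$ exactly as in \eqref{wight-bound-1-trans.}, so that the $\tfrac1\varepsilon v\cdot\nabla_x$ contribution is bounded by $\tfrac1\varepsilon\|\langle v\rangle^{\gamma/2}\overline{w}_{l_0}(\alpha+e_i,\beta-e_i)\partial^{\alpha+e_i}_{\beta-e_i}\{\mathbf{I-P}\}f_\varepsilon\|^2+\tfrac\eta\varepsilon\overline{\mathcal D}_{N_0-1,l_0}(t)$, with no velocity or time growth. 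The $E_\varepsilon\cdot v\,M^{1/2}q_1$ source, the $\mathbf P$-correction terms $\tfrac1\varepsilon\mathbf P(v\cdot\nabla_x f_\varepsilon)-\tfrac1{\varepsilon}v\cdot\nabla_x\mathbf P f_\varepsilon$, are bounded by $\mathcal D_{N_0}(t)$ plus $\tfrac\eta{\varepsilon^2}\|\partial^\alpha\{\mathbf{I-P}\}f_\varepsilon\|_\nu^2$ just as before. The nonlinear $\mathscr T$–term gives $\overline{\mathcal E}_{N_0-1,l_0}(t)\,\overline{\mathcal D}_{N_0-1,l_0}(t)+\eta\overline{\mathcal D}_{N_0-1,l_0}(t)$ by the bilinear estimates in the Appendix. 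For the magnetic term $\tfrac1\varepsilon v\times B_\varepsilon\cdot\nabla_v\{\mathbf{I-P}\}f_\varepsilon$ I would split the Leibniz sum by $|\alpha_1|$ exactly as in \eqref{bound-sum}: for $1\le|\alpha_1|\le3$ use Sobolev, interpolation and Young with exponent $\theta=\tfrac3{\tilde\ell+1/2}$ to land on $\|\partial^{\alpha_1}B_\varepsilon\|_{L^\infty_x}^{2/\theta}\|\langle v\rangle^{\tilde\ell}\overline{w}_{l_0}(\alpha-\alpha_1,\beta+e_i)\partial^{\alpha-\alpha_1}_{\beta+e_i}\{\mathbf{I-P}\}f_\varepsilon\|^2$ plus dissipation; for $4\le|\alpha_1|\le N_0-2$ use $\langle v\rangle^{3/2}\overline{w}_{l_0}(\alpha,\beta)\le\langle v\rangle^{-1/2}\overline{w}_{l_0}(\alpha-\alpha_1,\beta+e_i)$ together with $\|\partial^{\alpha_1}B_\varepsilon\|_{L^\infty_x}\lesssim\mathcal E_N(t)^{1/2}$ to get $\mathcal E_N(t)\overline{\mathcal D}_{N_0-1,l_0}(t)$. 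The $E_\varepsilon\cdot\nabla_v$ and $E_\varepsilon\cdot v$ terms are handled the same way, and the hypotheses $\tilde\ell\geq\tfrac32\sigma_{N_0-1,N_0-1}$, $\ell_0\geq l_0+\tilde\ell+\tfrac12$ are used precisely to absorb $\|\langle v\rangle^{\tilde\ell}\overline{w}_{l_0}(\alpha-\alpha_1,\beta+e_i)\|\lesssim\widetilde{w}_{\ell_0}(\cdot)\langle v\rangle^{-1/2}$ into $\sum_{n\le N_0-1}\mathbb D^{(n)}_{\ell_0}(t)$ after noting that under {\bf Assumption I} the electromagnetic $L^\infty$–factors decay so fast that $\sum_{|\alpha_1|\le3}\|\partial^{\alpha_1}[E_\varepsilon,B_\varepsilon]\|^{2/\theta}_{L^\infty_x}\lesssim\mathcal M_1^{2/\theta}(1+t)^{-\sigma_{N_0-1,N_0-1}}$. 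Summing over $|\alpha|+|\beta|\le N_0-1$, $|\alpha|\le N_0-1$ and absorbing the $\eta$–terms yields \eqref{bound-E-1}.

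**The main obstacle.** The delicate point, exactly as in the preceding proposition, is the magnetic nonlinearity: the algebraic weight $\overline{w}_{l_0}$ chosen to make the transport term free of growth unavoidably forces a linear gain in $\langle v\rangle$ when a $\nabla_v$ falls on $\{\mathbf I-\mathbf P\}f_\varepsilon$, since $\overline{w}_{l_0}(\alpha,\beta)=\langle v\rangle\,\overline{w}_{l_0}(\alpha-\alpha_1,\beta+e_i)$ for $|\alpha_1|=1$. Closing the estimate therefore hinges on converting that extra velocity weight, via the interpolation exponent $\theta=\tfrac3{\tilde\ell+1/2}$, into a sufficiently high power of the decaying $L^\infty$–norm $\|\partial^{\alpha_1}B_\varepsilon\|_{L^\infty_x}^{2/\theta}$ so that the time factor beats $(1+t)^{\sigma_{n,j}}$ in $\mathbb D^{(n)}_{\ell_0}(t)$; this is why the hypotheses on $\tilde\ell$, $l_0$, $\ell_0$ are tuned the way they are, and verifying the chain of weight inequalities together with the decay bookkeeping under {\bf Assumption I} is the one place where care is required. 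Everything else is a routine transcription of Proposition~\ref{bound-E} with $N-1\to N_0-1$.
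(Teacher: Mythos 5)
Your proposal is correct and takes essentially the same route as the paper: the paper itself omits the proof of this proposition, stating only that it is proved "by a similar way" as the preceding weighted estimate for $\overline{\mathcal{E}}_{N-1,l_1}(t)$, and your transcription with $l_1\rightsquigarrow l_0$, $\ell_1\rightsquigarrow\ell_0$, $N-1\rightsquigarrow N_0-1$ (including the weight identities for the transport term, the interpolation with exponent $\theta=\tfrac{3}{\tilde\ell+1/2}$ for the magnetic nonlinearity, and the use of {\bf Assumption I} to control the electromagnetic $L^\infty$ factors) is exactly the intended argument.
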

\begin{proof}
The proof is similar to Proposition \ref{bound-E}, we omit its proof for brevity.
\end{proof}

\subsection{Lyapunov inequality for the energy functionals $\mathbb{E}^{(n)}_\ell(t)$}

\begin{lemma}\label{lemma3.8-cut}
	Take $\overline{\ell}_0\geq \ell_1+\frac32N$, for $|\alpha|=N$, it holds that
	\begin{eqnarray}\label{lemma3.8-1-cut}
		&&\frac{\d}{\d t}\left\|\widetilde{w}_{\ell_1}(\alpha,0)\partial^\alpha f_{\varepsilon}\right\|^2
		+\frac1{\varepsilon^2}\left\|\widetilde{w}_{\ell_1}(\alpha,0)\partial^\alpha f_{\varepsilon}\right\|^2_{\nu}+\frac{\vartheta q}{(1+t)^{1+\vartheta}}\left\|\langle v\rangle \widetilde{w}_{\ell_1}(\alpha,0)\partial^\alpha f_{\varepsilon}\right\|^2\nonumber\\
		&\lesssim&\frac1{\varepsilon^2}\|\nabla^N_x{\bf P}f_{\varepsilon}\|^2+\mathcal{D}_N(t)+
		\left\|\partial^\alpha E_\varepsilon\right\|\left\|{M}^\delta\partial^\alpha f_{\varepsilon}\right\|\nonumber\\
		&&+\sum_{1\leq|\alpha_1|\leq 2}(1+t)^{1+\vartheta}\|\partial^{\alpha_1}[E_\varepsilon,B_\varepsilon]\|^2_{L^\infty_x}\sum_{N-1\leq n\leq N}(1+t)^{\sigma_{n,1}}\mathcal{D}^{n,1}_{\ell_1}(t)\nonumber\\
		&&+\sum_{3\leq|\alpha_1|\leq N_0+1}\|\partial^{\alpha_1}[E_\varepsilon,B_\varepsilon]\|^2\sum_{N_0+1\leq n\leq N-1}(1+t)^{\sigma_{n,1}}\mathcal{D}^{n,1}_{\ell_1}(t)\nonumber\\
		&&+\mathcal{E}_N(t)\mathcal{E}_{1\rightarrow N_0-1,\overline{\ell}_0}(t)+\mathcal{E}_{N-1,l_1}(t)\sum_{n\leq N}(1+t)^{\sigma_{n,0}}\mathcal{D}^{n,0}_{\ell_1}(t)
	\end{eqnarray}
	{for all $0\leq t\leq T$.}
\end{lemma}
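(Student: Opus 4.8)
The plan is to mirror the proof of Lemma~\ref{noncut-N-wight}, but with the cutoff dissipation norm $\|\cdot\|_\nu=\|\langle v\rangle^{\gamma/2}\cdot\|$ in place of $\|\cdot\|_D$ and with the quadratic--exponential weight $\widetilde{w}_{\ell_1}(\alpha,0)=\langle v\rangle^{\ell_1-N}e^{q\langle v\rangle^2/(1+t)^\vartheta}$. Fix $|\alpha|=N$, apply $\partial^\alpha$ to the first equation of \eqref{VMB-F-perturbative}, take the $L^2_{x,v}$ inner product against $\widetilde{w}^2_{\ell_1}(\alpha,0)\partial^\alpha f_\varepsilon$, and integrate by parts. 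Since $\widetilde{w}_{\ell_1}(\alpha,0)$ is $x$-independent, the transport contribution $\frac1\varepsilon(v\cdot\nabla_x\partial^\alpha f_\varepsilon,\widetilde{w}^2_{\ell_1}(\alpha,0)\partial^\alpha f_\varepsilon)$ vanishes identically, so, unlike in the microscopic estimates of Proposition~\ref{bound-E}, no transport term survives here. The time derivative hitting $e^{q\langle v\rangle^2/(1+t)^\vartheta}$ produces on the left-hand side the good term $\frac{q\vartheta}{(1+t)^{1+\vartheta}}\|\langle v\rangle\widetilde{w}_{\ell_1}(\alpha,0)\partial^\alpha f_\varepsilon\|^2$, and the weighted coercivity of $\mathscr{L}$ for the cutoff soft potential (the cutoff analogue of \eqref{L-noncut-2}) gives $\frac1{\varepsilon^2}(\mathscr{L}\partial^\alpha f_\varepsilon,\widetilde{w}^2_{\ell_1}(\alpha,0)\partial^\alpha f_\varepsilon)\gtrsim\frac1{\varepsilon^2}\|\widetilde{w}_{\ell_1}(\alpha,0)\partial^\alpha f_\varepsilon\|_\nu^2-\frac1{\varepsilon^2}\|\partial^\alpha{\bf P}f_\varepsilon\|^2-\frac1{\varepsilon^2}\|\partial^\alpha\{{\bf I-P}\}f_\varepsilon\|_\nu^2$. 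The first negative term is recorded on the right as $\frac1{\varepsilon^2}\|\nabla^N_x{\bf P}f_\varepsilon\|^2$ (it will be removed only later, after multiplication by $\varepsilon^2(1+t)^{-\sigma_{N,0}}$ in the assembly of $\varepsilon^2\mathbb{E}^{(N)}_{\ell_1}$), and the second is absorbed into $\mathcal{D}_N(t)$.

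It then remains to estimate the forcing terms coming from $\frac1\varepsilon(E_\varepsilon\cdot v)\sqrt M q_1$, $\tfrac12q_0(E_\varepsilon\cdot v)f_\varepsilon$, $q_0E_\varepsilon\cdot\nabla_v f_\varepsilon$, $\frac1\varepsilon q_0(v\times B_\varepsilon)\cdot\nabla_v f_\varepsilon$ and $\frac1\varepsilon\mathscr{T}(f_\varepsilon,f_\varepsilon)$. For the first term the macroscopic contribution vanishes because $q_1\cdot{\bf P}f_\varepsilon=(\rho_\varepsilon^+-\rho_\varepsilon^-)\sqrt M$ is even in $v$ while $v\sqrt M$ is odd, so only the microscopic piece survives and, after Cauchy--Schwarz against the Gaussian-controlled weight, it is dominated by a quantity of the type $\|\partial^\alpha E_\varepsilon\|\,\|M^\delta\partial^\alpha f_\varepsilon\|$ modulo $\frac1{\varepsilon^2}\|\nabla^N_x{\bf P}f_\varepsilon\|^2+\mathcal{D}_N(t)$. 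For the three terms carrying $\nabla_v$ or $v\times$, one expands $\partial^\alpha$ by Leibniz; the purely macroscopic top piece of the magnetic term vanishes by velocity oddness, and the $\nabla_v{\bf P}f_\varepsilon$ pieces are Gaussian and contribute only $\mathcal{E}_N(t)\mathcal{D}_N(t)$-type amounts. The decisive case is $\frac1\varepsilon(v\times\partial^{\alpha_1}B_\varepsilon)\cdot\nabla_v\partial^{\alpha-\alpha_1}\{{\bf I-P}\}f_\varepsilon$ paired with $\widetilde{w}^2_{\ell_1}(\alpha,0)\partial^\alpha\{{\bf I-P}\}f_\varepsilon$: exactly as in \eqref{hard-1-cut}, the identity $\langle v\rangle\widetilde{w}_{\ell_1}(\alpha,0)=\langle v\rangle^{1/2}\widetilde{w}_{\ell_1}(\alpha-e_i,e_j)$ lets one move half a power of $\langle v\rangle$ onto the lower-order factor, bounding the term by $\frac1\varepsilon\|\partial^{\alpha_1}B_\varepsilon\|_{L^\infty_x}\|\langle v\rangle^{1/4}\widetilde{w}_{\ell_1}(\alpha-\alpha_1,e_i)\partial^{\alpha-\alpha_1}_{e_i}\{{\bf I-P}\}f_\varepsilon\|\,\|\langle v\rangle^{1/4}\widetilde{w}_{\ell_1}(\alpha,0)\partial^\alpha\{{\bf I-P}\}f_\varepsilon\|$; splitting via $ab\le(1+t)^{1+\vartheta}a^2+(1+t)^{-(1+\vartheta)}b^2$ and interpolating $\|\langle v\rangle^{1/4}g\|^2\lesssim\varepsilon\|\langle v\rangle g\|^2+\varepsilon^{-1}\|g\|_\nu^2$ — admissible precisely because $\gamma\ge-1$, so $\langle v\rangle^{1/4}$ lies between $\langle v\rangle^{\gamma/2}$ and $\langle v\rangle$ — the $b$-part is absorbed into $\frac{q\vartheta}{(1+t)^{1+\vartheta}}\|\langle v\rangle\widetilde{w}_{\ell_1}(\alpha,0)\partial^\alpha f_\varepsilon\|^2+\frac1{\varepsilon^2}\|\widetilde{w}_{\ell_1}(\alpha,0)\partial^\alpha f_\varepsilon\|_\nu^2$, while the $a$-part, depending on $|\alpha_1|$ (which fixes the order $n=N-|\alpha_1|+1$ of the surviving derivative), feeds into $\sum_{1\le|\alpha_1|\le2}(1+t)^{1+\vartheta}\|\partial^{\alpha_1}[E_\varepsilon,B_\varepsilon]\|_{L^\infty_x}^2\sum_{N-1\le n\le N}(1+t)^{\sigma_{n,1}}\mathcal{D}_{\ell_1}^{n,1}(t)$, into its $L^2$-based counterpart for $3\le|\alpha_1|\le N_0+1$, and (when $|\alpha_1|\ge N_0+2$, so $\partial^{\alpha-\alpha_1}\nabla_v f_\varepsilon$ has at most $N_0-2$ derivatives and can be placed in $L^\infty_x$) into $\mathcal{E}_N(t)\mathcal{E}_{1\rightarrow N_0-1,\overline{\ell}_0}(t)$, the hypothesis $\overline{\ell}_0\ge\ell_1+\tfrac32N$ securing the weight comparison. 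The electric analogues are the same without the prefactor $\varepsilon^{-1}$, hence easier, and $\frac1\varepsilon\mathscr{T}(f_\varepsilon,f_\varepsilon)$ is handled by the weighted cutoff bilinear estimates of the appendix, whose high-frequency factor carries the weighted $\nu$-norm, yielding $\mathcal{E}_N(t)\mathcal{E}_{1\rightarrow N_0-1,\overline{\ell}_0}(t)+\mathcal{E}_{N-1,l_1}(t)\sum_{n\le N}(1+t)^{\sigma_{n,0}}\mathcal{D}_{\ell_1}^{n,0}(t)+\frac\eta{\varepsilon^2}\|\widetilde{w}_{\ell_1}(\alpha,0)\partial^\alpha f_\varepsilon\|_\nu^2$. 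Choosing $\eta$ small and collecting all contributions gives \eqref{lemma3.8-1-cut}.

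The principal obstacle is the magnetic nonlinearity with its uncancelled prefactor $\frac1\varepsilon$. Unlike the non-cutoff situation, the linearized collision operator supplies no velocity-derivative dissipation and, worse, its dissipation norm $\|\cdot\|_\nu$ carries the \emph{negative} weight $\langle v\rangle^{\gamma/2}$, so the top microscopic factor $\partial^\alpha\{{\bf I-P}\}f_\varepsilon$ has no self-contained reservoir to absorb the extra velocity weight the magnetic term demands. This is overcome by the specific design of $\widetilde{w}_{\ell_1}$, whose $-\tfrac12|\beta|$ exponent makes trading one $x$-derivative for one $v$-derivative cost exactly $\langle v\rangle^{1/2}$, together with the quadratic exponential $e^{q\langle v\rangle^2/(1+t)^\vartheta}$ whose time derivative produces a $\langle v\rangle$-weighted dissipation that, combined with $\frac1{\varepsilon^2}\|\cdot\|_\nu^2$ and the condition $\gamma\ge-1$, dominates the residual $\langle v\rangle^{1/4}$-weighted term. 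A secondary but essential point is the arithmetic of the graded time weights: one must verify that, after the later multiplication by $(1+t)^{-\sigma_{N,0}}$ with $\sigma_{N,0}=\tfrac{1+\epsilon_0}2$ and $\sigma_{N,|\beta|}-\sigma_{N,|\beta|-1}=\tfrac{1+\vartheta}2$, every term produced above is either a genuine fraction of some $\mathbb{D}^{(n)}_{\ell_1}(t)$ or carries enough extra decay, via the assumed bounds on $\|\partial^{\alpha_1}[E_\varepsilon,B_\varepsilon]\|_{L^\infty_x}$ and on $\mathcal{E}_{k\rightarrow N_0}(t)$, for the full hierarchy to close.
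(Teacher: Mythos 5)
Your proposal is correct and follows essentially the same route as the paper: the weighted energy identity for $\partial^\alpha f_\varepsilon$ with the macroscopic loss $\frac1{\varepsilon^2}\|\nabla^N_x{\bf P}f_\varepsilon\|^2$ kept on the right, the weight identity $\langle v\rangle\,\widetilde{w}_{\ell_1}(\alpha,0)=\langle v\rangle^{1/2}\widetilde{w}_{\ell_1}(\alpha-e_i,e_i)$ to split the magnetic nonlinearity, the case analysis in $|\alpha_1|$ ($L^\infty_x$ with the $(1+t)^{1+\vartheta}$ factor for $|\alpha_1|\le 2$, $L^2$-based for $3\le|\alpha_1|\le N_0+1$, and absorption into $\mathcal{E}_N\mathcal{E}_{1\rightarrow N_0-1,\overline{\ell}_0}$ for $|\alpha_1|\ge N_0+2$), and the interpolation $\langle v\rangle^{1/2}\le a\langle v\rangle^2+a^{-1}\langle v\rangle^{\gamma}$ (valid since $\gamma\ge-1$) to absorb the residual into the two dissipation terms on the left. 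This is exactly the computation the paper carries out in its displayed estimate for the magnetic term, with the remaining terms treated as in Lemma \ref{noncut-N-wight}.
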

\begin{proof}
	The proofs of this lemma is similar to Lemma \ref{noncut-N-wight}. For the sake of brevity, let's just point out the differences from Lemma \ref{noncut-N-wight}.
	
	Recalling the definition of the weight $\widetilde{w}_{\ell_1}(\alpha,\beta)(t,v)$, one has
	\[\langle v\rangle \widetilde{w}_{\ell_1}(\alpha,0)=\langle v\rangle \widetilde{w}_{\ell_1}(\alpha-e_i,e_i)\langle v\rangle^{-\frac12}\leq \langle v\rangle^{\frac12}\widetilde{w}_{\ell_1}(\alpha-e_i,e_i),\]	
	it follows that
	\begin{eqnarray}\label{N-weight-B-cut}
		&&{\frac1\varepsilon}\left|\left(\partial^\alpha[ v\times B_\varepsilon\cdot\nabla_v {\bf \{I-P\}}f_\varepsilon],\widetilde{w}^2_{\ell_1}(\alpha,0)\partial^\alpha {\bf \{I-P\}}f_{\varepsilon}\right)\right|\nonumber\\
		&=&{\frac1\varepsilon}\left|\left([ v\times B_\varepsilon\cdot\nabla_v \partial^\alpha{\bf \{I-P\}}f_\varepsilon],\widetilde{w}^2_{\ell_1}(\alpha,0)\partial^\alpha {\bf \{I-P\}}f_{\varepsilon}\right)\right|\nonumber\\
		&&+\sum_{1\leq|\alpha_1|\leq N}{\frac1\varepsilon}\left|\left([ v\times \partial^{\alpha_1}B_\varepsilon\cdot\nabla_v \partial^{\alpha-\alpha_1}{\bf \{I-P\}}f_\varepsilon],\widetilde{w}^2_{\ell_1}(\alpha,0)\partial^\alpha {\bf \{I-P\}}f_{\varepsilon}\right)\right|\nonumber\\
		&=&\sum_{|\alpha_1|=1}{\frac1\varepsilon}\left|\left([ v\times \partial^{\alpha_1}B_\varepsilon\cdot\nabla_v \partial^{\alpha-\alpha_1}{\bf \{I-P\}}f_\varepsilon],\widetilde{w}^2_{\ell_1}(\alpha,0)\partial^\alpha {\bf \{I-P\}}f_{\varepsilon}\right)\right|\nonumber\\
		&&+\sum_{|\alpha_1|=2}{\frac1\varepsilon}\left|\left([ v\times \partial^{\alpha_1}B_\varepsilon\cdot\nabla_v \partial^{\alpha-\alpha_1}{\bf \{I-P\}}f_\varepsilon],\widetilde{w}^2_{\ell_1}(\alpha,0)\partial^\alpha {\bf \{I-P\}}f_{\varepsilon}\right)\right|\nonumber\\
		&&+\sum_{3\leq|\alpha_1|\leq N_0-2}{\frac1\varepsilon}\left|\left([ v\times \partial^{\alpha_1}B_\varepsilon\cdot\nabla_v \partial^{\alpha-\alpha_1}{\bf \{I-P\}}f_\varepsilon],\widetilde{w}^2_{\ell_1}(\alpha,0)\partial^\alpha {\bf \{I-P\}}f_{\varepsilon}\right)\right|\nonumber\\
		&&+\sum_{|\alpha_1|= N_0-1}{\frac1\varepsilon}\left|\left([ v\times \partial^{\alpha_1}B_\varepsilon\cdot\nabla_v \partial^{\alpha-\alpha_1}{\bf \{I-P\}}f_\varepsilon],\widetilde{w}^2_{\ell_1}(\alpha,0)\partial^\alpha {\bf \{I-P\}}f_{\varepsilon}\right)\right|\nonumber\\
			&&+\sum_{|\alpha_1|= N_0}{\frac1\varepsilon}\left|\left([ v\times \partial^{\alpha_1}B_\varepsilon\cdot\nabla_v \partial^{\alpha-\alpha_1}{\bf \{I-P\}}f_\varepsilon],\widetilde{w}^2_{\ell_1}(\alpha,0)\partial^\alpha {\bf \{I-P\}}f_{\varepsilon}\right)\right|\nonumber\\
				&&+\sum_{|\alpha_1|= N_0+1}{\frac1\varepsilon}\left|\left([ v\times \partial^{\alpha_1}B_\varepsilon\cdot\nabla_v \partial^{\alpha-\alpha_1}{\bf \{I-P\}}f_\varepsilon],\widetilde{w}^2_{\ell_1}(\alpha,0)\partial^\alpha {\bf \{I-P\}}f_{\varepsilon}\right)\right|\nonumber\\
					&&+\sum_{N_0+2\leq|\alpha_1|\leq N}{\frac1\varepsilon}\left|\left([ v\times \partial^{\alpha_1}B_\varepsilon\cdot\nabla_v \partial^{\alpha-\alpha_1}{\bf \{I-P\}}f_\varepsilon],\widetilde{w}^2_{\ell_1}(\alpha,0)\partial^\alpha {\bf \{I-P\}}f_{\varepsilon}\right)\right|\nonumber\\
		&\lesssim&\sum_{|\alpha_1|=1}{\frac1\varepsilon}\|\partial^{\alpha_1}B_\varepsilon\|_{L^\infty_x}\|\langle v\rangle^{\frac14} \widetilde{w}_{\ell_1}(\alpha-\alpha_1,e_i)\partial^{\alpha-\alpha_1}_{e_i}{\bf \{I-P\}}f_{\varepsilon}\|\|\langle v\rangle^{\frac14} \widetilde{w}_{\ell_1}(\alpha,0 )\partial^{\alpha}{\bf \{I-P\}}f_{\varepsilon}\|\nonumber\\
		&&+\sum_{|\alpha_1|=2}{\frac1\varepsilon}\|\partial^{\alpha_1}B_\varepsilon\|_{L^\infty_x}\|\langle v\rangle^{-\frac14} \widetilde{w}_{\ell_1}(\alpha-\alpha_1,e_i)\partial^{\alpha-\alpha_1}_{e_i}{\bf \{I-P\}}f_{\varepsilon}\|\|\langle v\rangle^{-\frac14} \widetilde{w}_{\ell_1}(\alpha,0 )\partial^{\alpha}{\bf \{I-P\}}f_{\varepsilon}\|\nonumber\\
		&&+\sum_{3\leq|\alpha_1|\leq N_0-2}{\frac1\varepsilon}\|\partial^{\alpha_1}B_\varepsilon\|_{L^\infty_x}\|\langle v\rangle^{2-|\alpha_1|} \widetilde{w}_{\ell_1}(\alpha-\alpha_1,e_i)\partial^{\alpha-\alpha_1}_{e_i}{\bf \{I-P\}}f_{\varepsilon}\|\|\langle v\rangle^{-\frac12} \widetilde{w}_{\ell_1}(\alpha,0 )\partial^{\alpha}{\bf \{I-P\}}f_{\varepsilon}\|\nonumber\\
			&&+\sum_{|\alpha_1|= N_0-1}{\frac1\varepsilon}\|\partial^{\alpha_1}B_\varepsilon\|_{L^6_x}\|\langle v\rangle^{2-|\alpha_1|} \widetilde{w}_{\ell_1}(\alpha-\alpha_1,e_i)\partial^{\alpha-\alpha_1}_{e_i}{\bf \{I-P\}}f_{\varepsilon}\|_{L^3_x}\|\langle v\rangle^{-\frac12} \widetilde{w}_{\ell_1}(\alpha,0 )\partial^{\alpha}{\bf \{I-P\}}f_{\varepsilon}\|\nonumber\\
				&&+\sum_{|\alpha_1|= N_0}{\frac1\varepsilon}\|\partial^{\alpha_1}B_\varepsilon\|\|\langle v\rangle^{2-|\alpha_1|} \widetilde{w}_{\ell_1}(\alpha-\alpha_1,e_i)\partial^{\alpha-\alpha_1}_{e_i}{\bf \{I-P\}}f_{\varepsilon}\|_{L^\infty_x}\|\langle v\rangle^{-\frac12} \widetilde{w}_{\ell_1}(\alpha,0 )\partial^{\alpha}{\bf \{I-P\}}f_{\varepsilon}\|\nonumber\\
					&&+\sum_{|\alpha_1|= N_0+1}{\frac1\varepsilon}\|\partial^{\alpha_1}B_\varepsilon\|\|\langle v\rangle^{2-|\alpha_1|} \widetilde{w}_{\ell_1}(\alpha-\alpha_1,e_i)\partial^{\alpha-\alpha_1}_{e_i}{\bf \{I-P\}}f_{\varepsilon}\|_{L^\infty_x}\|\langle v\rangle^{-\frac12} \widetilde{w}_{\ell_1}(\alpha,0 )\partial^{\alpha}{\bf \{I-P\}}f_{\varepsilon}\|\nonumber\\
		&&+\sum_{N_0+2\leq|\alpha_1|\leq N}{\frac1\varepsilon}\left|\left([ v\times \partial^{\alpha_1}B_\varepsilon\cdot\nabla_v \partial^{\alpha-\alpha_1}{\bf \{I-P\}}f_\varepsilon],\widetilde{w}^2_{\ell_1}(\alpha,0)\partial^\alpha {\bf \{I-P\}}f_{\varepsilon}\right)\right|\nonumber\\
		&\lesssim&\sum_{1\leq|\alpha_1|\leq 2}{\frac1\varepsilon}(1+t)^{\frac{1+\vartheta}2}\|\partial^{\alpha_1}B_\varepsilon\|^2_{L^\infty_x}\|\langle v\rangle^{\frac14} \widetilde{w}_{\ell_1}(\alpha-\alpha_1,e_i)\partial^{\alpha-\alpha_1}_{e_i}{\bf \{I-P\}}f_{\varepsilon}\|^2\nonumber\\
			&&+\sum_{3\leq|\alpha_1|\leq N_0-2}\|\partial^{\alpha_1}B_\varepsilon\|^2_{L^\infty_x}\|\langle v\rangle^{2-|\alpha_1|} \widetilde{w}_{\ell_1}(\alpha-\alpha_1,e_i)\partial^{\alpha-\alpha_1}_{e_i}{\bf \{I-P\}}f_{\varepsilon}\|^2\nonumber\\
		&&+\sum_{|\alpha_1|= N_0-1}\|\partial^{\alpha_1}B_\varepsilon\|^3_{L^6_x}\|\langle v\rangle^{2-|\alpha_1|} \widetilde{w}_{\ell_1}(\alpha-\alpha_1,e_i)\partial^{\alpha-\alpha_1}_{e_i}{\bf \{I-P\}}f_{\varepsilon}\|_{L^3_x}^2\nonumber\\
		&&+\sum_{|\alpha_1|= N_0}\|\partial^{\alpha_1}B_\varepsilon\|^2\|\langle v\rangle^{2-|\alpha_1|} \widetilde{w}_{\ell_1}(\alpha-\alpha_1,e_i)\partial^{\alpha-\alpha_1}_{e_i}{\bf \{I-P\}}f_{\varepsilon}\|_{L^\infty_x}^2\nonumber\\
		&&+\sum_{|\alpha_1|= N_0+1}\|\partial^{\alpha_1}B_\varepsilon\|^2\|\langle v\rangle^{2-|\alpha_1|} \widetilde{w}_{\ell_1}(\alpha-\alpha_1,e_i)\partial^{\alpha-\alpha_1}_{e_i}{\bf \{I-P\}}f_{\varepsilon}\|_{L^\infty_x}^2\nonumber\\
		&&+\sum_{N_0+2\leq|\alpha_1|\leq N}{\frac1\varepsilon}\left|\left([ v\times \partial^{\alpha_1}B_\varepsilon\cdot\nabla_v \partial^{\alpha-\alpha_1}{\bf \{I-P\}}f_\varepsilon],\widetilde{w}^2_{\ell_1}(\alpha,0)\partial^\alpha {\bf \{I-P\}}f_{\varepsilon}\right)\right|\nonumber\\
		&&+{\frac\eta\varepsilon}(1+t)^{-\frac{1+\vartheta}2}\|\langle v\rangle^{\frac14} w_\ell(\alpha,0 )\partial^{\alpha}{\bf \{I-P\}}f_{\varepsilon}\|^2+\frac\eta{\varepsilon^2}\left\|\widetilde{w}_{\ell_1}(\alpha,0)\partial^\alpha {\bf \{I-P\}}f_{\varepsilon}\right\|_\nu^2\nonumber\\
			&\lesssim&\sum_{1\leq|\alpha_1|\leq 2}{\frac1\varepsilon}(1+t)^{\frac{1+\vartheta}2}\|\partial^{\alpha_1}B_\varepsilon\|^2_{L^\infty_x}\|\langle v\rangle^{\frac14} \widetilde{w}_{\ell_1}(\alpha-\alpha_1,e_i)\partial^{\alpha-\alpha_1}_{e_i}{\bf \{I-P\}}f_{\varepsilon}\|^2\nonumber\\
		&&+\sum_{3\leq|\alpha_1|\leq N_0+1}\|\partial^{\alpha_1}B_\varepsilon\|^2\sum_{N_0+1\leq n\leq N-1}(1+t)^{\sigma_{n,1}}\mathcal{D}^{n,1}_{\ell_1}(t)\nonumber\\
		&&+\sum_{N_0+2\leq|\alpha_1|\leq N}{\frac1\varepsilon}\left|\left([ v\times \partial^{\alpha_1}B_\varepsilon\cdot\nabla_v \partial^{\alpha-\alpha_1}{\bf \{I-P\}}f_\varepsilon],\widetilde{w}^2_{\ell_1}(\alpha,0)\partial^\alpha {\bf \{I-P\}}f_{\varepsilon}\right)\right|\nonumber\\
		&&+{\frac\eta\varepsilon}(1+t)^{-\frac{1+\vartheta}2}\|\langle v\rangle^{\frac14} \widetilde{w}_{\ell_1}(\alpha,0 )\partial^{\alpha}{\bf \{I-P\}}f_{\varepsilon}\|^2+\eta\left\|\widetilde{w}_{\ell_1}(\alpha,0)\partial^\alpha {\bf \{I-P\}}f_{\varepsilon}\right\|_\nu^2\nonumber\\
			&\lesssim&\sum_{1\leq|\alpha_1|\leq 2}(1+t)^{1+\vartheta}\|\partial^{\alpha_1}B_\varepsilon\|^2_{L^\infty_x}\sum_{N-1\leq n\leq N}(1+t)^{\sigma_{n,1}}\mathcal{D}^{n,1}_{\ell_1}(t)\nonumber\\\nonumber\\
		&&+\sum_{3\leq|\alpha_1|\leq N_0+1}\|\partial^{\alpha_1}B_\varepsilon\|^2\sum_{N_0+1\leq n\leq N-1}(1+t)^{\sigma_{n,1}}\mathcal{D}^{n,1}_{\ell_1}(t)+\mathcal{E}_N(t)\mathcal{E}_{1\rightarrow N_0-1,\overline{\ell}_0}(t)\nonumber\\
		&&+{\frac\eta\varepsilon}(1+t)^{-\frac{1+\vartheta}2}\|\langle v\rangle^{\frac14} \widetilde{w}_{\ell_1}(\alpha,0 )\partial^{\alpha}{\bf \{I-P\}}f_{\varepsilon}\|^2+\frac\eta{\varepsilon^2}\left\|\widetilde{w}_{\ell_1}(\alpha,0)\partial^\alpha {\bf \{I-P\}}f_{\varepsilon}\right\|_\nu^2
	\end{eqnarray}
	where the third term on the right-hand side of the above inequalities can dominated by
	\[	\frac1{\varepsilon^2}\left\|\widetilde{w}_{\ell_1}(\alpha,0)\partial^\alpha f_{\varepsilon}\right\|^2_{\nu}+\frac{\vartheta q}{(1+t)^{1+\vartheta}}\left\|\langle v\rangle \widetilde{w}_{\ell_1}(\alpha,0)\partial^\alpha f_{\varepsilon}\right\|^2.\]
By combining the other similar estimates as Lemma \ref{noncut-N-wight}, we complete the proof of Lemma \ref{lemma3.8-cut}.
\end{proof}
\begin{lemma}\label{N-micro-w-cut}
Assume $\overline{\ell}_0\geq \ell_1+\frac32N$, for ${|\alpha|+|\beta|= N,|\beta|\geq 1}$,	one also has
	\begin{eqnarray}\label{N-micro-w-cut-1}
		&&\frac{\d}{\d t}\left\|\widetilde{w}_{\ell_1}(\alpha,\beta)\partial^\alpha_\beta \{{\bf I-P}\}f_{\varepsilon}\right\|^2+\frac1{\varepsilon^2}\left\|\widetilde{w}_{\ell_1}(\alpha,\beta)\partial^\alpha_\beta \{{\bf I-P}\}f_{\varepsilon}\right\|^2_\nu\nonumber\\
		&&+\frac{q\vartheta}{(1+t)^{1+\vartheta}}\|\langle v\rangle\widetilde{w}_{\ell_1}(\alpha,\beta)\partial^\alpha_\beta \{{\bf I-P}\}f_{\varepsilon}\|^2\nonumber\\
		&\lesssim&\frac1\varepsilon(1+t)^{\frac{1+\epsilon_0}2}\left\|\langle v\rangle^{\frac14}\widetilde{w}_{\ell_1}(\alpha+e_i,\beta-e_i)\partial^{\alpha+e_i}_{\beta-e_i}\{{\bf I-P}\}f_{\varepsilon}\right\|^2\nonumber\\
		&&+\sum_{1\leq|\alpha_1|\leq 2}(1+t)^{1+\vartheta}\|\partial^{\alpha_1}B_\varepsilon\|^2_{L^\infty_x}(1+t)^{\sigma_{N+1-|\alpha_1|,|\beta|+1}}\mathcal{D}^{N+1-|\alpha_1|,|\beta|+1}_{\ell_1}(t)\nonumber\\\nonumber\\
	&&+\sum_{3\leq|\alpha_1|\leq N_0+1}\|\partial^{\alpha_1}B_\varepsilon\|^2\sum_{N_0+1\leq n\leq N-1}(1+t)^{\sigma_{n,|\beta|+1}}\mathcal{D}^{n,|\beta|+1}_{\ell_1}(t)\nonumber\\
	&&+\mathcal{D}_{N}(t)+\mathcal{E}_N(t)\mathcal{E}_{1\rightarrow N_0-1,\overline{\ell}_0}(t)+\mathcal{E}_{N-1,l_1}(t)\sum_{n\leq N-1}(1+t)^{\sigma_{n,|\beta|}}\mathcal{D}^{n,|\beta|}_{\ell_1}(t).
	\end{eqnarray}
\end{lemma}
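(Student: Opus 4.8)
The plan is to adapt the argument behind Lemma~\ref{N-micro-weight}, and its weighted top-order companion Lemma~\ref{lemma3.8-cut}, to the cutoff setting with the algebraic-in-velocity weight $\widetilde{w}_{\ell_1}(\alpha,\beta)$, working throughout from the micro-projected equation \eqref{I-P-cut} so that the uncontrollable singular term $\tfrac1{\varepsilon^2}\|\partial^\alpha{\bf P}f_\varepsilon\|^2$ never enters. First I would apply $\partial^\alpha_\beta$ (with $|\alpha|+|\beta|=N$, $|\beta|\geq1$) to \eqref{I-P-cut}, pair with $\widetilde{w}^2_{\ell_1}(\alpha,\beta)\partial^\alpha_\beta\{{\bf I-P}\}f_\varepsilon$ in $L^2_{x,v}$, and read off the energy identity: the time derivative falling on $e^{q\langle v\rangle^2/(1+t)^\vartheta}$ produces the gain $\tfrac{q\vartheta}{(1+t)^{1+\vartheta}}\|\langle v\rangle\widetilde{w}_{\ell_1}(\alpha,\beta)\partial^\alpha_\beta\{{\bf I-P}\}f_\varepsilon\|^2$ on the left, while the cutoff coercivity of $\mathscr{L}$ delivers $\tfrac1{\varepsilon^2}\|\widetilde{w}_{\ell_1}(\alpha,\beta)\partial^\alpha_\beta\{{\bf I-P}\}f_\varepsilon\|^2_\nu$ up to the error $\tfrac1{\varepsilon^2}\|\partial^\alpha\{{\bf I-P}\}f_\varepsilon\|^2_\nu\lesssim\mathcal{D}_N(t)$. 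The six source terms of \eqref{I-P-cut} then have to be estimated one at a time.

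For the transport term $\tfrac1\varepsilon\partial^\alpha_\beta[v\cdot\nabla_x\{{\bf I-P}\}f_\varepsilon]$ I would integrate by parts in $x$, trading one velocity derivative for one spatial derivative, and exploit the weight identity $\widetilde{w}_{\ell_1}(\alpha,\beta)=\langle v\rangle^{1/2}\widetilde{w}_{\ell_1}(\alpha+e_i,\beta-e_i)$ exactly as in \eqref{hard-2-cut}: distributing the resulting $\langle v\rangle^{1/2}\cdot\langle v\rangle^{1/2}$ asymmetrically and applying Young's inequality with the time factors $(1+t)^{\pm(1+\epsilon_0)/2}$ yields the first term on the right of \eqref{N-micro-w-cut-1} plus a remainder $\tfrac\eta\varepsilon(1+t)^{-(1+\epsilon_0)/2}\|\langle v\rangle^{1/4}\widetilde{w}_{\ell_1}(\alpha,\beta)\partial^\alpha_\beta\{{\bf I-P}\}f_\varepsilon\|^2$ which, because $\gamma\geq-1$, is absorbed by the combination of $\tfrac1{\varepsilon^2}\|\cdot\|^2_\nu$ and $\tfrac{q\vartheta}{(1+t)^{1+\vartheta}}\|\langle v\rangle\,\cdot\|^2$ on the left. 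The linear forcing $\tfrac1\varepsilon\partial^\alpha_\beta[E_\varepsilon\cdot vM^{1/2}q_1]$ and the macroscopic remainders $\tfrac1\varepsilon\partial^\alpha_\beta[{\bf P}(v\cdot\nabla_xf_\varepsilon)-\tfrac1\varepsilon v\cdot\nabla_x{\bf P}f_\varepsilon]$ are bounded, after Cauchy--Schwarz and absorption of $\tfrac\eta{\varepsilon^2}\|\partial^\alpha\{{\bf I-P}\}f_\varepsilon\|^2_\nu$, by $\|\partial^\alpha E_\varepsilon\|^2+\|\nabla^{|\alpha|+1}f_\varepsilon\|^2_\nu\lesssim\mathcal{D}_N(t)$; and the nonlinear collision term $\tfrac1\varepsilon\partial^\alpha_\beta\mathscr{T}(f_\varepsilon,f_\varepsilon)$ is handled by the cutoff $\mathscr{T}$-estimates of the Appendix, contributing $\lesssim\mathcal{E}_N(t)\mathcal{E}_{1\rightarrow N_0-1,\overline{\ell}_0}(t)+\mathcal{E}_{N-1,l_1}(t)\sum_{n\leq N-1}(1+t)^{\sigma_{n,|\beta|}}\mathcal{D}^{n,|\beta|}_{\ell_1}(t)+\tfrac\eta{\varepsilon^2}\|\widetilde{w}_{\ell_1}(\alpha,\beta)\partial^\alpha_\beta\{{\bf I-P}\}f_\varepsilon\|^2_\nu$ once the hypothesis $\overline{\ell}_0\geq\ell_1+\tfrac32N$ is used to swallow the weight when most derivatives hit one factor.

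The heart of the matter, and the step I expect to be the main obstacle, is the Lorentz-force term $\tfrac1\varepsilon\partial^\alpha_\beta[\{{\bf I-P}\}(q_0v\times B_\varepsilon\cdot\nabla_vf_\varepsilon)]$ together with its $E_\varepsilon$-counterpart. After peeling off the macroscopic pieces $\tfrac1\varepsilon\{{\bf I-P}\}[v\times B_\varepsilon\cdot\nabla_v{\bf P}f_\varepsilon]$ and $\tfrac1\varepsilon{\bf P}[v\times B_\varepsilon\cdot\nabla_vf_\varepsilon]$ (each $\lesssim\mathcal{E}_N(t)\mathcal{D}_N(t)+\tfrac\eta{\varepsilon^2}\|\partial^\alpha\{{\bf I-P}\}f_\varepsilon\|^2_\nu$ by Cauchy--Schwarz, since ${\bf P}$ contributes only finitely many Maxwellian-weighted velocity moments), one expands by Leibniz in $\partial^{\alpha_1}B_\varepsilon$ and must defeat three competing effects simultaneously: the $\tfrac1\varepsilon$ singularity, the extra $\langle v\rangle$ forced by the fact that $\widetilde{w}_{\ell_1}(\alpha,\beta)$ carries only $\langle v\rangle^{-\frac12|\beta|}$ so moving $\nabla_v$ onto the weight costs velocity, and the requirement to reproduce precisely the dissipation functionals $\mathcal{D}^{n,j}_{\ell_1}$ with the correct time powers $\sigma_{n,j}$.

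The mechanism I would use is a case split on $|\alpha_1|$. For $1\leq|\alpha_1|\leq2$, place $\partial^{\alpha_1}B_\varepsilon$ in $L^\infty_x$ and use $\langle v\rangle\widetilde{w}_{\ell_1}(\alpha,\beta)=\langle v\rangle^{1/2}\widetilde{w}_{\ell_1}(\alpha-e_i,\beta+e_i)$ to shift one order from $\alpha$ into $\beta$, balancing the $\langle v\rangle^{1/2}$-loss against the left-hand dissipation $\tfrac1{\varepsilon^2}\|\cdot\|^2_\nu$ and $\tfrac{q\vartheta}{(1+t)^{1+\vartheta}}\|\langle v\rangle\,\cdot\|^2$ (again $\gamma\geq-1$) and the singular $\tfrac1\varepsilon$ against the prescribed increment of the $\sigma_{n,j}$'s; this produces exactly the term $(1+t)^{1+\vartheta}\|\partial^{\alpha_1}B_\varepsilon\|^2_{L^\infty_x}(1+t)^{\sigma_{N+1-|\alpha_1|,|\beta|+1}}\mathcal{D}^{N+1-|\alpha_1|,|\beta|+1}_{\ell_1}(t)$. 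For $3\leq|\alpha_1|\leq N_0+1$, put $\partial^{\alpha_1}B_\varepsilon$ in $L^2_x$ (with an $L^3_x$--$L^6_x$ interpolation near $|\alpha_1|=N_0-1$ and an $L^\infty_x$ bound on the low-order $\{{\bf I-P}\}f_\varepsilon$-factor via Sobolev embedding), obtaining $\sum_{3\leq|\alpha_1|\leq N_0+1}\|\partial^{\alpha_1}B_\varepsilon\|^2\sum_{N_0+1\leq n\leq N-1}(1+t)^{\sigma_{n,|\beta|+1}}\mathcal{D}^{n,|\beta|+1}_{\ell_1}(t)$; and for $|\alpha_1|\geq N_0+2$, the factor $\partial^{\alpha-\alpha_1}\nabla_v\{{\bf I-P}\}f_\varepsilon$ is of low order, so that after putting it in $L^\infty_x$ the contribution collapses into $\mathcal{E}_N(t)\mathcal{E}_{1\rightarrow N_0-1,\overline{\ell}_0}(t)$, where $\overline{\ell}_0\geq\ell_1+\tfrac32N$ is precisely what is needed to absorb the weight $\widetilde{w}_{\ell_1}$. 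The $E_\varepsilon$-terms $\{{\bf I-P}\}[-\tfrac12q_0v\cdot E_\varepsilon f_\varepsilon+q_0E_\varepsilon\cdot\nabla_vf_\varepsilon]$ are treated the same way but without the $\tfrac1\varepsilon$, so they fit into $\mathcal{E}_N(t)\mathcal{E}_{1\rightarrow N_0-1,\overline{\ell}_0}(t)$ and $\mathcal{E}_{N-1,l_1}(t)\sum(1+t)^{\sigma_{n,|\beta|}}\mathcal{D}^{n,|\beta|}_{\ell_1}(t)$ directly. Finally I would collect all contributions into the energy identity, absorb every $\eta$-dissipation into the left-hand side, and (the bounds being uniform in $\alpha,\beta$) sum over $|\alpha|+|\beta|=N$, $|\beta|\geq1$ to obtain \eqref{N-micro-w-cut-1}; the delicate bookkeeping — which $\sigma_{n,j}$ and which $L^p_x$ norm of $\partial^{\alpha_1}[E_\varepsilon,B_\varepsilon]$ is assigned to each Leibniz term — is really the only nontrivial point, the remainder being structurally identical to Lemma~\ref{N-micro-weight}.
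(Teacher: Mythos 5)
Your proposal follows essentially the same route as the paper: the energy identity from the micro-projected equation \eqref{I-P-cut} weighted by $\widetilde{w}^2_{\ell_1}(\alpha,\beta)$, the coercivity of $\mathscr{L}$, the transport term handled via $\widetilde{w}_{\ell_1}(\alpha,\beta)=\langle v\rangle^{1/2}\widetilde{w}_{\ell_1}(\alpha+e_i,\beta-e_i)$ with asymmetric time factors $(1+t)^{\pm(1+\epsilon_0)/2}$, and the Lorentz-force term split by $|\alpha_1|$ into the ranges $1\leq|\alpha_1|\leq2$ (in $L^\infty_x$), $3\leq|\alpha_1|\leq N_0+1$ (in $L^2_x$ with Sobolev interpolation), and $|\alpha_1|\geq N_0+2$ (absorbed into $\mathcal{E}_N(t)\mathcal{E}_{1\rightarrow N_0-1,\overline{\ell}_0}(t)$), exactly as in the paper's estimate \eqref{N-weight-B-cut} adapted to $|\beta|\geq1$. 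The argument and all the key weight identities match the paper's proof.
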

\begin{proof}
	To this end, applying $\partial^\alpha_\beta$ into \eqref{I-P-cut}, multiplying $\widetilde{w}^2_{\ell_1}(\alpha,\beta)\partial^\alpha_\beta\{{\bf I-P}\}f_{\varepsilon}$ and integrating the result identity over $\mathbb{R}^3_x\times\mathbb{R}^3_v$, then we have
	\begin{eqnarray}\label{I-P-w-cut}
		&&\frac12\frac{\d}{\d t}\|\widetilde{w}_{\ell_1}(\alpha,\beta)\partial^\alpha_\beta\{{\bf I-P}\}f_{\varepsilon}\|^2+\frac{q\vartheta}{(1+t)^{1+\vartheta}}\left\|\langle v\rangle \widetilde{w}_{\ell_1}(\alpha,\beta)\partial^\alpha_\beta\{{\bf I-P}\}f_{\varepsilon}\right\|^2\nonumber\\
		&&+\frac1 {\varepsilon^2} \left(\partial^\alpha_\beta\mathscr{L}f_\varepsilon,\widetilde{w}^2_{\ell_1}(\alpha,\beta)\partial^\alpha_\beta\{{\bf I-P}\}f_{\varepsilon}\right)\nonumber\\
		&=&-\frac1\varepsilon\left(\partial^\alpha_\beta[ v\cdot \nabla_x\{{\bf I-P}\}f_{\varepsilon}],\widetilde{w}^2_{\ell_1}(\alpha,\beta)\partial^\alpha_\beta\{{\bf I-P}\}f_{\varepsilon}\right)\nonumber\\
		&&+\frac1 \varepsilon \left(\partial^\alpha_\beta \left[E_\varepsilon\cdot v{M}^{1/2}q_1\right],\widetilde{w}^2_{\ell_1}(\alpha,\beta)\partial^\alpha_\beta\{{\bf I-P}\}f_{\varepsilon}\right)\nonumber\\
		&&+\frac1 \varepsilon\left(\partial^\alpha_\beta\left[{\bf P}(v\cdot\nabla_x f_\varepsilon)-\frac1 \varepsilon v\cdot\nabla_x{\bf P}f_\varepsilon\right],\widetilde{w}^2_{\ell_1}(\alpha,\beta)\partial^\alpha_\beta\{{\bf I-P}\}f_{\varepsilon}\right)\nonumber\\
		&&+\frac1\varepsilon\left(\partial^\alpha_\beta\left[\{{\bf I-P}\}\left[ v\times B_\varepsilon\cdot\nabla_{ v}f_\varepsilon\right]\right],\widetilde{w}^2_{\ell_1}(\alpha,\beta)\partial^\alpha_\beta\{{\bf I-P}\}f_{\varepsilon}\right)\nonumber\\
		&&+\frac1\varepsilon\left(\partial^\alpha_\beta\left[\{{\bf I-P}\}\left[q_0 v\times B_\varepsilon\cdot\nabla_{ v}f_\varepsilon\right]\right],\widetilde{w}^2_{\ell_1}\partial^\alpha_\beta\{{\bf I-P}\}f_{\varepsilon}\right)\nonumber\\
	&&+\left(\partial^\alpha_\beta\{{\bf I-P}\}\left[ -\frac12q_0 v\cdot E_\varepsilon f_\varepsilon+q_0E_\varepsilon\cdot\nabla_{ v}f_\varepsilon\right],\widetilde{w}^2_{\ell_1}\partial^\alpha_\beta\{{\bf I-P}\}f_{\varepsilon}\right)\nonumber\\
	&&+\frac1\varepsilon\left(\partial^\alpha_\beta\mathscr{T}(f_\varepsilon,f_\varepsilon),\widetilde{w}^2_{\ell_1}\partial^\alpha_\beta\{{\bf I-P}\}f_{\varepsilon}\right).
	\end{eqnarray}
	The coercivity estimates on the linear operator $\mathscr{L}$ yields that
	\begin{eqnarray}
		&&\frac1 {\varepsilon^2} \left(\partial^\alpha_\beta\mathscr{L}f_\varepsilon,\widetilde{w}^2_{\ell_1}(\alpha,\beta)\partial^\alpha_\beta\{{\bf I-P}\}f_{\varepsilon}\right)\nonumber\\
		&\gtrsim&\frac1 {\varepsilon^2} \|\widetilde{w}_{\ell_1}(\alpha,\beta)\partial^\alpha_\beta\{{\bf I-P}\}f_{\varepsilon}\|_{\nu}^2-\frac1 {\varepsilon^2}\|\partial^\alpha\{{\bf I-P}\}f_{\varepsilon}\|_{\nu}^2.
	\end{eqnarray}
	As for the transport term on the right-hand side of \eqref{I-P-w-cut}, one has
	\begin{eqnarray}
		&&\frac1\varepsilon\left(\partial^\alpha_\beta\left[ v\cdot \nabla_x\{{\bf I-P}\}f_{\varepsilon}\right],\widetilde{w}^2_{\ell_1}(\alpha,\beta)\partial^\alpha_\beta\{{\bf I-P}\}f_{\varepsilon}\right)\nonumber\\
		&=&-\frac1\varepsilon\int_{\mathbb{R}^3_x\times\mathbb{R}^3_v}\langle v\rangle^{\frac12}\partial^{\alpha+e_i}_{\beta-e_i}\{{\bf I-P}\}f_{\varepsilon} \widetilde{w}_{\ell_1}(\alpha+e_i,\beta-e_i)w_l(\alpha,\beta)\partial^\alpha_\beta\{{\bf I-P}\}f_{\varepsilon}dvdx\nonumber\\
		&\lesssim&\frac1\varepsilon(1+t)^{\frac{1+\epsilon_0}2}\left\|\langle v\rangle^{\frac14}\widetilde{w}_{\ell_1}(\alpha+e_i,\beta-e_i)\partial^{\alpha+e_i}_{\beta-e_i}\{{\bf I-P}\}f_{\varepsilon}\right\|^2\nonumber\\
		&&+\frac\eta\varepsilon(1+t)^{-\frac{1+\epsilon_0}2}\left\|\langle v\rangle^{\frac14}\widetilde{w}_{\ell_1}(\alpha,\beta)\partial^{\alpha}_{\beta}\{{\bf I-P}\}f_{\varepsilon}\right\|^2,	
	\end{eqnarray}
where we used the fact
	\[\widetilde{w}_{\ell_1}(\alpha,\beta)=\langle v\rangle^{\frac12}\widetilde{w}_{\ell_1}(\alpha+e_i,\beta-e_i).\]
For the nonlinear term induced by magnetic field $B(t,x)$, simiar to \eqref{N-weight-B-cut}, one has
	\begin{eqnarray}\label{B-nonlinear}
		&&\frac1\varepsilon\left(\partial^\alpha_\beta\left[ v\times B_\varepsilon\cdot\nabla_{ v}\{{\bf I-P}\}f_{\varepsilon}\right],\widetilde{w}^2_{\ell_1}(\alpha,\beta)\partial^\alpha_\beta\{{\bf I-P}\}f_{\varepsilon}\right)\nonumber\\
		&=&\frac1\varepsilon\sum_{\alpha_1\leq\alpha,\beta_1\leq\beta}\left(\partial^{\alpha_1}_{\beta_1}\left[ v\times B_\varepsilon\right]\cdot\partial^{\alpha-\alpha_1}_{\beta-\beta_1}\left[\nabla_{ v}\{{\bf I-P}\}f_{\varepsilon}\right],\widetilde{w}^2_{\ell_1}(\alpha,\beta)\partial^\alpha_\beta\{{\bf I-P}\}f_{\varepsilon}\right)\nonumber\\
			&\lesssim&\sum_{1\leq|\alpha_1|\leq 2}(1+t)^{1+\vartheta}\|\partial^{\alpha_1}B_\varepsilon\|^2_{L^\infty_x}(1+t)^{\sigma_{N+1-|\alpha_1|,|\beta|+1}}\mathcal{D}^{N+1-|\alpha_1|,|\beta|+1}_{\ell_1}(t)\nonumber\\\nonumber\\
		&&+\sum_{3\leq|\alpha_1|\leq N_0+1}\|\partial^{\alpha_1}B_\varepsilon\|^2\sum_{N_0+1\leq n\leq N-1}(1+t)^{\sigma_{n,|\beta|+1}}\mathcal{D}^{n,|\beta|+1}_{\ell_1}(t)+\mathcal{E}_N(t)\mathcal{E}_{1\rightarrow N_0-1,\overline{\ell}_0}(t)\nonumber\\
		&&+{\frac\eta\varepsilon}(1+t)^{-\frac{1+\vartheta}2}\|\langle v\rangle^{\frac14} \widetilde{w}_{\ell_1}(\alpha,\beta )\partial^{\alpha}_\beta{\bf \{I-P\}}f_{\varepsilon}\|^2+\frac\eta{\varepsilon^2}\left\|\widetilde{w}_{\ell_1}(\alpha,\beta)\partial^\alpha_\beta {\bf \{I-P\}}f_{\varepsilon}\right\|_\nu^2.
	\end{eqnarray}

Similarly, one also has
\begin{eqnarray}
	&&\left(\partial^\alpha_\beta\left[q_0 E\cdot\nabla_{ v}\{{\bf I-P}\}f_{\varepsilon}-\frac12q_0E_\varepsilon\cdot v\{{\bf I-P}\}f_{\varepsilon}\right],\widetilde{w}_{\ell_1}(\alpha,\beta )\partial^\alpha_\beta\{{\bf I-P}\}f_{\varepsilon}\right)\nonumber\\
	&\lesssim&\sum_{1\leq|\alpha_1|\leq 2}(1+t)^{1+\vartheta}\|\partial^{\alpha_1}E_\varepsilon\|^2_{L^\infty_x}(1+t)^{\sigma_{N+1-|\alpha_1|,|\beta|+1}}\mathcal{D}^{N+1-|\alpha_1|,|\beta|+1}_{\ell_1}(t)\nonumber\\\nonumber\\
&&+\sum_{3\leq|\alpha_1|\leq N_0+1}\|\partial^{\alpha_1}E_\varepsilon\|^2\sum_{N_0+1\leq n\leq N-1}(1+t)^{\sigma_{n,|\beta|+1}}\mathcal{D}^{n,|\beta|+1}_{\ell_1}(t)+\mathcal{E}_N(t)\mathcal{E}_{1\rightarrow N_0-1,\overline{\ell}_0}(t)\nonumber\\
&&+{\frac\eta\varepsilon}(1+t)^{-\frac{1+\vartheta}2}\|\langle v\rangle^{\frac14} \widetilde{w}_{\ell_1}(\alpha,\beta )\partial^{\alpha}_\beta{\bf \{I-P\}}f_{\varepsilon}\|^2+\frac\eta{\varepsilon^2}\left\|\widetilde{w}_{\ell_1}(\alpha,\beta)\partial^\alpha_\beta {\bf \{I-P\}}f_{\varepsilon}\right\|_\nu^2\nonumber\\
	&&+\|E_\varepsilon\|_{L^\infty_x}\|\langle v\rangle ^{\frac12}\widetilde{w}_{\ell_1}(\alpha,\beta)\partial^{\alpha}_{\beta}{\bf \{I-P\}}f_{\varepsilon}\|^2.
\end{eqnarray}
The last term can be dominated by
\begin{eqnarray}
	&&\frac1\varepsilon\left(\partial^\alpha_\beta\mathscr{T}(f_\varepsilon,f_\varepsilon),\widetilde{w}_{\ell_1}(\alpha,\beta )\partial^\alpha_\beta f_\varepsilon\right)\nonumber\\
	&\lesssim&\mathcal{E}_{N-1,l_1}(t)\sum_{n\leq N-1}(1+t)^{\sigma_{n,|\beta|}}\mathcal{D}^{n,|\beta|}_{\ell_1}(t)+\frac\eta{\varepsilon^2}\left\|\widetilde{w}_{\ell_1}(\alpha,\beta)\partial^\alpha_\beta {\bf \{I-P\}}f_{\varepsilon}\right\|_\nu^2.
\end{eqnarray}
\eqref{N-micro-w-cut-1} follows by collecting the above estimates into \eqref{I-P-w-cut}, thus we complete the proof of this lemma.
\end{proof}
A proper linear combination of  Lemma \ref{lemma3.8-cut}.
 and Lemma \ref{N-micro-w-cut},  one has
\begin{proposition}\label{Highest}
	Under {\bf Assumption 1} and take
	\begin{equation}\label{sigma-N-Assump}
	\sigma_{N,0}=\frac{1+\epsilon_0}2,\ 	\sigma_{N,|\beta|}-\sigma_{N,|\beta|-1}=\frac{1+\vartheta}2, |\beta|\geq1,	
\end{equation}
where $0<\epsilon_0\ll 1$,
	 one can deduce that
	\begin{eqnarray}\label{large-N}
		&&{\varepsilon^2}\frac{\d}{\d t}\mathbb{E}^{(N)}_{\ell_1}(t)+{\varepsilon^2}\mathbb{D}^{(N)}_{\ell_1}(t)\nonumber\\
		&\lesssim&\eta{\varepsilon^2}(1+t)^{-2\sigma_{N,0}}\left\|\nabla^N_x E_\varepsilon\right\|^2+\mathcal{D}_{N}(t)\nonumber\\
	&&+\mathcal{M}_1\left\{(1+t)^{-1^+}\mathcal{E}_N(t)+	\sum_{N_0+1\leq n\leq N-1}\mathbb{D}^{(n)}_{\ell_1}(t)+\varepsilon^2\mathbb{D}^{(N)}_{\ell_1}(t)\right\}.
	\end{eqnarray}
\end{proposition}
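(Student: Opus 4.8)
\textbf{Proof strategy for Proposition \ref{Highest}.}
The plan is to combine the two weighted estimates at the top order, namely Lemma \ref{lemma3.8-cut} (the pure-$x$ derivatives $|\alpha|=N$, $|\beta|=0$) and Lemma \ref{N-micro-w-cut} (the mixed derivatives $|\alpha|+|\beta|=N$, $|\beta|\geq 1$), multiply each by the appropriate negative power of $(1+t)$ dictated by the exponents $\sigma_{N,|\beta|}$, and sum. The choice \eqref{sigma-N-Assump}, i.e. $\sigma_{N,0}=\frac{1+\epsilon_0}{2}$ and $\sigma_{N,|\beta|}-\sigma_{N,|\beta|-1}=\frac{1+\vartheta}{2}$ for $|\beta|\geq 1$, is engineered precisely so that the ``velocity-loss'' transport term on the right-hand side of \eqref{N-micro-w-cut-1}, which carries a factor $(1+t)^{\frac{1+\epsilon_0}{2}}$ and involves $\partial^{\alpha+e_i}_{\beta-e_i}$ (i.e. one more $x$-derivative, one fewer $v$-derivative, hence the exponent $\sigma_{N,|\beta|-1}$), gets absorbed into the dissipation $\mathcal{D}^{N,|\beta|-1}_{\ell_1}(t)$ appearing in $\mathbb{D}^{(N)}_{\ell_1}(t)$: after multiplication by $(1+t)^{-\sigma_{N,|\beta|}}$ one has $(1+t)^{-\sigma_{N,|\beta|}+\frac{1+\epsilon_0}{2}}\leq (1+t)^{-\sigma_{N,|\beta|-1}}$ provided $\epsilon_0\leq\vartheta$, which is guaranteed by the hypotheses ($0<\epsilon_0\ll 1$). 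Similarly the $|\beta|=0$ case feeds its transport loss into $\mathcal{D}^{N,0}_{\ell_1}$ via the same telescoping.

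The next step is to handle the time-derivative of the weight. Differentiating $\widetilde{w}_{\ell_1}(\alpha,\beta)=\langle v\rangle^{\ell_1-|\alpha|-\frac12|\beta|}e^{q\langle v\rangle^2/(1+t)^\vartheta}$ in $t$ produces the good term $-\frac{q\vartheta}{(1+t)^{1+\vartheta}}\|\langle v\rangle\widetilde{w}_{\ell_1}(\alpha,\beta)\partial^\alpha_\beta\{\mathbf{I}-\mathbf{P}\}f_\varepsilon\|^2$, which is exactly the second piece of $\mathbb{D}^{(N)}_{\ell_1}(t)$ (modulo the $\varepsilon^2$ factor), while differentiating the prefactor $(1+t)^{-\sigma_{N,|\beta|}}$ produces $-\sigma_{N,|\beta|}(1+t)^{-\sigma_{N,|\beta|}-1}\|\widetilde{w}_{\ell_1}\partial^\alpha_\beta\{\mathbf{I}-\mathbf{P}\}f_\varepsilon\|^2$, which has a favorable sign and can simply be discarded. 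The coercivity of $\mathscr{L}$ supplies $\frac{1}{\varepsilon^2}\|\widetilde{w}_{\ell_1}\partial^\alpha_\beta\{\mathbf{I}-\mathbf{P}\}f_\varepsilon\|^2_\nu$ up to the correction $\frac{1}{\varepsilon^2}\|\partial^\alpha\{\mathbf{I}-\mathbf{P}\}f_\varepsilon\|^2_\nu$ (no-weight) which is bounded by $\mathcal{D}_N(t)$; after multiplying \eqref{lemma3.8-1-cut} by $\varepsilon^2$ the dangerous $\frac{1}{\varepsilon^2}\|\nabla^N_x\mathbf{P}f_\varepsilon\|^2$ becomes $\|\nabla^N_x\mathbf{P}f_\varepsilon\|^2\lesssim\mathcal{D}_N(t)$, which is the whole reason the $\varepsilon^2$-weighted quantity $\varepsilon^2\mathbb{E}^{(N)}_{\ell_1}$ is used rather than the bare one.

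It then remains to collect the electromagnetic source terms. The linear term $\varepsilon^2\|\partial^\alpha E_\varepsilon\|\|M^\delta\partial^\alpha f_\varepsilon\|$ from \eqref{lemma3.8-1-cut} is split by Young's inequality as $\eta\varepsilon^2(1+t)^{-2\sigma_{N,0}}\|\nabla^N_x E_\varepsilon\|^2+C_\eta\varepsilon^2(1+t)^{2\sigma_{N,0}}\|M^\delta\nabla^N_x f_\varepsilon\|^2$; since $M^\delta$ decays super-polynomially in $v$, the $v$-weight in $\widetilde{w}_{\ell_1}$ dominates and with $\tilde\ell\geq\frac32\sigma_{N-1,N-1}$, $\overline\ell_0\geq\ell_1+\frac32N$ one bounds the second piece by $\mathcal{M}_1(1+t)^{-1^+}\mathcal{E}_N(t)$ using {\bf Assumption I} (which gives the decay $\mathcal{E}_{k\to N_0}(t)\lesssim(1+t)^{-k-\varrho}$, $\varrho\geq\frac12$, so $\|\nabla^N_xf_\varepsilon\|\cdot\|\cdot\|$ against a decaying multiplier is controllable). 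The $B$- and $E$-nonlinear terms of the form $\sum_{1\leq|\alpha_1|\leq 2}(1+t)^{1+\vartheta}\|\partial^{\alpha_1}[E_\varepsilon,B_\varepsilon]\|^2_{L^\infty_x}(1+t)^{\sigma_{\cdots}}\mathcal{D}^{\cdots}_{\ell_1}(t)$ are where {\bf Assumption I} is used decisively: by Sobolev embedding and the assumed decay rates one has $(1+t)^{1+\vartheta}\|\partial^{\alpha_1}[E_\varepsilon,B_\varepsilon]\|^2_{L^\infty_x}\lesssim\mathcal{M}_1$ (using $1+\vartheta<2+\varrho$ type inequalities, valid since $\vartheta\leq\frac23\varrho$), and likewise the medium-order terms with $3\leq|\alpha_1|\leq N_0+1$ give factors bounded by $\mathcal{M}_1$, so all of these collapse into $\mathcal{M}_1\big(\sum_{N_0+1\leq n\leq N-1}\mathbb{D}^{(n)}_{\ell_1}(t)+\varepsilon^2\mathbb{D}^{(N)}_{\ell_1}(t)\big)$. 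The remaining genuinely nonlinear pieces $\mathcal{E}_N(t)\mathcal{E}_{1\to N_0-1,\overline\ell_0}(t)$ and $\mathcal{E}_{N-1,l_1}(t)\sum(1+t)^{\sigma}\mathcal{D}^\cdots_{\ell_1}(t)$ are absorbed the same way, bounding one factor by $\mathcal{M}_1$. \textbf{The main obstacle} is the bookkeeping of the time weights: one must verify that for \emph{every} pairing of a source term's intrinsic growth $(1+t)^{a}$ with a dissipation's exponent $\sigma_{n,j}$, the net exponent after multiplying by the prefactor $(1+t)^{-\sigma_{N,|\beta|}}$ is $\leq$ the exponent of some term actually present in $\mathbb{D}^{(N)}_{\ell_1}$, $\mathbb{D}^{(n)}_{\ell_1}$ ($N_0+1\leq n\leq N-1$), or $\mathcal{D}_N$; this is a finite but delicate chase through the index ranges $|\beta|$, $|\alpha_1|$, and the relations \eqref{sigma-N-Assump}, and it is the sole place where the quantitative smallness of $\epsilon_0$ relative to $\vartheta$ and of $\vartheta$ relative to $\varrho$ must be invoked. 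Once this is checked, summing over $|\alpha|+|\beta|=N$ and relabelling gives exactly \eqref{large-N}.
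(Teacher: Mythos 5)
Your proposal follows essentially the same route as the paper's proof: multiplying the two top-order weighted lemmas by $\varepsilon^2$ to convert $\frac{1}{\varepsilon^2}\|\nabla^N_x\mathbf{P}f_\varepsilon\|^2$ into a term controlled by $\mathcal{D}_N(t)$, then multiplying by $(1+t)^{-\sigma_{N,|\beta|}}$ and exploiting the telescoping relation \eqref{sigma-N-Assump} so that the transport loss at level $|\beta|$ is absorbed by the dissipation at level $|\beta|-1$, with the favorable sign of the differentiated prefactor discarded, Young's inequality on the linear $E_\varepsilon$ term, and \textbf{Assumption I} collapsing the electromagnetic nonlinearities into $\mathcal{M}_1$ times the stated dissipations. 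The bookkeeping you identify as the main obstacle (including the need for $\epsilon_0$ small relative to $\vartheta$) is exactly what the paper's proof carries out.
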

\begin{proof}
	To control the singularty term
	\[\frac1{\varepsilon^2}\|\nabla^N_x{\bf P}f_{\varepsilon}\|^2\]
	on the righ-hand side of \eqref{lemma3.8-1-cut}, we firstly multiply $\varepsilon^2$ into \eqref{lemma3.8-1-cut}, then one has
		\begin{eqnarray}\label{N-cut-0}
		&&\varepsilon^2\frac{\d}{\d t}\left\|w_\ell(\alpha,0)\partial^\alpha f_{\varepsilon}\right\|^2
		+\left\|w_\ell(\alpha,0)\partial^\alpha f_{\varepsilon}\right\|^2_{\nu}+\frac{\vartheta q\varepsilon^2}{(1+t)^{1+\vartheta}}\left\|\langle v\rangle w_\ell(\alpha,0)\partial^\alpha f_{\varepsilon}\right\|^2\nonumber\\
		&\lesssim&\|\nabla^N_x{\bf P}f_{\varepsilon}\|^2+\mathcal{D}_N(t)+
	\left\|\partial^\alpha E_\varepsilon\right\|\left\|{M}^\delta\partial^\alpha f_{\varepsilon}\right\|\nonumber\\
	&&+{\varepsilon^2}\sum_{1\leq|\alpha_1|\leq 2}(1+t)^{1+\vartheta}\|\partial^{\alpha_1}B_\varepsilon\|^2_{L^\infty_x}(1+t)^{\sigma_{N+1-|\alpha_1|,1}}\mathcal{D}^{N+1-|\alpha_1|,1}_{\ell_1}(t)\nonumber\\
	&&+{\varepsilon^2}\sum_{3\leq|\alpha_1|\leq N_0+1}\|\partial^{\alpha_1}B_\varepsilon\|^2\sum_{N_0+1\leq n\leq N-1}(1+t)^{\sigma_{n,1}}\mathcal{D}^{n,1}_{\ell_1}(t)+\mathcal{E}_N(t)\mathcal{E}^{1\rightarrow N_0-1}_{\overline{\ell}_0}(t)\nonumber\\
	&\lesssim&\mathcal{D}_N(t)+
	\left\|\partial^\alpha E_\varepsilon\right\|\left\|{M}^\delta\partial^\alpha f_{\varepsilon}\right\|\nonumber\\
	&&+{\varepsilon^2}\sum_{1\leq|\alpha_1|\leq 2}(1+t)^{1+\vartheta}\|\partial^{\alpha_1}B_\varepsilon\|^2_{L^\infty_x}(1+t)^{\sigma_{N+1-|\alpha_1|,1}}\mathcal{D}^{N+1-|\alpha_1|,1}_{\ell_1}(t)\nonumber\\
	&&+{\varepsilon^2}\sum_{3\leq|\alpha_1|\leq N_0+1}\|\partial^{\alpha_1}B_\varepsilon\|^2\sum_{N_0+1\leq n\leq N-1}(1+t)^{\sigma_{n,1}}\mathcal{D}^{n,1}_{\ell_1}(t)+\mathcal{E}_N(t)\mathcal{E}_{1\rightarrow N_0-1,\overline{\ell}_0}(t).
	\end{eqnarray}

	When $|\alpha|=N-1,\ |\beta|=1$, we hope that the term
	\[\frac1\varepsilon(1+t)^{\frac{1+\epsilon_0}2}\left\|\langle v\rangle^{\frac14}\widetilde{w}_{\ell_1}(\alpha+e_i,\beta-e_i)\partial^{\alpha+e_i}_{\beta-e_i}\{{\bf I-P}\}f_{\varepsilon}\right\|^2\]
	on the right-hand side of \eqref{N-micro-w-cut-1} can be dominated by the corresponding  dissipation terms on the left-hand side of \eqref{N-cut-0}, to do so, we multiply $\varepsilon^2$ into \eqref{N-micro-w-cut-1} to get
	\begin{eqnarray}\label{N-cut-1}
		&&{\varepsilon^2}\frac{\d}{\d t}\left\|\widetilde{w}_{\ell_1}(\alpha,\beta)\partial^\alpha_\beta \{{\bf I-P}\}f_{\varepsilon}\right\|^2+\left\|\widetilde{w}_{\ell_1}(\alpha,\beta)\partial^\alpha_\beta \{{\bf I-P}\}f_{\varepsilon}\right\|^2_\nu\nonumber\\
		&&+\frac{q\vartheta{\varepsilon^2}}{(1+t)^{1+\vartheta}}\|\widetilde{w}_{\ell_1}(\alpha,\beta)\partial^\alpha_\beta \{{\bf I-P}\}f_{\varepsilon}\langle v\rangle_\varepsilon\|^2\nonumber\\
		&\lesssim&\varepsilon(1+t)^{\frac{1+\epsilon_0}2}\left\|\langle v\rangle^{\frac14}\widetilde{w}_{\ell_1}(\alpha+e_i,\beta-e_i)\partial^{\alpha+e_i}_{\beta-e_i}\{{\bf I-P}\}f_{\varepsilon}\right\|^2\nonumber\\
	&&+\varepsilon^2\sum_{1\leq|\alpha_1|\leq 2}(1+t)^{1+\vartheta}\|\partial^{\alpha_1}B_\varepsilon\|^2_{L^\infty_x}(1+t)^{\sigma_{N+1-|\alpha_1|,|\beta|+1}}\mathcal{D}^{N+1-|\alpha_1|,|\beta|+1}_{\ell_1}(t)\nonumber\\
	&&+\varepsilon^2\sum_{3\leq|\alpha_1|\leq N_0+1}\|\partial^{\alpha_1}B_\varepsilon\|^2\sum_{N_0+1\leq n\leq N-1}(1+t)^{\sigma_{n,|\beta|+1}}\mathcal{D}^{n,|\beta|+1}_{\ell_1}(t)\nonumber\\
	&&+\mathcal{D}_{N}(t)+\mathcal{E}_N(t)\mathcal{E}_{1\rightarrow N_0-1,\overline{\ell}_0}(t).
	\end{eqnarray}	
	Furthermore, to control the time increasing rates $(1+t)^{\frac{1+\vartheta}2}$ for the first term on the right-hand side of \eqref{N-cut-1}, we multiply $(1+t)^{-\sigma_{|\alpha|+|\beta|,|\beta|}}$ into  \eqref{N-cut-1} and \eqref{N-cut-0}, and take a proper linear combination of the result equalities, then we have
	\begin{eqnarray}
		&&{\varepsilon^2}\frac{\d}{\d t}\left\{\sum_{|\alpha|=N}(1+t)^{-\sigma_{N,0}}\left\|\widetilde{w}_{\ell_1}(\alpha,0)\partial^\alpha f_{\varepsilon}\right\|^2
		+\sum_{|\alpha|+|\beta|= N,\atop|\beta|\geq 1}(1+t)^{-\sigma_{N,|\beta|}}\left\|\widetilde{w}_{\ell_1}(\alpha,\beta)\partial^\alpha_\beta \{{\bf I-P}\}f_{\varepsilon}\right\|^2\right\}\nonumber\\
		&&+\sum_{|\alpha|=N}(1+t)^{-\sigma_{N,0}}\left\{\left\|\widetilde{w}_{\ell_1}(\alpha,0)\partial^\alpha f_{\varepsilon}\right\|^2_{\nu}+\frac{\vartheta q{\varepsilon^2}}{(1+t)^{1+\vartheta}}\sum_{|\alpha|=N}\left\|\langle v\rangle \widetilde{w}_{\ell_1}(\alpha,0)\partial^\alpha f_{\varepsilon}\right\|^2\right\}\nonumber\\
		&&+\sum_{|\alpha|+|\beta|= N,\atop|\beta|\geq1}(1+t)^{-\sigma_{N,|\beta|}}\left\{\left\|\widetilde{w}_{\ell_1}(\alpha,\beta)\partial^\alpha_\beta \{{\bf I-P}\}f_{\varepsilon}\right\|^2_\nu+\frac{q\vartheta{\varepsilon^2}}{(1+t)^{1+\vartheta}}\|\widetilde{w}_{\ell_1}(\alpha,\beta)\partial^\alpha_\beta \{{\bf I-P}\}f_{\varepsilon}\langle v\rangle_\varepsilon\|^2\right\}\nonumber\\
		&\lesssim&{\varepsilon^2}(1+t)^{-\sigma_{N,0}}\left\|\nabla^N_x E_\varepsilon\right\|\left\|{M}^\delta\nabla^N_x f_{\varepsilon}\right\|+\mathcal{D}_N(t)\nonumber\\	 &&+\mathcal{M}_1\left\{(1+t)^{-(1+\epsilon_0)}\mathcal{E}_N(t)+	 \sum_{N_0+1\leq n\leq N-1}\mathbb{D}^{(n)}_{\ell_1}(t)+\varepsilon^2\mathbb{D}^{(N)}_{\ell_1}(t)\right\}\nonumber\\
		&\lesssim&\eta{\varepsilon^2}(1+t)^{-2\sigma_{N,0}}\left\|\nabla^N_x E_\varepsilon\right\|^2+\mathcal{D}_N(t)\nonumber\\	
		&&+\mathcal{M}_1\left\{(1+t)^{-(1+\epsilon_0)}\mathcal{E}_N(t)+	\sum_{N_0+1\leq n\leq N-1}\mathbb{D}^{(n)}_{\ell_1}(t)+\varepsilon^2\mathbb{D}^{(N)}_{\ell_1}(t)\right\}.
	\end{eqnarray}
	where we use \eqref{sigma-N-Assump}.
\end{proof}

\begin{proposition}\label{sub-highest}
For $N_0+1\leq n\leq N-1$, it holds that
	\begin{eqnarray}\label{large-N-1}
		&&\frac{\d}{\d t}\sum_{N_0+1\leq n\leq N-1}\mathbb{E}_{\ell_1}^{(n)}(t)+\sum_{N_0+1\leq n\leq N-1}\mathbb{D}_{\ell_1}^{(n)}(t)\nonumber\\
		&\lesssim&\mathcal{D}_{N}(t)+\mathcal{M}_1\left\{(1+t)^{-(1+\epsilon_0)}\mathcal{E}_N(t)+	\sum_{N_0+1\leq n\leq N_0}\mathbb{D}^{(n)}_{\ell_1}(t)\right\}.
	\end{eqnarray}
	
\end{proposition}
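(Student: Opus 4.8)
The plan is to mirror the highest-order estimates of Lemma~\ref{lemma3.8-cut}, Lemma~\ref{N-micro-w-cut} and Proposition~\ref{Highest}, but now run them at the intermediate orders $N_0+1\le n\le N-1$, where a decisive simplification occurs: since for $n\le N-1$ we never need a full (un-projected) energy identity, the macroscopic singularity $\tfrac1{\varepsilon^2}\|\partial^\alpha{\bf P}f_\varepsilon\|^2$ — which in Proposition~\ref{Highest} forced the multiplication by $\varepsilon^2$ — simply does not appear, and the natural $\tfrac1{\varepsilon^2}$-dissipation is stronger than the $\|\cdot\|_\nu$ bilinear form recorded in $\mathbb D^{(n)}_{\ell_1}$. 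Concretely, for each pair $(\alpha,\beta)$ with $|\alpha|+|\beta|=n$ I would apply $\partial^\alpha_\beta$ to the microscopic projection equation~\eqref{I-P-cut}, pair it in $L^2_{x,v}$ with $\widetilde w_{\ell_1}^2(\alpha,\beta)\partial^\alpha_\beta\{{\bf I-P}\}f_\varepsilon$, and invoke the coercivity of $\mathscr L$ to produce $\tfrac1{\varepsilon^2}\|\widetilde w_{\ell_1}(\alpha,\beta)\partial^\alpha_\beta\{{\bf I-P}\}f_\varepsilon\|_\nu^2$ together with the gain $\tfrac{q\vartheta}{(1+t)^{1+\vartheta}}\|\langle v\rangle\widetilde w_{\ell_1}(\alpha,\beta)\partial^\alpha_\beta\{{\bf I-P}\}f_\varepsilon\|^2$ coming from $\partial_t$ hitting the exponential factor of the weight; the coercivity error $\tfrac1{\varepsilon^2}\|\partial^\alpha\{{\bf I-P}\}f_\varepsilon\|_\nu^2$ and the macro-remainder $\tfrac1\varepsilon[{\bf P}(v\cdot\nabla_x f_\varepsilon)-v\cdot\nabla_x{\bf P}f_\varepsilon]$ are absorbed into $\mathcal D_N(t)$.

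For the right-hand side I would reuse, essentially verbatim, the term-by-term bounds already performed in Lemmas~\ref{lemma3.8-cut}--\ref{N-micro-w-cut}, with the highest-order $\ell_1$-weighted quantities replaced by their $n$-th order counterparts: the transport term $\tfrac1\varepsilon v\cdot\nabla_x\{{\bf I-P}\}f_\varepsilon$ is rewritten by trading one $v$-derivative for an $x$-derivative through $\widetilde w_{\ell_1}(\alpha,\beta)=\langle v\rangle^{1/2}\widetilde w_{\ell_1}(\alpha+e_i,\beta-e_i)$, splitting the surplus $\langle v\rangle^{1/2}$ symmetrically and inserting the factors $(1+t)^{\pm\frac{1+\epsilon_0}{2}}$; the magnetic nonlinearity $\tfrac1\varepsilon v\times B_\varepsilon\cdot\nabla_v\{{\bf I-P}\}f_\varepsilon$ is expanded by Leibniz, the top term vanishing because $v\times B_\varepsilon\perp v$ (so it annihilates both $\nabla_v|\partial^\alpha_\beta\{{\bf I-P}\}f_\varepsilon|^2$ and $\nabla_v\widetilde w_{\ell_1}^2$), the remaining pieces carrying at least one $x$-derivative of $B_\varepsilon$ which is put in $L^\infty_x$ with the decay rate furnished by {\bf Assumption I} and whose velocity powers are reshuffled via the weight identities; the electric-field contributions $-\tfrac12 q_0(E_\varepsilon\cdot v)f_\varepsilon$, $q_0E_\varepsilon\cdot\nabla_v f_\varepsilon$, the source $\tfrac1\varepsilon E_\varepsilon\cdot vM^{1/2}q_1$, and the nonlinear collision term $\tfrac1\varepsilon\mathscr T(f_\varepsilon,f_\varepsilon)$ are treated exactly as before. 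Every lower-order remnant — high $x$-derivatives of $E_\varepsilon,B_\varepsilon$ landing on $\{{\bf I-P}\}f_\varepsilon$ at order $\le N_0$, and the cubic pieces from the $\mathscr T$-estimate — is collected into $\mathcal M_1\sum_{n\le N_0}\mathbb D^{(n)}_{\ell_1}(t)$, and those falling on quantities at order $\le N$ without velocity loss into $\mathcal D_N(t)$ or into $\mathcal M_1(1+t)^{-(1+\epsilon_0)}\mathcal E_N(t)$.

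Once the fixed-$(\alpha,\beta)$ differential inequality is in hand, I would multiply by $(1+t)^{-\sigma_{n,|\beta|}}$, sum over $|\beta|\le j$ and over $j$ to assemble $\mathbb E^{(n)}_{\ell_1}$ and $\mathbb D^{(n)}_{\ell_1}$, and finally sum over $N_0+1\le n\le N-1$. The entire point of the prescribed increments $\sigma_{n,0}=0$, $\sigma_{n,j+1}-\sigma_{n,j}=\tfrac{1+\epsilon_0}{2}$ is that the growing factor $(1+t)^{\frac{1+\epsilon_0}{2}}$ attached to the $(\beta-e_i)$-piece of the transport term, after multiplication by $(1+t)^{-\sigma_{n,|\beta|}}$, turns into $(1+t)^{-\sigma_{n,|\beta|-1}}$ times a dissipative quantity and so is reabsorbed into the lower-$|\beta|$ slot of $\mathbb D^{(n)}_{\ell_1}$, while the $(\alpha+e_i,\beta-e_i)$-piece remains at order $n$. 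The main obstacle is precisely this bookkeeping: one must verify that the $\langle v\rangle^{1/4}$-excess left over from the weight identity is recovered by interpolating between the $\nu$-dissipation $\|\cdot\|_\nu=\|\langle v\rangle^{\gamma/2}\cdot\|$ and the time-dissipation $\propto(1+t)^{-1-\vartheta}\|\langle v\rangle\cdot\|^2$ — this is exactly where $\gamma\ge-1$ enters — and that, after accounting for the $\tfrac1\varepsilon$ singular prefactor and the $(1+t)^{\pm\frac{1+\epsilon_0}{2}}$ factors, the resulting time and $\varepsilon$ powers never exceed those built into $\mathbb D^{(n)}_{\ell_1}(t)$, $\mathcal D_N(t)$ and the $\mathcal M_1$-small terms; the remaining estimates are routine repetitions of the highest-order arguments.
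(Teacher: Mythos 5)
Your proposal is correct and follows essentially the same route the paper intends: the paper omits this proof entirely, stating only that it is proved "by a similar way as Proposition \ref{Highest}", i.e.\ by rerunning the weighted estimates of Lemmas \ref{lemma3.8-cut}--\ref{N-micro-w-cut} at orders $N_0+1\le n\le N-1$ with the time factors $(1+t)^{-\sigma_{n,|\beta|}}$. Your reconstruction captures the one genuine structural difference — that at these orders only the microscopic equation \eqref{I-P-cut} is used, so the singular term $\tfrac1{\varepsilon^2}\|\partial^\alpha{\bf P}f_\varepsilon\|^2$ never appears and no multiplication by $\varepsilon^2$ is required — as well as the correct bookkeeping of the $\sigma_{n,j}$ increments and the role of $\gamma\ge-1$.
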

\begin{proof}
	This propositon can be proved by a similar way as Proposition \ref {Highest}, we omit its proof for brevity.
\end{proof}
When $n\leq N_0$, one also has
\begin{proposition}\label{mix-N-0-w}
For any $\ell_0\geq N_0$, it holds that
	\begin{eqnarray}\label{small-N_0}
		&&\frac{\d}{\d t}\sum_{n\leq N_0}\mathbb{E}_{\ell_0}^{(n)}(t)+\sum_{n\leq N_0}\mathbb{D}_{\ell_0}^{(n)}(t)
	\lesssim\mathcal{D}_{N_0+1}(t)+\mathcal{M}_1\sum_{ n\leq N_0}\mathbb{D}^{(n)}_{\ell_0}(t).
	\end{eqnarray}
	
\end{proposition}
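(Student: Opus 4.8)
\textbf{Proof plan for Proposition \ref{mix-N-0-w}.}
The plan is to run, for each pair $(n,j)$ with $n\leq N_0$ and $j\leq n$, the weighted microscopic energy estimate obtained by applying $\partial^\alpha_\beta$ (with $|\alpha|+|\beta|=n$, $|\beta|\leq j$) to the microscopic projection equation \eqref{I-P-cut}, multiplying by $\widetilde{w}^2_{\ell_0}(\alpha,\beta)\partial^\alpha_\beta\{{\bf I-P}\}f_{\varepsilon}$, integrating over $\mathbb{R}^3_x\times\mathbb{R}^3_v$, and then weighting the resulting inequality by $(1+t)^{-\sigma_{n,j}}$. This is the exact analogue of Lemma \ref{N-micro-w-cut} but now at low order $n\leq N_0$, so the macroscopic dissipation $\mathcal{D}_{N_0+1}(t)$ is available to absorb terms such as $\|\partial^\alpha E_\varepsilon\|^2$, $\|\nabla^{|\alpha|+1}f_\varepsilon\|_\nu^2$, and $\frac1{\varepsilon^2}\|\partial^\alpha\{{\bf I-P}\}f_\varepsilon\|_\nu^2$ that drop out of the coercivity of $\mathscr{L}$, the Maxwellian source term $E_\varepsilon\cdot v M^{1/2}q_1$, and the commutator $[{\bf P},v\cdot\nabla_x]f_\varepsilon$. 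The coercivity estimate on $\mathscr{L}$ (as used in Lemma \ref{N-micro-w-cut}) gives the positive term $\frac1{\varepsilon^2}\|\widetilde{w}_{\ell_0}(\alpha,\beta)\partial^\alpha_\beta\{{\bf I-P}\}f_\varepsilon\|_\nu^2$ on the left, and the extra weight dissipation $\frac{q\vartheta}{(1+t)^{1+\vartheta}}\|\langle v\rangle\widetilde{w}_{\ell_0}(\alpha,\beta)\partial^\alpha_\beta\{{\bf I-P}\}f_\varepsilon\|^2$ coming from the time derivative hitting $e^{q\langle v\rangle^2/(1+t)^\vartheta}$; together with the prefactor $(1+t)^{-\sigma_{n,j}}$ these assemble into $\mathbb{D}^{(n)}_{\ell_0}(t)$ after summing over $j\leq n$.

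First I would handle the transport term $\frac1\varepsilon(\partial^\alpha_\beta[v\cdot\nabla_x\{{\bf I-P}\}f_\varepsilon],\widetilde{w}^2_{\ell_0}(\alpha,\beta)\partial^\alpha_\beta\{{\bf I-P}\}f_\varepsilon)$ exactly as in \eqref{hard-2-cut}: integrating by parts in $x$, using $\widetilde{w}_{\ell_0}(\alpha,\beta)=\langle v\rangle^{\frac12}\widetilde{w}_{\ell_0}(\alpha+e_i,\beta-e_i)$, and applying Young's inequality with a split $(1+t)^{\pm(1+\epsilon_0)/2}$, so that the ``bad'' half becomes $\frac1\varepsilon(1+t)^{\frac{1+\epsilon_0}2}\|\langle v\rangle^{\frac14}\widetilde{w}_{\ell_0}(\alpha+e_i,\beta-e_i)\partial^{\alpha+e_i}_{\beta-e_i}\{{\bf I-P}\}f_\varepsilon\|^2$. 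Since $\sigma_{n,j+1}-\sigma_{n,j}=\frac{1+\epsilon_0}{2}$ by hypothesis, multiplying by $(1+t)^{-\sigma_{n,j+1}}$ turns this into a term controlled by $\mathcal{D}^{n,j+1}_{\ell_0}$ appearing on the left at the next level of $|\beta|$; a standard telescoping linear combination over $j$ (with decreasing coefficients) then absorbs it. Note $\langle v\rangle^{\frac14}$ versus $\langle v\rangle^{\frac\gamma2}$: here the factor $\frac{q\vartheta}{(1+t)^{1+\vartheta}}\|\langle v\rangle\,\widetilde{w}_{\ell_0}\partial^\alpha_\beta\{{\bf I-P}\}f_\varepsilon\|^2$ and $\frac1{\varepsilon^2}\|\widetilde{w}_{\ell_0}\partial^\alpha_\beta\{{\bf I-P}\}f_\varepsilon\|_\nu^2$ interpolate to dominate $\frac1{\varepsilon^2}(1+t)^{-(1+\vartheta)}\|\langle v\rangle^{\frac14}\widetilde{w}_{\ell_0}\partial^\alpha_\beta\{{\bf I-P}\}f_\varepsilon\|^2$ precisely because $\gamma\geq-1$ (cf. the discussion around \eqref{hard-2-cut}), which is why the hypothesis $-1\leq\gamma<0$ is used.

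Next I would treat the nonlinear magnetic term $\frac1\varepsilon(\partial^\alpha_\beta[\{{\bf I-P}\}(q_0 v\times B_\varepsilon\cdot\nabla_v f_\varepsilon)],\widetilde{w}^2_{\ell_0}(\alpha,\beta)\partial^\alpha_\beta\{{\bf I-P}\}f_\varepsilon)$ and the companion electric terms, following the Leibniz split used in \eqref{B-nonlinear}. For the top derivative $|\alpha_1|=0$ hitting $B_\varepsilon$ one integrates by parts in $v$ so the odd factor vanishes against the weight's $v$-growth; for $1\leq|\alpha_1|\leq$ small, one puts $\partial^{\alpha_1}B_\varepsilon$ (or $\partial^{\alpha_1}E_\varepsilon$) in $L^\infty_x$ via Sobolev embedding, uses the bound $\langle v\rangle\widetilde{w}_{\ell_0}(\alpha,\beta)=\langle v\rangle^{\frac12}\widetilde{w}_{\ell_0}(\alpha-e_i,\beta+e_i)\langle v\rangle^{\frac12}$ to trade one velocity power for one shift into a lower $|\alpha|$/higher $|\beta|$ slot, and absorbs $\|\partial^{\alpha_1}[E_\varepsilon,B_\varepsilon]\|_{L^\infty_x}$ either into $\mathcal{M}_1$ (for the genuinely nonlinear contribution, yielding $\mathcal{M}_1\sum_{n\leq N_0}\mathbb{D}^{(n)}_{\ell_0}(t)$) or into $\mathcal{D}_{N_0+1}(t)$ when the electromagnetic derivative is of low enough order that it is itself part of the macroscopic dissipation; for the remaining middle and high $|\alpha_1|$ one uses $L^3_x$–$L^6_x$ or $L^6_x$–$L^3_x$ Sobolev pairs exactly as in the proof of Lemma \ref{lemma3.8-cut}. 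The streaming remainder $\frac1\varepsilon\partial^\alpha_\beta[{\bf P}(v\cdot\nabla_x f_\varepsilon)-\varepsilon^{-1}v\cdot\nabla_x{\bf P}f_\varepsilon]$ and the Maxwell source are bounded by $\mathcal{D}_{N_0+1}(t)$ with an $\eta/\varepsilon^2$ micro-remainder exactly as in Lemma \ref{N-micro-w-cut}. The nonlinear collision term $\frac1\varepsilon(\partial^\alpha_\beta\mathscr{T}(f_\varepsilon,f_\varepsilon),\widetilde{w}^2_{\ell_0}(\alpha,\beta)\partial^\alpha_\beta\{{\bf I-P}\}f_\varepsilon)$ is handled by the weighted trilinear estimates from the Appendix, producing $\mathcal{M}_1\mathbb{D}^{(n)}_{\ell_0}(t)+\frac\eta{\varepsilon^2}\|\widetilde{w}_{\ell_0}\partial^\alpha_\beta\{{\bf I-P}\}f_\varepsilon\|_\nu^2$. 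Finally, summing the $(1+t)^{-\sigma_{n,j}}$-weighted inequalities over all $n\leq N_0$ and all $j\leq n$ with suitably decreasing positive coefficients, choosing $\eta$ small, and invoking {\bf Assumption I} to make $\mathcal{M}_1$ absorbable, yields \eqref{small-N_0}.

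\textbf{Main obstacle.} The delicate point, just as in the higher-order analogues \eqref{hard-3-cut}–\eqref{hard-4-cut}, is the bookkeeping of the time weights: the transport term forces a loss of $(1+t)^{(1+\epsilon_0)/2}$ each time one velocity derivative is exchanged for a spatial one, so one must verify that the chain of exponents $\sigma_{n,j}$ defined by $\sigma_{n,j+1}-\sigma_{n,j}=\frac{1+\epsilon_0}2$ (with $\sigma_{n,0}=0$ for $n\leq N-1$) closes the telescoping sum over $j\leq n$ without generating any net positive power of $(1+t)$ that could not be absorbed by $\frac{q\vartheta}{(1+t)^{1+\vartheta}}$-type dissipation — and simultaneously that the weight index $\ell_0$ is large enough ($\ell_0\geq N_0$, in fact $\ell_0\geq l_0+\tilde\ell+\frac12$ from the global hypotheses) that every velocity-power trade in the magnetic term lands inside the weight budget. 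Once the exponent arithmetic is checked, the rest is a routine replay of Lemma \ref{lemma3.8-cut} and Lemma \ref{N-micro-w-cut} at lower order, which is why the detailed computation can be omitted.
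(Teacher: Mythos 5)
Your proposal follows exactly the route the paper intends: the paper states Proposition \ref{mix-N-0-w} without proof, relying on the reader to replay the weighted microscopic estimates of Lemmas \ref{lemma3.8-cut} and \ref{N-micro-w-cut} and the telescoping-in-$\sigma_{n,j}$ combination of Propositions \ref{Highest} and \ref{sub-highest} at order $n\leq N_0$, which is precisely what you do, including the correct observation that at low order the commutator, Maxwell source, and macroscopic remainders are absorbed by $\mathcal{D}_{N_0+1}(t)$ rather than needing the $\varepsilon^2$ multiplication used at top order. The argument is correct and essentially identical in approach to the paper's.
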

\subsection{The temporal time decay rates for
	$\mathcal{E}_{k\rightarrow N_0}(t)$ and $\mathcal{E}_{1\rightarrow N_0-1,\overline{\ell}_0}(t)$}

To this end, we need
{\bf Assumption II:}
\[\sup_{0\leq t\leq T}\left\{\overline{\mathcal{E}}_{N_0-1,\bar{\ell}_0+\frac52}(t)+\overline{\mathcal{E}}_{N-1,l_1}(t)+\|\Lambda^{-\varrho}[f_\varepsilon,E_\varepsilon,B_\varepsilon]\|^2\right\}\leq \mathcal{M}_2,\]
where $\mathcal{M}_2$ is a sufficiently small positive constant.
\begin{lemma}\label{k-sum-cut} Under {\bf Assumption I} and {\bf Assumption II},  there exists suitably large constants $\bar{l}$, and take $l\geq\bar{l}$, $\widetilde{k}=\min\{k+1, N_0-1\}$ let $N_0\geq 4$, $N=2N_0$, one has the following estimates:
	\begin{itemize}
		\item[(i).] For $k=0,1,\cdots,N_0-1$, it holds that
		\begin{equation}\label{k-sum-cut-1}
			\begin{aligned}
				&\frac{\d}{\d t}\left(\left\|\nabla^kf_{\varepsilon}\right\|^2+\left\|\nabla^k[E_\varepsilon,B_\varepsilon]\right\|^2\right)+\frac1{\varepsilon^2}\left\|\nabla^k\{{\bf I-P}\}f_{\varepsilon}\right\|^2_{\nu}\\[2mm]
				\lesssim&\max\{\mathcal{M}_1,\mathcal{M}_2\}\left(\left\|\nabla^{\widetilde{k}}[E_\varepsilon,B_\varepsilon]\right\|^2+\left\|\nabla^{\widetilde{k}}{\bf P}f_{\varepsilon}\right\|^2\right)+\eta\left\|\nabla^{\widetilde{k}}f_{\varepsilon}\right\|_{\nu}^2.
			\end{aligned}
		\end{equation}
		\item[(iii).] For $k=N_0$, it follows that
		\begin{equation}\label{k-sum-cut-2}
			\begin{aligned}
				&\frac{\d}{\d t}\left(\left\|\nabla^{N_0}f_{\varepsilon}\right\|^2+\left\|\nabla^{N_0}[E_\varepsilon,B_\varepsilon]\right\|^2\right)
				+\frac1{\varepsilon^2}\left\|\nabla^{N_0}\{{\bf I-P}\}f_{\varepsilon}\right\|^2_{\nu}\\[2mm]
				\lesssim&\max\{\mathcal{M}_1,\mathcal{M}_2\}\left(\left\|\nabla^{N_0-1}[E_\varepsilon,B_\varepsilon]\right\|^2
				+\left\|\nabla^{N_0}f_{\varepsilon}\right\|_{\nu}^2\right)+\eta\left\|\nabla^{N_0}f_{\varepsilon}\right\|_{\nu}^2.
			\end{aligned}
		\end{equation}
		\item[(iii).] For $k=0,1,2\cdots,N_0-1$, there exist interactive energy functionals $G^k_f(t)$ satisfying
		\[
		G^k_{f_\varepsilon}(t)\lesssim \left\|\nabla^k[f_\varepsilon,E_\varepsilon,B_\varepsilon]\right\|^2+\left\|\nabla^{k+1}[f_\varepsilon,E_\varepsilon,B_\varepsilon]\right\|^2+\left\|\nabla^{k+2}E_\varepsilon\right\|^2
		\]
		such that
		\begin{eqnarray}\label{k-sum-cut-3}
			&&\frac{\d}{\d t}G^k_{f_\varepsilon}(t)+\left\|\nabla^k[E_\varepsilon,\rho_{\varepsilon}^+-\rho^-_{\varepsilon}]\right\|_{H^1_x}^2+\left\|\nabla^{k+1}[{\bf P}f_\varepsilon,B_\varepsilon])\right\|^2\nonumber\\
			&\lesssim&\max\{\mathcal{M}_1,\mathcal{M}_2\}\left(\left\|\nabla^{\widetilde{k}}[E_\varepsilon,B_\varepsilon]\right\|^2+\left\|\nabla^{\widetilde{k}}f_{\varepsilon}\right\|^2_{\nu}\right)
			+\frac1{{\varepsilon^2}}\left\|\nabla^k\{{\bf I-P}\}f_{\varepsilon}\right\|^2_{\nu}.
		\end{eqnarray}
	\end{itemize}
\end{lemma}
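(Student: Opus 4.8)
The plan is to run, for each multi-index $\alpha$ with $|\alpha|=k$, the standard $\nu$-weighted nonlinear energy method on the perturbed system \eqref{VMB-F-perturbative}, being careful that the singular factors $\tfrac1\varepsilon$ and $\tfrac1{\varepsilon^2}$ stay controlled uniformly in $\varepsilon$ and that every nonlinear contribution is distributed so that the low-regularity factor carries one more $x$-derivative than $\partial^\alpha f_\varepsilon$. This last point is precisely what yields the index $\widetilde k=\min\{k+1,N_0-1\}$ on the right-hand sides and what later drives the interpolation argument for the temporal decay of $\mathcal{E}_{k\rightarrow N_0}(t)$.

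For (i) and (ii) I would apply $\partial^\alpha$ to the first equation of \eqref{VMB-F-perturbative}, pair it with $\partial^\alpha f_\varepsilon$ in $L^2_{x,v}$, and add the $\partial^\alpha$-derivative of the Maxwell part tested against $\partial^\alpha[E_\varepsilon,B_\varepsilon]$. As usual the two $O(1/\varepsilon)$ linear couplings --- the forcing $\tfrac1\varepsilon(E_\varepsilon\cdot v)\sqrt{M}q_1$ and the Amp\`ere source $-\tfrac1\varepsilon\int_{\R^3}f_\varepsilon q_1 v\sqrt{M}\,\d v$ --- cancel, and $\tfrac1\varepsilon v\cdot\nabla_x\partial^\alpha f_\varepsilon$ integrates to zero against $\partial^\alpha f_\varepsilon$ by antisymmetry in $x$. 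The coercivity estimate \eqref{L-cut-1} for $\mathscr L$ then produces the dissipation $\tfrac1{\varepsilon^2}\|\partial^\alpha\{{\bf I-P}\}f_\varepsilon\|_\nu^2$, and it remains to bound the nonlinearities $(\partial^\alpha(\tfrac{q_0}2E_\varepsilon\cdot vf_\varepsilon),\partial^\alpha f_\varepsilon)$, $(\partial^\alpha(q_0E_\varepsilon\cdot\nabla_v f_\varepsilon),\partial^\alpha f_\varepsilon)$, $\tfrac1\varepsilon(\partial^\alpha(q_0(v\times B_\varepsilon)\cdot\nabla_v f_\varepsilon),\partial^\alpha f_\varepsilon)$ and $\tfrac1\varepsilon(\partial^\alpha\mathscr T(f_\varepsilon,f_\varepsilon),\partial^\alpha f_\varepsilon)$, exactly as in the argument leading to \eqref{prof-1-cut} but with one extra $x$-derivative tracked. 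For the first two I would use the macro--micro decomposition \eqref{macro-micro}, integration by parts in $v$ (which kills $(E_\varepsilon\cdot\nabla_v\partial^\alpha f_\varepsilon,\partial^\alpha f_\varepsilon)=0$) and $L^2$--$L^3$--$L^6$ / $L^2$--$L^\infty$--$L^2$ Sobolev embeddings, the smallness prefactor $\max\{\mathcal M_1,\mathcal M_2\}$ coming from {\bf Assumption I}/{\bf Assumption II}. For the magnetic term the pure-macroscopic piece $\tfrac1\varepsilon(\partial^\alpha((v\times B_\varepsilon)\cdot\nabla_v{\bf P}f_\varepsilon),\partial^\alpha{\bf P}f_\varepsilon)$ vanishes by the kernel structure of ${\bf P}$ and oddness in $v$, the full-derivative pure-microscopic piece vanishes by integration by parts in $v$ (since $\div_v(v\times B_\varepsilon)=0$), and the remaining pieces carry at least one $x$-derivative on $B_\varepsilon$ and get absorbed into $\eta\tfrac1{\varepsilon^2}\|\nabla^{\widetilde k}\{{\bf I-P}\}f_\varepsilon\|_\nu^2$ plus $\max\{\mathcal M_1,\mathcal M_2\}$ times macroscopic/dissipative quantities; here the hypothesis $\gamma\ge-1$ is essential, so that the single power of $\langle v\rangle$ produced by $v\times B_\varepsilon$ can be balanced against the weak $\langle v\rangle^{\gamma/2}$ of the $\nu$-norm. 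The trilinear term is handled by the cutoff bilinear estimate for $\mathscr T$. The borderline case $|\alpha|=N_0$ forbids gaining a derivative beyond $N_0$, which is exactly why (ii) has $\|\nabla^{N_0}f_\varepsilon\|_\nu$ on the right and why the dichotomy is stated as $\widetilde k=\min\{k+1,N_0-1\}$.

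For (iii) I would take the $k$-th order localisation of Lemma \ref{mac-dissipation}: projecting \eqref{VMB-F-perturbative} onto $\mathcal N$ produces a perturbed fluid system for $(\rho_\varepsilon^\pm,u_\varepsilon,\theta_\varepsilon,E_\varepsilon,B_\varepsilon)$ driven by $O(1/\varepsilon)$ moments of $\{{\bf I-P}\}f_\varepsilon$; testing the $\partial^\alpha$-derivatives of these equations against suitable cross-combinations of $\nabla_x\partial^\alpha[\rho_\varepsilon^\pm,u_\varepsilon,\theta_\varepsilon]$, and using the Amp\`ere and Gauss laws in the damped form coming from Ohm's law (as in \cite{Jiang-Luo-2022-Ann.PDE}), I would build an interactive functional $G^k_{f_\varepsilon}(t)$ of the stated size, gaining the macroscopic dissipation $\|\nabla^k[E_\varepsilon,\rho_\varepsilon^+-\rho_\varepsilon^-]\|_{H^1_x}^2+\|\nabla^{k+1}[{\bf P}f_\varepsilon,B_\varepsilon]\|^2$; the $\{{\bf I-P}\}f_\varepsilon$-contributions are moved to the right as a multiple of $\tfrac1{\varepsilon^2}\|\nabla^k\{{\bf I-P}\}f_\varepsilon\|_\nu^2$ (a couple of extra $x$-derivatives on $\{{\bf I-P}\}f_\varepsilon$ are really needed to close the damped-wave estimate for $B_\varepsilon$, as reflected by the $H^2_x$-norm in the analogous non-cutoff statement \eqref{k-sum-3}), and the nonlinear remainders take the form $\max\{\mathcal M_1,\mathcal M_2\}(\|\nabla^{\widetilde k}[E_\varepsilon,B_\varepsilon]\|^2+\|\nabla^{\widetilde k}f_\varepsilon\|_\nu^2)$. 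I expect the main obstacle to be the simultaneous control of the singular factor $\tfrac1\varepsilon$ in front of $(v\times B_\varepsilon)\cdot\nabla_v f_\varepsilon$ and of the velocity growth it carries, against the very weak soft-potential dissipation $\|\cdot\|_\nu$: without the two exact cancellations (the $\mathcal N$-kernel cancellation and the $v$-integration by parts) one cannot avoid a spurious factor of $\varepsilon^{-1}$, and without $\gamma\ge-1$ the single $\langle v\rangle$ generated by $v\times B_\varepsilon$ cannot be absorbed; the second, more bureaucratic, difficulty is keeping the derivative-counting uniform across all Leibniz terms --- verifying in each case that the factor with fewer $x$-derivatives lies in $L^\infty_x$ (or $L^3_x$/$L^6_x$ in the borderline cases), so that it can be pulled out with a small constant and the clean right-hand sides with index $\widetilde k=\min\{k+1,N_0-1\}$ emerge.
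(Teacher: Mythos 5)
Your outline of parts (i)--(ii) and of the macroscopic interactive functional in (iii) follows the same general route as the paper (which itself mostly defers to Lemmas 3.4--3.5 of \cite{Lei-Zhao-JFA-2014} and to the analogue of Lemma \ref{mac-dissipation}). However, there is a genuine gap at exactly the point the paper singles out as the ``distinct and crucial estimate'': the intermediate Leibniz terms
\begin{equation*}
\frac1\eps\int_{\R^3_x\times\R^3_v}\nabla^j_x[v\times B_\eps]\cdot\nabla_v\nabla^{k-j}_x\{{\bf I-P}\}f_\eps\,\nabla^k_x\{{\bf I-P}\}f_\eps\,\d x\,\d v,\qquad 1\leq j\leq k-1.
\end{equation*}
You claim these ``get absorbed into $\eta\eps^{-2}\|\nabla^{\widetilde k}\{{\bf I-P}\}f_\eps\|_\nu^2$ plus $\max\{\mathcal M_1,\mathcal M_2\}$ times macroscopic/dissipative quantities'' and that $\gamma\geq-1$ lets the single $\langle v\rangle$ be balanced against $\langle v\rangle^{\gamma/2}$. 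Neither assertion survives scrutiny in the cutoff setting. First, the factor $\nabla_v\nabla^{k-j}_x\{{\bf I-P}\}f_\eps$ carries a velocity derivative, and the cutoff dissipation norm $\|\cdot\|_\nu$ (unlike the non-cutoff $\|\cdot\|_D$) contains no velocity regularity, so this factor cannot be absorbed into the dissipation at all; it must be paid for out of the a priori weighted mixed-derivative functionals. Second, $\langle v\rangle\cdot\langle v\rangle^{-\gamma/2}=\langle v\rangle^{1-\gamma/2}$ is still unbounded for $\gamma\in[-1,0)$, so the weight is not killed by the $\nu$-norm either; it has to be traded against the large polynomial weights $\langle v\rangle^{\bar l_1},\langle v\rangle^{\bar l_2}$ controlled by {\bf Assumption II}. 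Third, and most importantly for the subsequent decay argument, the right-hand side must come out as $\max\{\mathcal M_1,\mathcal M_2\}\bigl(\|\nabla^{k+1}B_\eps\|^2+\|\nabla^k\{{\bf I-P}\}f_\eps\|_\nu^2\bigr)$ --- a clean quadratic in the dissipative quantities with a small prefactor --- which forces one to interpolate $\|\nabla^j_xB_\eps\|_{L^\infty_x}$ simultaneously upward to $\|\nabla^{k+1}B_\eps\|$ and downward to $\|\Lambda^{-\varrho}B_\eps\|$.

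The paper's actual mechanism is a three-way interpolation: in spatial derivatives of $B_\eps$ (between $\Lambda^{-\varrho}$ and $\nabla^{k+1}$), in velocity derivatives of $\{{\bf I-P}\}f_\eps$ (between $\nabla_v^{m_1+1}\nabla_x^{k-1}$ and $\Lambda^{-\varrho}$), and in velocity weights (between $\langle v\rangle^{-1/2}$ and $\langle v\rangle^{\bar l_1},\langle v\rangle^{\bar l_2}$), with the parameters $m_1,\bar l_1,\bar l_2$ calibrated so that the exponents landing on the two dissipative factors $\|\nabla^{k+1}B_\eps\|$ and $\|\langle v\rangle^{-1/2}\nabla^k\{{\bf I-P}\}f_\eps\|$ sum to exactly $2$; Young's inequality then produces the stated bound, with all ``bounded'' factors collapsing into $\max\{\mathcal M_1,\mathcal M_2\}$ and the residual $\eps^{-1}$ split against $\eta\eps^{-2}\|\nabla^k\{{\bf I-P}\}f_\eps\|_\nu^2$. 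Without this calibrated interpolation (or an equivalent device) the estimate does not close, so your proposal as written would fail at precisely this step.
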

\begin{proof}
	We can use the method of proving Lemma 3.4-3.5 in \cite{Lei-Zhao-JFA-2014} to prove this lemma. For the sake of brevity, we only provide some distinct and crucial estimates here, and similar calculations can be carried out for other related estimates.
	
	For the case $1\leq j\leq k-1\leq N_0-2$, we can control \[\int_{{\mathbb{R}}^3_x\times{\mathbb{R}}^3_v}\frac1\varepsilon\nabla^j_x[v\times B_\varepsilon]\cdot\nabla_v\nabla^{k-j}_x{\bf\{I- P\}}f_\varepsilon\nabla^k_x{{\bf\{I- P\}}f_{\varepsilon}}dxdv,\]
	by using interpolation with respect to both spatial-velocity derivatives and velocity. Take $\varrho=\frac12$ for brevity,
	when $j=1$, the estimate is given as follows:
	\begin{eqnarray*}
		&&	\int_{{\mathbb{R}}^3_x\times{\mathbb{R}}^3_v}\frac1\varepsilon\nabla_x[v\times B_\varepsilon]\cdot\nabla_v\nabla^{k-1}_x{\bf\{I- P\}}f_\varepsilon\nabla^k_x{{\bf\{I- P\}}f_{\varepsilon}}dxdv\\
		&\lesssim&\frac1\varepsilon\left\|\nabla_xB_\varepsilon\right\|_{L^\infty_x}\left\|\nabla_v\nabla^{k-1}_x{\bf\{I- P\}}f_{\varepsilon}\right\|\left\|\nabla^k_x{\bf\{I- P\}}f_\varepsilon\langle v\rangle\right\| \\
		&\lesssim&\frac1\varepsilon\left\|\Lambda^{-{\frac12}}B_\varepsilon\right\|^{\frac{k-\frac32}{k+\frac23}}\left\|\nabla^{k+1}B_\varepsilon\right\|^{\frac{3}{k+\frac32}}
		\left\|\nabla^{m_{1}+1}_v\nabla^{k-1}_x{\bf\{I- P\}}f_{\varepsilon}\right\|^{\frac1{m_{1}+1}}\\{}
		&&\times\left\|\Lambda^{-\frac12}{\bf\{I-P\}}f_{\varepsilon}\right\|^{\frac{m_1}{m_1+1}\times\frac1{k+\frac12}}\left\|\nabla^{k}{\bf\{I- P\}}f_{\varepsilon}\right\|^{\frac{m_1}{m_1+1}\times\frac{k-\frac12}{k+\frac12}}\left\|\nabla^k{\bf\{I- P\}}f_\varepsilon\langle v\rangle\right\| \\{}
		&\lesssim&\frac1\varepsilon\left\|\Lambda^{-{\frac12}}B_\varepsilon\right\|^{\frac{k-\frac32}{k+\frac23}}
		\left\|\nabla^{m_{1}+1}_v\nabla^{k-1}_x{\bf\{I- P\}}f_{\varepsilon}\right\|^{\frac1{m_{1}+1}}\left\|\Lambda^{-\frac12}{\bf\{I-P\}}f_{\varepsilon}\right\|^{\frac{m_1}{m_1+1}\times\frac1{k+\frac12}}\\{}
		&&\times\left\|\nabla^{k+1}B_\varepsilon\right\|^{\frac{3}{k+\frac32}}\left\|\nabla^{k}{\bf\{I- P\}}f_{\varepsilon}\right\|^{\frac{m_1}{m_1+1}\times\frac{k-\frac12}{k+\frac12}}\left\|\nabla^k{\bf\{I- P\}}f_\varepsilon\langle v\rangle\right\| \\{}
		&\lesssim&\frac1\varepsilon\left\|\Lambda^{-{\frac12}}B_\varepsilon\right\|^{\frac{k-\frac32}{k+\frac23}}
		\left\|\nabla^{m_{1}+1}_v\nabla^{k-1}_x{\bf\{I- P\}}f_{\varepsilon}\right\|^{\frac1{m_{1}+1}}\left\|\Lambda^{-\frac12}{\bf\{I-P\}}f_{\varepsilon}\right\|^{\frac{m_1}{m_1+1}\times\frac1{k+\frac12}}\\{}
		&&\times\left\|\nabla^{k+1}B_\varepsilon\right\|^{\frac{3}{k+\frac32}}\left\|\langle v\rangle^{-\frac12}\nabla^{k}{\bf\{I- P\}}f_{\varepsilon}\right\|^{\frac{m_1}{m_1+1}\times\frac{k-\frac12}{k+\frac12}\times\frac{\bar{l}_1}{\bar{l}_1+\frac12}}\left\|\langle v\rangle^{\bar{l}_1}\nabla^{k}{\bf\{I- P\}}f_{\varepsilon}\right\|^{\frac{m_1}{m_1+1}\times\frac{k-\frac12}{k+\frac12}\times\frac{1}{\bar{l}_1+\frac12}}\nonumber\\
		&&\times\left\|\langle v\rangle^{-\frac12}\nabla^k{\bf\{I- P\}}f_{\varepsilon}\right\|^{\frac{\bar{l}_2-1}{\bar{l}_2+\frac12}}\left\|\langle v\rangle^{\bar{l}_2}\nabla^k{\bf\{I- P\}}f_{\varepsilon}\right\|^{\frac{\frac32}{\bar{l}_2+\frac12}} \\{}
		&=&\frac1\varepsilon\left\|\Lambda^{-{\frac12}}B_\varepsilon\right\|^{\frac{k-\frac32}{k+\frac23}}
		\left\|\nabla^{m_{1}+1}_v\nabla^{k-1}_x{\bf\{I- P\}}f_{\varepsilon}\right\|^{\frac1{m_{1}+1}}\left\|\Lambda^{-\frac12}{\bf\{I-P\}}f_{\varepsilon}\right\|^{\frac{m_1}{m_1+1}\times\frac1{k+\frac12}}\\{}
		&&\times\left\|\langle v\rangle^{\bar{l}_2}\nabla^k{\bf\{I- P\}}f_{\varepsilon}\right\|^{\frac{\frac32}{\bar{l}_2+\frac12}} \left\|\langle v\rangle^{\bar{l}_1}\nabla^{k}{\bf\{I- P\}}f_{\varepsilon}\right\|^{\frac{m_1}{m_1+1}\times\frac{k-\frac12}{k+\frac12}\times\frac{1}{\bar{l}_1+\frac12}}\\{}
		&&\times\left\|\nabla^{k+1}B_\varepsilon\right\|^{\frac{3}{k+\frac32}}\left\|\langle v\rangle^{-\frac12}\nabla^{k}{\bf\{I- P\}}f_{\varepsilon}\right\|^{\frac{m_1}{m_1+1}\times\frac{k-\frac12}{k+\frac12}\times\frac{\bar{l}_1}{\bar{l}_1+\frac12}}\left\|\langle v\rangle^{-\frac12}\nabla^k{\bf\{I- P\}}f_{\varepsilon}\right\|^{\frac{\bar{l}_2-1}{\bar{l}_2+\frac12}}\nonumber\\
		&\lesssim&\max\left\{\left\|\langle v\rangle^{\max{\{\bar{l}_1,\bar{l}_2\}}}\nabla^k{\bf\{I- P\}}f_{\varepsilon}\right\|,\left\|\nabla^{m_{1}+1}_v\nabla^{k-1}_x{\bf\{I- P\}}f_{\varepsilon}\right\|^{\frac1{m_{1}+1}},\left\|\Lambda^{-{\frac12}}[f,B_\varepsilon]\right\|^2\right\}\nonumber\\
		&&\times\left(\left\|\nabla^{k+1}B_\varepsilon\right\|^2+\left\|\nabla^k\{{\bf I-P}\}f_{\varepsilon}\right\|^2_\nu\right)+\frac\eta{\varepsilon^2}\left\|\nabla^k\{{\bf I-P}\}f_{\varepsilon}\right\|^2_\nu\nonumber\\
		&\lesssim&\max\{\mathcal{M}_1,\mathcal{M}_2\}\left(\left\|\nabla^{k+1}B_\varepsilon\right\|^2+\left\|\nabla^k\{{\bf I-P}\}f_{\varepsilon}\right\|^2_\nu\right)+\frac\eta{\varepsilon^2}\left\|\nabla^k\{{\bf I-P}\}f_{\varepsilon}\right\|^2_\nu.
	\end{eqnarray*}
	In fact, we can take proper $m_1$, which satisfies
	\[m_1>\frac k2-1-\frac3{8k},\ m_1+k\leq 2N_0-1\]
	such that
	\[\ \frac{3}{k+\frac32}+\frac{m_1}{m_1+1}\times\frac{k-\frac12}{k+\frac12}>1,\]
	so obviously there exists suitably large constants $\bar{l}_1$ and $\bar{l}_2$ such that the following equation holds
	\[\frac{3}{k+\frac32}+\frac{m_1}{m_1+1}\times\frac{k-\frac12}{k+\frac12}\times\frac{\bar{l}_1}{\bar{l}_1+\frac12}+\frac{\bar{l}_2-1}{\bar{l}_2+\frac12}=2.\]
	Here we ask $\bar{l}\geq \max{\{\bar{l}_1,\bar{l}_2\}}$.
	Based on the aforementioned estimated ideas, we can prove \eqref{k-sum-cut-1}, \eqref{k-sum-cut-2} and \eqref{k-sum-cut-3}, thus the proof of this lemma is complete.
\end{proof}
\begin{proposition}\label{cut-decay}
	Under {\bf Assumption I} and {\bf Assumption II}, assume $N_0\geq 5$, $N=2N_0$,
	there exist an energy functional $\mathcal{E}_{k\rightarrow N_0}(t)$ and the corresponding energy dissipation rate functional $\mathcal{D}_{k\rightarrow N_0}(t)$ satisfying \eqref{E-k} and \eqref{D-k} respectively such that
	\begin{equation}\label{cut-decay-1}
		\frac{\d}{\d t}\mathcal{E}_{k\rightarrow N_0}(t)+\mathcal{D}_{k\rightarrow N_0}(t)\leq 0
	\end{equation}
	{holds for $k=0,1,2,\cdots, N_0-2$ and all $0\leq t\leq T.$}
	
	Furthermore, we can get that
	\begin{equation}\label{cut-decay-2}
		\mathcal{E}_{k\rightarrow N_0}(t)\lesssim\min\{\mathcal{M}_1,\mathcal{M}_2\}(1+t)^{-(k+\varrho)}.
	\end{equation}
\end{proposition}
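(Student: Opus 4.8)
The strategy mirrors the non-cutoff case (Proposition~\ref{noncut-decay}), now working with the $\nu$-dissipation $\|\cdot\|_\nu=\|\langle v\rangle^{\gamma/2}\cdot\|$ in place of the $D$-dissipation. The plan is first to assemble a closed Lyapunov inequality for $\mathcal{E}_{k\rightarrow N_0}(t)$ out of Lemma~\ref{k-sum-cut}, and then to upgrade it to a decay estimate by interpolating against the negative Sobolev norm $\|\Lambda^{-\varrho}[f_\varepsilon,E_\varepsilon,B_\varepsilon]\|$, whose boundedness is guaranteed by \textbf{Assumption II}.

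\emph{Step 1: the Lyapunov inequality \eqref{cut-decay-1}.} For each fixed $k\in\{0,1,\dots,N_0-2\}$ I would take a linear combination $\sum_{k\le m\le N_0}C_m\big(\|\nabla^m f_\varepsilon\|^2+\|\nabla^m[E_\varepsilon,B_\varepsilon]\|^2\big)+\kappa\sum_{k\le m\le N_0-1}C_m'\,G^m_{f_\varepsilon}(t)$ with constants $C_m\gg C_{m+1}$ and $\kappa$ small, exactly as in \cite{Lei-Zhao-JFA-2014}. Adding \eqref{k-sum-cut-1}, \eqref{k-sum-cut-2} over $k\le m\le N_0$ and \eqref{k-sum-cut-3} over $k\le m\le N_0-1$, the $\eta\|\nabla^{\widetilde m}f_\varepsilon\|_\nu^2$ terms and the $\max\{\mathcal{M}_1,\mathcal{M}_2\}$-prefactored terms (with $\widetilde m=\min\{m+1,N_0-1\}$, hence of lower or equal relative order) are absorbed by the dissipation on the left, which produces $\frac{\d}{\d t}\mathcal{E}_{k\rightarrow N_0}(t)+\mathcal{D}_{k\rightarrow N_0}(t)\le0$ once the combination is identified with $\mathcal{E}_{k\rightarrow N_0}$ in \eqref{E-k-cut} and the surviving dissipation with $\mathcal{D}_{k\rightarrow N_0}$ in \eqref{D-k-cut}. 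The requirement $N_0\ge5$, $N=2N_0$ is what leaves enough room for the interpolations in the next step.

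\emph{Step 2: interpolation against $\Lambda^{-\varrho}$.} For the macroscopic and field components I would use $\|\nabla^k g\|\le\|\nabla^{k+1}g\|^{\frac{k+\varrho}{k+1+\varrho}}\|\Lambda^{-\varrho}g\|^{\frac1{k+1+\varrho}}$ with $g={\bf P}f_\varepsilon,E_\varepsilon,B_\varepsilon$; for the microscopic part $\|\nabla^m\{{\bf I-P}\}f_\varepsilon\|$ I would split $\langle v\rangle^0=\langle v\rangle^{-\gamma/2}\langle v\rangle^{\gamma/2}$ and interpolate between $\|\nabla^m\{{\bf I-P}\}f_\varepsilon\|_\nu$ and a high velocity-weight norm dominated by $\overline{\mathcal{E}}_{N_0-1,\bar{\ell}_0+\frac52}(t)$ (finite by \textbf{Assumption II}); and for $\nabla^{N_0}[E_\varepsilon,B_\varepsilon]$ between $\nabla^{N_0-1}$ and $\nabla^{N_0+k+\frac12}$ (admissible since $N=2N_0\ge N_0+k+\frac12$ for $k\le N_0-2$). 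This yields $\mathcal{E}_{k\rightarrow N_0}(t)\lesssim\big(\mathcal{D}_{k\rightarrow N_0}(t)\big)^{\frac{k+\varrho}{k+1+\varrho}}\big(\min\{\mathcal{M}_1,\mathcal{M}_2\}\big)^{\frac1{k+1+\varrho}}$, hence $\mathcal{D}_{k\rightarrow N_0}(t)\gtrsim\big(\min\{\mathcal{M}_1,\mathcal{M}_2\}\big)^{-\frac1{k+\varrho}}\big(\mathcal{E}_{k\rightarrow N_0}(t)\big)^{1+\frac1{k+\varrho}}$. Combining with Step~1 gives $\frac{\d}{\d t}\mathcal{E}_{k\rightarrow N_0}(t)+c\big(\min\{\mathcal{M}_1,\mathcal{M}_2\}\big)^{-\frac1{k+\varrho}}\big(\mathcal{E}_{k\rightarrow N_0}(t)\big)^{1+\frac1{k+\varrho}}\le0$, and integrating this Bernoulli-type inequality, together with the initial bound $\mathcal{E}_{k\rightarrow N_0}(0)\lesssim\min\{\mathcal{M}_1,\mathcal{M}_2\}$ from the smallness of the data, yields \eqref{cut-decay-2}.

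\emph{Main obstacle.} The delicate point is Step~2 for the microscopic part: the $\nu$-norm carries a \emph{negative} velocity weight $\langle v\rangle^{\gamma/2}$, so the plain interpolation inequality must be run through two nested velocity-weight interpolations — choosing suitably large exponents $\bar l_1,\bar l_2$ so that all weight powers match, precisely as in the proof of Lemma~\ref{k-sum-cut} — and one must check that every resulting high-weight norm is controlled by $\overline{\mathcal{E}}_{N_0-1,\bar{\ell}_0+\frac52}(t)$ and $\overline{\mathcal{E}}_{N-1,l_1}(t)$ from \textbf{Assumption II} while keeping the derivative count inside the budget $N=2N_0$, $N_0\ge5$. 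That bookkeeping, rather than any new idea, is the real burden; the remainder is a routine adaptation of the non-cutoff argument.
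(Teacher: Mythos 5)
Your proposal follows essentially the same route as the paper: the Lyapunov inequality \eqref{cut-decay-1} is obtained by combining \eqref{k-sum-cut-1}, \eqref{k-sum-cut-2} and \eqref{k-sum-cut-3}, and the decay \eqref{cut-decay-2} is derived exactly as in Proposition \ref{noncut-decay}, interpolating against $\Lambda^{-\varrho}$ (with the nested velocity-weight interpolation for the microscopic part, controlled by the weighted energies in \textbf{Assumption II}) and then integrating the resulting Bernoulli-type differential inequality. The paper simply cites these earlier results and omits the details, so your filled-in argument is the intended one.
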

\begin{proof}
	From \eqref{k-sum-cut-1}, \eqref{k-sum-cut-1} and \eqref{k-sum-cut-3}, one has \eqref{cut-decay-1}.
	The decay estimtates \eqref{cut-decay-2} can be proved by a similar way as Proposition \ref{noncut-decay}, we omit its proof for brevity.
	\end{proof}
	
Besides, we need to deduce the temporal time decay rates for $\mathcal{E}_{1\rightarrow N_0-1,\overline{\ell}_0}(t)$.
\begin{proposition}\label{cut-w-decay}
Under {\bf Assumption I} and {\bf Assumption II},
	it holds that
	\begin{eqnarray}\label{cut-w-decay-1}
		\frac{\d}{\d t}\left\{\mathcal{E}_{1\rightarrow N_0-1,\overline{\ell}_0}(t)+\mathcal{E}_{1\rightarrow N_0}(t)\right\}+\mathcal{D}_{1\rightarrow N_0-1,\overline{\ell}_0}(t)+\mathcal{D}_{1\rightarrow N_0}(t)\lesssim 0.
	\end{eqnarray}
Furthermore, one also deduces that
	\begin{eqnarray}\label{cut-w-decay-2}
&&\mathcal{E}_{1\rightarrow N_0-1,\overline{\ell}_0}(t)+\mathcal{E}_{1\rightarrow N_0}(t)\lesssim\min\{\mathcal{M}_1,\mathcal{M}_2\} (1+t)^{-1-\varrho}.
	\end{eqnarray}
\end{proposition}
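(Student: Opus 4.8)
The plan is to follow the same two-step pattern as in Proposition~\ref{cut-decay}: first derive a self-contained Lyapunov inequality for the combined functional $\mathcal{E}_{1\rightarrow N_0-1,\overline{\ell}_0}(t)+\mathcal{E}_{1\rightarrow N_0}(t)$, and then upgrade it to the algebraic decay rate by interpolation. For the non-weighted part, I would invoke Lemma~\ref{k-sum-cut} but with the hierarchy restricted to $k\geq 1$: a suitable linear combination of \eqref{k-sum-cut-1}, \eqref{k-sum-cut-2} and \eqref{k-sum-cut-3} over $k=1,\dots,N_0$, exactly as in the proof of Proposition~\ref{cut-decay}, produces $\tfrac{\d}{\d t}\mathcal{E}_{1\rightarrow N_0}(t)+\mathcal{D}_{1\rightarrow N_0}(t)\lesssim0$, the smallness of $\max\{\mathcal{M}_1,\mathcal{M}_2\}$ being used to absorb the right-hand sides into the dissipation and the choice $\widetilde{k}=\min\{k+1,N_0-1\}$ keeping the hierarchy one derivative above $k$ so that $\mathcal{D}_{1\rightarrow N_0}$ closes it.

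For the weighted part, I would run the weighted microscopic estimate of Proposition~\ref{bound-E} over the restricted range $|\alpha|\geq1$, $|\alpha|+|\beta|\leq N_0-1$, with the weight $\overline{w}_{\overline{\ell}_0}(\alpha,\beta)$. Applying $\partial^\alpha_\beta$ to \eqref{I-P-cut}, testing against $\overline{w}^2_{\overline{\ell}_0}(\alpha,\beta)\partial^\alpha_\beta\{{\bf I-P}\}f_\varepsilon$ and using the coercivity of $\mathscr{L}$, the residual $\tfrac1{\varepsilon^2}\|\partial^\alpha\{{\bf I-P}\}f_\varepsilon\|_\nu^2$ is absorbed into $\mathcal{D}_{1\rightarrow N_0}$; the transport term, thanks to $\overline{w}_{\overline{\ell}_0}(\alpha,\beta)=\langle v\rangle^{-1}\overline{w}_{\overline{\ell}_0}(\alpha+e_i,\beta-e_i)$, generates no velocity growth and only the lower-$|\beta|$ quantity $\|\langle v\rangle^{\gamma/2}\overline{w}_{\overline{\ell}_0}(\alpha+e_i,\beta-e_i)\partial^{\alpha+e_i}_{\beta-e_i}\{{\bf I-P}\}f_\varepsilon\|^2$, absorbed into $\mathcal{D}_{1\rightarrow N_0-1,\overline{\ell}_0}$ once the coefficients of the combination are taken decreasing in $|\beta|$. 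The nonlinear terms carrying $E_\varepsilon$ and $B_\varepsilon$ are handled exactly as in \eqref{hard-5-cut}: the field goes into $L^\infty_x$ by Sobolev embedding, and the unavoidable single power of $\langle v\rangle$ coming from $\overline{w}_{\overline{\ell}_0}(\alpha,\beta)=\langle v\rangle\,\overline{w}_{\overline{\ell}_0}(\alpha-\alpha_1,\beta+e_i)$ is split off by interpolation and Young's inequality, leaving $\eta$ times weighted dissipation plus a term proportional to $\|\partial^{\alpha_1}[E_\varepsilon,B_\varepsilon]\|^{2/\theta}_{L^\infty_x}\,\|\langle v\rangle^{\tilde{\ell}}\overline{w}_{\overline{\ell}_0}(\alpha-\alpha_1,\beta+e_i)\partial^{\alpha-\alpha_1}_{\beta+e_i}\{{\bf I-P}\}f_\varepsilon\|^2$ with $\theta=3/(\tilde{\ell}+\tfrac12)$. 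Because of $l_0\geq\overline{\ell}_0+\tfrac52$ and $\overline{\ell}_0\geq\ell_1+\tfrac32N$ the weighted norm here is controlled by the bounded functionals of Assumptions~I and~II, while Proposition~\ref{cut-decay} together with Assumption~I gives polynomial decay of $\|\partial^{\alpha_1}[E_\varepsilon,B_\varepsilon]\|_{L^\infty_x}$; taking $\tilde{\ell}$ large makes $2/\theta$ large, so this term decays faster than any fixed power and, using the smallness of $\min\{\mathcal{M}_1,\mathcal{M}_2\}$, is absorbed into $\eta\,\mathcal{D}_{1\rightarrow N_0-1,\overline{\ell}_0}(t)$. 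Collecting these estimates and adding a sufficiently large multiple of the non-weighted inequality yields \eqref{cut-w-decay-1}.

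To obtain \eqref{cut-w-decay-2}, I would interpolate as in the proof of Proposition~\ref{noncut-decay}. The macroscopic and electromagnetic parts of $\mathcal{E}_{1\rightarrow N_0}$ are interpolated through $\|\nabla g\|\leq\|\nabla^2 g\|^{\frac{1+\varrho}{2+\varrho}}\|\Lambda^{-\varrho}g\|^{\frac1{2+\varrho}}$, and the microscopic and weighted parts through the velocity-weighted inequality $\|\langle v\rangle^{b}h\|\leq\|\langle v\rangle^{\gamma/2}h\|^{\theta_0}\|\langle v\rangle^{b'}h\|^{1-\theta_0}$ with $\theta_0=\tfrac{1+\varrho}{2+\varrho}$ and $b'$ fixed by $\theta_0\tfrac\gamma2+(1-\theta_0)b'=b$; since $\gamma\geq-1$ and $\varrho<\tfrac32$ the extra weight $b'-b$ stays below $\tfrac52$, which is precisely the margin built into $\overline{\mathcal{E}}_{N_0-1,\bar{\ell}_0+\frac52}$ in Assumption~II. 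This produces
\begin{equation*}
\mathcal{E}_{1\rightarrow N_0-1,\overline{\ell}_0}(t)+\mathcal{E}_{1\rightarrow N_0}(t)\lesssim\big(\mathcal{D}_{1\rightarrow N_0-1,\overline{\ell}_0}(t)+\mathcal{D}_{1\rightarrow N_0}(t)\big)^{\frac{1+\varrho}{2+\varrho}}\big(\min\{\mathcal{M}_1,\mathcal{M}_2\}\big)^{\frac{1}{2+\varrho}},
\end{equation*}
and substituting into \eqref{cut-w-decay-1} gives a closed differential inequality of the form $\tfrac{\d}{\d t}\mathcal{E}+c\,\big(\min\{\mathcal{M}_1,\mathcal{M}_2\}\big)^{-\frac1{1+\varrho}}\mathcal{E}^{\,1+\frac1{1+\varrho}}\leq0$ for $\mathcal{E}:=\mathcal{E}_{1\rightarrow N_0-1,\overline{\ell}_0}+\mathcal{E}_{1\rightarrow N_0}$, which integrates to the stated rate $(1+t)^{-1-\varrho}$.

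The main obstacle is the weighted bound for the magnetic nonlinearity: the weight $\overline{w}_{\overline{\ell}_0}$, designed to suppress velocity growth in the transport term, necessarily creates one power of $\langle v\rangle$ when a $v$-derivative is moved off $v\times B_\varepsilon$, and the singular prefactor $\tfrac1\varepsilon$ prevents this power from being absorbed by the collisional dissipation alone; it must instead be paid for by the already-established polynomial decay of $\|\partial^{\alpha_1}[E_\varepsilon,B_\varepsilon]\|_{L^\infty_x}$ raised to the large power $2/\theta$, which is exactly what forces the quantitative relations among $\tilde{\ell}$, $\overline{\ell}_0$, $l_0$ and $\ell_0$ imposed in Theorem~\ref{Main-Thm-2}. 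A secondary point requiring care is the $\varepsilon$-uniformity of the final interpolation, which survives because the factor $\tfrac1{\varepsilon^2}$ multiplies the microscopic energy and its dissipation in the same way.
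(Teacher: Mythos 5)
Your overall architecture is the paper's: couple the weighted microscopic estimate at the $N_0-1$ level (restricted to $|\alpha|\geq 1$, weight $\overline{w}_{\overline{\ell}_0}$) with the non-weighted hierarchy $\mathcal{E}_{1\rightarrow N_0}$ from Lemma \ref{k-sum-cut}/Proposition \ref{cut-decay}, close the transport term via $\overline{w}_{\overline{\ell}_0}(\alpha,\beta)=\langle v\rangle^{-1}\overline{w}_{\overline{\ell}_0}(\alpha+e_i,\beta-e_i)$, and then run the $\mathcal{E}\lesssim\mathcal{D}^{\frac{1+\varrho}{2+\varrho}}M^{\frac{1}{2+\varrho}}$ interpolation to integrate the Lyapunov inequality. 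That part is fine.

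The gap is in your treatment of the magnetic nonlinearity, where you import the Young/interpolation device of \eqref{hard-5-cut} with $\theta=3/(\tilde{\ell}+\tfrac12)$. Two things fail. First, the resulting term $\|\partial^{\alpha_1}B_\varepsilon\|^{2/\theta}_{L^\infty_x}\|\langle v\rangle^{\tilde{\ell}}\overline{w}_{\overline{\ell}_0}(\alpha-\alpha_1,\beta+e_i)\partial^{\alpha-\alpha_1}_{\beta+e_i}\{{\bf I-P}\}f_\varepsilon\|^2$ cannot be ``absorbed into $\eta\,\mathcal{D}_{1\rightarrow N_0-1,\overline{\ell}_0}$'': the dissipation \eqref{D-N-0-1-w} carries only the weight $\langle v\rangle^{\gamma/2}\overline{w}_{\overline{\ell}_0}$, which is lighter by $\langle v\rangle^{\tilde{\ell}-\gamma/2}$; and the available a priori bounds (Assumption II, i.e. $\overline{\mathcal{E}}_{N_0-1,\overline{\ell}_0+\frac52}\leq\mathcal{M}_2$, matching $l_0\geq\overline{\ell}_0+\tfrac52$) control only $\tfrac52$ extra powers of $\langle v\rangle$ at this level, not $\tilde{\ell}\geq\tfrac32\sigma_{N-1,N-1}\gg\tfrac52$. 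In Proposition \ref{bound-E} that term is paid for by the $\widetilde{w}_{\ell}$-dissipations $\mathbb{D}^{(n)}_{\ell}$; if you did the same here, those dissipations would appear on the right of \eqref{cut-w-decay-1}, destroying the closed ``$\lesssim 0$'' structure that the subsequent interpolation requires. Second, even granting a fast-decaying source, the differential inequality would no longer integrate to \eqref{cut-w-decay-2} without an extra argument. The paper avoids all of this: in \eqref{B-w-decay} it simply applies Cauchy--Schwarz with the split $\langle v\rangle^{5/2}/\langle v\rangle^{-1/2}$, bounds $\|\partial^{\alpha_1}B_\varepsilon\|^2_{L^\infty_x}$ by $\mathcal{D}_{1\rightarrow N_0}(t)$ via Sobolev embedding, and bounds the heavy factor by $\overline{\mathcal{E}}_{N_0-1,\overline{\ell}_0+\frac52}(t)\leq\mathcal{M}_2$, yielding $\mathcal{M}_2\,\mathcal{D}_{1\rightarrow N_0}(t)$, which is exactly why the non-weighted hierarchy is coupled in: its dissipation absorbs this product by smallness. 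You should replace your interpolation step by this direct quadratic estimate; it is precisely the purpose of the $\overline{\ell}_0+\tfrac52$ term in Assumption II.
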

\begin{proof}	
		Recalling the definition of $\mathcal{E}_{1\rightarrow N_0-1,\overline{\ell}_0}(t)$ in \eqref{E-N-0-1-w},
	applying $\partial^\alpha_\beta$ into \eqref{I-P-cut}, integrating the result identity over $\mathbb{R}^3_x\times\mathbb{R}^3_v$ by multiplying $\overline{w}^2_{\overline{\ell}_0}(\alpha,\beta)\partial^\alpha_\beta\{{\bf I-P}\}f_{\varepsilon}$ with $|\alpha|\geq 1$, then we have
	\begin{eqnarray}\label{bound-I-P-low-w}
		&&\frac12\frac{\d}{\d t}\|\overline {w}_{\overline{\ell}_0}(\alpha,\beta)\partial^\alpha_\beta\{{\bf I-P}\}f_{\varepsilon}\|^2+\frac{q\vartheta}{(1+t)^{1+\vartheta}}\left\|\langle v\rangle\overline {w}_{\overline{\ell}_0}(\alpha,\beta)\partial^\alpha_\beta\{{\bf I-P}\}f_{\varepsilon}\right\|^2\nonumber\\
		&&+\frac1 {\varepsilon^2} \left(\partial^\alpha_\beta\mathscr{L}f_\varepsilon,\overline {w}^2_{\overline{\ell}_0}(\alpha,\beta)\partial^\alpha_\beta\{{\bf I-P}\}f_{\varepsilon}\right)\nonumber\\
		&=&-\frac1\varepsilon\left(\partial^\alpha_\beta[ v\cdot \nabla_x\{{\bf I-P}\}f_{\varepsilon}],\overline {w}^2_{\overline{\ell}_0}(\alpha,\beta)\partial^\alpha_\beta\{{\bf I-P}\}f_{\varepsilon}\right)\nonumber\\
		&&+\frac1 \varepsilon \left(\partial^\alpha_\beta \left[E_\varepsilon\cdot v{M}^{1/2}q_1\right],\overline {w}^2_{\overline{\ell}_0}(\alpha,\beta)\partial^\alpha_\beta\{{\bf I-P}\}f_{\varepsilon}\right)\nonumber\\
		&&+\frac1 \varepsilon\left(\partial^\alpha_\beta\left[{\bf P}(v\cdot\nabla_x f_\varepsilon)-\frac1 \varepsilon v\cdot\nabla_x{\bf P}f_\varepsilon\right],\overline {w}^2_{\overline{\ell}_0}(\alpha,\beta)\partial^\alpha_\beta\{{\bf I-P}\}f_{\varepsilon}\right)\nonumber\\
		&&+\frac1\varepsilon\left(\partial^\alpha_\beta\left[\{{\bf I-P}\}\left[ q_0v\times B_\varepsilon\cdot\nabla_{ v}f_\varepsilon\right]\right],\overline {w}^2_{\overline{\ell}_0}(\alpha,\beta)\partial^\alpha_\beta\{{\bf I-P}\}f_{\varepsilon}\right)\nonumber\\
		&&+\left(\partial^\alpha_\beta\{{\bf I-P}\}\left[\tfrac{1}{2} q_0 (E_\varepsilon \cdot v) f_\varepsilon  -q_0E_\varepsilon\cdot\nabla_vf_\varepsilon \right],\overline {w}^2_{\overline{\ell}_0}(\alpha,\beta)\partial^\alpha_\beta\{{\bf I-P}\}f_{\varepsilon}\right)\nonumber\\
		&&+\tfrac{1}{\eps} \left(\partial^\alpha_\beta\mathscr{T} (f_\varepsilon, f_\varepsilon),\overline {w}^2_{\overline{\ell}_0}(\alpha,\beta)\partial^\alpha_\beta\{{\bf I-P}\}f_{\varepsilon}\right).
	\end{eqnarray}
	The coercivity estimates on the linear operator $\mathscr{L}$ yields that
	\begin{eqnarray}
		&&\frac1 {\varepsilon^2} \left(\partial^\alpha_\beta\mathscr{L}f_\varepsilon,\overline {w}^2_{\overline{\ell}_0}(\alpha,\beta)\partial^\alpha_\beta\{{\bf I-P}\}f_{\varepsilon}\right)\nonumber\\
		&\gtrsim&\frac1 {\varepsilon^2} \|\overline {w}_{\overline{\ell}_0}(\alpha,\beta)\partial^\alpha_\beta\{{\bf I-P}\}f_{\varepsilon}\|_\nu^2-\frac1 {\varepsilon^2}\|\partial^\alpha\{{\bf I-P}\}f_{\varepsilon}\|_\nu^2.
	\end{eqnarray}
	As for the transport term on the right-hand side of \eqref{bound-I-P-low-w}, since
		\[\overline {w}_{\overline{\ell}_0}(\alpha,\beta)=\langle v\rangle^{-1}\overline{w}_{\overline{\ell}_0}(\alpha+e_i,\beta-e_i),\]
		one has
	\begin{eqnarray}
		&&\frac1\varepsilon\left(\partial^\alpha_\beta\left[ v\cdot \nabla_x\{{\bf I-P}\}f_{\varepsilon}\right],\overline{w}^2_{\overline{\ell}_0}(\alpha,\beta)\partial^\alpha_\beta\{{\bf I-P}\}f_{\varepsilon}\right)\nonumber\\
		&=&-\frac1\varepsilon\int_{\mathbb{R}^3_x\times\mathbb{R}^3_v}\langle v\rangle^{-1}\partial^{\alpha+e_i}_{\beta-e_i}\{{\bf I-P}\}f_{\varepsilon} \overline{w}_{\overline{\ell}_0}(\alpha+e_i,\beta-e_i)\overline {w}_{\overline{\ell}_0}(\alpha,\beta)\partial^\alpha_\beta\{{\bf I-P}\}f_{\varepsilon}dvdx\nonumber\\
		&\lesssim&\frac1\varepsilon\left\|\langle v\rangle^{\frac\gamma2}\overline{w}_{\overline{\ell}_0}(\alpha+e_i,\beta-e_i)\partial^{\alpha+e_i}_{\beta-e_i}\{{\bf I-P}\}f_{\varepsilon}\right\|^2+\frac\eta\varepsilon\left\|\langle v\rangle^{\frac\gamma2}\overline{w}_{\overline{\ell}_0}(\alpha,\beta)\partial^{\alpha}_{\beta}\{{\bf I-P}\}f_{\varepsilon}\right\|^2\nonumber\\
		&\lesssim&\frac1\varepsilon\left\|\overline{w}_{\overline{\ell}_0}(\alpha+e_i,\beta-e_i)\partial^{\alpha+e_i}_{\beta-e_i}\{{\bf I-P}\}f_{\varepsilon}\right\|_\nu^2+\frac\eta\varepsilon\left\|\overline{w}_{\overline{\ell}_0}(\alpha,\beta)\partial^{\alpha}_{\beta}\{{\bf I-P}\}f_{\varepsilon}\right\|^2_\nu.	
	\end{eqnarray}
The second and third terms on the right-hand side of \eqref{bound-I-P-low-w} can be dominated by
	\begin{eqnarray}
		\|\partial^\alpha E_\varepsilon\|^2+\|\partial^\alpha \{{\bf I-P}\}f_{\varepsilon}\|_\nu^2+\|\nabla^{|\alpha|+1}_x {\bf P}f_{\varepsilon}\|^2\lesssim \mathcal{D}_{|\alpha|\rightarrow N_0}(t).
	\end{eqnarray}
	For the nonlinear term induced by magnetic field $B(t,x)$,
	\begin{eqnarray}
		&&\frac1\varepsilon\left(\partial^\alpha_\beta\left[ v\times B_\varepsilon\cdot\nabla_{ v}\{{\bf I-P}\}f_{\varepsilon}\right],\overline{w}^2_{\overline{\ell}_0}(\alpha,\beta)\partial^\alpha_\beta\{{\bf I-P}\}f_{\varepsilon}\right)\nonumber\\
		&=&\frac1\varepsilon\sum_{\alpha_1\leq\alpha,\beta_1\leq\beta}\left(\partial^{\alpha_1}_{\beta_1}\left[ v\times B_\varepsilon\right]\cdot\partial^{\alpha-\alpha_1}_{\beta-\beta_1}\left[\nabla_{ v}\{{\bf I-P}\}f_{\varepsilon}\right],\overline{w}^2_{\overline{\ell}_0}(\alpha,\beta)\partial^\alpha_\beta\{{\bf I-P}\}f_{\varepsilon}\right),\nonumber
	\end{eqnarray}
	when $|\alpha_1|=1,\beta_1=0$,
	we apply Sobolev inequalities, interpolation method and Young inequalites to get
	\begin{eqnarray}\label{B-w-decay}
		&\lesssim&\sum_{|\alpha_1|=1}{\frac1\varepsilon}\|\partial^{\alpha_1}B_\varepsilon\|_{L^\infty_x}\|\langle v\rangle^{\frac52} \overline{w}_{\overline{\ell}_0}(\alpha-\alpha_1,\beta+e_i)\partial^{\alpha-\alpha_1}_{\beta+e_i}{\bf \{I-P\}}f_{\varepsilon}\|\nonumber\\
		&&\times\|\langle v\rangle^{-\frac{1}2} \overline {w}_{\overline{\ell}_0}(\alpha,\beta)\partial^{\alpha}_\beta{\bf \{I-P\}}f_{\varepsilon}\|\nonumber\\
		&\lesssim&\sum_{|\alpha_1|=1}
		\|\partial^{\alpha_1}B_\varepsilon\|^2_{L^\infty_x}\|\langle v\rangle^{\frac{5}2} \overline{w}_{\overline{\ell}_0}(\alpha-\alpha_1,\beta+e_i)\partial^{\alpha-\alpha_1}_{\beta +e_i}{\bf \{I-P\}}f_{\varepsilon}\|^2\nonumber\\
		&&+{\frac\eta{\varepsilon^2}}\| \overline {w}_{\overline{\ell}_0}(\alpha,\beta)\partial^{\alpha}_\beta{\bf \{I-P\}}f_{\varepsilon}\|_\nu^2\nonumber\\
			&\lesssim&\mathcal{D}_{1\rightarrow N_0}(t)\overline{\mathcal{E}}_{N_0-1,\overline{\ell}_0+\frac52}(t)+{\frac\eta{\varepsilon^2}}\| \overline {w}_{\overline{\ell}_0}(\alpha,\beta)\partial^{\alpha}_\beta{\bf \{I-P\}}f_{\varepsilon}\|_\nu^2.
	\end{eqnarray}
The other cases have similar bound as \eqref{B-w-decay}. Consequently, one has
	\begin{eqnarray}
		&&\frac1\varepsilon\left(\partial^\alpha_\beta\left[ v\times B_\varepsilon\cdot\nabla_{ v}\{{\bf I-P}\}f_{\varepsilon}\right],\overline{w}^2_{l_1}(\alpha,\beta)\partial^\alpha_\beta\{{\bf I-P}\}f_{\varepsilon}\right)\nonumber\\
		&\lesssim&\mathcal{D}_{1\rightarrow N_0}(t)\overline{\mathcal{E}}_{N_0-1,\overline{\ell}_0+\frac52}(t)+{\frac\eta{\varepsilon^2}}\| \overline {w}_{\overline{\ell}_0}(\alpha,\beta)\partial^{\alpha}_\beta{\bf \{I-P\}}f_{\varepsilon}\|_\nu^2.
	\end{eqnarray}
	By using the similar argument, one also has
	\begin{eqnarray}
		&&\left(\partial^\alpha_\beta\left[ E_\varepsilon\cdot\nabla_{ v}\{{\bf I-P}\}f_{\varepsilon}\right],\overline{w}^2_{\overline{\ell}_0}(\alpha,\beta)\partial^\alpha_\beta\{{\bf I-P}\}f_{\varepsilon}\right)\nonumber\\
		&\lesssim&\mathcal{D}_{1\rightarrow N_0}(t)\overline{\mathcal{E}}_{N_0-1,\overline{\ell}_0+\frac32}(t)+{\frac\eta{\varepsilon^2}}\| \overline {w}_{\overline{\ell}_0}(\alpha,\beta)\partial^{\alpha}_\beta{\bf \{I-P\}}f_{\varepsilon}\|_\nu^2
	\end{eqnarray}
	and
	\begin{eqnarray}
		&&\left(\partial^\alpha_\beta\left[ E_\varepsilon\cdot v\{{\bf I-P}\}f_{\varepsilon}\right],\overline{w}^2_{\overline{\ell}_0}(\alpha,\beta)\partial^\alpha_\beta\{{\bf I-P}\}f_{\varepsilon}\right)\nonumber\\
		&\lesssim&\mathcal{D}_{1\rightarrow N_0}(t)\overline{\mathcal{E}}_{N_0-1,\overline{\ell}_0+\frac32}(t)+{\frac\eta{\varepsilon^2}}\| \overline {w}_{\overline{\ell}_0}(\alpha,\beta)\partial^{\alpha}_\beta{\bf \{I-P\}}f_{\varepsilon}\|_\nu^2,
	\end{eqnarray}
	one has
	\begin{eqnarray}
	&&\tfrac{1}{\eps} \left(\partial^\alpha_\beta\mathscr{T} (f_\varepsilon, f_\varepsilon),\overline {w}^2_{\overline{\ell}_0}(\alpha,\beta)\partial^\alpha_\beta\{{\bf I-P}\}f_{\varepsilon}\right)\nonumber\\
		&\lesssim&\overline{\mathcal{E}}_{N_0-1,\overline{\ell}_0}(t)\mathcal{D}_{1\rightarrow N_0-1,\overline{\ell}_0}(t)+{\frac\eta{\varepsilon^2}}\| \overline {w}_{\overline{\ell}_0}(\alpha,\beta)\partial^{\alpha}_\beta{\bf \{I-P\}}f_{\varepsilon}\|_\nu^2.
	\end{eqnarray}
	Now by collecting the above related estimates into \eqref{bound-I-P-low-w}, we arrive at
	\begin{eqnarray}\label{bound-sum-decay}
		&&\frac{\d}{\d t}\left\|\overline{w}_{\overline{\ell}_0}(\alpha,\beta)\partial^\alpha_\beta\{{\bf I-P}\}f_{\varepsilon}\right\|^2+\frac1{\varepsilon^2}\left\|\overline{w}_{\overline{\ell}_0}(\alpha,\beta)\partial^\alpha_\beta\{{\bf I-P}\}f_{\varepsilon}\right\|^2_{\nu}\nonumber\\
		&&+\frac{q\vartheta}{(1+t)^{1+\vartheta}}\left\|\langle v\rangle \overline{w}_{\overline{\ell}_0}(\alpha,\beta)\partial^\alpha_\beta\{{\bf I-P}\}f_{\varepsilon}\right\|^2\nonumber\\
		&\lesssim&\left\|\overline{w}_{\overline{\ell}_0}(\alpha+e_i,\beta-e_i)\partial^{\alpha+e_i}_{\beta-e_i}\{{\bf I-P}\}f_{\varepsilon}\right\|_\nu^2+\mathcal{D}_{1\rightarrow N_0}(t)\overline{\mathcal{E}}_{N_0-1,\overline{\ell}_0+\frac52}(t)+\mathcal{D}_{|\alpha|\rightarrow N_0}(t),
	\end{eqnarray}
Taking summation for \eqref{bound-sum-decay} with $|\alpha|+|\beta|\leq N_0-1, |\alpha|\geq 1$, one has
\begin{eqnarray}
\frac{\d}{\d t}\mathcal{E}_{1\rightarrow N_0-1,\overline{\ell}_0}(t)+\mathcal{D}_{1\rightarrow N_0-1,\overline{\ell}_0}(t)\lesssim \mathcal{D}_{1\rightarrow N_0}(t)
	\end{eqnarray}
which combining with \eqref{cut-decay-1} yields that
\begin{eqnarray}
	\frac{\d}{\d t}\left\{\mathcal{E}_{1\rightarrow N_0-1,\overline{\ell}_0}(t)+\mathcal{E}_{1\rightarrow N_0}(t)\right\}+\mathcal{D}_{1\rightarrow N_0-1,\overline{\ell}_0}(t)+\mathcal{D}_{1\rightarrow N_0}(t)\lesssim 0.
\end{eqnarray}
\eqref{cut-w-decay-2} follows by a similar way as \eqref{cut-decay-2}, thus we complete the proof of this Propostion \ref{cut-w-decay}.
\end{proof}
\subsection{The bound for $\|\Lambda^{-\varrho}[f_\varepsilon,E_\varepsilon,B_\varepsilon](t)\|^2$}
To ensure {\bf Assumption 2}, one also need the following result:
\begin{proposition}\label{prop-neg-cut}
	Under Under {\bf Assumption 1} and {\bf Assumption 2}, one has
	\begin{eqnarray}\label{prop-neg-cut-1}
		&&\frac{\d}{\d t}\left(\left\|\Lambda^{-\varrho}f_{\varepsilon}\right\|^2+\left\|\Lambda^{-\varrho}[E_\varepsilon,B_\varepsilon]\right\|^2+\kappa_1G^{-\varrho}_{f_\varepsilon}(t)\right)+\frac1{\varepsilon^2}\left\|\Lambda^{-\varrho}\{{\bf I-P}\}f_{\varepsilon}\right\|_{\nu}^2\nonumber\\
		&&+\kappa_1\left\|\Lambda^{1-\varrho}{\bf P}f_{\varepsilon}\right\|^2+\kappa_1\left\|\Lambda^{1-\varrho}[E_\varepsilon,B_\varepsilon]\right\|^2+\kappa_1\left\|\Lambda^{-\varrho}E_\varepsilon\right\|_{H^1_x}^2+\kappa_1\left\|\Lambda^{-\varrho}(\rho_\varepsilon^+-\rho_\varepsilon^-)\right\|^2_{H^1}\nonumber\\
		&\lesssim&\mathcal{M}_2\left\|\nabla_xf_{\varepsilon}\right\|^2_{\nu}.
	\end{eqnarray}
\end{proposition}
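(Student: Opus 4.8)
The plan is to run the negative-Sobolev-space argument of Proposition \ref{prop-neg} again, now with the non-cutoff dissipation norm $\|\cdot\|_D$ replaced everywhere by the cutoff norm $\|\cdot\|_\nu=\|\langle v\rangle^{\frac\gamma2}\cdot\|$, and with the cutoff coercivity of $\mathscr{L}$ and the cutoff bilinear estimates for $\mathscr{T}$ from the Appendix in place of their non-cutoff analogues. First I would establish a cutoff version of Lemma \ref{Lemma4.1}: taking the Fourier transform in $x$ of the first equation of \eqref{VMB-F-perturbative}, testing against $|\xi|^{-2\varrho}\bar{\hat{f}}_\pm$, integrating over $\mathbb{R}^3_\xi\times\mathbb{R}^3_v$ and using the coercivity of $\mathscr{L}$, one bounds $\frac{\d}{\d t}(\|\Lambda^{-\varrho}f_\varepsilon\|^2+\|\Lambda^{-\varrho}[E_\varepsilon,B_\varepsilon]\|^2)+\frac1{\varepsilon^2}\|\Lambda^{-\varrho}\{{\bf I-P}\}f_\varepsilon\|_\nu^2$ by the four nonlinear brackets from $E_\varepsilon\cdot v f_\varepsilon$, $E_\varepsilon\cdot\nabla_v f_\varepsilon$, $\frac1\varepsilon v\times B_\varepsilon\cdot\nabla_v f_\varepsilon$ and $\frac1\varepsilon\mathscr{T}(f_\varepsilon,f_\varepsilon)$, as in \eqref{4.6}. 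Each bracket is split by the macro--micro decomposition \eqref{macro-micro}; the pieces carrying a macroscopic factor are estimated with the Hardy--Littlewood--Sobolev inequality $\|\Lambda^{-\varrho}g\|\lesssim\|g\|_{L^{\frac{6}{3+2\varrho}}_x}$ and $L^p$--Sobolev embeddings, producing terms like $\|\Lambda^{-\varrho}f_\varepsilon\|(\|\Lambda^{\frac34-\frac\varrho2}E_\varepsilon\|^2+\|\Lambda^{\frac34-\frac\varrho2}f_\varepsilon\|_\nu^2)$ and, for the magnetic bracket, $(\|\Lambda^{-\varrho}f_\varepsilon\|+\|f_\varepsilon\langle v\rangle^{\frac52}\|^2)(\|\Lambda^{1-\varrho}B_\varepsilon\|^2+\frac1{\varepsilon^2}\|\Lambda^{1-\varrho}\{{\bf I-P}\}f_\varepsilon\|_\nu^2)$, where the cancellation $\frac1\varepsilon(\mathcal{F}[q_0 v\times B_\varepsilon\cdot\nabla_v{\bf P}f_\varepsilon]\mid|\xi|^{-2\varrho}{\bf P}\hat{f}_\varepsilon)=0$ is used as in \eqref{B-s-neg}. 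The purely microscopic pieces cost a $\langle v\rangle^{\frac52}$ weight and are absorbed after distributing $\eta\frac1{\varepsilon^2}\|\Lambda^{-\varrho}\{{\bf I-P}\}f_\varepsilon\|_\nu^2$, and the bilinear bracket is closed by the cutoff counterpart of the $\mathscr{T}$-estimate used at the end of the proof of Lemma \ref{Lemma4.1}.

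Next I would produce a cutoff version of Lemma \ref{Lemma4.2} exactly along the lines of Lemma \ref{mac-dissipation}: applying $\Lambda^{-\varrho}$ to the macroscopic (local conservation law and Maxwell) equations and building an interactive functional $G^{-\varrho}_{f_\varepsilon}(t)\lesssim\|\Lambda^{1-\varrho}[f_\varepsilon,E_\varepsilon,B_\varepsilon]\|^2+\|\Lambda^{-\varrho}[f_\varepsilon,E_\varepsilon,B_\varepsilon]\|^2+\|\Lambda^{\frac32}E_\varepsilon\|^2$ whose time derivative, added to $\|\Lambda^{1-\varrho}{\bf P}f_\varepsilon\|^2+\|\Lambda^{1-\varrho}[E_\varepsilon,B_\varepsilon]\|^2+\|\Lambda^{-\varrho}E_\varepsilon\|_{H^1_x}^2+\|\Lambda^{-\varrho}(\rho^+_\varepsilon-\rho^-_\varepsilon)\|_{H^1_x}^2$, is controlled by $\frac1{\varepsilon^2}\|\Lambda^{-\varrho}\{{\bf I-P}\}f_\varepsilon\|^2_{H^2_xL^2_\nu}+\mathcal{E}_2(t)\mathcal{D}_2(t)$.

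Then I would close the estimate by interpolation. Since $\frac12\leq\varrho<\frac32$ forces $\frac34-\frac\varrho2>1-\varrho$ and $\frac32-\varrho>1-\varrho$, one interpolates $\|\Lambda^{\frac34-\frac\varrho2}[E_\varepsilon,B_\varepsilon]\|$ and $\|\Lambda^{\frac32-\varrho}[E_\varepsilon,B_\varepsilon]\|$ between $\|\Lambda^{1-\varrho}[E_\varepsilon,B_\varepsilon]\|$ (absorbed with a small constant) and $\|\nabla_x[E_\varepsilon,B_\varepsilon]\|$, and likewise interpolates $\frac1{\varepsilon^2}\|\Lambda^{1-\varrho}\{{\bf I-P}\}f_\varepsilon\|^2_{H^1_xL^2_\nu}$ between $\frac1{\varepsilon^2}\|\nabla^2_x\{{\bf I-P}\}f_\varepsilon\|_\nu^2$ and $\frac\eta{\varepsilon^2}\|\Lambda^{-\varrho}\{{\bf I-P}\}f_\varepsilon\|_\nu^2$. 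Forming $\|\Lambda^{-\varrho}f_\varepsilon\|^2+\|\Lambda^{-\varrho}[E_\varepsilon,B_\varepsilon]\|^2+\kappa_1 G^{-\varrho}_{f_\varepsilon}(t)$ for $\kappa_1$ small and collecting terms: the weighted velocity norms $\|\langle v\rangle^{\frac52}\{{\bf I-P}\}f_\varepsilon\|$, $\|\langle v\rangle^{\frac52}\nabla_v\{{\bf I-P}\}f_\varepsilon\|$ and the weighted Sobolev norm of $f_\varepsilon$ in the bilinear bound are $\lesssim\overline{\mathcal{E}}_{N_0-1,\bar{\ell}_0+\frac52}(t)\leq\mathcal{M}_2$ by \textbf{Assumption II}; the electromagnetic quantities $\|\nabla_x[E_\varepsilon,B_\varepsilon]\|$, $\|\Lambda^{\frac32}E_\varepsilon\|$ and the low $L^p_x$ norms of $E_\varepsilon,B_\varepsilon$ are $\lesssim\min\{\mathcal{M}_1,\mathcal{M}_2\}(1+t)^{-(1+\varrho)}$ by Propositions \ref{cut-decay} and \ref{cut-w-decay}; and $\mathcal{E}_2(t)\mathcal{D}_2(t)\lesssim\mathcal{M}_1\mathcal{D}_2(t)$ is absorbed into the left-hand dissipation. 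Every surviving term is then either already on the left or of the form $\mathcal{M}_2\|\nabla_x f_\varepsilon\|_\nu^2$ (using $\|\nabla_x f_\varepsilon\|_\nu^2\approx\|\nabla_x{\bf P}f_\varepsilon\|^2+\|\nabla_x\{{\bf I-P}\}f_\varepsilon\|_\nu^2$), which is \eqref{prop-neg-cut-1}.

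The main obstacle is the magnetic nonlinearity $\frac1\varepsilon v\times B_\varepsilon\cdot\nabla_v f_\varepsilon$ at negative order. Because the cutoff soft-potential dissipation is only $\|\cdot\|_\nu=\|\langle v\rangle^{\frac\gamma2}\cdot\|$ with $\gamma<0$, the $\langle v\rangle^{\frac52}$ weight forced by the macro--micro split together with the Hardy--Littlewood--Sobolev step cannot be recovered from the dissipation; it must instead be routed through the strong velocity weights of $\overline{\mathcal{E}}_{N_0-1,\cdot}(t)$. This is precisely why $\gamma\geq-1$ is required---so that the residual $\langle v\rangle^{\frac\gamma2}$-loss is at most $\langle v\rangle^{-\frac12}$, hence harmless under the available weight---and why \textbf{Assumption II} is stated with the inflated index $\bar{\ell}_0+\frac52$. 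Arranging the bookkeeping so that no uncontrolled $\frac1{\varepsilon^2}$ power or non-integrable time factor is generated, while each remaining term is genuinely $O(\mathcal{M}_2)$ small or absorbable, is the step that needs the most care.
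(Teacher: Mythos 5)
Your proposal is correct and takes essentially the same route as the paper: the paper's entire proof of this proposition is the remark that it ``can be proved by a similar way as Proposition \ref{prop-neg}'', and what you write is exactly that transplant --- the cutoff analogues of Lemmas \ref{Lemma4.1} and \ref{Lemma4.2} with $\|\cdot\|_{D}$ replaced by $\|\cdot\|_{\nu}$ and the cutoff coercivity and bilinear estimates from the Appendix, followed by the same interpolations and the same linear combination with small $\kappa_1$. The one caveat, which you inherit from the paper rather than introduce, is that a verbatim repetition of Proposition \ref{prop-neg} leaves $\tfrac{1}{\varepsilon^2}\|\nabla_x^2\{{\bf I-P}\}f_{\varepsilon}\|_{\nu}^2$ and $\mathcal{E}_2(t)\mathcal{D}_2(t)$ on the right-hand side, and neither your closing sentence nor the paper explains how these collapse into the single term $\mathcal{M}_2\|\nabla_x f_{\varepsilon}\|_{\nu}^2$ claimed in \eqref{prop-neg-cut-1}.
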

\begin{proof}
This proposition can be proved by a similar way as Proposition \ref{prop-neg}, we omit its proof for brevity.
\end{proof}
\subsection{The a priori estimates}
Now we are ready to construct the a priori estimates:
\begin{eqnarray}\label{The-a-priori-estimtates-cut}
X(T)&\equiv&\sup_{0\leq t\leq T}\left\{\sum_{n\leq N_0}\mathbb{E}_{\ell_0}^{(n)}(t)+\sum_{N_0+1\leq n\leq N-1}\mathbb{E}_{\ell_1}^{(n)}(t)+\varepsilon^2\mathbb{E}_{\ell_1}^{(N)}(t)\right\}\nonumber\\
&&+\sup_{0\leq t\leq T}\left\{\overline{\mathcal{E}}_{N-1,l_1}(t)+\overline{\mathcal{E}}_{N_0-1,l_0}(t)+\mathcal{E}_{N}(t)+\|\Lambda^{-\varrho}[f_\varepsilon,E_\varepsilon,B_\varepsilon]\|^2\right\}\leq \mathcal{M}.
\end{eqnarray}
\begin{proposition}\label{prop1-cut}
Assume that
\begin{itemize}
	\item $N_0\geq 5$, $N=2N_0$;
	\item $-1\leq\gamma<0$, $\frac12\leq \varrho<\frac32$, $0<\vartheta\leq \frac23\rho$;
	\item $0<\epsilon_0\leq 2(1+\varrho)$;
	\item $\sigma_{N,0}=\frac{1+\epsilon_0}2$, $\sigma_{n,0}=0$ for $n\leq N-1$, $\sigma_{n,j+1}-\sigma_{n,j}=\frac{1+\epsilon_0}2$;
	\item $\bar{l}$ is a sufficiently large positive constant;
	\item $l_1\geq N+\bar{l}$, $\tilde{\ell}\geq\frac32\sigma_{N-1,N-1}$, $\ell_1\geq l_1+\tilde{\ell}+\frac12$, $\overline{\ell}_0\geq \ell_1+\frac32N$, $l_0\geq \overline{\ell}_0+\frac52$, $\ell_0\geq l_0+\tilde{\ell}+\frac12$,
\end{itemize}
under the a priori estimates \eqref{The-a-priori-estimtates-cut}, we can deduce that
	\begin{eqnarray}\label{cut-end}
		&&\frac{\d}{\d t}\left\{\sum_{n\leq N_0}\mathbb{E}_{\ell_0}^{(n)}(t)+\sum_{N_0+1\leq n\leq N-1}\mathbb{E}_{\ell_1}^{(n)}(t)+\varepsilon^2\mathbb{E}_{\ell_1}^{(N)}(t)\right\}\nonumber\\
		&&+\frac{\d}{\d t}\left\{\overline{\mathcal{E}}_{N-1,l_1}(t)+\overline{\mathcal{E}}_{N_0-1,l_0}(t)+\mathcal{E}_{N}(t)+\|\Lambda^{-\varrho}[f_\varepsilon,E_\varepsilon,B_\varepsilon]\|^2\right\}+\sum_{n\leq N_0}\mathbb{D}_{\ell_0}^{(n)}(t)\nonumber\\
		&&+\sum_{N_0+1\leq n\leq N-1}\mathbb{D}_{\ell_1}^{(n)}(t)+\varepsilon^2\mathbb{D}_{\ell_1}^{(N)}(t)+\overline{\mathcal{D}}_{N-1,l_1}(t)+\overline{\mathcal{D}}_{N_0-1,l_0}(t)+\mathcal{D}_{N}(t)
		\lesssim0\nonumber
	\end{eqnarray}
	{ holds for all $0\leq t\leq T$.}
\end{proposition}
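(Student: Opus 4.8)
The plan is to assemble a single Lyapunov functional out of all the differential inequalities established in this section and to check that, under the a priori bound \eqref{The-a-priori-estimtates-cut} with $\mathcal{M}$ sufficiently small, every term generated on the right-hand sides is either a dissipation appearing on the left, an $O(\mathcal{M})$ multiple of such a dissipation, or the product of a summable power of $(1+t)$ with an a priori controlled quantity. Concretely, I would fix small constants
\[
0<\kappa_{\mathrm{neg}}\ll\kappa_{\mathrm{low}}\ll\kappa_{\mathrm{mid}}\ll\kappa_{\mathrm{top}}\ll1,
\]
whose ordering is dictated by the direction in which the dissipation hierarchy closes --- from the lowest-order weighted quantities up to the $\varepsilon^{2}$-weighted top order --- and work with
\[
\begin{aligned}
\mathcal{G}(t)=\ &\mathcal{E}_{N}(t)+\overline{\mathcal{E}}_{N-1,l_1}(t)+\overline{\mathcal{E}}_{N_0-1,l_0}(t)+\kappa_{\mathrm{neg}}\|\Lambda^{-\varrho}[f_\varepsilon,E_\varepsilon,B_\varepsilon]\|^{2}\\
&+\kappa_{\mathrm{top}}\varepsilon^{2}\mathbb{E}^{(N)}_{\ell_1}(t)+\kappa_{\mathrm{mid}}\!\!\sum_{N_0+1\le n\le N-1}\!\!\mathbb{E}^{(n)}_{\ell_1}(t)+\kappa_{\mathrm{low}}\sum_{n\le N_0}\mathbb{E}^{(n)}_{\ell_0}(t),
\end{aligned}
\]
which is comparable to the quantity in curly brackets on the left of \eqref{cut-end}.

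First I would observe that \eqref{The-a-priori-estimtates-cut} already forces both \textbf{Assumption~I} and \textbf{Assumption~II}: $\mathcal{E}_N(t)\le\mathcal{M}$ is immediate, and by Propositions~\ref{cut-decay} and \ref{cut-w-decay} --- whose own hypotheses are exactly what \eqref{The-a-priori-estimtates-cut} provides, using also $\overline{\mathcal{E}}_{N_0-1,\overline{\ell}_0+5/2}(t)\le\overline{\mathcal{E}}_{N_0-1,l_0}(t)$ since $l_0\ge\overline{\ell}_0+\tfrac52$ --- one has $\mathcal{E}_{k\to N_0}(t)\lesssim\mathcal{M}(1+t)^{-(k+\varrho)}$ for $k\le N_0-2$ and $\mathcal{E}_{1\to N_0-1,\overline{\ell}_0}(t)\lesssim\mathcal{M}(1+t)^{-1-\varrho}$, which supply the time factors in \eqref{Assump-1-cut} and the bounds demanded by \textbf{Assumption~II}; in particular $\|E_\varepsilon\|_{L^\infty_x}^{2}\lesssim\mathcal{M}(1+t)^{-1-\varrho}$ and $\|\nabla_xB_\varepsilon\|_{L^\infty_x}^{2}\lesssim\mathcal{M}(1+t)^{-2-\varrho}$. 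With Assumptions~I and II in force, the $\mathcal{E}_N$-estimate \eqref{prof-1-cut}, Proposition~\ref{bound-E} (in both versions, for $\overline{\mathcal{E}}_{N-1,l_1}$ and for $\overline{\mathcal{E}}_{N_0-1,l_0}$), Propositions~\ref{Highest}, \ref{sub-highest}, \ref{mix-N-0-w}, and Proposition~\ref{prop-neg-cut} all apply, and I would add them with the coefficients above. The cross terms then close in a cascade: the $\mathcal{D}_N$ on the right of the $\overline{\mathcal{E}}$- and $\mathbb{E}$-inequalities is absorbed by the $\mathcal{D}_N$ produced by \eqref{prof-1-cut} (kept with coefficient one); the products $\mathcal{M}^{2/\theta}\sum_{N_0+1}^{N-1}\mathbb{D}^{(n)}_{\ell_1}$, $\mathcal{M}\sum_{N_0+1}^{N-1}\mathbb{D}^{(n)}_{\ell_1}$, $\mathcal{M}\varepsilon^{2}\mathbb{D}^{(N)}_{\ell_1}$, $\mathcal{M}\sum_{n\le N_0}\mathbb{D}^{(n)}_{\ell_0}$, $\mathcal{M}_2\|\nabla_xf_\varepsilon\|_\nu^{2}$ and the quadratic $\overline{\mathcal{E}}\,\overline{\mathcal{D}}$ terms are absorbed into the matching dissipation on the left by smallness of $\mathcal{M}$; $\mathcal{E}_N(t)\mathcal{E}_{1\to N_0-1,\overline{\ell}_0}(t)\lesssim\mathcal{M}(1+t)^{-1-\varrho}\mathcal{E}_N(t)$; and $\eta\varepsilon^{2}(1+t)^{-2\sigma_{N,0}}\|\nabla^N_xE_\varepsilon\|^{2}+\mathcal{M}(1+t)^{-1^{+}}\mathcal{E}_N(t)\lesssim\mathcal{M}(1+t)^{-1^{+}}\mathcal{E}_N(t)$ since $2\sigma_{N,0}=1+\epsilon_0>1$ by \eqref{sigma-N-Assump}. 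The two $L^\infty$-flavoured terms on the right of \eqref{prof-1-cut}, $\|E_\varepsilon\|^{2}_{L^\infty_x}\|\langle v\rangle^{3/2}\nabla^N_xf_\varepsilon\|^{2}$ and $\|\nabla_xB_\varepsilon\|^{2}_{L^\infty_x}\|\langle v\rangle^{3/2}\nabla^{N-1}_x\nabla_v\{{\bf I-P}\}f_\varepsilon\|^{2}$, I would split via $f_\varepsilon={\bf P}f_\varepsilon+\{{\bf I-P}\}f_\varepsilon$: the macroscopic pieces are $\lesssim\mathcal{M}(1+t)^{-1-\varrho}\mathcal{D}_N(t)$ because ${\bf P}f_\varepsilon$ is Gaussian in $v$, while the microscopic pieces, after inserting the weights $\widetilde{w}_{\ell_1}$ (permissible because $\ell_1\ge l_1+\tilde{\ell}+\tfrac12$ and $l_1\ge N+\bar l$), reduce to a piece already contained in $\varepsilon^{-2}\|\nabla^N\{{\bf I-P}\}f_\varepsilon\|_\nu^{2}\subset\mathcal{D}_N(t)$ plus $\lesssim\mathcal{M}(1+t)^{\sigma_{N,0}-1-\varrho}\bigl(\varepsilon^{2}\mathbb{D}^{(N)}_{\ell_1}(t)+\textstyle\sum_{n\le N-1}\mathbb{D}^{(n)}_{\ell_1}(t)\bigr)$, absorbed for $\mathcal{M}$ small since $\sigma_{N,0}-1-\varrho<0$ when $\epsilon_0$ is small. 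Summing everything then yields \eqref{cut-end}.

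The delicate point --- and the reason for the whole apparatus of Sections~4.2--4.5 --- is that the right-hand side of \eqref{prof-1-cut} carries the velocity growth $\langle v\rangle^{3/2}$ coming from the Lorentz term $\tfrac1\varepsilon v\times B_\varepsilon\cdot\nabla_vf_\varepsilon$ and from $E_\varepsilon\cdot vf_\varepsilon$, which the cutoff dissipation $\|\cdot\|_\nu=\|\langle v\rangle^{\gamma/2}\cdot\|$ with $\gamma<0$ is far too weak to absorb; recovering it forces the weighted hierarchies $\mathbb{E}^{(n)}_\ell,\overline{\mathcal{E}}_{\bullet}$, whose dissipations in turn grow in time like $(1+t)^{\sigma_{n,j}}$. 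The whole estimate then hinges on this time growth being beaten by the decay of the electromagnetic field in $L^\infty_x$; since the naive linear-plus-Duhamel decay is only $O(\varepsilon^{-1})$ --- useless against the $\tfrac1\varepsilon$ in \eqref{hard-6} --- the crucial input is the \emph{$\varepsilon$-uniform} decay of Propositions~\ref{cut-decay}--\ref{cut-w-decay}, grounded on the negative Sobolev bound of Proposition~\ref{prop-neg-cut} and interpolation. Thus the genuine obstacle in deducing \eqref{cut-end} is the simultaneous compatibility of the three competing scales --- the $1/\varepsilon$ singularity, the polynomial velocity growth, and the polynomial time growth of the weighted dissipations --- and almost all of the remaining work is verifying that the chosen indices $l_1\ge N+\bar l$, $\tilde{\ell}\ge\tfrac32\sigma_{N-1,N-1}$, $\ell_1\ge l_1+\tilde{\ell}+\tfrac12$, $\overline{\ell}_0\ge\ell_1+\tfrac32N$, $l_0\ge\overline{\ell}_0+\tfrac52$, $\ell_0\ge l_0+\tilde{\ell}+\tfrac12$, together with $0<\vartheta\le\tfrac23\varrho$ and the choices \eqref{sigma-N-Assump} of $\sigma_{n,j}$, are simultaneously consistent with every weight comparison and every time-power bookkeeping used in the absorptions above. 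Once this is checked, \eqref{cut-end} follows by summing the displayed inequalities and taking $\mathcal{M}$ small enough.
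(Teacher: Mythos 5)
Your proposal follows essentially the same route as the paper: the proof there is exactly a linear combination of \eqref{prof-1-cut} (rewritten as \eqref{end-1-cut}), the two versions of Proposition \ref{bound-E}, \eqref{large-N}, \eqref{large-N-1}, \eqref{small-N_0} and \eqref{prop-neg-cut-1}, with every cross term absorbed into the matching dissipation by the smallness of the a priori bound and by the $\varepsilon$-uniform decay of the electromagnetic field supplied by Propositions \ref{cut-decay} and \ref{cut-w-decay}. Your preliminary observation that \eqref{The-a-priori-estimtates-cut} enforces \textbf{Assumption I} and \textbf{Assumption II} through those decay propositions is also the intended (if unstated) logic, and your handling of the two $L^\infty$-flavoured terms on the right of \eqref{prof-1-cut} matches the paper's insertion of the factors $(1+t)^{\pm\sigma_{N,0}}$ and $(1+t)^{\pm\sigma_{N,1}}$ in \eqref{end-1-cut}.

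One step in your bookkeeping is left dangling. Proposition \ref{Highest} (and likewise \eqref{large-N-1}) produces on the right the non-dissipative term $\mathcal{M}_1(1+t)^{-(1+\epsilon_0)}\mathcal{E}_N(t)$; you record it as bounded by $\mathcal{M}(1+t)^{-1^{+}}\mathcal{E}_N(t)$ but never say how it is absorbed, and since it is neither one of the dissipations appearing on the left of \eqref{cut-end} nor nonpositive, it cannot simply be discarded if one wants the pointwise conclusion ``$\lesssim 0$''. The paper's device is to first multiply the $\mathcal{E}_N$-inequality \eqref{end-1-cut} by $(1+t)^{-\epsilon_0}$, which generates the compensating term $\epsilon_0(1+t)^{-1-\epsilon_0}\mathcal{E}_N(t)$ on the left of \eqref{end-2-cut}; for $\mathcal{M}_1\ll\epsilon_0$ this absorbs the offending contribution. (The price is that the resulting Lyapunov functional carries $(1+t)^{-\epsilon_0}\mathcal{E}_N(t)$ rather than $\mathcal{E}_N(t)$, a discrepancy with the displayed statement that the paper silently tolerates.) You should either import this time-weighting trick into your combination $\mathcal{G}(t)$ or supply an alternative absorption; everything else in your cascade of coefficients $\kappa_{\mathrm{neg}}\ll\kappa_{\mathrm{low}}\ll\kappa_{\mathrm{mid}}\ll\kappa_{\mathrm{top}}$ matches the paper's argument.
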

\begin{proof}
	Recalling the definition of $\mathbb{D}^{(N)}_{\ell}(t)$ and $\overline{\mathcal{D}}_{N-1,l}(t)$, \eqref{prof-1-cut} tells us that
	\begin{eqnarray}\label{end-1-cut}
		&&\frac{\d}{\d t}\mathcal{E}_{N}(t)+\mathcal{D}_{N}(t)\nonumber\\
		&\lesssim &\left\|E_\varepsilon\right\|_{L^\infty_x}^2(1+t)^{\sigma_{N,0}}(1+t)^{-\sigma_{N,0}}\left\|\langle v\rangle^{\frac 32}\nabla^N_xf_{\varepsilon}\right\|^2\nonumber\\
		&&+\left\|\nabla_xB_\varepsilon\right\|_{L^\infty_x}^2(1+t)^{\sigma_{N,1}}(1+t)^{-\sigma_{N,1}}\left\|\langle v\rangle^{\frac 32}\nabla^{N-1}_x\nabla_v\{{\bf I-P}\}f_{\varepsilon}\right\|^2\nonumber\\
		&&+\mathcal{E}_N(t)\sum_{|\alpha'|+|\beta'|\leq N-1}\|\partial^{\alpha'}_{\beta'} \{{\bf I-P}\}f_{\varepsilon}\langle v\rangle^{\frac32}\|^2\nonumber\\
		&\lesssim&\mathcal{M}_1{\varepsilon^2}\mathbb{D}^{(N)}_{\ell_1}(t)+\mathcal{E}_N(t)\overline{\mathcal{D}}_{N-1,l_1}(t).
	\end{eqnarray}

		By multiplying $(1+t)^{-\epsilon_0}$ into \eqref{end-1-cut}, one has
	\begin{eqnarray}\label{end-2-cut}
		&&\frac{\d}{\d t}\left\{(1+t)^{-\epsilon_0}\mathcal{E}_{N}(t)\right\}+\epsilon_0(1+t)^{-1-\epsilon_0}\mathcal{E}_{N}(t)+(1+t)^{-\epsilon_0}\mathcal{D}_{N}(t)\nonumber\\
		&\lesssim&\mathcal{M}_1{\varepsilon^2}\mathbb{D}^{(N)}_{\ell_1}(t)+\mathcal{E}_N(t)\overline{\mathcal{D}}_{N-1,l_1}(t).
	\end{eqnarray}

	Thus \eqref{cut-end} follows from \eqref{end-1-cut},  \eqref{end-2-cut}, \eqref{large-N}, \eqref{large-N-1}, \eqref{small-N_0} and \eqref{prop-neg-cut-1}, which complete the proof of this proposition.
\end{proof}
\section{Limit to two fluid incompressible Navier-Stokes-Fourier-Maxwell equations with Ohm's law}
\label{Sec:Limits}
In this section, we will derive the two fluid incompressible NSFM equations \eqref{INSFM-Ohm} with Ohm's law from the perturbed two-species VMB as $\eps \rightarrow 0$.

As \cite{Jiang-Luo-2022-Ann.PDE}, we first introduce the following fluid variables
\begin{equation}\label{Fluid-Quanities}
	\begin{aligned}
		\rho_\eps = \tfrac{1}{2} \langle {f}_\eps , {q_2} \sqrt{M} \rangle_{L^2_v} \,, \ u_\eps = \tfrac{1}{2} \langle f_\eps , {q_2} v \sqrt{M} \rangle_{L^2_v} \,, \ \theta_\eps = \tfrac{1}{2} \langle f_\eps , {q_2} ( \tfrac{|v|^2}{3} - 1 ) \sqrt{M} \rangle_{L^2_v} \,, \\
		n_\eps = \langle f_\eps , {q_1} \sqrt{M} \rangle_{L^2_v} \,,\ j_\eps = \tfrac{1}{\eps} \langle f_\eps , {q_1} v \sqrt{M} \rangle_{L^2_v} \,,\ w_\eps = \tfrac{1}{\eps} \langle f_\eps , {q_1} ( \tfrac{|v|^2}{3} - 1 )\sqrt{M} \rangle_{L^2_v} \,.
	\end{aligned}
\end{equation}
We use the similar argument as \eqref{Macro-equation1}, we can deduce the following local conservation laws
\begin{equation}\label{Local-Consvtn-Law}
	\left\{
	\begin{array}{l}
		\partial_t \rho_\eps + \tfrac{1}{\eps} \div_x \, u_\eps = 0 \,, \\
		\partial_t u_\eps + \tfrac{1}{\eps} \nabla_x ( \rho_\eps + \theta_\eps ) + \div_x \, \big\langle \widehat{A} (v) \sqrt{M}\cdot {q_2} , \tfrac{1}{\eps} \mathscr{L} ( \tfrac{f_\eps }{2} ) \big\rangle_{L^2_v} = \tfrac{1}{2} ( n_\eps E_\eps + j_\eps \times B_\eps ) \,, \\
		\partial_t \theta_\eps + \tfrac{2}{3} \tfrac{1}{\eps} \div_x \, u_\eps + \tfrac{2}{3} \div_x \, \big\langle \widehat{B} (v) \sqrt{M}\cdot {q_2} , \tfrac{1}{\eps} \mathscr{L} ( \tfrac{f_\eps}{2} ) \big\rangle_{L^2_v} = \tfrac{\eps}{3} j_\eps \cdot E_\eps \,, \\
		\partial_t n_\eps + \div_x \, j_\eps = 0 \,, \\
		\partial_t E_\eps - \nabla_x \times B_\eps = - j_\eps \,, \\
		\partial_t B_\eps + \nabla_x \times E_\eps = 0 \,, \\
		\div_x \, E_\eps = n_\eps \,, \quad \div_x B_\eps = 0 \,.
	\end{array}
	\right.
\end{equation}
where $\mathscr{L}[\widehat{A} (v) \sqrt{M}\cdot {q_2}]=\left(v\otimes v-\frac{|v|^2}{3}I_3\right)\sqrt{M}\cdot {q_2}\in\ker^{\bot}({\mathscr{L}})$ with $\widehat{A} (v) \sqrt{M}\cdot {q_2}\in\ker^{\bot}({\mathscr{L}})\cdot {q_2}$
and $\mathscr{L}[\widehat{B} (v) \sqrt{M}]=\left(v\left(\frac{|v|^2}2-\frac52\right)\right)\sqrt{M}\cdot {q_2}\in\ker^{\bot}({\mathscr{L}})$ with $\widehat{B} (v) \sqrt{M}\cdot {q_2}\in\ker^{\bot}({\mathscr{L}})$.

Based on Theorem \ref{Main-Thm-1} and Theorem \ref{Main-Thm-2}, the Cauchy problem to \eqref{VMB-F-perturbative} admits a global solution
$(f_\epsilon,E_\epsilon,B_\epsilon)$ belonging to $L^\infty(\mathbb{R}_+;H^N_xL^2_v)$, for the noncutoff cases, from \eqref{Main-Thm-1-1}, one has
\begin{eqnarray}\label{limits-2}
	\mathcal{E}_{N,l}(t)+\mathcal{E}_{N}(t)
	\leq\mathcal{E}_{N,l}(0)+\mathcal{E}_{N}(0)\leq C
\end{eqnarray}
and
\begin{eqnarray}\label{limits-3}
	\int^\infty_0\left\{\mathcal{D}_{N}(t)+\mathcal{D}_{N-1,l}(t)\right\}dt\leq C
\end{eqnarray}
where $C$ is independent of $\varepsilon$. In fact, \eqref{limits-3} tells us that
\begin{eqnarray}\label{limits-4}
	\sum_{|\alpha|\leq N}\int^\infty_0\left\|\partial^\alpha\{{\bf I-P}\}f_{\varepsilon}\right\|^2_{D} dt\lesssim C\varepsilon^2.
\end{eqnarray}
which yields that
\begin{equation}
	\{{\bf I-P}\}f_{\varepsilon}\rightarrow 0, \textrm{\ strongly in $L^2(\mathbb{R}_+;H^N_xL^2_D)$ as $\varepsilon\rightarrow0$.}
\end{equation}
As for the cutoff cases, from \eqref{Main-Thm-2-1}, one also deduces that
\begin{equation}
	\{{\bf I-P}\}f_{\varepsilon}\rightarrow 0, \textrm{\ strongly in $L^2(\mathbb{R}_+;H^N_xL^2_\nu)$ as $\varepsilon\rightarrow0$.}
\end{equation}
By standard convergent method, there exist $f,E,B,\rho,u,\theta,n,w,j$ such that
\begin{eqnarray}\label{limit-weak}
	f_\epsilon&\rightarrow& f,\textrm{weakly$-*$ for $t>0$, weakly in $H^N_x L^2_v$},\nonumber\\
	E_\epsilon&\rightarrow& E,\textrm{weakly$-*$ for $t>0$, weakly in $H^N_x$},\nonumber\\
	B_\epsilon&\rightarrow& B,\textrm{weakly$-*$ for $t>0$, weakly in $H^N_x$},\nonumber\\
	\rho_\epsilon&\rightarrow& \rho,\textrm{weakly$-*$ for $t>0$, weakly in $H^N_x$},\nonumber\\
	u_\epsilon&\rightarrow& u,\textrm{weakly$-*$ for $t>0$, weakly in $H^N_x$},\nonumber\\
	\theta_\epsilon&\rightarrow& \theta,\textrm{weakly$-*$ for $t>0$, weakly in $H^N_x$},\nonumber\\
	n_\epsilon&\rightarrow& n,\textrm{weakly$-*$ for $t>0$, weakly in $H^N_x$},\nonumber\\
	w_\epsilon&\rightarrow& w,\textrm{weakly in $L^2(\mathbb{R}^+;H^N_xL^2_v)$},\nonumber\\
	j_\epsilon&\rightarrow& j,\textrm{weakly in $L^2(\mathbb{R}^+;H^N_xL^2_v)$}.
\end{eqnarray}
where
\begin{eqnarray}\label{f-limit}
	f &=&\left(\rho+n/2\right)\frac{q_1+q_2}{2}{M}^{1/2}+\left(\rho-n/2\right)\frac{q_1-q_2}{2}{M}^{1/2}\nonumber\\
	&&+u\cdot v q_2{{M}}^{1/2}+\theta q_2(| v|^2-3){M}^{1/2}
\end{eqnarray}
with
\begin{equation}\label{Fluid-Quanities-non}
	\begin{aligned}
		\rho = \tfrac{1}{2} \langle {f} , {q_2} \sqrt{M} \rangle_{L^2_v} \,, \ u = \tfrac{1}{2} \langle f , {q_2} v \sqrt{M} \rangle_{L^2_v} \,,
		\\ \theta = \tfrac{1}{2} \langle f , {q_2} ( \tfrac{|v|^2}{3} - 1 ) \sqrt{M} \rangle_{L^2_v} \,,
		n = \langle f , {q_1} \sqrt{M} \rangle_{L^2_v} \,.
	\end{aligned}
\end{equation}
In the sense of distributions, utilizing the uniform estimates \eqref{limits-2}, \eqref{limits-3} and \eqref{limits-4}, applying Aubin-Lions-Simon Theorem and the similar argument as \cite{Jiang-Luo-2022-Ann.PDE}, we can deduce that
$$(u, \theta, n, E, B) \in C(\R^+; H^{N-1}_x ) \cap L^\infty (\R^+; H^N_x) $$
satisfy the following two fluid incompressible NSFM equations with Ohm's law
\begin{equation*}
	\left\{
	\begin{array}{l}
		\partial_t u + u \cdot \nabla_x u - \mu \Delta_x u + \nabla_x p = \tfrac{1}{2} ( n E + j \times B ) \,, \qquad \div_x \, u = 0 \,, \\ [2mm]
		\partial_t \theta + u \cdot \nabla_x \theta - \kappa \Delta_x \theta = 0 \,, \qquad\qquad\qquad\qquad\qquad\quad\ \, \rho + \theta = 0 \,, \\ [2mm]
		\partial_t E - \nabla_x \times B = - j \,, \qquad\qquad\qquad\qquad\qquad\qquad\ \ \ \, \div_x \, E = n \,, \\ [2mm]
		\partial_t B + \nabla_x \times E = 0 \,, \qquad\qquad\qquad\qquad\qquad\qquad\qquad \div_x \, B = 0 \,, \\ [2mm]
		\qquad \qquad j - nu = \sigma \big( - \tfrac{1}{2} \nabla_x n + E + u \times B \big) \,, \qquad\quad\,\ w = \tfrac{3}{2} n \theta \,,
	\end{array}
	\right.
\end{equation*}
with initial data
\begin{equation*}
	\begin{aligned}
		u (0,x) = \mathcal{P} u^{in} (x) \,, \ \theta (0,x) = \tfrac{3}{5} \theta^{in} (x) - \tfrac{2}{5} \rho^{in} (x) \,, \ E(0,x) = E^{in} (x) \,, \ B(0,x) = B^{in} (x) \,.
	\end{aligned}
\end{equation*}
We omit its detail proof for brevity.
Moreover, from the uniform bound \eqref{Main-Thm-1-1} in Theorem \ref{Main-Thm-1}, \eqref{Main-Thm-2-1} in Theorem \ref{Main-Thm-2} and the convergence \eqref{limit-weak}, we have
\begin{eqnarray}
	&&\sup_{t \geq 0} \big( \|f \|^2_{H^N_{x}L^2_v} + \| E \|^2_{H^N_x} + \| B \|^2_{H^N_x} \big) (t)\nonumber\\
	& \leq& \sup_{t \geq 0} \big( \| f_\eps \|^2_{H^N_{x}L^2_v} + \| E_\eps \|^2_{H^N_x} + \| B_\eps \|^2_{H^N_x} \big) (t) \nonumber\\
	& \lesssim&Y^2_{f_\varepsilon,E_\varepsilon,B_\varepsilon}(0)\rightarrow Y^2_{f,E,B}(0)
\end{eqnarray}
as $\eps \rightarrow 0$. Hence
\begin{equation*}
	\begin{aligned}
		&&\sup_{t \geq 0} \big( \|f \|^2_{H^N_{x}L^2_v} + \| E \|^2_{H^N_x} + \| B \|^2_{H^N_x} \big) (t) \lesssim Y^2_{f,E,B}(0).
	\end{aligned}
\end{equation*}
Recalling the definition of $f$ in \eqref{f-limit}, there are positive generic constants $C_h$ and $C_l$ such that
\begin{equation*}
	\begin{aligned}
		C_l \big( \| u \|^2_{H^N_x} + \| \theta \|^2_{H^N_x} + \| n \|^2_{H^N_x} \big) \leq \| f \|^2_{H^N_{x}L^2_v} \leq C_h \big( \| u \|^2_{H^N_x} + \| \theta \|^2_{H^N_x} + \| n \|^2_{H^N_x} \big) \,.
	\end{aligned}
\end{equation*}
Consequently, the solution $(u,\theta,n,E,B)$ to the two fluid incompressible NSFM equations \eqref{INSFM-Ohm} with Ohm's law constructed above admits the energy bound
\begin{equation*}
	\begin{aligned}
		\sup_{t \geq 0} \big( & \| u \|^2_{H^N_x} + \| \theta \|^2_{H^N_x} + \| n \|^2_{H^N_x} + \| E \|^2_{H^N_x} + \| B \|^2_{H^N_x} \big) (t) \lesssim Y^2_{f,E,B}(0).
	\end{aligned}
\end{equation*}
Then the proof of Theorem \ref{Main-Thm-3} is complete.

\appendix

\section{Appendix}
\subsection{Properties of the collision operator for non-cutoff cases}
Take \[w_{\ell,\vartheta}=\langle v\rangle^{\ell}e^{\frac{q\langle v\rangle}{(1+t)^{\vartheta}}}.\]
\begin{lemma}\label{L-noncut}
	Let $\ell\in \mathbb{R}$, $\eta>0$, $0<s<1$ and $\max\{-3,-\frac32-2s\}<\gamma<0$.
	\begin{itemize}
		\item [(i).] It holds that
		\begin{equation}\label{L-noncut-1}
			\langle\mathscr{L}g,g\rangle\gtrsim|\{{\bf{I}}-{\bf{P}}\}g|_{L^2_D}^2.
		\end{equation}
		\item [(ii).] It holds that
		\begin{equation}\label{L-noncut-2}
			\left\langle w_{\ell,\vartheta}^{2}\mathscr{L}g, g\right\rangle\gtrsim |w_{\ell,\vartheta} g|_D^2-C|g|^2_{L^2_{B_C}}.
		\end{equation}
		For $|\beta|\geq 1$, one has
		\begin{equation}\label{L-noncut-3}
			\left\langle w_{\ell,\vartheta}^{2}\partial _\beta \mathscr{L}g,\partial_\beta g\right\rangle\gtrsim \left|w_{\ell,\vartheta}\partial_\beta g\right|_{L^2_D}^2-C\sum_{\beta'<\beta}\left|w_{\ell,\vartheta}\partial_{\beta'} g\right|_{L^2_D}^2 -C|g|^2_{L^2_{B_C}}.
		\end{equation}
	\end{itemize}
\end{lemma}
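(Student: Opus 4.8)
The plan is to deduce everything from the classical non-cutoff spectral-gap estimate for the single-species linearized Boltzmann operator, transporting it first through the weight $w_{\ell,\vartheta}$ and then through the velocity derivatives $\partial_\beta$. For the unweighted bound \eqref{L-noncut-1}: writing $g=[g^+,g^-]$ and using the explicit form of $\mathscr{L}_\pm$, the quadratic form $\langle\mathscr{L}g,g\rangle$ decomposes into a nonnegative combination of the single-species forms evaluated at $g^++g^-$ and $g^+-g^-$; for each such form the Bobylev--AMUXY coercivity estimate bounds it below by the square of the anisotropic dissipation norm of its microscopic part, and since the null space of $\mathscr{L}$ is precisely $\mathcal{N}$ and $|\cdot|_{L^2_D}$ is exactly the norm appearing there, \eqref{L-noncut-1} follows. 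This is essentially recorded in \cite{AMUXY-JFA-2012} and the appendix of \cite{DLYZ-KRM2013}, so I would only reproduce the bookkeeping of the $\pm$ components.

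For the weighted bound \eqref{L-noncut-2}: since $0<q\ll1$ and $(1+t)^\vartheta\ge1$, the effective exponent $q'(t):=q/(1+t)^\vartheta$ lies in $(0,q]$, so it suffices to prove the estimate for the time-frozen weight $w=w_{\ell,\vartheta}(t,\cdot)=\langle v\rangle^\ell e^{q'\langle v\rangle}$, uniformly in $q'\in(0,q]$. I would use the splitting $\langle w^2\mathscr{L}g,g\rangle=\langle\mathscr{L}(wg),wg\rangle+\mathcal R(g)$, where $\mathcal R(g)$ collects the terms in which a finite difference of the weight, $w(v')-w(v)$ or $w(u')-w(u)$, replaces one factor of $w$. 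The first term is $\gtrsim|\{{\bf I-P}\}(wg)|^2_{L^2_D}\ge\tfrac12|wg|^2_{L^2_D}-C|g|^2_{L^2_{B_C}}$ by \eqref{L-noncut-1} together with removal of the compactly supported finite-dimensional projection. For $\mathcal R(g)$ the Taylor bound $|w(v')-w(v)|\lesssim|v-v'|(1+q')\langle v\rangle^\ell e^{q'\langle v\rangle}$, combined with the cancellation lemma and the singular-kernel estimates of \cite{AMUXY-JFA-2012}, shows that every piece either gains a power of $|v-v'|$ absorbed by the $L^2_D$-norm, or carries an explicit factor $q'\le q$; hence for $q$ small $|\mathcal R(g)|\le\tfrac14|wg|^2_{L^2_D}+C|g|^2_{L^2_{B_C}}$, and \eqref{L-noncut-2} follows.

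For the derivative bound \eqref{L-noncut-3}: applying $\partial_\beta$ and Leibniz' rule, $\partial_\beta(\mathscr{L}g)=\mathscr{L}(\partial_\beta g)+\sum_{\beta'<\beta}C^{\beta'}_\beta\,\mathscr{L}^{(\beta-\beta')}(\partial_{\beta'}g)$, where $\mathscr{L}^{(\beta-\beta')}$ denotes $\mathscr{L}$ with the collision kernel differentiated in $v$; by \cite{AMUXY-JFA-2012} these operators obey the same bilinear bounds as $\mathscr{L}$. Pairing with $w^2\partial_\beta g$, applying \eqref{L-noncut-2} to the leading term, and estimating $|\langle w^2\mathscr{L}^{(\beta-\beta')}\partial_{\beta'}g,\partial_\beta g\rangle|\le\eta|w\partial_\beta g|^2_{L^2_D}+C_\eta|w\partial_{\beta'}g|^2_{L^2_D}$ for the remainders (then absorbing the $\eta$-term on the left), an induction on $|\beta|$ yields \eqref{L-noncut-3}.

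The main obstacle is the second step: controlling $\mathcal R(g)$ when the weight contains the exponential $e^{q'\langle v\rangle}$ in the soft and strongly singular regime $\max\{-3,-\tfrac32-2s\}<\gamma<0$, $\tfrac12\le s<1$. One has to verify that differencing the exponential across a collision produces a genuine gradient-type gain matched by the $D$-norm rather than an uncontrolled growth in $v$, which is precisely where $q\ll1$ enters; everything else is routine bookkeeping once the bilinear machinery of \cite{AMUXY-JFA-2012} is in place.
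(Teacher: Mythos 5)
Your proposal is correct and follows essentially the same route as the paper: the paper does not prove this lemma itself but simply cites \cite{AMUXY-JFA-2012} for \eqref{L-noncut-1} and \cite{DLYZ-KRM2013} for the weighted estimates \eqref{L-noncut-2}--\eqref{L-noncut-3}, and your sketch (coercivity of $\mathscr{L}$ on $\mathcal{N}^{\perp}$, the commutator splitting $\langle w^{2}\mathscr{L}g,g\rangle=\langle\mathscr{L}(wg),wg\rangle+\mathcal{R}(g)$ with the finite-difference weight estimate, and Leibniz plus induction on $|\beta|$) is exactly the argument carried out in those references. The one step you flag as the main obstacle -- controlling $\mathcal{R}(g)$ for the exponential weight $e^{q\langle v\rangle/(1+t)^{\vartheta}}$ in the soft, strongly singular regime using $q\ll1$ -- is indeed the substantive content of the cited lemma in \cite{DLYZ-KRM2013}, so nothing is missing beyond what the paper itself delegates to the literature.
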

\begin{proof}
	\eqref{L-noncut-1} has been shown in \cite{AMUXY-JFA-2012}.
	The relevant coercive estimate \eqref{L-noncut-2} and \eqref{L-noncut-3} with exponential weights can be found in \cite{DLYZ-KRM2013}.
	
\end{proof}
\begin{lemma}\label{Gamma-noncut}\quad
	For all $0<s<1$, ${q}>0$ and  $\ell\geq 0$,
\begin{itemize}
	\item [i)] It holds that
	\begin{eqnarray}\label{Gamma-noncut-1}
		&&\left|\left\langle \partial_\beta^\alpha\mathscr{T}(f,g), w_{\ell,\vartheta}^2\partial_\beta^\alpha h\right\rangle\right|\nonumber\\
		&\lesssim&\sum\left\{\left|w_{\ell,\vartheta}\partial^{\alpha_1}_{\beta_1}f\right|_{L^2_
			{\frac\gamma2+s}}
		\left|\partial^{\alpha-\alpha_1}_{\beta_2}g\right|_{L^2_D}+\left|\partial^{\alpha-\alpha_1}_{\beta_2}g\right|_{L^2_{\frac\gamma2+s}}
		\left|w_{\ell,\vartheta}\partial^{\alpha_1}_{\beta_1}f\right|_{L^2_D}\right\}
		\left|w_{\ell,\vartheta}\partial^{\alpha}_{\beta}h\right|_{L^2_D}\nonumber\\
		&&+\min\left\{\left|w_{\ell,\vartheta}\partial^{\alpha_1}_{\beta_1}f\right|_{L^2_v}
		\left|\partial^{\alpha-\alpha_1}_{\beta_2}g\right|_{L^2_{\frac\gamma2+s}},\left|\partial^{\alpha-\alpha_1}_{\beta_2}g\right|_{L^2_v}
		\left|w_{\ell,\vartheta}\partial^{\alpha_1}_{\beta_1}f\right|_{L^2_{\frac\gamma2+s}}\right\}
		\left|w_{\ell,\vartheta}\partial^{\alpha}_{\beta}h\right|_{L^2_D}\nonumber\\
		&&+\sum\left|e^{\frac{{q}\langle v\rangle}{(1+t)^{\vartheta}}}\partial^{\alpha_1}_{\beta_2}g\right|_{L^2_v}
		\left|w_{\ell,\vartheta}\partial^{\alpha-\alpha_1}_{\beta_1}f\right|_{L^2_{\frac\gamma2+s}}
		\left|w_{\ell,\vartheta}\partial^{\alpha}_{\beta}h\right|_{L^2_D},
	\end{eqnarray}
	where the summation $\sum$ is taken over $\alpha_1+\alpha_2\leq \alpha$ and $\beta_1+\beta_2\leq\beta$.
	
	Furthermore, we have
	\begin{eqnarray}\label{Gamma-noncut-2}
		\left|\langle\mathscr{T}(f,g), h\rangle\right|&\lesssim& \left\{|f|_{L^2_{\frac\gamma2+s}}|g|_{L^2_D}+|g|_{L^2_{\frac\gamma2+s}}|f|_{L^2_D}\right.\nonumber\\
		&&\left.+\min\left\{|f|_{L^2_v}|g|_{L^2_{\frac\gamma2+s}},|g|_{L^2}|f|_{L^2_{\frac\gamma2+s}}\right\}\right\}|h|_{L^2_D}.
	\end{eqnarray}
\item [ii)]	For $0<s<1,\ m\geq 4,\ \ell>0$, it holds that
\begin{equation}\label{Gamma-noncut-3}
	\left|w_{\ell,\vartheta}\partial^\alpha\mathscr{T}(f,f)\right|_{L^2_v}\lesssim
	\sum_{\alpha_1+\alpha_2\leq\alpha }\left|w_{\ell,\vartheta}\partial^{\alpha_1} f\right|_{H^3_{\frac\gamma2+s}}
	\left|w_{\ell,\vartheta}\partial^{\alpha_2} f\right|_{H^1_{\frac\gamma2+s}},
\end{equation}
or
\begin{equation}\label{Gamma-noncut-4}
	\left|w_{\ell,\vartheta}\partial^\alpha\mathscr{T}(f,f)\right|_{L^2_v}\lesssim
	\sum_{\alpha_1+\alpha_2\leq\alpha }\left|w_{\ell,\vartheta}\partial^{\alpha_1} f\right|_{L^2_{\frac\gamma2+s}}
	\left|w_{\ell,\vartheta}\partial^{\alpha_2} f\right|_{H^4_{\frac\gamma2+s}}.
\end{equation}
\end{itemize}	
\end{lemma}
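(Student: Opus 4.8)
\textbf{Proof proposal for Lemma \ref{Gamma-noncut}.}

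The plan is to reduce everything to the bilinear estimates for $\mathscr{T}$ on $L^2_v$ that are already available in the non-cutoff literature (e.g. \cite{AMUXY-JFA-2012, DLYZ-KRM2013}), and then to feed in the weight $w_{\ell,\vartheta}=\langle v\rangle^{\ell}e^{q\langle v\rangle/(1+t)^{\vartheta}}$ carefully. For part~i), I would start from the definition of $\mathscr{T}_\pm(f,g)=M^{-1/2}Q(M^{1/2}f^\pm,M^{1/2}g^\pm)+M^{-1/2}Q(M^{1/2}f^\pm,M^{1/2}g^\mp)$ and apply the Leibniz rule in both $x$ and $v$ to $\partial^\alpha_\beta\mathscr{T}(f,g)$, producing a finite sum over $\alpha_1+\alpha_2\le\alpha$, $\beta_1+\beta_2\le\beta$ of terms of the schematic form $M^{-1/2}\partial_{\beta'}[Q(M^{1/2}\partial^{\alpha_1}_{\beta_1}f,\,M^{1/2}\partial^{\alpha_2}_{\beta_2}g)]$, where the $v$-derivatives may hit $M^{1/2}$ and generate extra polynomial factors that are harmless. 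The key analytic input is the trilinear estimate pairing such a $Q$-term against $w_{\ell,\vartheta}^2\partial^\alpha_\beta h$: one commutes one copy of the weight onto each of the two input functions and onto $h$, using that $w_{\ell,\vartheta}(v')\lesssim w_{\ell,\vartheta}(v)w_{\ell,\vartheta}(v-u)$ along a collision (this is where $e^{q\langle v\rangle/(1+t)^\vartheta}$, being of order $\langle v\rangle$ in the exponent, behaves like a sub-multiplicative weight and the argument is the same as in \cite{DLYZ-KRM2013}), and invokes the known weighted upper bound for $Q$ in the $|\cdot|_{L^2_D}$ and $|\cdot|_{L^2_{\gamma/2+s}}$ norms. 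This produces exactly the three families of terms on the right-hand side of \eqref{Gamma-noncut-1}: the symmetric $L^2_D$–$L^2_{\gamma/2+s}$ split, the $\min\{\cdots\}$ term (arising from the option of putting the Sobolev-type loss on either factor), and the last term where the weight is distributed asymmetrically so that only $e^{q\langle v\rangle/(1+t)^\vartheta}$ (without the polynomial part) survives on the $g$-factor. Setting $\alpha=\beta=0$ and dropping all weights gives \eqref{Gamma-noncut-2} directly from the unweighted bilinear bound of \cite{AMUXY-JFA-2012}.

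For part~ii), I would pass from the bilinear form estimate to a pointwise-in-$x$ $L^2_v$ estimate on $w_{\ell,\vartheta}\partial^\alpha\mathscr{T}(f,f)$ by duality: test against an arbitrary $h(v)$ with $|h|_{L^2_v}\le1$, apply \eqref{Gamma-noncut-1} with $\beta=0$ (noting $|w_{\ell,\vartheta}\partial^\alpha h|_{L^2_D}$ can be replaced by $|h|_{L^2_v}$ up to the weight already absorbed, via $|\cdot|_{L^2_D}\lesssim|\cdot|_{H^s_{\gamma/2+s}}$ and then discarding regularity on the test function as in \cite{DLYZ-KRM2013}), and then control the $L^\infty_x$–$L^2_x$ or $L^2_x$–$L^\infty_x$ Sobolev embeddings in the $x$-variable that convert products $|w_{\ell,\vartheta}\partial^{\alpha_1}f|\,|w_{\ell,\vartheta}\partial^{\alpha_2}f|$ into the stated $H^3_{\gamma/2+s}\cdot H^1_{\gamma/2+s}$ or $L^2_{\gamma/2+s}\cdot H^4_{\gamma/2+s}$ structure, with $m\ge4$ giving enough room for the three-dimensional embedding $H^2\hookrightarrow L^\infty$. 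The two alternative forms \eqref{Gamma-noncut-3} and \eqref{Gamma-noncut-4} correspond to the two choices of which factor carries the higher number of derivatives.

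I expect the only genuine obstacle to be the weight-commutation step in part~i): one must verify that $w_{\ell,\vartheta}$ with the time-dependent exponential factor still satisfies the sub-multiplicativity along a collision uniformly in $t$, and that the extra polynomial factors coming from $\partial^\beta M^{1/2}$ do not overwhelm the $\langle v\rangle^{\gamma/2+s}$ gain in the dissipation norm — this is exactly why the statement only claims $\ell\ge0$ and keeps the slightly awkward three-term right-hand side rather than a cleaner symmetric one. Since $q/(1+t)^{\vartheta}\le q\ll1$, the exponential weight is uniformly comparable to $e^{q\langle v\rangle}$ which is the one treated in \cite{DLYZ-KRM2013}, so this reduces to citing that work; everything else is the routine Leibniz expansion plus Sobolev embedding in $x$. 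Therefore I would present the proof as: (a) Leibniz expansion; (b) invoke the weighted $Q$-estimate of \cite{DLYZ-KRM2013} term by term to get \eqref{Gamma-noncut-1}; (c) specialize to get \eqref{Gamma-noncut-2}; (d) dualize and apply $x$-Sobolev embeddings to get \eqref{Gamma-noncut-3}–\eqref{Gamma-noncut-4}.
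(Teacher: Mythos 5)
Your outline for part~i) matches what the paper actually does: the paper gives no argument beyond citing \cite{FLLZ-2018}, \cite{DLYZ-KRM2013} and \cite{AMUXY-JFA-2012}, and your Leibniz-plus-weighted-$Q$-estimate sketch (sub-multiplicativity of $e^{q\langle v\rangle/(1+t)^\vartheta}$ along collisions, with the excess exponential on the $u$-variable absorbed by the Maxwellian since $q\ll1$) is a faithful reconstruction of the strategy of those references, so i) and \eqref{Gamma-noncut-2} are fine as a citation-level proof.

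Part~ii), however, contains a genuine gap. First, a misreading: by the paper's conventions $|\cdot|_{H^k_{\gamma/2+s}}$ is a \emph{velocity} Sobolev norm and \eqref{Gamma-noncut-3}--\eqref{Gamma-noncut-4} are pointwise-in-$x$ statements; the $H^3$, $H^1$ and $H^4$ on the right-hand side do not arise from $L^\infty_x$--$L^2_x$ embeddings in the spatial variable, but from the fact that the non-cutoff $Q$ loses $2s$ derivatives in $v$ and one input factor must be controlled in $L^\infty_v$-type norms (via $H^2_v\hookrightarrow L^\infty_v$). Second, and more seriously, the duality reduction you propose does not close: testing $w_{\ell,\vartheta}\partial^\alpha\mathscr{T}(f,f)$ against $h$ with $|h|_{L^2_v}\le 1$ and invoking \eqref{Gamma-noncut-1} leaves the factor $\left|w_{\ell,\vartheta}h\right|_{L^2_D}$ on the test function, and the $L^2_D$ norm is equivalent (up to weights) to $|\cdot|_{H^s_{\gamma/2+s}}$, i.e.\ it \emph{contains} a fractional derivative of $h$; it is therefore not bounded by $|h|_{L^2_v}$, and ``discarding regularity on the test function'' goes in the wrong direction. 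Consequently \eqref{Gamma-noncut-3}--\eqref{Gamma-noncut-4} cannot be obtained from \eqref{Gamma-noncut-1} by duality; they require the separate, direct weighted $L^2_v$ upper bounds for the non-cutoff collision operator established in \cite{AMUXY-JFA-2012} and \cite{DLYZ-KRM2013}, which is precisely what the paper cites for them.
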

\begin{proof}
Actually, for the case of weak angular singularity $0<s<\frac12$, \eqref{Gamma-noncut-1} is shown as in \cite{FLLZ-2018}. For the case of strong angular singularity $\frac12\leq s<1 $, we can follow the proof strategy of \cite{FLLZ-2018} and \cite{DLYZ-KRM2013} with slight modifications to obtain \eqref{Gamma-noncut-1}. For the sake of brevity, we omit the detailed proof. \eqref{Gamma-noncut-2} and \eqref{Gamma-noncut-3}-\eqref{Gamma-noncut-4} have been shown in \cite{AMUXY-JFA-2012} and \cite{DLYZ-KRM2013}.
\end{proof}
\subsection{Properties of the collision operator for angular cutoff cases}
Take \[w_{l,\vartheta}=\langle v\rangle^{l}e^{\frac{q\langle v\rangle^2}{(1+t)^{\vartheta}}}.\]
\begin{lemma}\label{L-cut}
	Let $-3<\gamma\leq 1$,
		\begin{itemize}
			\item [i)]One has
			\begin{equation}\label{L-cut-1}
				\langle \mathscr{L}g, g\rangle\geq |{\bf \{I-P\}}g|^2_{L^2_\nu}.
			\end{equation}
		\item [ii)]
		It holds that
		\begin{equation}\label{L-cut-2}
			\left\langle w_{l,\vartheta}^{2}\mathscr{L}g, g\right\rangle \gtrsim |w_{l,\vartheta} g|_{L^2_\nu}^2-C\|g\|^2_{L^2_{B_C}}.
		\end{equation}
		For $|\beta|\geq 1$, one has
		\begin{equation}\label{L-cut-3}
			\left\langle w_{l,\vartheta}^{2}\partial _\beta \mathscr{L}g,\partial_\beta g\right\rangle \gtrsim \left|w_{l,\vartheta}\partial_\beta g\right|_{L^2_\nu}^2-C\sum_{\beta'<\beta}\left|w_{l,\vartheta}\partial_{\beta'} g\right|_{L^2_\nu}^2 -C\|g\|^2_{L^2_{B_C}}.
		\end{equation}
			\end{itemize}

\end{lemma}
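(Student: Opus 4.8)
The final statement to prove is Lemma \ref{L-cut}, the coercivity estimates for the linearized cutoff Boltzmann operator $\mathscr{L}$, both without weight (\ref{L-cut-1}) and with the time-velocity exponential weight $w_{l,\vartheta}=\langle v\rangle^l e^{q\langle v\rangle^2/(1+t)^\vartheta}$, in the plain form (\ref{L-cut-2}) and the velocity-differentiated form (\ref{L-cut-3}).

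\textbf{Overall approach.} The plan is to reduce everything to known coercivity facts for the classical one-species linearized cutoff Boltzmann operator and then track the effect of the weight and of the $\partial_\beta$ derivatives. For (\ref{L-cut-1}): I would first recall the standard decomposition $\mathscr{L} = \nu(v) - K$ where $\nu(v)\sim\langle v\rangle^\gamma$ is the collision frequency (so $|g|_{L^2_\nu}^2 = \langle\nu g,g\rangle$) and $K$ is the compact (Hilbert-Schmidt away from a ball) integral part. Since $\mathscr{L}$ is self-adjoint, nonnegative, with null space exactly $\mathcal{N}$ (stated in the body of the paper), a classical Weyl-type / compactness argument — of Grad and Caflisch type, adapted to the two-species setting exactly as in \cite{Guo-2003-Invent, DLYZ-CMP2017} — gives a spectral gap: $\langle\mathscr{L}g,g\rangle\gtrsim|\{\mathbf{I}-\mathbf{P}\}g|_{L^2_\nu}^2$. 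Because this is a verbatim restatement of the cutoff analogue of Lemma \ref{L-noncut}(i), I would simply cite the literature; there is nothing new to do here.

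\textbf{Weighted estimates.} For (\ref{L-cut-2}) the plan is: write $\langle w_{l,\vartheta}^2\mathscr{L}g,g\rangle = \langle w_{l,\vartheta}^2\nu g,g\rangle - \langle w_{l,\vartheta}^2 Kg,g\rangle$. The first term is exactly $|w_{l,\vartheta}g|_{L^2_\nu}^2$. For the second, split $K = K\chi_{|v|\le C} + K\chi_{|v|>C}$ and use the pointwise kernel bounds on $k(v,v')$ (Grad's estimates: $|k(v,v')|\lesssim |v-v'|^{-1}e^{-c|v-v'|^2 - c||v|^2-|v'|^2|^2/|v-v'|^2}$ type decay, with the extra $|v-v'|^\gamma$ factor for soft potentials). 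The key point is that $w_{l,\vartheta}(v)/w_{l,\vartheta}(v')$ is controlled by $e^{C|v-v'|\langle v'\rangle/(1+t)^\vartheta}$ times polynomial factors, and this is absorbed by the Gaussian-in-$(v-v')$ decay of the kernel provided $q$ is small — so $w_{l,\vartheta}^2 k(v,v')$ still defines an operator with small off-diagonal part for $|v|$ large, giving $|\langle w_{l,\vartheta}^2 Kg,g\rangle|\le \eta|w_{l,\vartheta}g|_{L^2_\nu}^2 + C_\eta|g|_{L^2_{B_C}}^2$; absorbing the $\eta$ term yields (\ref{L-cut-2}). This argument is standard for the soft-potential exponential-weight setting and appears in \cite{DLYZ-CMP2017}; I would invoke it. For (\ref{L-cut-3}), I would commute $\partial_\beta$ through $\mathscr{L}$: $\partial_\beta(\mathscr{L}g) = \mathscr{L}(\partial_\beta g) + \sum_{\beta'<\beta}\mathscr{L}_{\beta-\beta'}(\partial_{\beta'}g)$, where $\mathscr{L}_{\beta-\beta'}$ carries derivatives that fall on the kernel/collision frequency and are lower order; pairing with $w_{l,\vartheta}^2\partial_\beta g$, the main term gives $|w_{l,\vartheta}\partial_\beta g|_{L^2_\nu}^2$ up to the usual $\eta$-plus-$B_C$ correction (same mechanism as in (\ref{L-cut-2})), while the commutator terms are bounded by $\eta|w_{l,\vartheta}\partial_\beta g|_{L^2_\nu}^2 + C\sum_{\beta'<\beta}|w_{l,\vartheta}\partial_{\beta'}g|_{L^2_\nu}^2 + C|g|_{L^2_{B_C}}^2$ using the boundedness of the differentiated kernels between weighted $L^2_\nu$ spaces.

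\textbf{Main obstacle.} The only genuinely delicate point is the interaction between the \emph{time-dependent} exponential weight $e^{q\langle v\rangle^2/(1+t)^\vartheta}$ and the Gaussian decay of the Grad kernel: one must verify that the ratio $w_{l,\vartheta}(v)/w_{l,\vartheta}(v')$ growing like $e^{q(\langle v\rangle^2-\langle v'\rangle^2)/(1+t)^\vartheta}$ does not destroy the smallness of the off-diagonal part of $K$, uniformly in $t\ge 0$. This is precisely where the smallness hypothesis $0<q\ll 1$ enters, and where one needs $\gamma\ge -1$ (or at least the soft-potential range) so that the $\langle v\rangle^\gamma$ loss in $\nu$ is compatible with the polynomial index $l$ bookkeeping. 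Since all of these are worked out in the references cited in Lemma \ref{L-noncut} and their cutoff counterparts, the cleanest route is to state that (\ref{L-cut-1})–(\ref{L-cut-3}) are the cutoff analogues of the already-cited non-cutoff estimates and follow by the same method, with the weight $\langle v\rangle^\gamma$ replacing the $L^2_D$-norm; I would give a one-paragraph proof pointing to \cite{Guo-2003-Invent, DLYZ-CMP2017} and noting the role of small $q$.
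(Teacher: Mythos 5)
Your proposal is correct and, at bottom, does exactly what the paper does: the paper's entire proof of this lemma is a citation to \cite{Guo-2003-Invent} and \cite{DLYZ-CMP2017}, and your concluding recommendation is the same. The additional outline you give (the $\mathscr{L}=\nu-K$ decomposition, Grad's kernel bounds, absorption of the weight ratio $w_{l,\vartheta}(v)/w_{l,\vartheta}(v')$ by the Gaussian decay for $0<q\ll1$, and the commutator treatment of $\partial_\beta$) is the standard mechanism behind those references and is accurate.
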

\begin{proof}
	The detail proof of this lemma can be found in \cite{Guo-2003-Invent} and \cite{DLYZ-CMP2017}.
\end{proof}

\begin{lemma}\label{Gamma-cut}
	Let $-3<\gamma<0$, $N\geq4$, one has
	\begin{eqnarray}\label{Gamma-cut-0}
		&&\partial^\alpha_\beta\mathscr{T}_\pm(g_1,g_2)\nonumber\\&\equiv&\sum C_\beta^{\beta_0\beta_1\beta_2}C_\alpha^{\alpha_1\alpha_2} \mathscr{T}^0_\pm\left(\partial^{\alpha_1}_{\beta_1}g_1,\partial^{\alpha_2}_{\beta_2}g_2\right)\nonumber\\ \nonumber
		&\equiv& \sum C_\beta^{\beta_0\beta_1\beta_2}C_\alpha^{\alpha_1\alpha_2}\int_{\mathbb{R}^3\times\mathbb{S}^2}|v-u|^\gamma{\bf b}(\cos\theta)\partial_{\beta_0}[\mu(u)^\frac 12]\left\{\partial^{\alpha_1}_{\beta_1}g_{1\pm}(v')\partial^{\alpha_2}_{\beta_2}g_{2\pm}(u')\right.\\
		&&\left.
		+\partial^{\alpha_1}_{\beta_1}g_{1\pm}(v')\partial^{\alpha_2}_{\beta_2}g_{2\mp}(u')
		-\partial^{\alpha_1}_{\beta_1}g_{1\pm}(v)\partial^{\alpha_2}_{\beta_2}g_{2\pm}(u)
		-\partial^{\alpha_1}_{\beta_1}g_{1\pm}(v)\partial^{\alpha_2}_{\beta_2}g_{2\mp}(u)\right\}d\omega du,
	\end{eqnarray}
	where $g_i(t,x,v)=[g_{i+}(t,x,v), g_{i-}(t,x,v)]$ $(i=1,2)$ and the summations are taken for all $\beta_0+\beta_1+\beta_2=\beta, \alpha_1+\alpha_2=\alpha$.
	\begin{itemize}
		\item[(i).] When $|\alpha_1|+|\beta_1|\leq N$, we have
		\begin{equation}\label{Gamma-cut-1}
		\begin{split}
				&\left\langle w_{l,\vartheta}^2\mathscr{T}^0_\pm\left(\partial^{\alpha_1}_{\beta_1}g_1,\partial^{\alpha_2}_{\beta_2}g_2\right), \partial^{\alpha}_{\beta}g_3\right\rangle\\
				\lesssim&\sum\limits_{m\leq2}\left\{
				\left|\nabla^m_{v}\left\{\mu^\delta\partial^{\alpha_1}_{\beta_1}g_1\right\}\right|_{L^2_v}+\left|e^{\frac{q\langle v\rangle^2}{(1+t)^{\vartheta}}}\partial^{\alpha_1}_{\beta_1}g_1\right|_{L^2_v}\right\}
				\left|w_{l,\vartheta}\partial^{\alpha_2}_{\beta_2}g_2\right|_{L^2_{\nu}}
				\left|w_{l,\vartheta}\partial^{\alpha}_{\beta}g_3\right|_{L^2_{\nu}}\\
				&+\sum\limits_{m\leq2}\left\{
				\left|\nabla^m_{v}\left\{\mu^\delta\partial^{\alpha_1}_{\beta_1}g_1\right\}\right|_{L^2_v}+\left|w_{l,\vartheta}\partial^{\alpha_1}_{\beta_1}g_1\right|_{L^2_v}\right\}
				\left|e^{\frac{q\langle v\rangle^2}{(1+t)^{\vartheta}}}\partial^{\alpha_2}_{\beta_2}g_2\right|_{L^2_{\nu}}
				\left|w_{l,\vartheta}\partial^{\alpha}_{\beta}g_3\right|_{L^2_{\nu}},
			\end{split}
		\end{equation}
		or
		\begin{equation}\label{Gamma-cut-2}
	\begin{split}
		&\left\langle w_{l,\vartheta}^2\mathscr{T}^0_\pm\left(\partial^{\alpha_1}_{\beta_1}g_1,\partial^{\alpha_2}_{\beta_2}g_2\right), \partial^{\alpha}_{\beta}g_3\right\rangle\\
		\lesssim&\sum\limits_{m\leq2}\left\{
		\left|\nabla^m_{v}\left\{\mu^\delta\partial^{\alpha_2}_{\beta_2}g_2\right\}\right|_{L^2_v}+\left|e^{\frac{q\langle v\rangle^2}{(1+t)^{\vartheta}}}\partial^{\alpha_2}_{\beta_2}g_2\right|_{L^2_v}\right\}
		\left|w_{l,\vartheta}\partial^{\alpha_1}_{\beta_1}g_1\right|_{L^2_{\nu}}
		\left|w_{l,\vartheta}\partial^{\alpha}_{\beta}g_3\right|_{L^2_{\nu}}\\
		&+\sum\limits_{m\leq2}\left\{
		\left|\nabla^m_{v}\left\{\mu^\delta\partial^{\alpha_2}_{\beta_2}g_2\right\}\right|_{L^2_v}+\left|w_{l,\vartheta}\partial^{\alpha_2}_{\beta_2}g_2\right|_{L^2_v}\right\}
		\left|e^{\frac{q\langle v\rangle^2}{(1+t)^{\vartheta}}}\partial^{\alpha_1}_{\beta_1}g_1\right|_{L^2_{\nu}}
		\left|w_{l,\vartheta}\partial^{\alpha}_{\beta}g_3\right|_{L^2_{\nu}}.
	\end{split}
		\end{equation}
		\item[(ii).]
		Set $\varsigma(v)=\langle v\rangle^{-\gamma}\equiv \nu(v)^{-1},~l\geq0$, it holds that
		\begin{equation}\label{Gamma-cut-3}
			\begin{split}
				\left|\varsigma^{l}\mathscr{T}(g_1,g_2)\right|^2_{L^2_v}
				&\lesssim\displaystyle\sum_{|\beta|\leq2}\left|\varsigma^{l-|\beta|}\partial_{\beta}g_1\right|^2_{L^2_{\nu}}
				\left|\varsigma^{l}g_2\right|^2_{L^2_{\nu}},\\
				\left|\varsigma^{l}\mathscr{T}(g_1,g_2)\right|^2_{L^2_v}
				&\lesssim\sum_{|\beta|\leq2}\left|\varsigma^{l}g_1\right|^2_{L^2_{\nu}}
				\left|\varsigma^{l-|\beta|}\partial_{\beta}g_2\right|^2_{L^2_{\nu}}.
			\end{split}
		\end{equation}
	\end{itemize}
\end{lemma}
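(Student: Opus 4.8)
The plan is to establish \eqref{Gamma-cut-1}--\eqref{Gamma-cut-3} by the classical Grad-type analysis of the cutoff collision operator, carried out in the two-species setting and made compatible with the time-velocity weight $w_{l,\vartheta}=\langle v\rangle^l e^{q\langle v\rangle^2/(1+t)^\vartheta}$. Since the Leibniz expansion \eqref{Gamma-cut-0} (which follows from differentiating the kernel representation, the velocity derivatives being allowed to fall on the Maxwellian weight, and is recorded as part of the statement) has already reduced $\partial^\alpha_\beta\mathscr{T}_\pm$ to a finite sum of model terms $\mathscr{T}^0_\pm(\partial^{\alpha_1}_{\beta_1}g_1,\partial^{\alpha_2}_{\beta_2}g_2)$, it suffices to bound one such term. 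The two inequalities \eqref{Gamma-cut-1} and \eqref{Gamma-cut-2} differ only by which of the two factors is designated as the Maxwellian-weighted one, so I would prove \eqref{Gamma-cut-1} in detail and obtain \eqref{Gamma-cut-2} by interchanging the roles of $g_1$ and $g_2$ throughout.

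The first step is a weight-transfer lemma based on the collision identities for the $\omega$-representation $v'=v-[(v-u)\cdot\omega]\omega$, $u'=u+[(v-u)\cdot\omega]\omega$. Energy conservation $|v'|^2+|u'|^2=|v|^2+|u|^2$ together with $\langle v'\rangle,\langle u'\rangle\lesssim\langle v\rangle\langle u\rangle$ yields the multiplicative splittings $\langle v\rangle^l\lesssim\langle v'\rangle^l\langle u'\rangle^l$ and $e^{q\langle v\rangle^2/(1+t)^\vartheta}\leq e^{q\langle v'\rangle^2/(1+t)^\vartheta}e^{q\langle u'\rangle^2/(1+t)^\vartheta}$, and likewise with the primes removed. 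The decisive point is that the kernel carries the factor $\partial_{\beta_0}[\mu(u)^{1/2}]$, which is Schwartz in $u$; using the smallness $0<q\ll1$ imposed in the hypotheses one has $e^{q\langle u\rangle^2/(1+t)^\vartheta}\mu(u)^{1/2}\lesssim\mu(u)^{1/4}$ uniformly in $t$, so every power of $\langle u\rangle$ or $\langle u'\rangle$ produced by the weight transfer is absorbed into a residual Gaussian $\mu(u)^{\delta}$. This moves the weight $w_{l,\vartheta}$ so that it acts only on each of $g_1,g_2,g_3$ evaluated at its own argument.

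The main work is then the trilinear kernel estimate. After the pre-/post-collisional change of variables (Jacobian one), I would apply Cauchy--Schwarz in the finite collision measure $|v-u|^\gamma{\bf b}(\cos\theta)\mu(u)^\delta\,d\omega\,du\,dv$, legitimate because the cutoff bound \eqref{cutoff-assump} gives $\int_{\mathbb{S}^2}{\bf b}\,d\omega<\infty$, splitting the integral so that one factor carries $w_{l,\vartheta}\partial^\alpha_\beta g_3(v)$ and the other carries the two remaining factors. The soft-potential singularity $|v-u|^\gamma$ with $-3<\gamma<0$ is handled by splitting $\{|v-u|\le1\}$ and $\{|v-u|>1\}$: on the outer region $|v-u|^\gamma\le1$, while on the inner region $v\approx u$ so the Maxwellian $\mu(u)^\delta$ supplies matching decay in $v$ and $|v-u|^\gamma$ is locally integrable since $\gamma>-3$. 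Collecting the resulting factors produces the $|\cdot|_{L^2_\nu}$ norms on the right of \eqref{Gamma-cut-1}. The bookkeeping that decides, for each of the four terms of \eqref{Gamma-cut-0}, which factor ends up inside the $u$-integral (hence multiplied by the decaying Maxwellian and carrying the $\nabla^m_v\{\mu^\delta\,\cdot\}$, $m\le2$, factors generated by the velocity derivatives of the Carleman representation) and which retains the exponential-only weight $e^{q\langle v\rangle^2/(1+t)^\vartheta}$ is exactly what produces the two-term structure on the right-hand side of \eqref{Gamma-cut-1}.

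For part (ii), \eqref{Gamma-cut-3}, I would run the same kernel analysis but estimate $|\varsigma^l\mathscr{T}(g_1,g_2)|_{L^2_v}$ directly rather than testing against $g_3$, bounding $\mathscr{T}$ pointwise in $v$ by a convolution-type operator and then taking the $L^2_v$ norm with the Minkowski and Young inequalities. Here $\varsigma^l=\nu^{-l}=\langle v\rangle^{-\gamma l}$ grows polynomially, and Taylor-expanding the kernel weight around $v'\approx v$ is what introduces the up-to-second-order velocity derivatives $\varsigma^{l-|\beta|}\partial_\beta g_1$ with $|\beta|\le2$. I expect the main obstacle to be purely the weight bookkeeping: one must verify that the velocity weight transferred onto the factor evaluated at $u'$ (or $u$) is completely absorbed by $\mu(u)^\delta$ uniformly in the time-decaying exponential, which is precisely where the smallness $0<q\ll1$ and the Gaussian form of $w_{l,\vartheta}$ are used. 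Since every individual estimate of this type is contained, with the same structure, in \cite{Guo-2003-Invent} and \cite{DLYZ-CMP2017}, I would present the weight-compatibility argument in full and cite these references for the remaining routine computations.
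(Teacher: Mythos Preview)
Your proposal is correct and in fact goes well beyond what the paper does: the paper's own proof of this lemma consists of a single sentence referring the reader to \cite{DLYZ-CMP2017} for the details. Your sketch of the weight-transfer via energy conservation, the absorption of exponential factors into $\mu(u)^\delta$ using $0<q\ll1$, and the Cauchy--Schwarz/near-far splitting of the soft kernel is exactly the machinery developed in that reference (and in \cite{Guo-2003-Invent} for the unweighted case), so your plan to present the weight-compatibility argument and cite those works for the routine kernel computations is entirely consistent with---indeed more informative than---the paper's treatment.
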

\begin{proof}
	The detail proof of this lemma can be found in \cite{DLYZ-CMP2017}.
\end{proof}
\subsection{The proof of Lemma \ref{mac-dissipation}}\label{Macro}
\begin{proof}
Note that the proof here is not only valid for the non-cutoff cases, but also for the grad cutoff cases. For the sake of convenience, we use $\|\cdot\|_{D\vee \nu}$ to denote the dissipative norm $\|\cdot\|_{D}$ under non-cutoff cases or  $\|\cdot\|_{\nu}$ under cutoff cases.

By applying the macro-micro decomposition $(\ref{macro-micro})$ introduced in \cite{Guo-IUMJ-2004} and by defining moment functions $\mathcal{A}_{mj}(f_\varepsilon)$ and $\mathcal{B}_j(f_\varepsilon),~1\leq m,j\leq3,$ by
	\begin{equation*}
		\mathcal{A}_{mj}(f_\varepsilon)=\int_{{\mathbb{R}}^3}\left( v_m v_j-1\right){M}^{1/2}f_\varepsilon d v,\quad \mathcal{B}_j(f_\varepsilon)=\frac{1}{10}\int_{{\mathbb{R}}^3}\left(| v|^2-5\right) v_j{M}^{1/2}f_\varepsilon d v,
	\end{equation*}
 one can then derive from $(\ref{VMB-F-perturbative})$ a fluid-type system of equations
	\begin{equation}\label{Macro-equation}
		\begin{cases}
			\partial_t\rho_\varepsilon^\pm+\frac1\varepsilon\nabla_x\cdot u_\varepsilon+\frac1\varepsilon\nabla_x\cdot
			\left\langle v{M}^{1/2},\{{\bf I_\pm-P_\pm}\}f_\varepsilon\right\rangle=\left\langle {M}^{1/2}, g_\pm\right\rangle,\\[2mm]
			\partial_t\left(u_{\varepsilon,i}+\left\langle v_i{M}^{1/2},\{{\bf I_\pm-P_\pm}\}f_\varepsilon\right\rangle\right)+\frac1\varepsilon\partial_i\left(\rho_\varepsilon^\pm+2\theta_{\varepsilon}\right)\mp \frac1\varepsilon E_{\varepsilon,i}\\[2mm]
			\quad\quad+\frac1\varepsilon\nabla_x\cdot\left\langle vv_i{M}^{1/2}, \{{\bf I_\pm-P_\pm}\}f _{\varepsilon}\right\rangle=\left\langle v_i{M}^{1/2}, g_\pm+\frac1{\varepsilon^2} \mathscr{L}_\pm f_\varepsilon\right\rangle,\\[2mm]
			\partial_t\left(\theta _{\varepsilon}+\frac16\left\langle (|v|^2-3){M}^{1/2},\{{\bf I_\pm-P_\pm}\}f_\varepsilon\right\rangle\right)+\frac1{3\varepsilon}\nabla_x\cdot u_\varepsilon\\[2mm]
			\quad\quad+\frac1{6\varepsilon}\nabla_x\left\langle (|v|^2-3)v{M}^{1/2},\{{\bf I_\pm-P_\pm}\}f_\varepsilon\right\rangle=\left\langle(|v|^2-3){M}^{1/2},g_\pm-\frac1{\varepsilon^2}\mathscr{L}_\pm f_\varepsilon \right\rangle,
		\end{cases}
	\end{equation}
	and
	\begin{equation}\label{Micro-equation}
		\begin{cases}
			\partial_t[\mathcal{A}_{ii}(\{{\bf I_\pm-P_\pm}\}f _{\varepsilon})+2\theta_{\varepsilon}]+2\frac1\varepsilon\partial_iu_{{\varepsilon},i}
			=\mathcal{A}_{ii}(r_\pm+g_\pm),\\[2mm]
			\partial_t\mathcal{A}_{ij}(\{{\bf I_\pm-P_\pm}\}f _{\varepsilon})+\frac1\varepsilon\partial_ju_{{\varepsilon},i}+\frac1\varepsilon\partial_iu_{{\varepsilon},j}
			+\frac1\varepsilon\nabla_x\cdot \langle v{M}^{1/2},\{{\bf I_\pm-P_\pm}\}f_\varepsilon\rangle
			=\mathcal{A}_{ij}(r_\pm+g_\pm),\\[2mm]
			\partial_t \mathcal{B}_{j}(\{{\bf I_\pm-P_\pm}\}f _{\varepsilon})+\frac1\varepsilon\partial_j\theta_{\varepsilon}=\mathcal{B}_j(r_\pm+g_\pm),
		\end{cases}
	\end{equation}
	where
	\begin{eqnarray}\label{r-G}
		r_\pm&=&- \frac1\varepsilon v\cdot\nabla_x\{{\bf I_\pm-P_\pm}\}f_{\varepsilon}-\frac1{\varepsilon^2}{\mathscr{L}_\pm}f_{\varepsilon},\nonumber\\
		\quad g_\pm&=&\frac12 v\cdot E_{\varepsilon} f_{{\varepsilon},\pm}\mp (E_{\varepsilon}+\frac1\varepsilon v\times B_{\varepsilon})\cdot\nabla_{ v}f_{{\varepsilon},\pm}+\frac1\varepsilon\mathscr{T}_\pm(f_{\varepsilon},f_{\varepsilon}).
	\end{eqnarray}
	Setting
	$$
	G\equiv\left\langle v{M}^{1/2},\{{\bf I-P}\}f_{\varepsilon} \cdot q_1 \right\rangle,
	$$ we can get from (\ref{Macro-equation})-(\ref{Micro-equation}) that
	\begin{equation}\label{Macro-equation1}
		\begin{cases}
			\partial_t\left(\frac{\rho_\varepsilon^++\rho_\varepsilon^-}2\right)+\frac1\varepsilon\nabla_x\cdot u_\varepsilon=0\\[2mm]
			\partial_tu_{\varepsilon,i}+\frac1\varepsilon\partial_i\left(\frac{\rho_\varepsilon^++\rho_\varepsilon^-}2+2\theta_{\varepsilon}\right)+\frac1{2\varepsilon}\sum\limits_{j=1}^3\partial_j\mathcal{A}_{ij}(\{{\bf I-P}\}f_\varepsilon\cdot [1,1])=\frac{\rho_\varepsilon^+-\rho_\varepsilon^-}{2}E_i+\frac1\varepsilon[G\times B]_i,\\[2mm]
			\partial_t\theta _{\varepsilon}+\frac1{3\varepsilon}\nabla_x\cdot u_\varepsilon+\frac5{6\varepsilon}\sum\limits_{i=1}^3\partial_i \mathcal{B}_i(\{{\bf I-P}\}f_\varepsilon\cdot [1,1])=\frac16 G\cdot E_\varepsilon,
		\end{cases}
	\end{equation}
	and
	\begin{equation}\label{Micro-equation1}
		\begin{cases}
			\partial_t[\frac12\mathcal{A}_{ij}(\{{\bf I-P}\}f_\varepsilon\cdot [1,1])+2\theta_{\varepsilon}\delta_{ij}]+\frac1\varepsilon\partial_ju_{\varepsilon,i}+\frac1\varepsilon\partial_iu_{\varepsilon,j}
			=\frac12\mathcal{A}_{ij}(r_++r_-+g_++g_-),\\[2mm]
			\frac12\partial_t \mathcal{B}_{j}(\{{\bf I-P}\}f_\varepsilon\cdot [1,1])+\frac1\varepsilon\partial_j\theta_\varepsilon=\frac12\mathcal{B}_{j}(r_++r_-+g_++g_-).
		\end{cases}
	\end{equation}
	Moreover, by using the third equation of (\ref{Macro-equation1}) to replace $\partial_t\theta$ in the first equation of (\ref{Micro-equation1}), one has
	\begin{equation}\label{Micro-equation2}
		\begin{split}
			&\frac12\partial_t\mathcal{A}_{ij}(\{{\bf I-P}\}f_\varepsilon\cdot [1,1])+\frac1\varepsilon\partial_ju_{\varepsilon,i}+\frac1\varepsilon\partial_iu_{\varepsilon,j}-\frac2{3\varepsilon}\delta_{ij}\nabla_x\cdot u_\varepsilon\\[2mm]
			&-\frac5{3 \varepsilon}\delta_{ij}\nabla_x\cdot \mathcal{B}(\{{\bf I-P}\}f_\varepsilon\cdot [1,1])=\frac12\mathcal{A}_{ij}(r_++r_-+g_++g_-)-\frac13\delta_{ij}G\cdot E_\varepsilon.
		\end{split}
	\end{equation}
	In order to further obtain the dissipation rate related to $\rho^\pm_\varepsilon$ from the formula
	\[
	|\rho_\varepsilon^+|^2+|\rho_\varepsilon^-|^2=\frac{|\rho_\varepsilon^++\rho_\varepsilon^-|^2}{2}+\frac{|\rho_\varepsilon^+-\rho_\varepsilon^-|^2}{2},
	\]
	we need to consider the dissipation of $\rho_\varepsilon^+-\rho_\varepsilon^-$. For that purpose, one can get from $(\ref{Macro-equation1})_1$ and $(\ref{Macro-equation1})_2$ that
	\begin{equation}\label{a_+-a_--original}
		\begin{cases}
			\partial_t(\rho_\varepsilon^+-\rho_\varepsilon^-)+\frac1\varepsilon\nabla_x\cdot G=0,\\[2mm]
			\partial_tG+\frac1\varepsilon\nabla_x(\rho_\varepsilon^+-\rho_\varepsilon^-)-\frac2\varepsilon E_\varepsilon+\frac1\varepsilon\nabla_x\cdot \mathcal{A}(\{{\bf I-P}\}f_\varepsilon\cdot q_1)\\[2mm]
			\qquad \qquad=E(\rho_\varepsilon^++\rho_\varepsilon^-)+\frac2\varepsilon u_\varepsilon\times B_\varepsilon+\left\langle [v,-v]{M}^{1/2},\frac1{\varepsilon^2}\mathscr{L}f_\varepsilon+\frac1\varepsilon\mathscr{T}(f_\varepsilon,f_\varepsilon)\right\rangle.
		\end{cases}
	\end{equation}
	
	Applying $\partial^\alpha$ to $(\ref{Micro-equation1})_2  $, and multiplying to the identity with $\varepsilon\partial^\alpha\partial_j \theta $, and integrating the identity result over $\mathbb{R}^3_x$, one has
	\begin{eqnarray}
		&&\left\|\partial^\alpha\nabla_x \theta_\varepsilon\right\|^2=\sum_{j=1}^3\left(\frac1\varepsilon\partial^\alpha\partial_j\theta_\varepsilon, \varepsilon\partial^\alpha\partial_j\theta_\varepsilon\right)\nonumber\\
		&\lesssim&-\sum_{j=1}^3\left(\frac12\partial_t \partial^\alpha \mathcal{B}_{j}(\{{\bf I-P}\}f_\varepsilon\cdot [1,1]),\varepsilon\partial^\alpha\partial_j\theta_\varepsilon\right)+\sum_{j=1}^3\left(\frac12\partial^\alpha \mathcal{B}_{j}(r_++r_-+g_++g_-),\varepsilon\partial^\alpha\partial_j\theta_\varepsilon\right)\nonumber\\
		&=&-\frac{\d}{\d t}\sum_{j=1}^3\left(\frac12 \partial^\alpha \mathcal{B}_{j}(\{{\bf I-P}\}f_\varepsilon\cdot [1,1]),\varepsilon\partial^\alpha\partial_j\theta_\varepsilon\right)+\sum_{j=1}^3\left(\frac12 \partial^\alpha\mathcal{ B}_{j}(\{{\bf I-P}\}f_\varepsilon\cdot [1,1]),\varepsilon\partial^\alpha\partial_j\partial_t\theta_\varepsilon\right)\nonumber\\
		&&+\sum_{j=1}^3\left(\frac12\partial^\alpha \mathcal{B}_{j}(r_++r_-+g_++g_-),\varepsilon\partial^\alpha\partial_j\theta_\varepsilon\right).\label{4.18}
	\end{eqnarray}
	
	For the second term on the right hand of the second equality in \eqref{4.18}, we have from $(\ref{Macro-equation1})_3$ that
	\begin{equation*}
		\begin{aligned}
			&\sum_{j=1}^3\left(\frac12 \partial^\alpha \mathcal{B}_{j}(\{{\bf I-P}\}f_\varepsilon\cdot [1,1]),\varepsilon\partial^\alpha\partial_j\partial_t\theta_\varepsilon\right)\\
			=&\bigg(\frac12 \partial^\alpha \mathcal{B}_{j}(\{{\bf I-P}\}f_\varepsilon\cdot [1,1]),\varepsilon\partial^\alpha\partial_j\bigg\{-\frac1{3\varepsilon}\nabla_x\cdot u_\varepsilon-\frac5{6\varepsilon}\sum\limits_{i=1}^3\partial_i \mathcal{B}_{\varepsilon,i}(\{{\bf I-P}\}f_\varepsilon\cdot [1,1])+\frac16 G\cdot E_\varepsilon\bigg\}\bigg)\\
			\lesssim&\eta\left\|\partial^\alpha\nabla_x u_\varepsilon\right\|^2+\left\|\partial^{\alpha}\{{\bf I-P}\}f_\varepsilon\right\|_{H^1_xL^2_{D\vee\nu}}+\mathcal{E}_{N}(t)\mathcal{D}_{N}(t),
		\end{aligned}
	\end{equation*}
	while for the last term on the right hand of (\ref{4.18}), we can deduce that
	\begin{equation*}
		\begin{aligned}
			&\sum_{j=1}^3\left(\frac12\partial^\alpha \mathcal{B}_{j}(r_++r_-+g_++g_-),\varepsilon\partial^\alpha\partial_j\theta_\varepsilon\right)\\[2mm]
			\lesssim&\eta\left\|\partial^\alpha\nabla_x u_\varepsilon\right\|^2+\left\|\partial^{\alpha}\{{\bf I-P}\}f_\varepsilon\right\|_{H^1_xL^2_{D\vee\nu}}+\mathcal{E}_{N}(t)\mathcal{D}_{N}(t).
		\end{aligned}
	\end{equation*}
	Thus we can get that
	\begin{equation}\label{G-c}
		\begin{aligned}
			\frac{\d}{\d t}G^{f_\varepsilon}_{\theta_\varepsilon}(t)+\left\|\partial^\alpha\nabla_x \theta_\varepsilon\right\|^2\lesssim\eta\left\|\partial^\alpha\nabla_x u_\varepsilon\right\|^2+\left\|\partial^{\alpha}\{{\bf I-P}\}f_\varepsilon\right\|_{H^1_xL^2_{D\vee\nu}}+\mathcal{E}_{N}(t)\mathcal{D}_{N}(t).
		\end{aligned}
	\end{equation}
	Here
	\[
	G^{f_\varepsilon}_{\theta_\varepsilon}(t)\equiv\sum_{j=1}^3\left(\frac12 \partial^\alpha \mathcal{B}_{j}(\{{\bf I-P}\}f_\varepsilon\cdot [1,1]),\varepsilon\partial^\alpha\partial_j\theta_\varepsilon\right).
	\]
	On the other hand, by using \eqref{Micro-equation2}, one has
	\begin{eqnarray}
		&&-\left\{\frac12\partial_i\partial_t\mathcal{A}_{ii}(\{{\bf I-P}\}f_\varepsilon\cdot [1,1])+\sum_j\partial_j\partial_t\mathcal{A}_{ji}(\{{\bf I-P}\}f_\varepsilon\cdot [1,1])\right\}\nonumber\\
		&&-\frac2\varepsilon\Delta u_{\varepsilon,i}-\frac2\varepsilon\partial_i\partial_i u_{\varepsilon,i}+\frac5{ \varepsilon}\partial_i\nabla_x\cdot \mathcal{B}(\{{\bf I-P}\}f_\varepsilon\cdot [1,1])\nonumber\\
		&=&-\frac12\partial_i\mathcal{A}_{ii}(r_++r_-+g_++g_-)-\sum_j\partial_j\mathcal{A}_{ji}(r_++r_-+g_++g_-)+\partial_i\left[G\cdot E_\varepsilon\right].
	\end{eqnarray}
	Applying $\partial^\alpha$ to the above equality, multiplying it with $\epsilon\partial^\alpha u_{\varepsilon,i}$, and integrating the identity result over $\mathbb{R}^3_x$, then one has
	\begin{eqnarray*}
		&&2\|\partial^\alpha \nabla_xu_\varepsilon\|^2+2\sum_i\|\partial^\alpha \partial_iu_{\varepsilon,i}\|^2\nonumber\\
		&=&\sum_i\left(-\frac2\varepsilon\partial^\alpha \Delta u_{\varepsilon,i}, \varepsilon\partial^\alpha u_{\varepsilon,i}\right)+\sum_i
		\left(-\frac2\varepsilon\partial^\alpha\partial_i\partial_i u_{\varepsilon,i}, \varepsilon\partial^\alpha u_{\varepsilon,i}\right)\nonumber\\
		&=&\sum_i\left(-\frac2\varepsilon\partial^\alpha \Delta u_{\varepsilon,i}-\frac2\varepsilon\partial^\alpha\partial_i\partial_i u_{\varepsilon,i}, \varepsilon\partial^\alpha u_{\varepsilon,i}\right)\nonumber\\
		&=&\sum_i\left(\frac12\partial^\alpha\partial_i\partial_t\mathcal{A}_{ii}(\{{\bf I-P}\}f_\varepsilon\cdot [1,1])+\sum_j\partial^\alpha\partial_j\partial_t\mathcal{A}_{ji}(\{{\bf I-P}\}f_\varepsilon\cdot [1,1]), \varepsilon\partial^\alpha u_{\varepsilon,i}\right)\nonumber\\
		&&+\sum_i\left(-\frac12\partial^\alpha\partial_i\mathcal{A}_{ii}(r_++r_-+g_++g_-)
		-\sum_j\partial^\alpha\partial_j\mathcal{A}_{ji}(r_++r_-+g_++g_-), \varepsilon\partial^\alpha u_{\varepsilon,i}\right)\nonumber\\
		&&+\sum_i\left(\partial^\alpha\partial_i\left[G\cdot E_\varepsilon\right]-\frac5{ \varepsilon}\partial^\alpha\partial_i\nabla_x\cdot \mathcal{B}(\{{\bf I-P}\}f_\varepsilon\cdot [1,1]), \varepsilon\partial^\alpha u_{\varepsilon,i}\right)\nonumber\\
		&=&\frac{\d}{\d t}\sum_i\left(\frac12\partial^\alpha\partial_i\mathcal{A}_{ii}(\{{\bf I-P}\}f_\varepsilon\cdot [1,1])+\sum_j\partial^\alpha\partial_j\mathcal{A}_{ji}(\{{\bf I-P}\}f_\varepsilon\cdot [1,1]), \varepsilon\partial^\alpha u_{\varepsilon,i}\right)\nonumber\\
		&&-\sum_i\left(\frac12\partial^\alpha\partial_i\mathcal{A}_{ii}(\{{\bf I-P}\}f_\varepsilon\cdot [1,1])+\sum_j\partial^\alpha\partial_j\mathcal{A}_{ji}(\{{\bf I-P}\}f_\varepsilon\cdot [1,1]), \varepsilon\partial^\alpha \partial_tu_{\varepsilon,i}\right)\nonumber\\
		&&+\sum_i\left(-\frac12\partial^\alpha\partial_i\mathcal{A}_{ii}(r_++r_-+g_++g_-)
		-\sum_j\partial^\alpha\partial_j\mathcal{A}_{ji}(r_++r_-+g_++g_-), \varepsilon\partial^\alpha u_{\varepsilon,i}\right)\nonumber\\
		&&+\sum_i\left(\partial^\alpha\partial_i\left[G\cdot E_\varepsilon\right]-\frac5{ \varepsilon}\partial^\alpha\partial_i\nabla_x\cdot \mathcal{B}(\{{\bf I-P}\}f_\varepsilon\cdot [1,1]), \varepsilon\partial^\alpha u_{\varepsilon,i}\right)\nonumber\\
		&=&\frac{\d}{\d t}\sum_i\left(\frac12\partial^\alpha\partial_i\mathcal{A}_{ii}(\{{\bf I-P}\}f_\varepsilon\cdot [1,1])+\sum_j\partial^\alpha\partial_j\mathcal{A}_{ji}(\{{\bf I-P}\}f_\varepsilon\cdot [1,1]), \varepsilon\partial^\alpha u_{\varepsilon,i}\right)\nonumber\\
		&&+\sum_i\left(\frac12\partial^\alpha\partial_i\mathcal{A}_{ii}(\{{\bf I-P}\}f_\varepsilon\cdot [1,1])+\sum_j\partial^\alpha\partial_j\mathcal{A}_{ji}(\{{\bf I-P}\}f_\varepsilon\cdot [1,1]), \partial^\alpha \left\{\partial_i\big(\frac{\rho_\varepsilon^++\rho_\varepsilon^-}2+2\theta_\varepsilon\big)\right\}\right)\nonumber\\
		&&-\sum_i\left(\frac12\partial^\alpha\partial_i\mathcal{A}_{ii}(\{{\bf I-P}\}f_\varepsilon\cdot [1,1])+\sum_j\partial^\alpha\partial_j\mathcal{A}_{ji}(\{{\bf I-P}\}f_\varepsilon\cdot [1,1]), \varepsilon\partial^\alpha \left\{\frac{\rho_\varepsilon^+-\rho_\varepsilon^-}{2}E_{\varepsilon,i}+\frac1\varepsilon[G\times B_\varepsilon]_i\right\}\right)\nonumber\\
		&&+\sum_i\left(\frac12\partial^\alpha\partial_i\mathcal{A}_{ii}(\{{\bf I-P}\}f_\varepsilon\cdot [1,1])+\sum_j\partial^\alpha\partial_j\mathcal{A}_{ji}(\{{\bf I-P}\}f_\varepsilon\cdot [1,1]), \frac1{2}\partial^\alpha \partial_i\big(\sum\limits_{j=1}^3\partial_j\mathcal{A}_{ij}(\{{\bf I-P}\}f_\varepsilon\cdot [1,1])\right)\nonumber\\
		&&+\sum_i\left(-\frac12\partial^\alpha\partial_i\mathcal{A}_{ii}(r_++r_-+g_++g_-)
		-\sum_j\partial^\alpha\partial_j\mathcal{A}_{ji}(r_++r_-+g_++g_-), \varepsilon\partial^\alpha u_{\varepsilon,i}\right)\nonumber\\
		&&+\sum_i\left(\partial^\alpha\partial_i\left[G\cdot E_\varepsilon\right]-\frac5{ \varepsilon}\partial^\alpha\partial_i\nabla_x\cdot \mathcal{B}(\{{\bf I-P}\}f_\varepsilon\cdot [1,1]), \varepsilon\partial^\alpha u_{\varepsilon,i}\right)\nonumber
	\end{eqnarray*}
	The second term on the right-hand side of the above equality can be bounded by
	\begin{eqnarray}
		&&\sum_i\left(\frac12\partial^\alpha\partial_i\mathcal{A}_{ii}(\{{\bf I-P}\}f_\varepsilon\cdot [1,1])+\sum_j\partial^\alpha\partial_j\mathcal{A}_{ji}(\{{\bf I-P}\}f_\varepsilon\cdot [1,1]), \partial^\alpha \left\{\partial_i\big(\frac{\rho_\varepsilon^++\rho_\varepsilon^-}2+2\theta_{\varepsilon}\big)\right\}\right)\nonumber\\
		&\lesssim&\eta\|\partial^{\alpha}\nabla_x\{\rho_\varepsilon^++\rho_\varepsilon^-\}\|^2+\eta\|\partial^{\alpha}\nabla_x\theta_{\varepsilon}\|^2
		+\left\|\partial^{\alpha}\nabla_x\{{\bf I-P}\}f_\varepsilon\right\|_{D\vee\nu}^2.
	\end{eqnarray}
	The other terms can be bounded by
	\begin{eqnarray}
		&\lesssim&\eta\|\partial^\alpha\nabla_x u_\varepsilon\|^2+\left\|\partial^{\alpha}\nabla_x\{{\bf I-P}\}f_\varepsilon\right\|_{D\vee\nu}^2+\mathcal{E}_N(t)\mathcal{D}_N(t).
	\end{eqnarray}
	Consequently, one has
	\begin{eqnarray}\label{G-b}
		&&\frac{\d}{\d t}G^{f_\varepsilon}_{u_\varepsilon}(t)+\|\partial^\alpha \nabla_xu_\varepsilon\|^2+\sum_i\|\partial^\alpha \partial_iu_{\varepsilon,i}\|^2\nonumber\\
		&\lesssim&\eta\|\partial^{\alpha}\nabla_x\{\rho_\varepsilon^++\rho_\varepsilon^-\}\|^2+\eta\|\partial^{\alpha}\nabla_x\theta_\varepsilon\|^2+\left\|\partial^{\alpha}\nabla_x\{{\bf I-P}\}f_\varepsilon\right\|_{D\vee\nu}^2+\mathcal{E}_N(t)\mathcal{D}_N(t),
	\end{eqnarray}
	where $G^{f_\varepsilon}_{u_\varepsilon}(t)$ denotes that
	\[G^{f_\varepsilon}_{u_\varepsilon}(t)\equiv\sum_i\left(\frac12\partial^\alpha\partial_i\mathcal{A}_{ii}(\{{\bf I-P}\}f_\varepsilon\cdot [1,1])+\sum_j\partial^\alpha\partial_j\mathcal{A}_{ji}(\{{\bf I-P}\}f_\varepsilon\cdot [1,1]), \varepsilon\partial^\alpha u_{\varepsilon,i}\right).\]
	Next, we estimate $\rho_\varepsilon^++\rho_\varepsilon^-$. To this end, we have from $(\ref{Macro-equation1})_2$ and by employing the same argument to deduce $(\ref{G-c})$ that
	\begin{equation}\label{G-a}
		\begin{split}
&	\frac{\d}{\d t}	G^{f_\varepsilon}_{\rho_\varepsilon}(t)+\|\partial^\alpha\nabla_x(\rho_\varepsilon^++\rho_\varepsilon^-)\|^2\\
\lesssim&\|\partial^\alpha\nabla_xu_\varepsilon\|^2
+\|\partial^\alpha\nabla_x\theta_\varepsilon\|^2
+\left\|\partial^{\alpha}\nabla_x\{{\bf I-P}\}f_\varepsilon\right\|_{D\vee\nu}^2+\mathcal{E}_N(t)\mathcal{D}_N(t)
		\end{split}	
	\end{equation}
	with
	$$
	G^{f_\varepsilon}_{\rho_\varepsilon}(t)\equiv\sum_{i}\left(\partial^\alpha u_{\varepsilon, i},\partial^\alpha\partial_i(\rho_\varepsilon^++\rho_\varepsilon^-)\right).
	$$
	
	Set
	\begin{equation*}
		 G^{f_\varepsilon}_{\rho_\varepsilon^++\rho_\varepsilon^-,u_\varepsilon,\theta_\varepsilon}(t)=G^{f_\varepsilon}_{u_\varepsilon}(t)+\kappa_1G^{f_\varepsilon}_{\theta_\varepsilon}(t)+\kappa_2G^{f_\varepsilon}_{\rho_\varepsilon}(t), , 0<\kappa_2\ll\kappa_1\ll1.
	\end{equation*}
	we can deduce from (\ref{G-c}), (\ref{G-b}), and (\ref{G-a}) that
	\begin{equation}\label{g-f-}
		\begin{aligned}
			\frac{\d}{\d t}G^f_{\rho_\varepsilon^++\rho_\varepsilon^-,u_\varepsilon,\theta_\varepsilon}(t)+\left\|\partial^{\alpha}\nabla_x[\rho_\varepsilon^++\rho_\varepsilon^-,u_\varepsilon,\theta_\varepsilon]\right\|^2
			\lesssim\left\|\{{\bf I-P}\}f_\varepsilon\right\|^2_{H^N_xL^2_{{D\vee\nu}}}+\mathcal{E}_{N}(t)\mathcal{D}_{N}(t).
		\end{aligned}
	\end{equation}
	Finally, for the corresponding estimate on $\rho_\varepsilon^+-\rho_\varepsilon^-$, we have from $\eqref{a_+-a_--original}_2$ that
	\begin{eqnarray}
		&&\left\|\partial^{\alpha}\nabla_x(\rho_\varepsilon^+-\rho_\varepsilon^-)\right\|^2+2\left\|\partial^{\alpha}(\rho_\varepsilon^+-\rho_\varepsilon^-)\right\|^2
		=\left(\partial^{\alpha}\nabla_x(\rho_\varepsilon^+-\rho_\varepsilon^-)-2\partial^\alpha E_\varepsilon,\partial^{\alpha}\nabla_x(\rho_\varepsilon^+-\rho_\varepsilon^-)\right)\nonumber\\
		&=&\left(\partial^{\alpha}\left\{\frac1\epsilon\nabla_x(\rho_\varepsilon^+-\rho_\varepsilon^-)-\frac2\epsilon E_\varepsilon\right\},\varepsilon\partial^{\alpha}\nabla_x(\rho_\varepsilon^+-\rho_\varepsilon^-)\right)\nonumber\\
		&=&\left(\partial^{\alpha}\left\{-\partial_tG-\frac1\varepsilon\nabla_x\cdot A(\{{\bf I-P}\}f_\varepsilon\cdot q_1)\right\},\varepsilon\partial^{\alpha}\nabla_x(\rho_\varepsilon^+-\rho_\varepsilon^-)\right)\nonumber\\
		&&+\left(\partial^{\alpha}\left\{E_\varepsilon(\rho_\varepsilon^++\rho_\varepsilon^-)+\frac2\varepsilon u_\varepsilon\times B_\varepsilon+\left\langle [v,-v]{M}^{1/2},\frac1{\varepsilon^2}\mathscr{L}f_\varepsilon+\frac1\varepsilon\mathscr{T}(f_\varepsilon,f_\varepsilon)\right\rangle\right\},\varepsilon\partial^{\alpha}\nabla_x(\rho_\varepsilon^+-\rho_\varepsilon^-)\right)\nonumber\\
		&=&-\frac{\d}{\d t}\left(\partial^{\alpha}G,\varepsilon\partial^{\alpha}\nabla_x(\rho_\varepsilon^+-\rho_\varepsilon^-)\right)
		+\left(\partial^{\alpha}G,\varepsilon\partial^{\alpha}\nabla_x\partial_t(\rho_\varepsilon^+-\rho_\varepsilon^-)\right)\nonumber\\
		&&+\left(\partial^{\alpha}\left\{-\frac1\varepsilon\nabla_x\cdot \mathcal{A}(\{{\bf I-P}\}f_\varepsilon\cdot q_1)\right\},\varepsilon\partial^{\alpha}\nabla_x(\rho_\varepsilon^+-\rho_\varepsilon^-)\right)\nonumber\\
		&&+\left(\partial^{\alpha}\left\{E_\varepsilon(\rho_\varepsilon^++\rho_\varepsilon^-)+\frac2\varepsilon u_\varepsilon\times B_\varepsilon+\left\langle [v,-v]{M}^{1/2},\frac1{\varepsilon^2}\mathscr{L}f_\varepsilon+\frac1\varepsilon\mathscr{T}(f_\varepsilon,f_\varepsilon)\right\rangle\right\},\varepsilon\partial^{\alpha}\nabla_x(\rho_\varepsilon^+-\rho_\varepsilon^-)\right)\nonumber\\
		&=&-\frac{\d}{\d t}\left(\partial^{\alpha}G,\varepsilon\partial^{\alpha}\nabla_x(\rho_\varepsilon^+-\rho_\varepsilon^-)\right)
		-\left(\partial^{\alpha}G,\varepsilon\partial^{\alpha}\nabla_x
		\frac1\varepsilon\nabla_x\cdot G\right)\nonumber\\
		&&+\left(\partial^{\alpha}\left\{-\frac1\varepsilon\nabla_x\cdot \mathcal{A}(\{{\bf I-P}\}f_\varepsilon\cdot q_1)\right\},\varepsilon\partial^{\alpha}\nabla_x(\rho_\varepsilon^+-\rho_\varepsilon^-)\right)\nonumber\\
		&&+\left(\partial^{\alpha}\left\{E_\varepsilon(\rho_\varepsilon^++\rho_\varepsilon^-)+\frac2\varepsilon u_\varepsilon\times B_\varepsilon+\left\langle [v,-v]{M}^{1/2},\frac1{\varepsilon^2}\mathscr{L}f_\varepsilon+\frac1\varepsilon\mathscr{T}(f_\varepsilon,f_\varepsilon)\right\rangle\right\},\varepsilon\partial^{\alpha}\nabla_x(\rho_\varepsilon^+-\rho_\varepsilon^-)\right)\nonumber\\
		&=&-\frac{\d}{\d t}\left(\partial^{\alpha}G,\varepsilon\partial^{\alpha}\nabla_x(\rho_\varepsilon^+-\rho_\varepsilon^-)\right)+
		\left(\partial^{\alpha}\nabla_x\cdot G,\partial^{\alpha}
		\nabla_x\cdot G\right)\nonumber\\
		&&+\left(\partial^{\alpha}\left\{-\frac1\varepsilon\nabla_x\cdot \mathcal{A}(\{{\bf I-P}\}f_\varepsilon\cdot q_1)\right\},\varepsilon\partial^{\alpha}\nabla_x(\rho_\varepsilon^+-\rho_\varepsilon^-)\right)\nonumber\\
		&&+\left(\partial^{\alpha}\left\{E_\varepsilon(\rho_\varepsilon^++\rho_\varepsilon^-)+\frac2\varepsilon u _\varepsilon\times B _\varepsilon+\left\langle [v,-v]{M}^{1/2},\frac1{\varepsilon^2}\mathscr{L}f_\varepsilon+\frac1\varepsilon\mathscr{T}(f_\varepsilon,f_\varepsilon)\right\rangle\right\},\varepsilon\partial^{\alpha}\nabla_x(\rho_\varepsilon^+-\rho_\varepsilon^-)\right)\nonumber\\
	\end{eqnarray}
	Consequently
	\begin{eqnarray}\label{g-a}
		&&\frac{\d}{\d t}\left(\partial^{\alpha}G,\varepsilon\partial^{\alpha}\nabla_x(\rho_\varepsilon^+-\rho_\varepsilon^-)\right)
		+\left\|\partial^{\alpha}\nabla_x(\rho_\varepsilon^+-\rho_\varepsilon^-)\right\|^2+2\left\|\partial^{\alpha}(\rho_\varepsilon^+-\rho_\varepsilon^-)\right\|^2\nonumber\\
		&\lesssim&\left\|\partial^{\alpha}\nabla_x\{{\bf I-P}\}f_\varepsilon\right\|_{{D\vee\nu}}^2+\mathcal{E}_N(t)\mathcal{D}_N(t).
	\end{eqnarray}

	A suitable linear combination of (\ref{g-f-}) and (\ref{g-a}) gives
	\begin{equation}\label{mac-alpha}
		\begin{aligned}
			&\frac{\d}{\d t}G^{f}_{\rho_\varepsilon^+\pm\rho_\varepsilon^-,u_\varepsilon,\theta_\varepsilon}(t)+
			2\left\|\partial^{\alpha}(\rho_\varepsilon^+-\rho_\varepsilon^-)\right\|^2+\left\|\partial^{\alpha}\nabla_x[\rho_\varepsilon^+\pm\rho_\varepsilon^-,u_\varepsilon,\theta_\varepsilon]\right\|^2\\
			\lesssim&\left\|\{{\bf I-P}\}f_\varepsilon\right\|^2_{H^N_xL^2_{D\vee\nu}}+\mathcal{E}_{N}(t)\mathcal{D}_{N}(t)
		\end{aligned}
	\end{equation}
	where
	\[
	G^{f}_{\rho_\varepsilon^+\pm\rho_\varepsilon^-,u_\varepsilon,\theta_\varepsilon}(t)\equiv G^f_{\rho_\varepsilon^++\rho_\varepsilon^-,u_\varepsilon,\theta_\varepsilon}(t)+\left(\partial^{\alpha}G,\varepsilon\partial^{\alpha}\nabla_x(\rho_\varepsilon^+-\rho_\varepsilon^-)\right),
	\]
	where $|\alpha|\leq N-1$.
	
	Since
	\begin{eqnarray}
		\frac2\varepsilon E_\varepsilon&=&\partial_tG+\frac1\varepsilon\nabla_x(\rho_\varepsilon^+-\rho_\varepsilon^-)+\frac1\varepsilon\nabla_x\cdot \mathcal{A}(\{{\bf I-P}\}f_\varepsilon\cdot q_1)-E(\rho_\varepsilon^++\rho_\varepsilon^-)\nonumber\\
		&&-\frac2\varepsilon u _\varepsilon\times B _\varepsilon-\left\langle [v,-v]{M}^{1/2},\frac1{\varepsilon^2}\mathscr{L}f-\frac1\varepsilon\mathscr{T}(f,f)\right\rangle.
	\end{eqnarray}
	Applying $\partial^\alpha$ to the above equality and multiplying it with $\varepsilon \partial^\alpha E$, one has
	\begin{eqnarray}
		&& 2\|\partial^\alpha E_\varepsilon\|^2=\left(\partial^\alpha\left\{ \frac2\varepsilon E_\varepsilon\right\},\varepsilon\partial^\alpha E_\varepsilon\right) \nonumber\\
		&=&\left(\partial^\alpha\left\{ \partial_tG+\frac1\varepsilon\nabla_x(\rho_\varepsilon^+-\rho_\varepsilon^-)+\frac1\varepsilon\nabla_x\cdot \mathcal{A}(\{{\bf I-P}\}f_\varepsilon\cdot q_1)-E_\varepsilon(\rho_\varepsilon^++\rho_\varepsilon^-)\right\},\varepsilon\partial^\alpha E_\varepsilon\right) \nonumber\\
		&&+\left(\partial^\alpha\left\{ -\frac2\varepsilon u _\varepsilon\times B _\varepsilon-\left\langle [v,-v]{M}^{1/2},\frac1{\varepsilon^2}\mathscr{L}f_\varepsilon-\frac1\varepsilon\mathscr{T}(f_\varepsilon,f_\varepsilon)\right\rangle\right\},\varepsilon\partial^\alpha E_\varepsilon\right) \nonumber\\
		&=&\frac{\d}{\d t}\left(\partial^\alpha G,\varepsilon\partial^\alpha E_\varepsilon\right) -\left(\partial^\alpha G,\varepsilon\partial^\alpha \left\{\nabla_x \times B_\varepsilon  - \tfrac{1}{\eps}G\right\} \right)\nonumber\\
		&&+\left(\partial^\alpha\left\{\frac1\varepsilon\nabla_x(\rho_\varepsilon^+-\rho_\varepsilon^-)+\frac1\varepsilon\nabla_x\cdot \mathcal{A}(\{{\bf I-P}\}f_\varepsilon\cdot q_1)-E_\varepsilon(\rho_\varepsilon^++\rho_\varepsilon^-)\right\},\varepsilon\partial^\alpha E_\varepsilon\right) \nonumber\\
		&&+\left(\partial^\alpha\left\{ -\frac2\varepsilon u _\varepsilon\times B _\varepsilon-\left\langle [v,-v]{M}^{1/2},\frac1{\varepsilon^2}\mathscr{L}f_\varepsilon-\frac1\varepsilon\mathscr{T}(f_\varepsilon,f_\varepsilon)\right\rangle\right\},\varepsilon\partial^\alpha E_\varepsilon\right) \nonumber\\
		&=&\frac{\d}{\d t}\left(\partial^\alpha G,\varepsilon\partial^\alpha E_\varepsilon\right) +\|\partial^\alpha G\|^2-\left(\partial^\alpha G,\varepsilon\partial^\alpha \left\{\nabla_x \times B _\varepsilon\right\} \right)\nonumber\\
		&&+\left(\partial^\alpha\left\{\frac1\varepsilon\nabla_x(\rho_\varepsilon^+-\rho_\varepsilon^-)+\frac1\varepsilon\nabla_x\cdot\mathcal{ A}(\{{\bf I-P}\}f_\varepsilon\cdot q_1)-E_\varepsilon(\rho_\varepsilon^++\rho_\varepsilon^-)\right\},\varepsilon\partial^\alpha E_\varepsilon\right) \nonumber\\
		&&+\left(\partial^\alpha\left\{ -\frac2\varepsilon u _\varepsilon\times B _\varepsilon-\left\langle [v,-v]{M}^{1/2},\frac1{\varepsilon^2}\mathscr{L}f_\varepsilon-\frac1\varepsilon\mathscr{T}(f_\varepsilon,f_\varepsilon)\right\rangle\right\},\varepsilon\partial^\alpha E_\varepsilon\right) \nonumber\\
		&\lesssim&\frac{\d}{\d t}\left(\partial^\alpha G,\varepsilon\partial^\alpha E_\varepsilon\right) -\|\partial^\alpha(\rho_\varepsilon^+-\rho_\varepsilon^-)\|^2+\|\nabla_xG\|_{H^{N}_x}^2+\eta\|\nabla_x B_\varepsilon\|^2_{H^{N-2}_x}+\eta\|\partial^\alpha E_\varepsilon\|^2\nonumber\\
		&&+\frac1{\varepsilon^2}\|\partial^\alpha\{{\bf I-P}\}f_\varepsilon\|^2_{D\vee\nu}+\|\partial^\alpha\nabla_x\{{\bf I-P}\}f_\varepsilon\|^2_{D\vee\nu}+\mathcal{E}_N(t)\mathcal{D}_N(t)\nonumber\\
	\end{eqnarray}
	where $|\alpha|\leq N-1$.
	Consequently
	\begin{eqnarray}\label{E-1-s}
		&&-\frac{\d}{\d t}\left(\partial^\alpha G,\varepsilon\partial^\alpha E_\varepsilon\right) +2\|\partial^\alpha E_\varepsilon\|^2+\|\partial^\alpha(\rho_\varepsilon^+-\rho_\varepsilon^-)\|^2\nonumber\\
		&\lesssim&\|\nabla_xG\|_{H^{N}_x}^2+\eta\|\nabla_x B_\varepsilon\|^2_{H^{N-2}_x}\nonumber\\
		&&+\frac1{\varepsilon^2}\|\partial^\alpha\{{\bf I-P}\}f_\varepsilon\|^2_{D\vee\nu}+\|\partial^\alpha\nabla_x\{{\bf I-P}\}f_\varepsilon\|^2_{D\vee\nu}+\mathcal{E}_N(t)\mathcal{D}_N(t).
	\end{eqnarray}
	For $B_\varepsilon$, it follows that for $|\alpha|\leq N-2$
	\begin{eqnarray}
		\left\|\partial^{\alpha}\nabla_xB_\varepsilon\right\|^2&=&\left(\partial^{\alpha}\nabla_xB_\varepsilon,\partial^{\alpha}\nabla_xB_\varepsilon\right)\nonumber\\
		&=&\left(\partial^{\alpha}\nabla_x\times B_\varepsilon,\partial^{\alpha}\nabla_x\times B_\varepsilon\right)\nonumber\\
		&=&\left(\partial^{\alpha}\left\{\partial_tE_\varepsilon+\frac1\varepsilon G\right\},\partial^{\alpha}\nabla_x\times B_\varepsilon\right)\nonumber\\
		&=&\frac{\d}{\d t}\left(\partial^{\alpha}E_\varepsilon,\partial^{\alpha}\nabla_x\times B_\varepsilon\right)-\left(\partial^{\alpha}E_\varepsilon,\partial^{\alpha}\nabla_x\times \partial_tB_\varepsilon\right)+\left(\partial^{\alpha}\left\{\frac1\varepsilon G\right\},\partial^{\alpha}\nabla_x\times B_\varepsilon\right)\nonumber\\
		&=&\frac{\d}{\d t}\left(\partial^{\alpha}E_\varepsilon,\partial^{\alpha}\nabla_x\times B_\varepsilon\right)-\left(\partial^{\alpha}E_\varepsilon,\partial^{\alpha}\nabla_x\times \left\{-\nabla_x\times E_\varepsilon\right\}\right)+\left(\partial^{\alpha}\left\{\frac1\varepsilon G\right\},\partial^{\alpha}\nabla_x\times B_\varepsilon\right)\nonumber\\
		&=&\frac{\d}{\d t}\left(\partial^{\alpha}E_\varepsilon,\partial^{\alpha}\nabla_x\times B_\varepsilon\right)+\|\partial^{\alpha}\nabla_x\times E_\varepsilon\|^2+\left(\partial^{\alpha}\left\{\frac1\varepsilon G\right\},\partial^{\alpha}\nabla_x\times B_\varepsilon\right)\nonumber\\
		&\lesssim&\frac{\d}{\d t}\left(\partial^{\alpha}E_\varepsilon,\partial^{\alpha}\nabla_x\times B_\varepsilon\right)+\|\partial^{\alpha}\nabla_x\times E_\varepsilon\|^2+\eta\|\partial^{\alpha}\nabla_x\times B_\varepsilon\|^2+\frac1{\varepsilon^2}\|\partial^{\alpha}G\|^2\nonumber
	\end{eqnarray}
	That is
	\begin{eqnarray}\label{B-s}
		&&-\frac{\d}{\d t}\left(\partial^{\alpha}E_\varepsilon,\partial^{\alpha}\nabla_x\times B_\varepsilon\right)+\left\|\partial^{\alpha}\nabla_xB_\varepsilon\right\|^2\nonumber\\
		&\lesssim&\|\partial^{\alpha}\nabla_x\times E_\varepsilon\|^2+\eta\|\partial^{\alpha}\nabla_x\times B_\varepsilon\|^2+\frac1{\varepsilon^2}\|\partial^{\alpha}G\|^2
	\end{eqnarray}
	for $|\alpha|\leq N-2$.
	
	For sufficiently small $\kappa>0$, (\ref{E-1-s})$+\kappa$(\ref{B-s}) gives
	\begin{equation}\label{EB-s-2}
		\begin{aligned}
			&\frac{\d}{\d t}G_{E_\varepsilon,B_\varepsilon}(t)+\left\|E_\varepsilon\right\|_{H^{N-1}_x}^2+\left\|\nabla_xB_\varepsilon\right\|_{H^{N-2}_x}^2+\left\|\rho_\varepsilon^+-\rho_\varepsilon^-\right\|^2_{H^{N-1}_x}\\[2mm]
			\lesssim&\frac1{\varepsilon^2}\sum_{|\alpha|\leq N}\|\partial^\alpha\{{\bf I-P}\}f_\varepsilon\|^2_{D\vee\nu}+\mathcal{E}_N(t)\mathcal{D}_N(t).
		\end{aligned}
	\end{equation}
	Here we set
	\[
	G_{E_\varepsilon,B_\varepsilon}(t)=-\left\{\sum_{|\alpha|\leq N-1}\left(\partial^\alpha G,\varepsilon\partial^\alpha E_\varepsilon\right) +\kappa\sum_{|\alpha|\leq N-2}\left(\partial^{\alpha}E_\varepsilon,\partial^{\alpha}\nabla_x\times B_\varepsilon\right)\right\}.
	\]
	A proper combination of \eqref{mac-alpha} and \eqref{EB-s-2} gives \eqref{mac-dissipation-1}.
	This completes the proof of Lemma \ref{mac-dissipation}.
\end{proof}

\section*{Acknowledgment}

The research of Ning Jiang was supported by grants from the National Natural Science Foundation of China under contract No.11471181 and No.11731008. This work is also supported by the Strategic Priority
Research Program of Chinese Academy of Sciences, Grant No.XDA25010404. The research of Yuanjie Lei was supported by the National Natural Science Foundation of China under contract No.11971187 and No.12171176.
\bigskip

\bibliography{reference}

\end{document}